\def\chk#1{#1^{\smash{\scalebox{.7}[1.4]{\rotatebox{90}{\textnormal{\guilsinglleft}}}}}}
\newcommand{\res}{\upharpoonright}
\newcommand{\un}{\underline} 
\newcommand{\ov}{\overline}
\newcommand{\ec}{\overset{c}{=}}
\newcommand{\eb}{\overset{b}{=}}
\newcommand{\ac}{\overset{c}{\to}}
\newcommand{\ab}{\overset{b}{\to}}
\newcommand{\eq}{\overset{q}{=}}
\newcommand{\aq}{\overset{q}{\to}}
\theoremstyle{plain}
\newtheorem{theorem}{Theorem}[section]
\newtheorem{corollary}[theorem]{Corollary}
\newtheorem{lemma}[theorem]{Lemma}
\newtheorem{proposition}[theorem]{Proposition}
\newtheorem*{main}{Main Lemma}
\theoremstyle{remark}
\newtheorem{question}{Question}[section]
\newtheorem*{claim*}{Claim}
\newtheorem{claim}{Claim}
\numberwithin{equation}{section} 
\numberwithin{figure}{section}
\def\side{3}
\def\lwa{1.2pt}
\def\crb{2.0pt}
\newcommand\Square[1]{+(-#1,-#1) rectangle +(#1,#1)}
\begin{document}

\title[Complexes, moves, and amalgamation]{Simplicial complexes, stellar moves,\\ and projective amalgamation}

\author{S\l{}awomir Solecki}
\address{Department of Mathematics, 
Cornell University, Ithaca, NY 14853}
\email{ssolecki@cornell.edu}

\begin{abstract}
We explore connections between stellar moves on simplicial complexes (these are fundamental operations of combinatorial 
topology) and projective Fra{\"i}ss{\'e} limits (this is a model theoretic construction with topological applications). 

We identify a class of simplicial maps that arise from the stellar moves of welding and subdividing. 
We call these maps weld-division maps. The core of the paper is the proof that the category of 
weld-division maps fulfills the projective amalgamation property. This gives an example of an amalgamation class that substantially differs from known classes. 

The weld-division amalgamation class naturally gives rise to a projective Fra{\"i}ss{\'e} class. 
We compute the canonical limit of this projective Fra{\"i}ss{\'e} class and its canonical quotient space. 
This computation gives a combinatorial description of the geometric realization of a simplicial complex and 
an example of a combinatorially defined projective Fra{\"i}ss{\'e} class whose canonical quotient space has topological dimension strictly bigger than $1$. 

The method of proof of the amalgamation theorem is new. It is not geometric or topological, but rather it consists of combinatorial calculations performed on finite sequences of finite sets and functions among such sequences. Set theoretic nature of the entries of the sequences is crucial to the arguments. 
\end{abstract}

\thanks{Research supported by NSF grant DMS-2246873.}

\keywords{Simplicial complexes, stellar moves, projective amalgamation}

\subjclass[2010]{05E45, 03E75, 05D10, 14E05} 


\maketitle

\setcounter{tocdepth}{1}

\tableofcontents{}

\newpage

\part{Introduction}

The paper interacts with three areas---amalgamation classes, projective Fra{\"i}ss{\'e} limits, and stellar moves of combinatorial topology. It gives a new amalgamation class that is quite distinct from the amalgamation classes studied so far. Through projective Fra{\"i}ss{\'e} limits, it gives a new, combinatorial presentation of the geometric realization of a simplicial complex and a combinatorially defined projective Fra{\"i}ss{\'e} class with the canonical quotient having topological dimension bigger than $1$. Finally, it develops a new method of dealing with stellar moves on simplicial complexes and simplicial maps among them.

The heart of the paper is a proof of the amalgamation property for a natural class of simplicial maps acting among simplicial complexes. 
In the definition of the amalgamation class, we take as our departure point the two operations, known as stellar moves, on simplicial complexes that are fundamental to combinatorial topology---stellar subdivision and, its inverse, welding. We start with decoupling the two operations: 
\begin{enumerate} 
\item[---] we use stellar subdivision as a generating procedure for our amalgamation class; it produces new simplicial complexes and, together with composition, new simplicial maps; 

\item[---] we use welding to define the base family of simplicial maps in our amalgamation class; simplicial maps in the class are produced by closing of the base family under stellar subdivision and composition.
\end{enumerate} 
We prove the amalgamation theorem for the class of simplicial maps sketched out above. Our results reveal that the class is small enough to have strong combinatorial  properties and large enough to remember the topology of simplicial complexes.

Now, we outline the content of the paper in some more detail. 

{\bf Stellar moves}, that is, stellar subdivision and welding, provide a combinatorial method of modifying simplicial complexes while retaining their geometric and topological structures. 
They go back to the papers of Alexander \cite{Al} and Newman \cite{Ne}, \cite{Ne2}. Stellar moves have been shown to be ``geometrically complete'' in various senses
in \cite{Ne}, \cite{Al}, \cite{Wl}, \cite{AP}; see \cite[Theorem~4.5]{Li} for an exposition of the theorem from \cite{Ne} and \cite{Al} and see also \cite{LR} for a related result.
Stellar moves have formed the basis of combinatorial topology since the publication of the three papers by Alexander and Newman mentioned above; see \cite{Gl} and \cite{Li}. 
We recall the fundamental notions of the theory of stellar moves in Appendix~\ref{S:Div}.

{\bf Amalgamation classes}, that is, classes of structures that fulfill the amalgamation property, are a well-researched area of combinatorics with firm connections to the study of homogeneous structure and Ramsey theory. A number of amalgamation classes have been described, both of the direct and projective kinds, and classifications of amalgamation classes in prescribed contexts have been achieved. The reader may consult \cite{Ch}, \cite{HN}, \cite{La}, \cite{Ne} and the papers cited in the paragraph below for more background information. 

{\bf Projective Fra{\"i}ss{\'e} theory} is a method of producing ``generic'' compact topological spaces from classes of finite 
combinatorial objects by taking canonical projective limits and quotients. 
The method makes it possible to approach topological questions using combinatorial arguments. 
Projective Fra{\"i}ss{\'e} theory was developed in \cite{IrSo}.
It builds on and extends model theoretic ideas coming from Fra{\"i}ss{\'e} \cite{Fra}, see also \cite[Section~7.1]{Hod}. 
This approach was applied in various topological situations; see \cite{BK}, 
\cite{BK2}, \cite{BC0}, \cite{BC}, \cite{Ca}, \cite{CKR}, \cite{CKRY}, \cite{CK}, \cite{Kw0},  
\cite{PS}. We recall the basic notions of projective Fra{\"i}ss{\'e} theory in Appendix~\ref{Su:prfr}.

In this paper, we start with an arbitrary (abstract) simplicial complex $\mathbf A$. We consider the class $\langle {\mathbf A}\rangle$ 
consisting of simplicial complexes produced from $\mathbf A$ by iterative application of stellar subdivisions. These are our objects. 
Next, we define a class of simplicial maps among complexes in $\langle {\mathbf A}\rangle$, which we name {\bf weld-division maps}. 
These are our morphisms. Now, the complexes in $\langle {\mathbf A}\rangle$, as objects, and weld-division maps among them, as morphisms, 
form a natural category ${\mathcal D}({\mathbf A})$ associated with the simplicial complex $\mathbf A$.

Since the class of weld-division maps is new, we comment briefly on the way it is defined. First, we introduce the operation of stellar subdivision of a simplicial map that is parallel to the notion of stellar subdivision of a simplicial complex; this is done in Section~\ref{Su:addsu}. Weld-division maps are then defined as follows; for details see Section~\ref{S:divmap} and Appendix~\ref{A:isome}. The  basic building blocks in this definition are what we call {\bf weld maps} between simplicial complexes, which are inverses of stellar subdivisions of simplicial complexes and are a refinement of the operation of welding. Weld-division maps are obtained from weld maps 
by closing them under composition and stellar subdivision of simplicial maps.

We investigate properties of the category ${\mathcal D}({\mathbf A})$, which amounts to 
a combinatorial study of weld-division maps. Our principal result, Theorem~\ref{T:frai}, 
asserts that the class of weld-division maps has an amalgamation property---the {\bf projective amalgamation property}. That is, given two weld-division maps $f'\colon B\to A$ and $g'\colon C\to A$, where $A, B, C$ are in $\langle {\mathbf A}\rangle$, there exist weld-division maps $f\colon D\to B$ and $g\colon D\to C$, for some $D$ in $\langle{\mathbf A}\rangle$, such that 
\[
f'\circ f= g'\circ g, 
\]
or, in the form of a commuting diagram,  
\begin{figure}[htb]
  \centering
  \begin{tikzcd}
    & B \ar[dl, "f'"'] & \\
    A & & D \ar[ul, dashed, "f"'] \ar[dl, dashed, "g"] \\
    & C \ar[ul, "g'"] &
  \end{tikzcd}
  \label{fig:amalgamation}
\end{figure}

We deduce from this result that the category 
${\mathcal D}({\mathbf A})$ forms a {\bf transitive projective Fra{\"i}ss{\'e} class}; this is done in Theorem~\ref{C:trprc}. 
As such, it is subject to the general approach described 
in Appendix~\ref{Su:prfr}. In particular, it has a canonical limit, called the {\bf projective Fra{\"i}ss{\'e} limit}. 
From the limit, we extract, again following the general theory, 
a compact zero-dimensional metric space $\mathbb A$ with 
a compact equivalence relation $R^{\mathbb A}$ on it. Now, we are in a position to form 
the quotient space  ${\mathbb A}/R^{\mathbb A}$---the {\bf canonical quotient space of ${\mathcal D}({\mathbf A})$}. A natural question arises of topologically identifying this space. 
In Theorem~\ref{T:prli}, we prove that the canonical quotient space is homeomorphic to the geometric realization 
of the simplicial complex $\mathbf A$; a definition of geometric realization is recalled in Appendix~\ref{S:Div}. 
Two aspects of the above results may be worth highlighting.
First, this is the first case of the canonical quotient space of a combinatorially defined projective Fra{\"i}ss{\'e} class having topological dimension strictly greater than $1$. Second, the results give a purely combinatorial definition of the geometric realization of an abstract simplicial complex.

A comment about the method of proof is in order. The interest in the class ${\mathcal D}({\mathbf A})$ 
comes, to a large degree, from the geometric, multidimensional nature of its objects and morphisms. However, 
in proving the amalgamation property for ${\mathcal D}({\mathbf A})$, 
we found it impossible to employ geometric or topological methods.
Consequently, the proof of this property is rather unexpected. The high dimensional geometric problems are handled by forming a calculus of finite sequences of finite sets. 
To be a bit more specific, we note that finite sets are fundamental to our considerations. They are the building blocks of simplicial complexes. 
They are also operators on simplicial complexes,  that is, a finite set applied to a complex subdivides it. 
Since we consider iterative stellar subdivisions, that is, subdivisions implemented by finite sequences of finite sets, and simplicial maps among so subdivided complexes, we are naturally led to a study of finite sequences of finite sets and appropriately defined functions among such sequences. 
Developing these ideas, we carry out the main arguments by performing computations and combinatorial manipulations on finite sequences of finite sets and functions among them. 
Curiously, crucial to these considerations is the set theoretic character of the entries of the sequences---in particular, the cumulative hierarchy of sets, the relation $\in$ of membership and its well foundedness, boolean operations, formation of singletons, etc. The proof of the amalgamation property is long and rather involved---it occupies in Parts~\ref{P:frpr} and \ref{P:repr} and Appendices~\ref{A:faces} and \ref{A:isome}---but it is self-contained.


An earlier attempt to construct a combinatorial projective Fra{\"i}ss{\'e} category whose canonical quotient space is equal to the geometric realization of a given simplicial complex was made in the circulated note \cite{PS2}. This attempt was not successful as the proof of the projective amalgamation theorem for the category from that note contains a gap. (In the proof of  \cite[Theorem~3.2]{PS2}, the transition from $h$ to $\beta h$  is unjustified.)
So, the projective Fra{\"i}ss{\'e} limit in \cite{PS2} is not defined. The approach in the current paper differs from that in \cite{PS2} in two major ways. 
First, the category studied here is broader than the one in \cite{PS2}. 
The second difference is a complete change in the method of proof with the leading role played 
now by the combinatorial calculus of sequences of sets mentioned above.

\bigskip

\noindent {\bf Acknowledgement.} I thank Aristotelis Panagiotopoulos for our stimulating discussions during the time of our work on the note \cite{PS2}. 
I am grateful to Jarik Ne{\v s}et{\v r}il, Honza Hubi{\v c}ka, and Mat{\v e}j Kone{\v c}n{\'y} for making it possible for me to present the subject matter of 
this paper at the Midsummer Combinatorial  Workshop in August 2024, which gave me a chance to consolidate my thinking on the subject. 
Finally, I thank Kostya Slutsky for preparing the very nice drawings for me.

\newpage

\part{Background and statements of the main results}\label{Pa:back}

\section{Subdivided simplicial complexes and maps among them}

\subsection{Stellar subdivision} 

We recall the definition of stellar subdivision of complexes. 
The formulation below of the definition is from \cite[Section~2.1.5]{Koz}. It is equivalent to the classical formulation, as given, for example, in \cite{Li}. 

Let $A$ be a complex and let $s$ be a non-empty finite set. We define the {\bf stellar subdivison $sA$ of $A$ by $s$}. 
Fix a new vertex $\chk{s}$. We declare $sA$ to consist of the following sets
\begin{equation}\label{E:divin}
\begin{cases}
y\cup \{ \chk{s}\}, &\hbox{ if }s\not\subseteq y \hbox{ and } s\cup y\in A;\\
y, &\hbox{ if } s\not\subseteq y\hbox{ and } y\in A. 
\end{cases}
\end{equation} 
It is easy to check that so defined family $sA$ of sets is a complex. 
We will sometimes refer to faces of the form $y\cup \{ \chk{s}\}$ as the {\bf new faces of $sA$} and to faces of the form $y$ as the {\bf old faces of $sA$}. 

One can view forming $sA$ as follows. We keep from $A$ all the faces $t$ with $s\not\subseteq t$. We remove all the faces $t$ of $A$ with $s\subseteq t$ and replace each such face with the family 
\[
\{ y\cup \{ \chk{s}\} \mid s\cup y=t \}, 
\]
that is, each face $t$ of $A$ with $s\subseteq t$ is shattered into a number of new faces. 

Observe further that if $s$ is not a face of $A$, then 
$sA=A$.
Defining $sA$ in the cases when $s$ is not a face of $A$ may seem gratuitous; in fact, the general definition makes computations easier 
as it allows us  to write formulas and manipulate them without worrying about excluding certain divisions because they do not change the 
complex being divided.

The operation of division can, of course, be iterated. For a sequence $s_0,\dots , s_n$ of finite non-empty sets, let 
\begin{equation}\label{E:itsi0}
s_0 s_1 \cdots s_l A = s_0( s_1  \cdots (s_l A)\cdots).
\end{equation} 
Our convention is that for the empty sequence $\emptyset$, we have $\emptyset A = A$.

Below, we will say {\bf division} instead of the longer stellar subdivision.

\bigskip

\centerline{\bf For the remainder of Part~\ref{Pa:back}, we fix a complex $\mathbf A$.}

\subsection{A class of complexes and maps among them}\label{Su:acla} $\empty$

We consider the family of all complexes obtained by iteratively dividing $\mathbf A$. 
These complexes carry additional structure; namely, each face $s$ of such a complex remembers in which faces $\sigma$ of 
$\mathbf A$ it is ``included.'' To make it formal, for a complex $B$, which is obtained by iterative application of division to $\mathbf A$, and for a face $\sigma$ of $\mathbf A$, we define a family ${\rm D}^B_\sigma$ of faces of $B$. If $B= {\mathbf A}$, then, for a face $s$ of $\mathbf A$, 
\[
s\in {\rm D}^{\mathbf A}_\sigma \Leftrightarrow s\subseteq \sigma.  
\]
Let now $B= t B'$, for a finite set $t$, and assume ${\rm D}^{B'}_\sigma$ is defined. Let $s$ be a face of $B$. If $s$ is an old face of $B$, that is, it is a face of $B'$, then 
\[
s\in {\rm D}^B_\sigma \Leftrightarrow s\in {\rm D}^{B'}_\sigma. 
\]
Otherwise, $s= \{ \chk{t}\}\cup y$ and $t\cup y$ is a face of $B'$. Let 
\[
s\in {\rm D}^B_\sigma \Leftrightarrow t\cup y\in {\rm D}^{B'}_\sigma. 
\]
It can be checked that with the definition from Appendix~\ref{S:Div} the geometric realizations of $B$ and $\mathbf A$ are homeomorphic with each other with a homeomorphism such that $s$ is an element of ${\rm D}^B_\sigma$ precisely when each vertex of $s$ is mapped to a point that is in the convex span of the vertices of $\sigma$. 
From this point on when talking about a simplicial complex $B$ obtained from $\mathbf A$ by iterated subdivision, we assume that the complex comes equipped with with the additional structure ${\rm D}^B_\sigma$, for $\sigma \in {\mathbf A}$. We call this family ${\rm D}^B_\sigma$, $\sigma\in {\mathbf A}$, the {\bf face structure of} $B$.

Let $A, A'$ be complexes obtained from ${\mathbf A}$ by iterating the division operation. 
A function $f\colon A\to A'$ is called a {\bf grounded simplicial map} if it is simplicial, 
and, for each $\sigma\in {\mathbf A}$,  we have 
\[
t\in {\rm D}^{A'}_\sigma\; \Leftrightarrow\;\big( t= f(s),\hbox{ for some }s\in {\rm D}^A_\sigma\big).
\]
A function $f\colon A\to A'$ is called a 
{\bf grounded isomorphism} if it is a bijection that is grounded simplicial. 
Note that for a grounded isomorphism $f$ we have 
\[
s\in {\rm D}^A_\sigma\Leftrightarrow f(s)\in {\rm D}^{A'}_\sigma, \;\hbox{ for each }s\in A.
\]
Obviously, if $f$ is a grounded isomorphism, so is $f^{-1}$. In the above situation, we say that $A$ and $A'$ are {\bf ground isomorphic}.

Let 
\[
\langle{\mathbf A}\rangle
\]
be the family of all complexes obtained from $\mathbf A$ be iterated application of the division operation, where 
we identify two such complexes if they are ground isomorphic. We equip each element $B$ of $\langle{\mathbf A}\rangle$ with the relations $D_\sigma^B$, for $\sigma\in {\mathbf A}$.

The relation of the class $\langle{\mathbf A}\rangle$ with the class of all complexes obtained from $\mathbf A$ by applying all stellar moves, that is, divisions and, their inverses, welds, is clarified by the recent paper of Adiprasito and Pak \cite{AP}. They prove that if a complex $A'$ is obtained from $\mathbf A$ using stellar moves, then there exist $B\in \langle {\mathbf A}\rangle$ and $B'$ obtained from $A'$ by applying divisions such that $B$ and $B'$ are isomorphic as simplicial complexes. So the family $\langle {\mathbf A}\rangle$ is coinitial in the family of all complexes obtained from $\mathbf A$ using all stellar moves.

\subsection{Additive families of faces}\label{Su:addsu} 

We need to introduce the notion of stellar subdivision of a simplicial map, which in turn requires the notion of additive family of faces. 

Let $S$ be a family of faces of $A$, that is, $S\subseteq A$. We say that a sequence 
${\bar S} = s_0, s_1, \dots, s_l$ is a {\bf non-decreasing enumeration of $S$} if it injectively lists the elements of $S$ and, 
for all $i,j$, $s_i\subseteq s_j$ implies $i\leq j$. That each family $S$ has such an enumeration is a consequence of \cite{Sz}. 
In general, if $\ov{S}$ and $\ov{S}'$ are two non-decreasing enumerations of the same family $S$, then the complexes 
$\ov{S} A$ and  $\ov{S}' A$ need not be equal. There is, however, an important situation in which such a discrepancy cannot occur. 
We say that $S\subseteq A$ is {\bf additive in $A$} if $s_1\cup s_2\in S$ for all $s_1, s_2\in S$ with $s_1\cup s_2\in A$.
We have the following lemma on the independence of division by an additive family on its non-decreasing enumeration. This lemma is derived from its more precise version Lemma~\ref{L:dedf}.

\begin{lemma}\label{L:adar}
Let $S$ be an additive family in a simplicial complex $A$. Let $\vec{S}$ and $\vec{S}'$ be two non-decreasing enumerations 
of $S$. Then $\vec{S} A=\vec{S}' A$.
\end{lemma}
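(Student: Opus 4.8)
The plan is to reduce the statement to a local commutation fact about dividing by two faces that are ``incomparable enough,'' and then to run an exchange/bubble-sort argument turning any non-decreasing enumeration $\vec S$ into any other $\vec S'$ through a sequence of allowed transpositions. First I would isolate the following claim: if $s$ and $t$ are finite sets with $s \not\subseteq t$ and $t \not\subseteq s$, then for any complex $A$ we have $s(tA) = t(sA)$, provided $s \cup t \notin A$ or, more generally, provided the relevant faces behave correctly. In fact the clean statement is: if neither of $s,t$ is contained in the other, then $s$ is a face of $tA$ iff $s$ is a face of $A$ (since dividing by $t$ only shatters faces containing $t$, and $s$ does not contain $t$; one must check $s$ is not itself one of the new faces $y \cup \{\chk t\}$, which holds because $\chk t$ is a fresh vertex and $s \subseteq$ some old vertex set), and symmetrically; then a direct computation from \eqref{E:divin} shows $s(tA)$ and $t(sA)$ consist of exactly the same sets. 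This is the step I expect to be the main obstacle: the bookkeeping of old versus new faces when two subdivisions are performed in either order is genuinely fiddly, because a face of $s(tA)$ can carry both $\chk t$ and $\chk s$, and one has to match $y \cup \{\chk s, \chk t\}$ on one side with the corresponding face on the other, tracking the condition ``$s \cup t \cup y \in A$'' symmetrically. The hypothesis that this is ``derived from its more precise version Lemma~\ref{L:dedf}'' suggests the author has a sharper statement handling exactly this, so I would lean on that rather than reprove the commutation from scratch.

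Granting the local commutation, the next step is the exchange argument. Let $\vec S = s_0,\dots,s_l$ and $\vec S' = s_0',\dots,s_l'$ be two non-decreasing enumerations of the additive family $S$. They are related by a permutation $\pi$ of $\{0,\dots,l\}$, and since both respect the partial order given by $\subseteq$ (restricted to $S$), $\pi$ is an order-automorphism-compatible shuffle: whenever $i < j$ in one enumeration but the corresponding elements appear in the other order in $\vec S'$, the two sets involved must be $\subseteq$-incomparable. Standard combinatorics (any two linear extensions of a finite poset are connected by a sequence of adjacent transpositions, each swapping two incomparable consecutive elements) lets me pass from $\vec S$ to $\vec S'$ one adjacent transposition at a time, staying always within the set of non-decreasing enumerations of $S$. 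At each such transposition I swap consecutive entries $s_i, s_{i+1}$ that are $\subseteq$-incomparable; writing $A_{i+2} = s_{i+2}\cdots s_l A$ (the part of the division already performed ``above''), I need $s_i(s_{i+1} A_{i+2}) = s_{i+1}(s_i A_{i+2})$, which is exactly the local commutation claim applied with $A := A_{i+2}$. Applying the outer part of the division $s_0 \cdots s_{i-1}$ to both sides preserves the equality, so the full divided complex is unchanged by the transposition.

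Where does additivity enter? It is needed precisely to guarantee that the local commutation hypothesis is met at every transposition: two incomparable elements $s_i, s_{i+1}$ of $S$ might still have $s_i \cup s_{i+1}$ as a face of the intermediate complex $A_{i+2}$, and then dividing in the two orders need not commute unless we know something more. Additivity of $S$ in $A$ forces $s_i \cup s_{i+1} \in S$ if it is a face of $A$; one then argues that $s_i \cup s_{i+1}$, being $\supseteq$ both, would have had to be enumerated after both in any non-decreasing enumeration, and use this together with the structure of the intermediate complexes to rule out the bad case (or to handle it — dividing by $s_i$ and $s_{i+1}$ when their union is in $S$ and gets divided later is exactly the configuration that iterated stellar subdivision is set up to tolerate). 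So the argument is: (i) reduce to adjacent transpositions of incomparable elements via linear-extension connectivity; (ii) for each such transposition invoke the local commutation lemma, whose hypotheses are secured by additivity of $S$; (iii) conclude $\vec S A = \vec S' A$ by induction on the number of transpositions. I would expect (i) and (iii) to be routine and the genuine content to sit in verifying that additivity is exactly the right hypothesis to feed into (ii) — and, as noted, in the underlying face-by-face commutation computation packaged in Lemma~\ref{L:dedf}.
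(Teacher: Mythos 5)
Your proposal is essentially the paper's own argument: Lemma~\ref{L:adar} is derived from Lemma~\ref{L:dedf}, whose proof is exactly your exchange scheme — adjacent swaps of incomparable consecutive entries, justified because additivity forces the union (when it is a face) into $S$ and hence later in any non-decreasing enumeration, so it has already been divided away and is not a face of the intermediate complex, at which point the two divisions commute (that commutation being Corollary~\ref{C:comg}, obtained from the face-by-face computation in the appendix that you correctly anticipated as the fiddly part). The only real difference is in organizing the swaps: where you would invoke connectivity of linear extensions of a finite poset by adjacent transpositions, the paper gives a self-contained version, sorting the enumeration by cardinality via a potential-function argument and then exchanging equal-cardinality neighbors.
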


Lemma~\ref{L:adar} allows us to introduce the following piece of notation. For an additive family $S$ of faces of $A$, we write 
\[
S A 
\]
for $\vec{S} A$, where $\vec{S}$ is an arbitrary non-decreasing enumeration of $S$.

\subsection{Stellar subdivisions of simplicial maps}\label{Su:stdm}

Let $f\colon B\to A$ be a simplicial map between simplicial complexes $A$ and $B$. Let $s$ be a face of $A$. We define
now the subdivision of $f$ based on $s$. Consider the following family of faces of $A$
\[
f^{-1}(s) = \{ t \in B \colon f(t)= s\}.
\]
It is easy to check that $f^{-1}(s)$ is an additive family of faces of $B$.  We define
\[
s f
\]
to be the function from the vertices of $\big( f^{-1}(s)\big) B$ to the vertices of $s A$ that maps
each vertex $\chk{t}$ in $\big( f^{-1}(s)\big) B$, for $t \in f^{-1}(s)$, to the vertex $\chk s$ in $sA$, and each vertex $v$ of $B$
to the vertex $f(v)$ of $A$. It is easy to check that the map 
\[
sf\colon \big( f^{-1}(s)\big) B\to sA
\]
is simplicial. 
The map $s f$ will be called the {\bf subdivision of $f$ based on $s$}. To shorten the phrase and since $s$ can be read of the notation $sf$, 
we will often say that $sf$ is the {\bf division of $f$}. Figure~\ref{fig:division-of-a-weld-map} provides an illustration of a division of a grounded simplcial map. 

The following lemma, with the conventions introduced in Part~\ref{P:stfra}, is proved as Lemma~\ref{L:divpr}.

\begin{lemma} 
Assume $A, B\in \langle {\mathbf A}\rangle$. Let $f\colon B\to A$ be a grounded simplicial map, and let $s$ be a face of $A$. 
Then the map 
\[
sf\colon \big( f^{-1}(s)\big) B\to sA
\]
is grounded simplicial. 
\end{lemma}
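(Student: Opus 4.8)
The plan is to verify the two defining conditions of a grounded simplicial map for $sf$: that it is simplicial (already asserted in the construction) and that for every $\sigma\in\mathbf A$ it intertwines the face structures, i.e.\ $u\in\mathrm D^{sA}_\sigma$ if and only if $u=(sf)(t)$ for some $t\in\mathrm D^{(f^{-1}(s))B}_\sigma$. Everything hinges on tracking how the relations $\mathrm D^{\cdot}_\sigma$ propagate through a single division, both on the complex side (where it is dictated by the recursive definition: old faces inherit membership from $B$, and a new face $\{\chk s\}\cup y$ with $s\cup y\in B$ lies in $\mathrm D^{sA}_\sigma$ exactly when $s\cup y\in\mathrm D^{A}_\sigma$) and on the map side (where $sf$ sends old vertices $v$ to $f(v)$ and each $\chk t$, $t\in f^{-1}(s)$, to $\chk s$). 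So the proof is really a case analysis over the four shapes a face of $(f^{-1}(s))B$ can take, crossed with the two shapes a face of $sA$ can take.

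First I would record the ``$\Leftarrow$'' direction (images of $\sigma$-faces are $\sigma$-faces), which is the easier half. Take $t\in\mathrm D^{(f^{-1}(s))B}_\sigma$. If $t$ is an old face of $(f^{-1}(s))B$, then $t\in\mathrm D^{B}_\sigma$, and $(sf)(t)=f(t)$; since $f$ is grounded, $f(t)\in\mathrm D^{A}_\sigma$, and I must then check that $f(t)$, viewed inside $sA$, is still in $\mathrm D^{sA}_\sigma$ — this splits according to whether $s\subseteq f(t)$ or not, using that $t$ is old means $f^{-1}(s)\not\subseteq t$, hence (since $f^{-1}(s)$ is additive) $t$ contains no member of $f^{-1}(s)$ large enough to force $s\subseteq f(t)$; one needs the elementary fact that $s\subseteq f(t)$ forces some $t'\subseteq t$ with $f(t')=s$. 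If instead $t=\{\chk{t_0}\}\cup y$ with $t_0\in f^{-1}(s)$ and $t_0\cup y$ an old face, then by definition $t_0\cup y\in\mathrm D^{B}_\sigma$, so $f(t_0\cup y)\in\mathrm D^A_\sigma$; but $f(t_0)=s\subseteq f(t_0\cup y)$, and $(sf)(t)=\{\chk s\}\cup f(y)$ with $s\cup f(y)=f(t_0\cup y)\in\mathrm D^A_\sigma$, so by the recursive definition of $\mathrm D^{sA}$ the face $(sf)(t)$ is a new face of $sA$ lying in $\mathrm D^{sA}_\sigma$. The ``$\Rightarrow$'' direction is dual: given $u\in\mathrm D^{sA}_\sigma$, write $u$ as an old or new face of $sA$, pull $u$ back to a face $u'\in\mathrm D^A_\sigma$ of $A$ (using that $f$ is grounded to get $u'=f(t')$, $t'\in\mathrm D^B_\sigma$), and then lift $t'$ through the division of $B$ by $f^{-1}(s)$ to a face of $(f^{-1}(s))B$ in $\mathrm D^{(f^{-1}(s))B}_\sigma$ mapping to $u$ under $sf$; again two subcases depending on whether $u$ is old or new, and in the new case one must locate the correct $t_0\in f^{-1}(s)$ and the correct ``$y$''.

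The main obstacle I anticipate is bookkeeping rather than conceptual: the family $f^{-1}(s)$ is in general not a single set but an additive family, so $(f^{-1}(s))B$ is an iterated division, and I need Lemma~\ref{L:adar} to know that the result is well defined independently of the non-decreasing enumeration, plus an inductive understanding of how $\mathrm D^{\cdot}_\sigma$ and the old/new dichotomy behave under iterated division by an additive family — this is presumably where the promised Lemma~\ref{L:divpr} (and the conventions of Part~\ref{P:stfra}) does the real work, so in practice I would set up a one-step lemma ``if $g\colon B'\to A'$ is grounded and $r$ is a face of $A'$ then $rg$ is grounded'' and then iterate it along a non-decreasing enumeration of $f^{-1}(s)$, checking at each step that the image of the additive family remains a single face $s$ so that the outer division stays a single stellar subdivision. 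The one genuinely delicate point is making sure that, when we divide $B$ by $f^{-1}(s)$ but $A$ only by the single set $s$, the ``new vertices'' $\chk t$ for the various $t\in f^{-1}(s)$ all collapse correctly onto the single new vertex $\chk s$ and that no old face of $(f^{-1}(s))B$ has image meeting $\mathrm D^{sA}_\sigma$ spuriously; this is controlled by additivity of $f^{-1}(s)$ together with the characterization of when $s\subseteq f(t)$.
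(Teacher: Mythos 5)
Your overall verification scheme --- check the two halves of groundedness, do a case analysis on old versus new faces, and use the fact that $s\subseteq f(t)$ forces a subset of $t$ lying in $f^{-1}(s)$ --- is in line with the paper's proof (given as Lemma~\ref{L:divpr}, which phrases groundedness through the support operation via Lemma~\ref{L:simsp}; by Lemma~\ref{L:dsig} that is equivalent to your ${\rm D}_\sigma$ bookkeeping). The gap is in the device you rely on to handle the fact that $f^{-1}(s)$ is a family rather than a single face. The ``one-step lemma'' you propose to iterate --- if $g\colon B'\to A'$ is grounded and $r$ is a face of $A'$, then $rg$ is grounded --- is verbatim the statement to be proved, since a single codomain face $r$ already entails dividing the domain by the \emph{entire} additive family $g^{-1}(r)$; iterating it along a non-decreasing enumeration of $f^{-1}(s)$ is therefore circular. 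The only non-circular reading --- divide the domain by the elements $t_0,\dots,t_n$ of $f^{-1}(s)$ one at a time while the codomain is divided by $s$ once --- breaks down: after dividing by the later entries $t_k,\dots,t_n$ only, each earlier $t_j$ with $j<k$ is still a face of the partially divided domain (non-decreasingness gives $t_i\not\subseteq t_j$ for $i\ge k$), and the intended map carries $t_j$ onto $s$, which is not a face of $sA$. So the intermediate maps are not even simplicial, and there is no chain of grounded maps along which to induct.

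The paper avoids induction entirely: it divides by the whole family at once and uses the explicit description of the faces of $\big(f^{-1}(s)\big)B$ (Lemma~\ref{L:linear} with $T=f^{-1}(s)$): every face has the form $x\cup X$ with $X$ a chain in $f^{-1}(s)$, $x\cup\bigcup X\in B$, and no member of $f^{-1}(s)$ contained in $x$. This also covers faces carrying several new vertices, which your two cases (an old face, or $\{\chk{t_0}\}\cup y$ with a single new vertex) omit and implicitly delegate to the failed induction. A second, smaller gap: in the surjectivity direction for a new face $r\cup\{\chk{s}\}$ of $sA$ you only remark that one ``must locate the correct $t_0$ and the correct $y$,'' but this is a genuine step rather than bookkeeping. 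Starting from $t\in B$ with $f(t)=r\cup s$ witnessing groundedness of $f$, one must take $t_1\subseteq t$ \emph{maximal} with $f(t_1)=s$ (and any $t_2\subseteq t$ with $f(t_2)=r$); maximality, together with the closure of $\{u\subseteq t\mid f(u)=s\}$ under unions, is exactly what guarantees that every $u\in f^{-1}(s)$ contained in $t_2\cup t_1$ is contained in $t_1$, i.e.\ condition (d) of Lemma~\ref{L:linear}, so that $t_2\cup\{\chk{t_1}\}$ really is a face of $\big(f^{-1}(s)\big)B$ mapping onto $r\cup\{\chk{s}\}$. Without that choice the natural candidate preimage need not be a face.
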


\section{Weld-division maps}\label{S:divmap}

We define the class of grounded simplicial maps that is fundamental to our considerations.

Let $t$ be a finite non-empty set and let $p\in t$. Such a pair $(p,t)$ determines a simplicial map from $tA$ to $A$ for each complex $A$ as follows.  We define 
\begin{equation}\label{E:wel}
\pi^A_{p,t} \colon  t A \to A
\end{equation} 
by letting it be identity on ${\rm Vr}(A)$ and, when $t\in A$, mapping the new vertex $\chk t$ of $tA$ to $p$. We call a map of this form a {\bf weld map}. Note that if $t$ is not a face of $A$, then the map $\pi^A_{p,t}$ is the identity map. The weld map \eqref{E:wel} reverses the stellar subdivision leading from $A$ to $tA$, so, in this respect, it is a refinement of the weld operation, which is the inverse of the stellar subdivision operation. 
Figure~\ref{fig:weld-map} provides an illustration of a weld map. 

The following lemma, with the conventions of Part~\ref{P:stfra}, is proved as Lemma~\ref{L:welgr2}. 

\begin{lemma}\label{L:welgr} 
Let $A\in \langle {\mathbf A}\rangle$, and let $t$ be a finite set with $p\in t$. Then the weld map $\pi^A_{p.t}$ is grounded simplicial. 
\end{lemma}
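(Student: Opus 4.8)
The statement to prove is Lemma~\ref{L:welgr}: for $A \in \langle {\mathbf A}\rangle$ and a finite set $t$ with $p \in t$, the weld map $\pi^A_{p,t}\colon tA \to A$ is grounded simplicial. Since $tA$ carries the face structure inherited from $A$ via the rule defining ${\rm D}^{tA}_\sigma$ in terms of ${\rm D}^A_\sigma$, the proof splits naturally into two parts. First I would verify that $\pi^A_{p,t}$ is a simplicial map; this is essentially immediate from the definition of $tA$ via the cases \eqref{E:divin}, because every face of $tA$ is either an old face $y \in A$ (mapped identically, hence a face of $A$) or a new face $y \cup \{\chk t\}$ with $y \cup t \in A$, which $\pi^A_{p,t}$ sends to $y \cup \{p\} \subseteq y \cup t \in A$, a face of $A$. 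Second, and this is the substantive part, I would check the grounding condition: for each $\sigma \in {\mathbf A}$,
\[
r \in {\rm D}^A_\sigma \iff \big(r = \pi^A_{p,t}(s) \text{ for some } s \in {\rm D}^{tA}_\sigma\big).
\]

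\textbf{Key steps.} For the forward direction, given $r \in {\rm D}^A_\sigma$, I would produce a preimage: if $t \not\subseteq r$ and $r \in A$, then $r$ is an old face of $tA$, it lies in ${\rm D}^{tA}_\sigma$ by the first clause of the definition of the face structure of $tA$, and $\pi^A_{p,t}(r) = r$; if $t \subseteq r$, then writing $r = (r \setminus \{q\}) \cup \{q\}$ is not quite right---instead I take $y = r \setminus (t \setminus \{p\})$ (so $y \cup t = r$, $p \in y$, and $t \not\subseteq y$ since $|t| \geq 2$ forces some element of $t\setminus\{p\}$ to be dropped, unless $t = \{p\}$, a trivial case handled separately), set $s = y \cup \{\chk t\}$, which is a new face of $tA$ with $y \cup t = r \in {\rm D}^A_\sigma$, hence $s \in {\rm D}^{tA}_\sigma$, and observe $\pi^A_{p,t}(s) = y \cup \{p\} = y$. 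Here I must still confirm $y \in {\rm D}^A_\sigma$, which follows from $r \in {\rm D}^A_\sigma$ and the downward-closure of ${\rm D}^A_\sigma$ under taking subfaces (a property of the face structure that should be recorded as a lemma in the appendix, or verified directly by induction on the subdivision history of $A$). For the reverse direction, I would take $s \in {\rm D}^{tA}_\sigma$ and trace $r = \pi^A_{p,t}(s)$ back through the two cases of the face structure definition: if $s$ is old, $r = s \in {\rm D}^A_\sigma$ directly; if $s = y \cup \{\chk t\}$, then $y \cup t \in {\rm D}^A_\sigma$ by definition, and $r = y \cup \{p\} \subseteq y \cup t$, so again downward-closure of ${\rm D}^A_\sigma$ gives $r \in {\rm D}^A_\sigma$.

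\textbf{Main obstacle.} The routine simplicial-map check is harmless; the real work is bookkeeping with the face structure, and the one genuine prerequisite is the downward-closure property of the relations ${\rm D}^B_\sigma$---that $s \in {\rm D}^B_\sigma$ and $s' \subseteq s$, $s' \in B$ imply $s' \in {\rm D}^B_\sigma$. I expect this to be proved by induction on the number of subdivisions used to build $B$ from ${\mathbf A}$, using the recursive definition of ${\rm D}^{tB'}_\sigma$; the new-face case requires noting that a subface of $y \cup \{\chk t\}$ is either a subface of $y$ (old, handled by the inductive hypothesis applied to $y \cup t$) or of the form $y' \cup \{\chk t\}$ with $y' \subseteq y$, for which $y' \cup t \subseteq y \cup t$ and the inductive hypothesis again applies. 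Once this closure lemma is in hand, the proof of Lemma~\ref{L:welgr} is a short case analysis as sketched above. The degenerate case $t = \{p\}$ (where $\pi^A_{p,t}$ is essentially the identity and $\chk t$ is identified with $p$) should be dispatched first to avoid cluttering the main argument, as should the case where $t$ is not a face of $A$, in which $tA = A$ and $\pi^A_{p,t}$ is literally the identity, trivially grounded.
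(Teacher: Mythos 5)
Your overall structure (simpliciality check, then the two directions of the grounding condition, plus a downward-closure lemma for the face structure) is reasonable and close in spirit to the paper's argument, but the forward direction contains a genuine error in exactly the case where something nontrivial has to be shown. When $t\subseteq r$ and $r\in {\rm D}^A_\sigma$, you set $y=r\setminus(t\setminus\{p\})$ and $s=y\cup\{\chk{t}\}$, and your own computation gives $\pi^A_{p,t}(s)=y\cup\{p\}=y$, which is a \emph{proper} subset of $r$ whenever $t\neq\{p\}$. So you have produced a preimage of $y$, not of $r$, and the requirement ``every $r\in {\rm D}^A_\sigma$ is of the form $\pi^A_{p,t}(s)$ for some $s\in {\rm D}^{tA}_\sigma$'' is left unverified precisely for the faces $r\supseteq t$ that the subdivision shatters. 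Your remark ``I must still confirm $y\in{\rm D}^A_\sigma$'' shows the slip: at that point you are checking that the image of $s$ lands in ${\rm D}^A_\sigma$ (the backward direction, which you had already handled), not that $r$ itself has a preimage.

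The repair is the choice the paper makes: take $y=r\setminus\{p\}$, i.e.\ remove only $p$ rather than all of $t\setminus\{p\}$. Then $t\cup y=r\in A$ and $t\not\subseteq y$ (since $p\in t$, $p\notin y$), so $s=(r\setminus\{p\})\cup\{\chk{t}\}$ is a new face of $tA$; it lies in ${\rm D}^{tA}_\sigma$ because $t\cup y=r\in{\rm D}^A_\sigma$; and $\pi^A_{p,t}(s)=(r\setminus\{p\})\cup\{p\}=r$. This is in fact the unique face of $tA$ mapping onto $r$, and it works uniformly, so neither the $t=\{p\}$ case split nor the $|t|\geq2$ hypothesis is needed. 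For comparison, the paper proves the lemma (as Lemma~\ref{L:welgr2}) in the support formulation of Lemma~\ref{L:simsp}: one checks ${\rm sp}(\pi(s))\subseteq{\rm sp}(s)$ for faces $s$ of $tA$ and exhibits, for each face $r$ of $A$, the preimage $(r\setminus\{p\})\cup\{\chk t\}$ with equal support; in that formulation the downward-closure property you want as a separate induction is automatic, since ${\rm sp}(s')\subseteq{\rm sp}(s)$ for $s'\subseteq s$ together with Lemma~\ref{L:dsig}(ii) gives it at once.
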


We are now ready to equip the class $\langle {\mathbf A}\rangle$ with morphisms, which we will call weld-division maps. 
{\bf Weld-division maps among complexes in $\langle {\mathbf A}\rangle$} is the smallest class of maps that 
\begin{enumerate}
\item[---] contains all weld maps between complexes in $\langle {\mathbf A}\rangle$,   

\item[---] is closed under division of simplicial maps, and  

\item[---] is closed under composition.
\end{enumerate} 
We use 
\[
{\mathcal D}({\mathbf A}) 
\]
to denote the category whose objects are complexes in $\langle {\mathbf A}\rangle$ and whose morphisms 
are weld-division maps among them. 


We provide two schematic drawings illustrating the concepts introduced above. 
The following figure shows a weld map. In it, the squared vertex is mapped to the squared vertex and all the vertices of the outside triangle are mapped to themselves. 

\begin{figure}[htb]
  \centering
  \begin{center}
    \begin{tikzpicture}[scale=1.4]
      \begin{scope}
        \coordinate (A) at (0,0);
        \coordinate (B) at (\side,0);
        \coordinate (C) at (\side/2,{(sqrt(3)/2)*\side});
        \coordinate (D) at (barycentric cs:A=1,B=1,C=1);

        \draw[line width=\lwa] (A) -- (B) -- (C) -- cycle;
        \draw (A) -- (D) -- (C);
        \draw (B) -- (D);
        \node[below left] at (A) {\(a\vphantom{b}\)};
        \node[below right]  at (B) {\(b\)};
        \node[above]  at (C) {\(c\)};

        \tkzMarkSegment[size=2,pos=.5,mark=](A,B)
        \tkzMarkSegment[size=2,pos=.5,mark=](A,C)
        \tkzMarkSegment[size=2,pos=.5,mark=](B,C)
        \tkzMarkSegment[size=2,pos=.5,mark=](A,D)
        \tkzMarkSegment[size=2,pos=.5,mark= ](B,D)
        \tkzMarkSegment[size=2,pos=.5,mark=](C,D)

        \draw (D) \Square{\crb};

      \end{scope}
      \begin{scope}[xshift=4cm]
        \coordinate (A) at (0,0);
        \coordinate (B) at (\side,0);
        \coordinate (C) at (\side/2,{(sqrt(3)/2)*\side});

        \draw[line width=\lwa] (A) -- (B) -- (C) -- cycle;
        \node[below left] at (A) {\(a\vphantom{b}\)};
        \node[below right]  at (B) {\(b\)};
        \node[above]  at (C) {\(c\)};

        \tkzMarkSegment[size=2,pos=.5,mark=](A,B)
        \tkzMarkSegment[size=2,pos=.5,mark=](A,C)
        \tkzMarkSegment[size=2,pos=.5,mark=](B,C)

        \draw (B) \Square{\crb};

      \end{scope}
      \draw[->] (\side,1.2) -- node[pos=0.5,anchor=south]{\(\pi^A_{p,t}\)} (\side+1,1.2);
    \end{tikzpicture}
  \end{center}
  \caption{A weld map with \(p=b\), \(t = \{a,b,c\}\), and $A=$ all non-empty subsets of \(\{a, b, c\}\)}
  \label{fig:weld-map}
\end{figure}
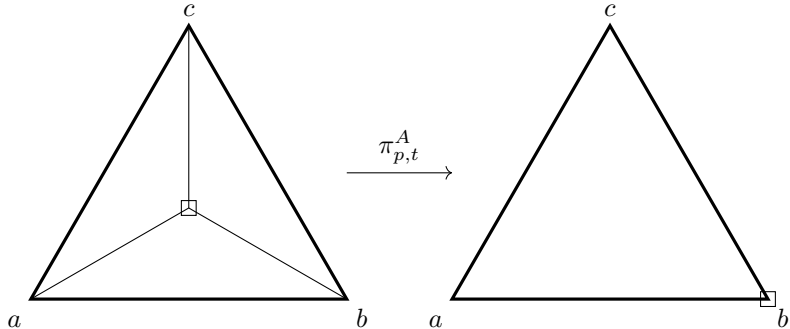

The figure below illustrates a division of a weld map. In it, the circled vertices are mapped to the circled vertex and the squared vertex to the squared vertex. The vertices lying on the outer triangle are mapped to themselves. Some edges internal to the triangle on the left are marked with strokes. The number of strokes indicates the order in which the edges are produced in the process of iterated division of the complex $A$ that is the domain of $\pi^A_{p,t}$.

\begin{figure}[htb]
  \centering
  \begin{center}
    \begin{tikzpicture}[scale=1.4]
      \begin{scope}
        \coordinate (A) at (0,0);
        \coordinate (B) at (\side,0);
        \coordinate (C) at (\side/2,{(sqrt(3)/2)*\side});
        \coordinate (D) at (barycentric cs:A=1,B=1,C=1);
        \coordinate (E) at ($(A)!0.5!(D)$);
        \coordinate (F) at (\side/2,0);
        \coordinate (G) at (barycentric cs:A=1,D=1,B=1);

        \draw[line width=\lwa] (A) -- (B) -- (C) -- cycle;
        \draw (A) -- (D) -- (B);
        \draw (C) -- (D) -- (G) -- (A);
        \draw (C) -- (E) -- (G) -- (B);
        \draw (F) -- (G);

        \node[below left] at (A) {\(a\vphantom{b}\)};
        \node[below right]  at (B) {\(b\)};
        \node[above]  at (C) {\(c\)};

        \tkzMarkSegment[size=2,pos=.5,mark=](A,C)
        \tkzMarkSegment[size=2,pos=.5,mark=](C,B)
        \tkzMarkSegment[size=2,pos=.5,mark=](A,F)
        \tkzMarkSegment[size=2,pos=.5,mark=](F,B)
        \tkzMarkSegment[size=2,pos=.5,mark=](C,D)
        \tkzMarkSegment[size=2,pos=.5,mark=](D,B)
        \tkzMarkSegment[size=2,pos=.5,mark=](D,E)
        \tkzMarkSegment[size=2,pos=.5,mark=](A,E)
        \tkzMarkSegment[size=2,pos=.5,mark=|](D,G)
        \tkzMarkSegment[size=2,pos=.5,mark=|](B,G)
        \tkzMarkSegment[size=2,pos=.5,mark=|](A,G)
        \tkzMarkSegment[size=2,pos=.5,mark=|||](E, G)
        \tkzMarkSegment[size=2,pos=.5,mark=||](G, F)
        \tkzMarkSegment[size=2,pos=.5,mark=|||](E, C)

        \draw (D) \Square{\crb};

        \draw (E) circle (0.8mm);
        \draw (G) circle (0.8mm);
      \end{scope}

      \begin{scope}[xshift=4cm]
        \coordinate (A) at (0,0);
        \coordinate (B) at (\side,0);
        \coordinate (C) at (\side/2,{(sqrt(3)/2)*\side});
        \coordinate (D) at (\side/2,0);

        \draw[line width=\lwa] (A) -- (B) -- (C) -- cycle;
        \draw (C) -- (D);
        \node[below left] at (A) {\(a\vphantom{b}\)};
        \node[below right]  at (B) {\(b\)};
        \node[above]  at (C) {\(c\)};

        \tkzMarkSegment[size=2,pos=.5,mark=](A,D)
        \tkzMarkSegment[size=2,pos=.5,mark=](D,B)
        \tkzMarkSegment[size=2,pos=.5,mark=](A,C)
        \tkzMarkSegment[size=2,pos=.5,mark=](B,C)
        \tkzMarkSegment[size=2,pos=.5,mark=](C,D)

                \draw (B) \Square{\crb};

        \draw (D) circle (0.8mm);
      \end{scope}
      \draw[->] (\side,1.2) -- node[pos=0.5,anchor=south]{\(s\pi^A_{p,t}\)}
      (\side+1,1.2);
    \end{tikzpicture}
  \end{center}
  \caption{A division of a weld map with \(p=b \), \(s = \{a,b\}\), \(t= \{ a, b, c\}\),  and $A=$ all non-empty subsets of \(\{a, b, c\}\) }
  \label{fig:division-of-a-weld-map}
\end{figure}
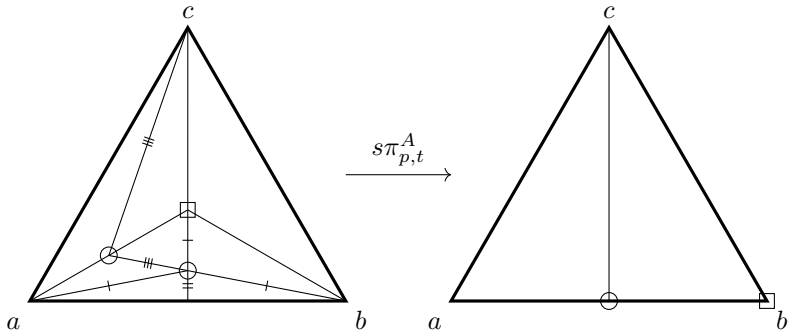

\section{Statements of the main theorems}\label{S:stma}

We state the three main results of the paper---Theorems~\ref{T:frai}, \ref{C:trprc}, and \ref{T:prli}.
Their sharper versions phrased using the setup of Part~\ref{P:stfra} are proved in Part~\ref{P:prfd}.

Theorem~\ref{T:frai} is a structural theorem about the category ${\mathcal D}({\mathbf A})$ of weld-division maps over a complex $\mathbf A$; it gives projective amalgamation for the class of weld-division maps and constitutes the foundations for the proofs of Theorems~\ref{C:trprc} and \ref{T:prli}. 
Theorem~\ref{C:trprc} asserts that ${\mathcal D}({\mathbf A})$ forms a transitive projective Fra{\"i}ss{\'e} class; see Apendix~\ref{Su:prfr}. It follows from Theorem~\ref{C:trprc} that ${\mathcal D}({\mathbf A})$ has the projective Fra{\"i}ss{\'e} limit that has the canonical quotient; see Apendix~\ref{Su:prfr}. Theorem~\ref{T:prli} identifies this quotient as the geometric realization of the complex $\mathbf A$.

\begin{theorem}\label{T:frai} 
For $f',\, g'\in {\mathcal D}({\mathbf A})$ with the same codomain, there exist $f, \, g\in {\mathcal D}({\mathbf A})$ 
such that 
\begin{equation}\notag
f'\circ f = g'\circ g.
\end{equation} 
\end{theorem}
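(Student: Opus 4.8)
The plan is to reduce the amalgamation problem for arbitrary weld-division maps to a problem about divisions only, and then solve that by a direct combinatorial construction on sequences of finite sets. First I would observe that by the very definition of weld-division maps (closure of the weld maps under division of simplicial maps and composition), every morphism $f'\colon B\to A$ in ${\mathcal D}({\mathbf A})$ factors as a composite of divisions of weld maps. So it suffices to amalgamate two such composites over a common codomain $A$. By an induction on the total number of factors, the problem reduces to the following two-sided ``one-step'' amalgamation tasks: (i) amalgamate a weld map $\pi^A_{p,t}\colon tA\to A$ against another weld map $\pi^A_{q,u}\colon uA\to A$; and (ii) handle the interaction of a division $s(-)$ with a weld map, i.e.\ understand how $s\pi^A_{p,t}$ sits over $\pi^A_{p,t}$, so that divisions performed on one leg of a partial amalgam can be propagated to the other leg. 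The key point to isolate is that division is functorial enough: if $f\colon B\to A$ is grounded simplicial and $s\in A$, then $sf\colon (f^{-1}(s))B\to sA$ fits into a commuting square with the weld maps $\pi^B_{\bar S}$ (the composite of welds realizing the additive division $f^{-1}(s)$) and $\pi^A_{p,s}$ for a suitable choice of $p\in s$; this square, plus Lemma~\ref{L:adar} on independence of additive division from its enumeration, is what lets one ``push'' a division across a morphism.

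The core construction is then for two weld maps over $A$. Given $\pi^A_{p,t}\colon tA\to A$ and $\pi^A_{q,u}\colon uA\to A$, I would build $D$ by dividing $A$ by \emph{both} $t$ and $u$, in an order chosen compatibly (using a non-decreasing enumeration of $\{t,u\}$ if $t\subseteq u$ or $u\subseteq t$, and otherwise noting the two divisions commute up to ground isomorphism because $t\cup u\notin$ the relevant faces or is handled by additivity). Concretely, one takes $D = u(tA)$ (equivalently $t(uA)$ when the order is immaterial), and one must exhibit a weld-division map $f\colon D\to tA$ and a weld-division map $g\colon D\to uA$ with $\pi^A_{p,t}\circ f = \pi^A_{q,u}\circ g$. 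For $g$ one takes the weld map $\pi^{tA}_{q,u}\colon u(tA)\to tA$ — but here one has to be careful: $u$ need not be a face of $tA$ (the division by $t$ may have shattered it), so ``the'' weld undoing the $u$-division of $tA$ is really a composite of welds indexed by the pieces into which $u$ was shattered, and dually for $f$. This is exactly where the set-theoretic bookkeeping enters: one tracks each face of $D$ by the pair of faces of $tA$ and of $uA$ it projects to, and checks that the two composite maps agree vertex-by-vertex, using that both ultimately restrict to the identity on ${\rm Vr}(A)$ and send the new barycenter vertices $\chk t,\chk u$ consistently to $p,q$.

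The main obstacle — and where I expect the real work to be — is precisely this: divisions by $t$ and by $u$ do not literally commute, and a single weld map does not in general invert an iterated division, so the naive square $tA \leftarrow D \rightarrow uA$ does not close on the nose. One must instead set up a calculus in which a division by a set is replaced, after an intervening division, by a division by a whole additive \emph{family} of sets (the shattered pieces), and show that divisions by two additive families can be amalgamated by dividing by the ``join'' family and that the corresponding composites of weld maps commute with the original welds. Making this robust in the presence of the face structure ${\rm D}^B_\sigma$ (so that all maps produced are \emph{grounded} simplicial, and so that the identifications are ground isomorphisms, not just simplicial ones) is the delicate part; it is what forces the inductive statement to be strengthened to a statement about finite sequences of finite sets and the functions among the subdivided complexes they encode, rather than about single maps. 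Once that strengthened statement is in place, Theorem~\ref{T:frai} follows by instantiating the two given morphisms as such sequences and invoking the sequence-level amalgamation together with Lemmas~\ref{L:adar}, \ref{L:welgr}, and the groundedness of $sf$.
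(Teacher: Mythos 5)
Much of your outline matches the paper's actual strategy at the easy end: your commuting square $\pi^A_{p,s}\circ(sf)=f\circ\pi^B_\iota$ (with $\iota(t)\in t$ chosen so that $f(\iota(t))=p$) is exactly the identity on which the paper's base amalgamation rests (Lemma~\ref{L:ama}), your reduction of a composite $f'$ to successive one-factor amalgamations is Lemma~\ref{L:ama2}(i) as used in Lemma~\ref{C:cam}, and your two-weld amalgam is the specialization of Lemma~\ref{L:ama} to $g'$ a weld --- once you replace your $D=u(tA)$ by division by the additive preimage family $\pi_{p,t}^{-1}(u)$ (division of $tA$ by the single set $u$ is typically trivial, as you yourself note). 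So the ``weld maps against arbitrary weld-division maps'' half of the argument is correctly in place.

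The genuine gap is in the other half, and your inductive scheme as stated does not close there. After factoring $f'$ into iterated divisions of weld maps, the one-step task you must solve is to amalgamate a map of the form $h=\vec{s}\,\pi_{p,t}$ against an \emph{arbitrary} $g'\in{\mathcal D}$, and neither your square nor ``dividing the amalgamation constructed at the undivided level'' accomplishes this: if $W$ denotes the composite of welds collapsing $\vec{s}$ and you amalgamate the weld $\pi_{p,t}$ against $W\circ g'$ to get $\pi_{p,t}\circ a=(W\circ g')\circ b$, then dividing this identity by $\vec{s}$ (via the compatibility of division with composition, Lemma~\ref{L:X}) yields $h\circ a'=(\vec{s}\,W)\circ\big(W^{-1}(\vec{s})\,g'\big)\circ b'$, whose right-hand side begins with a division of $W$ and a \emph{division of} $g'$ --- it is not of the required form $g'\circ(\text{something})$, so no amalgamation of $h$ against $g'$ results. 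The paper does not proceed by your induction at all: it proves a coinitiality theorem (Theorem~\ref{T:clsu}), that every weld-division map becomes a composition of welds after post-composition with a further weld-division map, and then Theorem~\ref{T:frai} follows from Lemma~\ref{C:cam} together with Lemma~\ref{L:ama2}(ii). Coinitiality in turn rests on the Main Lemma --- that $S\,\pi_{p,T}$ is a pure weld-division map for suitably interacting additive families $S,T$ --- whose proof (the two reductions, the order of Lemma~\ref{L:orb}, Lemma~\ref{L:cruc}, and the explicit sequence calculus) is the technical core of the paper. Your closing appeal to ``a strengthened statement about finite sequences of finite sets'' defers precisely this content; without supplying that statement and its proof, the proposal is a plan whose central step is missing, and the step you do propose for it would fail as described.
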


The theorem above, actually a sharper version of it, is proved as Theorem~\ref{T:frai2}.

We will show that Theorem~\ref{T:frai} implies Theorem~\ref{C:trprc} asserting that ${\mathcal D}({\mathbf A})$ is a 
transitive projective Fra{\"i}ss{\'e} class. 
To make the statement of Theorem~\ref{C:trprc} precise, we need to move from ${\mathcal D}({\mathbf A})$ to a category, whose objects are sets equipped with a symmetric and reflexive binary relation; see Appendix~\ref{Su:prfr}. We fix 
a symbol $R$. Given a complex $A$ in ${\mathcal D}({\mathbf A})$, we interpret $R$ in ${\rm Vr}(A)$ in a natural way. 
Namely, for $v,w\in {\rm Vr}(A)$, we let 
\[
vR^Aw\;\Leftrightarrow\; \{ v,w\}\in A, 
\]
that is, $v$ and $w$ are related with respect to $R$ precisely when they are 
elements of a face of $A$. Then $R^A$ is a symmetric and reflexive binary relation on ${\rm Vr}(A)$. We also note that any 
surjective simplicial map $f\colon A\to B$ is a strong homomorphism from ${\rm Vr}(A)$ equipped with $R^A$ to ${\rm Vr}(B)$ 
equipped with $R^B$. In particular, 
all weld-division maps in ${\mathcal D}({\mathbf A})$ are strong homomorphisms. In the context of projective Fra{\"i}ss{\'e} classes, 
we can and do regard each complex $A$ in 
${\mathcal D}({\mathbf A})$ as the set ${\rm Vr}(A)$ equipped with the relation $R^A$. Morphisms among this class of objects  
are just weld-division maps in ${\mathcal D}({\mathbf A})$ regarded as functions among the sets ${\rm Vr}(A)$. 
We denote this new category by ${\mathcal D}_R({\mathbf A})$; see \cite{Se} for more on the close relationship between the two categories.

\begin{theorem}\label{C:trprc} 
${\mathcal D}_R({\mathbf A})$ is a transitive projective Fra{\"i}ss{\'e} class. 
\end{theorem}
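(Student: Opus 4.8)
\textbf{Proof plan for Theorem~\ref{C:trprc}.}

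The plan is to verify the defining properties of a transitive projective Fra{\"i}ss{\'e} class for ${\mathcal D}_R({\mathbf A})$ one by one, relying on Theorem~\ref{T:frai} for the only nontrivial ingredient. Recall (Appendix~\ref{Su:prfr}) that a category is a transitive projective Fra{\"i}ss{\'e} class when: (a) it is countable up to isomorphism; (b) every morphism is an epimorphism (in the structural sense, i.e. a surjective strong homomorphism), which gives transitivity of the associated inverse systems; (c) it has the joint projection property (any two objects admit a common object mapping onto both); and (d) it has the projective amalgamation property. I would organize the proof as: first reduce statements about ${\mathcal D}_R({\mathbf A})$ to the already-established statements about ${\mathcal D}({\mathbf A})$ via the translation described just before the theorem (complexes $\leftrightarrow$ their vertex sets with the relation $R$, weld-division maps $\leftrightarrow$ the same maps on vertices), then check (a)--(d) in turn.

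For (a), I would argue that $\langle{\mathbf A}\rangle$ is countable: each complex in it is obtained from the fixed finite (or at worst countable) complex $\mathbf A$ by a finite sequence of divisions, each division indexed by a finite set built from previously occurring vertices together with finitely many fresh ``check'' vertices; hence there are only countably many isomorphism types, and for each pair of objects only finitely many, or at worst countably many, weld-division maps, since a weld-division map is a composite of a finite chain of weld maps and divisions. For (b), I would note that the passage to ${\mathcal D}_R$ was arranged precisely so that every morphism is a surjective strong homomorphism: weld maps are surjective by construction (the excerpt records that surjective simplicial maps are strong homomorphisms for $R$), divisions of such maps remain surjective, and composites of surjections are surjections; strongness is the displayed observation that surjective simplicial maps carry $R^A$ onto $R^B$. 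This yields that every morphism in ${\mathcal D}_R({\mathbf A})$ is an epi in the relevant sense, which is what ``transitive'' requires.

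For (c), the joint projection property, given $B,C\in\langle{\mathbf A}\rangle$ I would first exhibit a common object mapping to both, which is easiest by observing that ${\mathbf A}$ itself, or rather the trivial situation, is handled by applying projective amalgamation to a suitable pair of maps into a common codomain; concretely, since $B$ and $C$ are each obtained from $\mathbf A$ by iterated division, the identity-type structure gives grounded simplicial maps, and one can weld $B$ down and $C$ down appropriately, or simply invoke Theorem~\ref{T:frai} with $A={\mathbf A}$ and the canonical weld-division maps $B\to{\mathbf A}$, $C\to{\mathbf A}$ obtained by composing the weld maps inverting the divisions that produced $B$ and $C$ from $\mathbf A$. (One should check that such ``total weld-down'' maps $B\to{\mathbf A}$ are indeed weld-division maps, which follows from closure of the class under composition and the fact that each single division is inverted by a weld map.) Finally (d) is exactly Theorem~\ref{T:frai}, transported across the equivalence to ${\mathcal D}_R({\mathbf A})$: if $f'\colon B\to A$ and $g'\colon C\to A$ are weld-division maps with common codomain, Theorem~\ref{T:frai} produces $f\colon D\to B$ and $g\colon D\to C$ with $f'\circ f=g'\circ g$, and these remain weld-division maps after restricting attention to vertex sets with $R$.

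The main obstacle I anticipate is not any single one of (a)--(d) in isolation but the bookkeeping needed to justify that the maps used for the joint projection property genuinely lie in the class---i.e. confirming that ``composing the weld maps that reverse the divisions from $\mathbf A$ to $B$'' is legitimate (each stellar subdivision step $B'\mapsto tB'$ is reversed by a weld map $\pi^{B'}_{p,t}$ for a suitable choice of $p\in t$, and Lemma~\ref{L:welgr} guarantees groundedness), together with care about the identification of ground-isomorphic complexes so that ``the'' object $D$ is well defined in $\langle{\mathbf A}\rangle$. Once the translation to ${\mathcal D}_R$ and these closure facts are in place, the theorem follows essentially formally from Theorem~\ref{T:frai} plus the structural remarks already recorded in the excerpt.
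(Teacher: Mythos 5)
Your treatment of the projective Fra{\"i}ss{\'e}-class part is essentially the paper's: amalgamation is transported directly from Theorem~\ref{T:frai} (in its sharper form Theorem~\ref{T:frai2}), and joint projection is obtained by welding every $B\in\langle{\mathbf A}\rangle$ all the way down to $\mathbf A$ (this is exactly Proposition~\ref{P:inob}, which shows $\mathbf A$ is an initial object) and then amalgamating over $\mathbf A$; your worry about whether the ``total weld-down'' maps lie in the class is resolved precisely as you suggest, since each single division is reversed by a weld map and the class is closed under composition.

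However, there is a genuine gap: you have misread what ``transitive'' means here, and as a result the main nontrivial part of the statement is not addressed at all. In Appendix~\ref{Su:prfr} a projective Fra{\"i}ss{\'e} class is called \emph{transitive} when the relation $R^{\mathbb C}$ on its projective Fra{\"i}ss{\'e} limit is a transitive relation (hence a compact equivalence relation, so that the canonical quotient ${\mathbb C}/R^{\mathbb C}$ exists). It does not mean that all morphisms are epimorphisms; your item (b) --- that weld-division maps are surjective strong homomorphisms --- is part of the basic setup of ${\mathcal D}_R({\mathbf A})$ and is already recorded in Section~\ref{S:stma}, but it does not yield transitivity of $R^{\mathbb A}$. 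Transitivity of $R^{\mathbb A}$ is not a formal consequence of amalgamation; in the paper it is Theorem~\ref{T:trans2}(i), proved by a genuine argument: given $x R^{\mathbb A} y$ and $y R^{\mathbb A} z$, one fixes a level $k$, collapses the edge $s=\{x_k,y_k\}$ by the weld map $\pi^{A_k}_{y_k,s}\colon sA_k\to A_k$, uses the factorization property ($\gamma$) of the limit to find $l\ge k$ and $f\colon A_l\to sA_k$ with $\pi\circ f\circ{\rm pr}_l={\rm pr}_k$, shows that necessarily $f(x_l)=x_k$, $f(z_l)=z_k$ and $f(y_l)=s$ (the new vertex), and then derives from $\{y_l,z_l\}\in A_l$ that $\{x_k,y_k,z_k\}\in A_k$. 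This step uses the availability of weld maps in the class in an essential way and is where the specific combinatorics of ${\mathcal D}({\mathbf A})$ enters; without it the theorem as stated (and hence the existence of the canonical quotient used in Theorem~\ref{T:prli}) is not established.
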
 
 
The theorem above is proved as Theorem~\ref{T:trprc2}.

In light of Theorem~\ref{C:trprc}, the class ${\mathcal D}({\mathbf A})$ has the canonical quotient. Theorem~\ref{T:prli} determines this quotient. 

\begin{theorem}\label{T:prli}
The canonical quotient of the transitive projective Fra{\"i}ss{\'e} class ${\mathcal D}_R({\mathbf A})$ is homeomorphic to 
the geometric realization of $\mathbf A$.
\end{theorem}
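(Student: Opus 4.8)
The plan is to identify the canonical quotient $\mathbb{A}/R^{\mathbb{A}}$ by constructing an explicit continuous surjection onto the geometric realization $|\mathbf{A}|$ and showing it factors through $R^{\mathbb{A}}$ to a homeomorphism. The essential point is that the face structure $\mathrm{D}^B_\sigma$, $\sigma\in\mathbf{A}$, attached to each complex in $\langle\mathbf{A}\rangle$ is exactly the combinatorial shadow of barycentric coordinates relative to $\mathbf{A}$: as noted in Section~\ref{Su:acla}, each $B$ is geometrically the same as $\mathbf{A}$ via a canonical homeomorphism under which a face $s$ lies in $\mathrm{D}^B_\sigma$ precisely when every vertex of $s$ maps into the closed geometric simplex spanned by $\sigma$. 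So I would first set up, for each $B\in\langle\mathbf{A}\rangle$, the canonical barycentric map $\beta_B\colon |B|\to |\mathbf{A}|$ (a homeomorphism), check that for a weld-division map $f\colon B\to C$ one has $\beta_C\circ|f| = \beta_B$ (it suffices to check this on weld maps and divisions of simplicial maps, then propagate through composition), and record that $\beta_B$ sends the vertex $v$ to a point whose support is $\bigcap\{\sigma : v\in\mathrm{D}^B_\sigma\}$.

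Next I would pass to the projective Fra\"iss\'e limit. Writing $\mathbb{A}=\varprojlim B$ over the limiting sequence $B_0\leftarrow B_1\leftarrow\cdots$ of weld-division maps, the maps $\beta_{B_n}$ together with the bonding maps yield a well-defined continuous map $\beta\colon \mathbb{A}\to|\mathbf{A}|$ (using $\beta_{B_{n+1}}\circ|f_{n+1}|=\beta_{B_n}$, one checks the induced map on the inverse limit is continuous and, by compactness plus surjectivity of each $\beta_{B_n}$, surjective). The real work is to show $\beta$ collapses exactly the relation $R^{\mathbb{A}}$. One inclusion is that $x\,R^{\mathbb{A}}\,y$ implies $\beta(x)=\beta(y)$: if $x,y$ project in each $B_n$ to an edge (or equal vertices) of $B_n$, then the supports shrink along the sequence and, because divisions subdivide carriers into geometrically smaller pieces all mapping into the same face of $\mathbf{A}$, the common value of $\beta_{B_n}$ on that edge converges, forcing $\beta(x)=\beta(y)$. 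The reverse inclusion — $\beta(x)=\beta(y)$ implies $x\,R^{\mathbb{A}}\,y$ — is the crux: one must show that any point $p\in|\mathbf{A}|$ is the $\beta$-image of an $R^{\mathbb{A}}$-class, and that distinct $R^{\mathbb{A}}$-classes have distinct $\beta$-images. Here I would use the projective amalgamation property (Theorem~\ref{T:frai}, equivalently Theorem~\ref{C:trprc}) together with the density/genericity built into the Fra\"iss\'e limit: given $p$ and a neighborhood, one can find, inside the class, a weld-division refinement whose ``stars'' of the relevant faces have arbitrarily small diameter, so that the fibers $\beta_{B_n}^{-1}(p)$ form a decreasing sequence of $R^{B_n}$-connected sets (single faces or stars) whose $\varprojlim$ is a single $R^{\mathbb{A}}$-class.

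The main obstacle, then, is the fineness/subdivision argument: one needs that inside the category $\mathcal{D}(\mathbf{A})$ there are weld-division maps realizing arbitrarily fine subdivisions in which the carrier of each point $p\in|\mathbf{A}|$ has small combinatorial star. Stellar subdivision alone gives barycentric-type refinements, which do shrink diameters after iteration (this is the classical mesh-goes-to-zero fact for iterated subdivision), so the objects of $\langle\mathbf{A}\rangle$ are cofinally fine; the subtlety is that the \emph{limiting sequence} of the Fra\"iss\'e limit must be chosen to pass through such fine complexes, which follows from the definition of projective Fra\"iss\'e limit (every object embeds, i.e.\ maps onto, cofinally) once we know $\langle\mathbf{A}\rangle$ contains arbitrarily fine subdivisions. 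I would then assemble: $\beta$ induces a continuous bijection $\mathbb{A}/R^{\mathbb{A}}\to|\mathbf{A}|$, and since $\mathbb{A}/R^{\mathbb{A}}$ is compact Hausdorff (the canonical quotient of a projective Fra\"iss\'e limit, with $R^{\mathbb{A}}$ closed) and $|\mathbf{A}|$ is Hausdorff, it is a homeomorphism. Finally I would remark that the identification is natural enough that $\beta$ restricted to vertices recovers the standard geometric realization, closing the loop with Appendix~\ref{S:Div}.
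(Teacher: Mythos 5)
There is a genuine gap, and it sits at the foundation of your construction: the exact compatibility $\beta_C\circ|f|=\beta_B$ is false, already for a single weld map. Take $\mathbf A$ to be one edge $\{a,b\}$ with its vertices, $t=\{a,b\}$, $p=a$, and $\pi^{\mathbf A}_{p,t}\colon t\mathbf A\to\mathbf A$. The canonical homeomorphism $\beta_{t\mathbf A}$ sends the new vertex $\chk t$ to an interior point of the geometric edge, while $\beta_{\mathbf A}\circ|\pi_{p,t}|$ sends it to the vertex $a$; so the square does not commute. Since your map $\beta\colon\mathbb A\to\|\mathbf A\|$ is defined on threads $(v_n)$ precisely by exploiting this commutation (it is what makes $\beta_{B_n}(v_n)$ independent of $n$), the map is not actually defined. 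The paper's proof (Theorem~\ref{T:prli2}) replaces exact commutation by an approximate one: realizing assignments $r_n$ are built recursively along the limiting sequence, placing each new vertex at the barycenter \eqref{E:aseq}, and the weld maps satisfy the carrier-nesting properties \eqref{E:a1}--\eqref{E:a4}, which bound $\operatorname{dist}(g_n,g_{n+1})$ by the mesh of $A_n$; the limit map $g=\lim_n r_n\circ{\rm pr}_n$ exists only after one also forces the mesh to go to $0$. Two points you gloss over are essential for this repair: the nesting properties hold for weld maps but, as the paper stresses, fail for divisions of weld maps and for grounded isomorphisms, so one must choose a generic sequence whose bonding maps are single welds — this uses the sharper, one-sided form of amalgamation (the $\mathcal N(\mathbf A)$ leg in Theorem~\ref{T:frai2}) via Lemma~\ref{L:specge}; and "$\langle\mathbf A\rangle$ contains arbitrarily fine subdivisions" is not enough, because fineness is not intrinsic to the object but to the realizing assignments built along the sequence, so the generic sequence is arranged to pass cofinally through full (barycentric-type) subdivisions $\pi_\iota$, giving the $\tfrac{d-1}{d}$ estimate \eqref{E:epze}.

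Your argument for the crux direction ($\beta(x)=\beta(y)\Rightarrow xR^{\mathbb A}y$) is also too thin: "the fibers are stars whose inverse limit is a single $R^{\mathbb A}$-class" assumes what has to be proved, since points of a closed star need not be pairwise $R$-related and the limit of such sets is not obviously one class. The paper argues the contrapositive: if $\neg(xR^{\mathbb A}y)$, then transitivity of $R^{\mathbb A}$ (Theorem~\ref{T:trans2}) together with finiteness of the $A_n$ produces a level $n$ at which every face containing ${\rm pr}_n(x)$ is disjoint from every face containing ${\rm pr}_n(y)$; since $g(x)$ and $g(y)$ lie in the convex hulls of such carriers and disjoint faces have disjoint hulls, $g(x)\neq g(y)$. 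If you rebuild your proof around the recursive realizing assignments, the weld-only generic sequence, the mesh estimate, and this separation argument, you arrive essentially at the paper's proof; as written, the proposal does not go through.
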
 

A more precise version of the theorem above is proved as Theorem~\ref{T:prli2}. 

Reversing the perspective, we may view Theorem~\ref{T:prli} as providing a combinatorial description of the geometric realization of the complex $\mathbf A$. Indeed the abstract complex $\mathbf A$ is a combinatorial object, the class ${\mathcal D}({\mathbf A})$ of weld-division maps is combinatorially generated from $\mathbf A$, the projective Fra{\"i}ss{\'e} limit of this class is a totally disconnected compact space equipped with a compact equivalence relation, so also a combinatorial object. Theorem~\ref{T:prli} asserts that the geometric realization of $\mathbf A$ is obtained as a canonical quotient of this last object.

\newpage

\part{Set theoretic setup for complexes and their divisions}\label{P:stfra}

Our proofs will consist of performing calculations on finite sequences of finite sets and on appropriately defined functions among them. The set theoretic setup described in this section is necessary to carry out and apply these calculations. So the point of this part is to motivate definitions in Part~\ref{P:frpr} and to make possible applications in Part~\ref{P:prfd} of the results of Part~\ref{P:repr} to obtain Theorems~\ref{T:frai}--\ref{T:prli}. 

In the current part, we present an arbitrary complex so that its faces have a particular, simple set theoretic structure and observe consequences of this presentation for division and for grounded simplicial maps. Concretely, we need to do two things:

1. restrict the class of sets allowed to be faces of complexes; this class of sets is ${\rm Fin}^+$, 
which is defined in Section~\ref{Su:finp};

2. make a uniform specification of what the new vertex $\chk{s}$ in the division given by \eqref{E:divin} is; this is done in formula \eqref{E:divinf}. 

The restriction in 1 will be purely set theoretic and it will not limit the class of complexes or the class of divisions 
if those are taken up to isomorphisms. On the other hand, we will have to ensure that divisions of complexes that are admitted by 1 will not lead outside of the class of those complexes. 
The specification in 2 will help with this goal.

\section{The family of sets ${\rm Fin}^+$}\label{Su:finp}

We assume that there exists a set $\rm Ur$ consisting of all urelements of arbitrary cardinality; see Appendix~\ref{A:sett} for a discussion.
Define 
\[
{\rm Fin}^+_n = 
\begin{cases} 
{\rm Ur}, &\text{ if $n=0$};\\
\{ x\mid x \hbox{ is finite and } \emptyset\not= x \subseteq {\rm Fin}^+_{n-1}\}, &\text{ if $n>0$}.
\end{cases} 
\]
Finally, let 
\[
{\rm Fin}^+ = \bigcup_{n>0}^\infty {\rm Fin}^+_n. 
\]
Note that the union above starts with $n=1$. So, ${\rm Fin}^+$ is the family of all sets that are constructed from elements of ${\rm Ur}$ using iterations of the operation of taking finite non-empty sets. In particular, ${\rm Fin}^+$ is disjoint with $\rm Ur$. 
For $s\in {\rm Fin}^+$, we define the support of $s$ by letting  
\[
{\rm sp}(s)= {\rm tc}(s)\cap {\rm Ur},
\]
where the set theoretic transitive closure operation $\rm tc$ is defined in Appendix~\ref{A:sett}.
So, ${\rm sp}(s)$ collects all elements of $\rm Ur$ involved in the construction of $s$. The family ${\rm Fin}^+$ and the operation $\rm sp$ are closely related to the smallest admissible set and the support function in \cite[Sections I.6 and II.2]{Bar}. The following properties of $\rm sp$ are implied by \eqref{E:trpr}. For $s,t\in {\rm Fin}^+$, we have 
\begin{equation}\label{E:sppr}
\begin{split}
{\rm sp}(s) &\subseteq {\rm sp}(t),\hbox{ if $s\in t$ or $s\subseteq t$},\\
{\rm sp}(s\cup t) &= {\rm sp}(s)\cup {\rm sp}(t),\\
{\rm sp}\big(\{ s\}\big) &= {\rm sp}(s).
\end{split} 
\end{equation}

\section{Grounded and divided complexes and grounded simplicial maps}\label{S:divset}

\subsection{Complexes in ${\rm Fin}^+$ and their divisions}\label{Su:divinf}

It will be sufficient and convenient to limit our attention to complexes that are subsets of ${\rm Fin}^+$ and that are obtained by dividing complexes that are placed inside of ${\rm Fin}^+$ in a particularly simple way. A complex $A$ is called {\bf grounded} if each face of $A$ is a subset of $\rm Ur$, that is, ${\rm Vr}(A)\subseteq {\rm Ur}$. 

\begin{lemma}\label{L:latt} 
Each complex is isomorphic to a grounded complex. 
\end{lemma}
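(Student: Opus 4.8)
The statement is that every (abstract) simplicial complex is isomorphic to a grounded complex, where grounded means every vertex is an urelement. The plan is entirely elementary: replace the vertex set by a set of urelements and transport the complex along the resulting bijection.

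First I would recall that an abstract simplicial complex $A$ is determined by its vertex set ${\rm Vr}(A)$ together with the family of faces $A\subseteq \mathcal{P}({\rm Vr}(A))$ closed under nonempty subsets. Since ${\rm Ur}$ is assumed to consist of urelements of \emph{arbitrary cardinality}, there is in particular a subset $U\subseteq {\rm Ur}$ with $|U| = |{\rm Vr}(A)|$; fix a bijection $\varphi\colon {\rm Vr}(A)\to U$. Then define
\[
A' = \{\, \varphi[s] \mid s\in A \,\},
\]
where $\varphi[s] = \{\varphi(v)\mid v\in s\}$ is the image of the face $s$.

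The key steps are then just verifications. One checks that $A'$ is a complex: it is a family of nonempty finite subsets of $U\subseteq {\rm Ur}$, and it is closed under nonempty subsets because if $\emptyset\neq t\subseteq \varphi[s]\in A'$ then $t = \varphi[\varphi^{-1}[t]]$ with $\emptyset\neq \varphi^{-1}[t]\subseteq s$, hence $\varphi^{-1}[t]\in A$ and $t\in A'$. One checks that $A'$ is grounded: by construction ${\rm Vr}(A')\subseteq U\subseteq {\rm Ur}$. Finally, the map $s\mapsto \varphi[s]$ is a simplicial bijection $A\to A'$ whose inverse $s\mapsto \varphi^{-1}[s]$ is also simplicial, so it is an isomorphism of simplicial complexes. (If one wants to be careful about which notion of simplicial map is intended here, note $\varphi$ itself is a bijection on vertices inducing this face map, so it is an isomorphism in the usual sense.)

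There is essentially no obstacle: the only point requiring the hypothesis on ${\rm Ur}$ is the existence of enough urelements to match the cardinality of ${\rm Vr}(A)$, which is guaranteed by the stipulation in Section~\ref{Su:finp} (and discussed in Appendix~\ref{A:sett}). The mild subtlety worth a sentence in the writeup is that this lemma is the bridge letting us assume, throughout Part~\ref{P:stfra}, that all complexes sit inside ${\rm Fin}^+$ with vertices in ${\rm Ur}$ — so the proof should also remark that face structures ${\rm D}^B_\sigma$ and groundedness of maps are preserved under such a relabeling, since they are defined purely in terms of the abstract face data; but strictly for the statement as given, the bijection argument above suffices.
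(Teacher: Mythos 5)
Your proposal is correct and is essentially the paper's own argument: the paper's proof simply takes an injection from ${\rm Vr}(A)$ into ${\rm Ur}$ and transfers $A$ along it, which is exactly your bijection-onto-$U$ construction with the routine verifications spelled out.
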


\begin{proof}
Given a complex $A$, find an injective function from ${\rm Vr}(A)$ to $\rm Ur$ and use it to transfer $A$ to its isomorphic copy $A'$ with ${\rm Vr}(A')\subseteq {\rm Ur}$. 
\end{proof}

We restate the definition of division in a way set theoretically appropriate for our proofs. We specify what the new vertex in the division formula \eqref{E:divin} is---when dividing by a face $s$, the new vertex $\chk{s}$ is equal the set $s$ itself.  

Let $A$ be a family of sets in ${\rm Fin}^+$ and let $s$ is a finite non-empty set. Define $sA$ to consist of the following sets 
\begin{equation}\label{E:divinf}
\begin{cases}
y\cup \{ s\}, &\hbox{ if }s\not\subseteq y \hbox{ and } s\cup y\in A;\\
y, &\hbox{ if } s\not\subseteq y\hbox{ and } y\in A. 
\end{cases}
\end{equation} 
The formula above differs from \eqref{E:divin} by specifying that the new vertex $\chk{s}$ is equal to the set $s$.

For a sequence $s_0\cdots s_n$ of finite non-empty sets, let 
\begin{equation}\label{E:itsi}
s_0 s_1 \cdots s_l A = s_0( s_1  \cdots (s_l A)\cdots).
\end{equation} 
As before, $\emptyset A =A$.

\begin{lemma}\label{L:ok}
Let $A$ be a grounded complex and let $s_0 s_1\cdots s_n$ be a sequence of finite non-empty sets. Then $s_0 s_1 \cdots s_l A$ is a complex with the property 
\begin{equation}\label{E:nottr} 
s\not\in {\rm tc}(t), \hbox{ for all }s,t\in s_0\cdots s_n A. 
\end{equation} 
\end{lemma}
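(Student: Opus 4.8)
The plan is to proceed by induction on the length $n+1$ of the sequence $s_0\cdots s_n$, using the reformulated division operation \eqref{E:divinf}. For the base case, when the sequence is empty, $s_0\cdots s_n A = A$ is a grounded complex, so every face is a subset of $\mathrm{Ur}$; since $\mathrm{Fin}^+$ is disjoint from $\mathrm{Ur}$ and the elements of faces are urelements, no face can be a member of the transitive closure of another face — in fact $\mathrm{tc}(t)$ for a face $t\subseteq\mathrm{Ur}$ consists only of urelements and subsets of urelements, none of which is a face (a face has the form $t\subseteq\mathrm{Ur}$, and $\mathrm{tc}(t)=t\cup\bigcup_{v\in t}\mathrm{tc}(v)$, whose non-urelement members are... actually $t$ itself is not in $\mathrm{tc}(t)$). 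The cleaner formulation: I will carry along a stronger inductive hypothesis, namely a bound on the "set-theoretic rank" or on the nesting structure of faces, that makes the membership-freeness \eqref{E:nottr} fall out. Concretely I would track, for a complex $B$ in the class, the set $V(B)=\mathrm{Vr}(B)$ of vertices, and prove by induction that (i) $sB$ is again a complex, and (ii) for every vertex $v$ of $sB$ and every vertex $w$ of $sB$, $v\notin\mathrm{tc}(w)$, from which \eqref{E:nottr} for faces follows because $s\not\in\mathrm{tc}(t)$ would force $s$ to contain, or be reachable through $\in$ from, some vertex, contradicting vertex-level membership-freeness together with well-foundedness of $\in$.

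The inductive step is the heart of the matter. Assume $B = s_1\cdots s_n A$ satisfies the conclusion, and consider $C = s_0 B$. The vertices of $C$ are the vertices of $B$ (the "old" vertices) together with possibly the new vertex $s_0$ itself (present exactly when $s_0\in B$), by \eqref{E:divinf}. Faces of $C$ are either old faces $y\in B$ with $s_0\not\subseteq y$, or new faces $y\cup\{s_0\}$ with $s_0\not\subseteq y$ and $s_0\cup y\in B$. I would check the membership-freeness for each pair of faces $u,v$ of $C$ by cases. If both $u,v$ are old faces, this is immediate from the inductive hypothesis applied to $B$. If $u$ is old and $v = y\cup\{s_0\}$ is new, then $u\in\mathrm{tc}(v)$ would mean either $u\in y$ (impossible: faces are not elements of faces, by the inductive hypothesis since $u$ and the vertices of $y$ are all faces/vertices of $B$ — more precisely $u\in y$ would make $u$ a vertex of $B$, and then $u\notin\mathrm{tc}(u)$... here I need $u\in\mathrm{tc}(y)$ rather, and $\mathrm{tc}(y)$ only involves vertices of $B$ and their $\mathrm{tc}$'s, so $u$, being a face with $\ge 1$ vertex, cannot lie there without violating the hypothesis), or $u\in\mathrm{tc}(s_0)$, and here is where the key set-theoretic input enters: $s_0$ is a face of $B$, so by the inductive hypothesis no face of $B$ lies in $\mathrm{tc}(s_0)$; I must also rule out $u$, a face of $C$, lying in $\mathrm{tc}(s_0)$ — but $\mathrm{tc}(s_0)$ only contains vertices of $B$ and their members, so again $u$ would have to be such an object, contradiction. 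The symmetric case $v$ old, $u$ new, and the case both new, are handled the same way, with the additional subpoint that $\{s_0\}\in\mathrm{tc}(v)$ for $v=y'\cup\{s_0\}$ but $\{s_0\}$ is not a face of $C$ (it would be a face only if $\{s_0\}\subseteq$ some face, but $s_0$ as a vertex of $C$ sits in faces of the form $y\cup\{s_0\}$, so $\{s_0\}$ IS a face of $C$ — I need to double-check this and argue instead that $\{s_0\}\ne u$ for any face $u$ that could be the problematic one, using a rank/support comparison between $s_0$ and the urelement-level vertices of $B$).

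The main obstacle I anticipate is precisely the bookkeeping around the new vertex $s_0$: it is a genuine set (a face of $B$, hence an element of $\mathrm{Fin}^+$ of positive rank over the urelements), whereas under iteration the next division $s_{-1}$ might itself be a set involving $s_0$, so across several steps one builds up deeply nested sets, and one must be sure that the "new vertex equals the dividing face" convention \eqref{E:divinf} never creates a cycle or a $\in$-descent from one face into another. I expect the right way to control this is to prove, alongside \eqref{E:nottr}, the auxiliary fact that for any face $t$ of $s_0\cdots s_n A$ and any vertex $v$ of that complex, $\mathrm{tc}(v)\cap(\text{faces}) = \emptyset$, and to lean on well-foundedness of $\in$ to convert "$s\in\mathrm{tc}(t)$" into "$s\in$ some face or $s\in\mathrm{tc}(\text{some vertex of }t)$" and dispatch both. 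If the induction as structured gets tangled, the fallback is to assign to each complex in $\langle\mathbf A\rangle$ the ordinal $\sup\{\mathrm{rank}(v): v\in\mathrm{Vr}(B)\}$ and show division strictly increases the rank of newly created vertices relative to old ones while old vertices keep their rank, making \eqref{E:nottr} a consequence of rank strict monotonicity along $\in$-chains. This is the step I would budget the most care for; the rest is routine verification that \eqref{E:divinf} produces a complex, which is essentially the computation already sketched after \eqref{E:divin}.
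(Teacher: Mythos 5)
There is a genuine gap, and it sits exactly where you placed your main bet. Your plan is to strengthen the inductive hypothesis to vertex-level membership-freeness: ``for every vertex $v$ of $sB$ and every vertex $w$ of $sB$, $v\notin{\rm tc}(w)$.'' This invariant is false after a single division. Take ${\mathbf A}$ to be all non-empty subsets of $\{a,b,c\}\subseteq{\rm Ur}$ and divide by $s=\{a,b\}$. By \eqref{E:divinf} the new vertex is the set $s$ itself, while $a$ remains a vertex of $s{\mathbf A}$ (since $\{a\}$ is still a face); but $a\in s\subseteq{\rm tc}(s)$, so two vertices of $s{\mathbf A}$ violate your condition (ii). The failure only worsens under iteration (divide next by $\{a,s\}$: its transitive closure contains both of the vertices $a$ and $s$). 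Since your case analysis in the inductive step repeatedly discharges the dangerous subcases ($u\in{\rm tc}(y)$, $u\in{\rm tc}(s_0)$) by appealing to ``vertex-level membership-freeness,'' those steps do not go through. Your fallback is also not a proof: rank monotonicity along $\in$-chains says only that $x\in{\rm tc}(y)$ forces ${\rm rank}(x)<{\rm rank}(y)$, and faces of a divided complex genuinely have different ranks (e.g.\ $\{a\}$ and $\{a,s\}$ above), so rank considerations alone cannot exclude one face from lying in the transitive closure of another; the statement ``\eqref{E:nottr} is a consequence of rank strict monotonicity'' does not follow.

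The repair is to drop the strengthening altogether: the face-level property \eqref{E:nottr} for $B$ is itself the right inductive invariant, and it closes your own case analysis. In the mixed case $r_1\in B$, $r_2=y\cup\{s\}$ with $y\cup s\in B$, one has ${\rm tc}(r_2)={\rm tc}(y)\cup\{s\}\cup{\rm tc}(s)$; the possibilities $r_1\in{\rm tc}(y)\subseteq{\rm tc}(y\cup s)$ and $r_1\in{\rm tc}(s)$ are killed directly by \eqref{E:nottr} applied in $B$ to the faces $r_1$, $y\cup s$ and $r_1$, $s$ — no statement about vertices is needed — and the remaining possibility $r_1=s$ must be addressed (you omit it): for an old face it is immediate since old faces satisfy $s\not\subseteq r_1$, and in general $s$ cannot be a face of $sB$ at all, as $s=y'\cup\{s\}$ would give $s\in s$, again contradicting \eqref{E:nottr} in $B$. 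In the new--new case the observation is that $s\in r_1$, so $r_1\in{\rm tc}(r_2)$ forces $s\in{\rm tc}(y_2)$ or $s\in{\rm tc}(s)$, both excluded by \eqref{E:nottr} in $B$. Your digression about whether $\{s_0\}$ is a face of $C$ confuses $s_0$ with its singleton and is not needed. With these corrections your induction (one division at a time, base case grounded complexes, plus the easy reduction of ``is a complex'' to closure under non-empty subsets together with \eqref{E:nottr}) becomes the paper's proof.
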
 

\begin{proof}
It is immediate to see that if $B$ is a family of sets in ${\rm Fin}^+$ that is closed under taking non-empty subsets and $s$ is a finite non-empty set, then $sB$ is closed under taking non-empty subsets. So, to see that $s_0 s_1 \cdots s_l A$ is a complex it suffices to check that 
\begin{equation}\notag
s\not\in t, \;\hbox{ for all }s,t\in s_0\cdots s_nA. 
\end{equation} 
Note that condition \eqref{E:nottr} implies the condition above. It follows that it suffices to check that $s_0 s_1 \cdots s_l A$ fulfills \eqref{E:nottr}. We check that grounded complexes fulfill \eqref{E:nottr} and that if $B$ fulfills \eqref{E:nottr}, then so does $sB$. So, the advantage of assuming \eqref{E:nottr} is that this condition is preserved under divisions as specified above.

If $A$ is grounded and $s,t\in A$, then $s\in {\rm Fin}^+$ and $t\subseteq {\rm Ur}$, so $s\not\in {\rm tc}(t)$ as all elements of ${\rm tc}(t) =t$ are elements of $\rm Ur$ and $s$ is not. 

Assume now that $B$ is a complex consisting of sets in ${\rm Fin}^+$ that fulfills condition \eqref{E:nottr}. Let $s$ be a finite non-empty set. We show that $sB$ fulfills \eqref{E:nottr}.  
If $s$ is not a face of $B$, then $sB=B$ and there is nothing to check. Assume $s$ is a face of $B$. Let $r_1, r_2$ be faces of $sB$. Assume towards a contradiction that 
\[
r_1\in {\rm tc}(r_2). 
\]
We have the following cases based on \eqref{E:divinf}. 

$r_1\in B$ and $r_2\in B$. This case is impossible by $B$ fulfilling \eqref{E:nottr}. 

$r_1\in B$ and $r_2= y \cup \{ s\}$, for some $y$ with $y\cup s\in B$. Then $r_1\in {\rm tc}(y)$ or $r_1\in {\rm tc}(s)$ or $r_1=s$. The first two possibilities are excluded by $B$ fulfilling \eqref{E:nottr}. For the third possibility note that it implies $s\in sB$ since $r_1\in sB$. But then 
$s= y'\cup\{ s\}$ for some $y'$ with $y'\cup s\in B$, which implies $s\in s$ contradicting $B$ fulfilling \eqref{E:nottr}. 

$r_1= y_1 \cup \{ s\}$ and $r_2= y_2\cup \{ s\}$, for some $y_1, y_2$ with $y_1\cup s,\, y_2\cup s\in B$. Then $s\in {\rm tc}(y_2)$ or $s\in {\rm tc}(s)$, both of which are excluded by the assumption that $B$ fulfills \eqref{E:nottr}. 

$r_1= y\cup \{ s\}$, for some $y$ with $y\cup s\in B$ and $r_2\in B$. Then $s\in {\rm tc}(r_2)$, which contradicts $B$ fulfilling \eqref{E:nottr}. 
\end{proof}

We define a complex to be {\bf divided} if it is of the form $s_0 s_1 \cdots s_l A$ for a grounded complex $A$.

Lemmas~\ref{L:latt} and \ref{L:ok} show that we can restrict our attention to divided complexes. Further, we can restrict the division operation to sets in ${\rm Fin}^+$ since the faces of a divided complex are sets in ${\rm Fin}^+$, so dividing a divided complex by a set not in ${\rm Fin}^+$ leaves the complex unchanged. As explained in the next section, the operation of support $\rm sp$ recovers the grounded complex $A$ from the divided complex $s_0s_1\cdots s_n A$, in particular, the grounded complex $A$ is determined by $s_0s_1\cdots s_n A$.

\subsection{Divided complexes and grounded simplicial maps among them}

We point out some important advantages/simplifications resulting from considering complexes as in Section~\ref{Su:divinf}.
We show that when considering complexes $A$ obtained by iterative division from grounded complexes, the face structure ${\rm D}^A_\sigma$ does not need to be specified explicitly as it is encoded by $A$ alone  and groundedness of simplicial maps among such complexes is defined from the complexes alone without the face structure.  
The next lemma shows that the operation $\rm sp$ of support recovers from $A$ the grounded complex from which $A$ was constructed as well as the face structure ${\rm D}^A_\sigma$. 

Fix a grounded complex ${\mathbf A}$. 
Recall from Section~\ref{Su:acla}, the family $\langle {\mathbf A}\rangle$ of complexes obtained by iteratively dividing $\mathbf A$ and 
the relations ${\rm D}^A_\sigma$ for a complex $A$ in $\langle {\mathbf A}\rangle$ and a face $\sigma$ of $\mathbf A$. 

\begin{lemma}\label{L:dsig} 
Let $A$ be a complex in $\langle {\mathbf A}\rangle$.
\begin{enumerate}
\item[(i)] ${\mathbf A}= \{ {\rm sp}(s)\mid s\in A\}$. 

\item[(ii)] For $\sigma\in {\mathbf A}$ and for $s\in A$, we have 
\[
s\in {\rm D}^A_\sigma\; \Leftrightarrow\; {\rm sp}(s)\subseteq \sigma. 
\]
\end{enumerate} 
\end{lemma}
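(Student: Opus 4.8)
\textbf{Proof plan for Lemma~\ref{L:dsig}.}

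The plan is to prove both parts simultaneously by induction on the length $n$ of a sequence $s_0 s_1\cdots s_{n-1}$ of finite non-empty sets witnessing that $A = s_0 s_1\cdots s_{n-1}\mathbf{A}$ (recall that elements of $\langle\mathbf{A}\rangle$ are, up to ground isomorphism, exactly divided complexes obtained from the grounded complex $\mathbf A$). Actually, since $\mathbf A$ is grounded, its faces are subsets of $\mathrm{Ur}$, so for $\sigma\in\mathbf A$ one has $\mathrm{sp}(\sigma)=\sigma$; this base observation, together with the definition of $\mathrm{D}^{\mathbf A}_\sigma$ as $\{s : s\subseteq\sigma\}$ and the fact that $\mathrm{sp}(s)=s$ for $s\subseteq\mathrm{Ur}$, gives both (i) and (ii) when $n=0$.

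For the inductive step, write $A = tB$ where $B = s_1\cdots s_{n-1}\mathbf A \in\langle\mathbf A\rangle$, $t$ a finite non-empty set, and assume (i) and (ii) hold for $B$. I would split a face $s$ of $A$ into the two cases of \eqref{E:divinf}. If $s$ is an old face, i.e.\ $s\in B$ with $t\not\subseteq s$, then $\mathrm{sp}(s)$ is unchanged, and $s\in\mathrm D^A_\sigma\Leftrightarrow s\in\mathrm D^B_\sigma$ by definition of the face structure, so the inductive hypothesis closes this case. If $s = y\cup\{t\}$ is a new face, with $t\not\subseteq y$ and $t\cup y\in B$, then by the third line of \eqref{E:sppr} and the second line of \eqref{E:sppr} we get $\mathrm{sp}(s) = \mathrm{sp}(y\cup\{t\}) = \mathrm{sp}(y)\cup\mathrm{sp}(t) = \mathrm{sp}(y\cup t)$ (using that $\mathrm{sp}(t\cup y)=\mathrm{sp}(t)\cup\mathrm{sp}(y)$), i.e.\ $\mathrm{sp}(s) = \mathrm{sp}(t\cup y)$, and $t\cup y$ is a face of $B$. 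On the other hand the definition of $\mathrm D^A_\sigma$ gives $s\in\mathrm D^A_\sigma\Leftrightarrow t\cup y\in\mathrm D^B_\sigma$. Combining these with the inductive hypothesis applied to the $B$-face $t\cup y$ yields $s\in\mathrm D^A_\sigma\Leftrightarrow\mathrm{sp}(t\cup y)\subseteq\sigma\Leftrightarrow\mathrm{sp}(s)\subseteq\sigma$, which is (ii) for $A$; and for (i), ranging over all faces $s$ of $A$, the set $\{\mathrm{sp}(s):s\in A\}$ equals $\{\mathrm{sp}(s):s\in B\}$ because every old face already appears and every new face $y\cup\{t\}$ has the same support as the old face $t\cup y$, so by induction this is $\mathbf A$.

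One genuinely needs to check that the answer is independent of the chosen division sequence, i.e.\ that the statement is well-defined on $\langle\mathbf A\rangle$ (whose elements are equivalence classes under ground isomorphism): if $g\colon A\to A'$ is a ground isomorphism then $s\in\mathrm D^A_\sigma\Leftrightarrow g(s)\in\mathrm D^{A'}_\sigma$ by definition, so it suffices that $\mathrm{sp}(g(s)) = \mathrm{sp}(s)$ for faces $s$ — but in the set-theoretic setup of Part~\ref{P:stfra} a ground isomorphism between divided complexes is, by Appendix~\ref{A:isome}, induced by a bijection of the underlying urelements that restricts to the identity on each $\sigma\in\mathbf A$, hence preserves supports; I would invoke this rather than reprove it. The main obstacle, such as it is, is purely bookkeeping: being careful that the two clauses of \eqref{E:divinf} are genuinely exhaustive for faces of $A$ (which uses that $tB$ is a complex, Lemma~\ref{L:ok}, and the ``old face / new face'' dichotomy is exclusive because of \eqref{E:nottr} — a new face $y\cup\{t\}$ cannot also be an old face since $t\in\mathrm{tc}(y\cup\{t\})$), and that the singleton and union rules for $\mathrm{sp}$ in \eqref{E:sppr} are applied in the right order. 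No step is conceptually hard; the inductive skeleton plus the three identities in \eqref{E:sppr} carry the whole argument.
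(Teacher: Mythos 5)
Your proposal is correct and follows essentially the same route as the paper: induction on the division sequence, with the old-face case handled by the definition of ${\rm D}^A_\sigma$ and the inductive hypothesis, and the new-face case reduced to the identity ${\rm sp}(y\cup\{t\})={\rm sp}(y\cup t)$ from \eqref{E:sppr}. Your aside about invariance under ground isomorphism is not needed in the set-theoretic setup of Part~\ref{P:stfra} (one works with concrete divided complexes, and in any case ground isomorphisms preserve ${\rm D}^A_\sigma$ by definition), and its justification via ``a bijection of urelements fixing each $\sigma$'' is not how Appendix~\ref{A:isome} characterizes grounded isomorphisms, but this does not affect the core argument.
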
 

\begin{proof} (i) The equality is obvious for $A={\mathbf A}$. If $A=tA'$, for some $A'\in \langle {\mathbf A}\rangle$, with the equality holding for $A'$, then it follows directly from the definition of dividing that the equality holds for $A$ since ${\rm sp}\big( y\cup \{ t\}\big) = {\rm sp}(y\cup t)$, for all $t,y\in {\rm Fin}^+$. 

(ii) The equivalence is obviously true if $A= {\mathbf A}$. Assume the equivalence holds for $A'$ and let $A=tA'$. We show it for $A$. 
The only case that needs to be considered is $s= \{ t\}\cup y$ with $t\cup y\in A'$. In this situation, notice that ${\rm sp}(s) = {\rm sp}(t\cup y)$. So 
we have 
\[
s\in {\rm D}^A_\sigma\Leftrightarrow t\cup y \in {\rm D}^{A'}_\sigma \Leftrightarrow {\rm sp}(t\cup y) \subseteq \sigma \Leftrightarrow {\rm sp}(s) \subseteq \sigma,
\]
where the first equivalence follows from the definition of ${\rm D}^A_\sigma$, the second one follows from the inductive assumption, and the third one from the previous sentence. 
\end{proof}

The next lemma asserts that groundedness of a simplicial surjection is remembered through the support operation $\rm sp$. 

\begin{lemma}\label{L:simsp} 
Let $A$ and $B$ be in $\langle {\mathbf A}\rangle$, and let $f\colon B\to A$ be simplicial. 
Then $f$ is grounded if and only if 
\begin{enumerate}
\item[---] ${\rm sp}\big(f(t)\big)\subseteq {\rm sp}(t)$, for each $t\in B$ and 

\item[---] for each $s\in A$, there exists $t\in B$ with $f(t)=s$ and ${\rm sp}(t)={\rm sp}(s)$.
\end{enumerate}
\end{lemma}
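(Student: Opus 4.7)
The plan is to reduce groundedness of the simplicial map $f$ to a statement about the supports of faces using Lemma~\ref{L:dsig}. By that lemma, for any $C\in\langle\mathbf A\rangle$, a face $u\in C$ lies in ${\rm D}^C_\sigma$ precisely when ${\rm sp}(u)\subseteq\sigma$. Thus $f\colon B\to A$ is grounded iff, for every $\sigma\in\mathbf A$ and every $s\in A$,
\[
{\rm sp}(s)\subseteq\sigma \;\Longleftrightarrow\;\bigl(s=f(t)\text{ for some }t\in B\text{ with }{\rm sp}(t)\subseteq\sigma\bigr).
\]
The whole proof amounts to unwinding this biconditional in both directions.

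For the forward direction, I will feed the groundedness hypothesis two specific choices of $\sigma$. First, given $t\in B$, I set $\sigma:={\rm sp}(t)$, which lies in $\mathbf A$ by Lemma~\ref{L:dsig}(i). Since trivially $t\in{\rm D}^B_\sigma$, groundedness forces $f(t)\in{\rm D}^A_\sigma$, and then Lemma~\ref{L:dsig}(ii) yields ${\rm sp}(f(t))\subseteq\sigma={\rm sp}(t)$, establishing the first bulleted condition. Second, given $s\in A$, I set $\sigma:={\rm sp}(s)$. Then $s\in{\rm D}^A_\sigma$, so groundedness produces $t\in{\rm D}^B_\sigma$ with $f(t)=s$. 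Combining ${\rm sp}(t)\subseteq\sigma={\rm sp}(s)={\rm sp}(f(t))$ with the already established inequality ${\rm sp}(f(t))\subseteq{\rm sp}(t)$ forces equality ${\rm sp}(t)={\rm sp}(s)$, which is the second bulleted condition.

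For the reverse direction, I assume the two bulleted conditions and verify the displayed biconditional. The ``$\Leftarrow$'' implication is immediate from the first bullet: if $s=f(t)$ with ${\rm sp}(t)\subseteq\sigma$, then ${\rm sp}(s)={\rm sp}(f(t))\subseteq{\rm sp}(t)\subseteq\sigma$. The ``$\Rightarrow$'' implication uses the second bullet: if ${\rm sp}(s)\subseteq\sigma$, pick the witness $t\in B$ with $f(t)=s$ and ${\rm sp}(t)={\rm sp}(s)\subseteq\sigma$, so $t\in{\rm D}^B_\sigma$. Translating back through Lemma~\ref{L:dsig}(ii) gives groundedness of $f$.

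There is no serious obstacle; the whole argument is a matter of choosing $\sigma={\rm sp}(t)$ or $\sigma={\rm sp}(s)$ in the right places and invoking Lemma~\ref{L:dsig}. The only point that requires a moment of care is noticing that ${\rm sp}(t)$ and ${\rm sp}(s)$ are indeed faces of $\mathbf A$, which is exactly what Lemma~\ref{L:dsig}(i) supplies.
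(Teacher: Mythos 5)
Your proof is correct and follows essentially the same route as the paper: it translates groundedness into support conditions via Lemma~\ref{L:dsig} and then checks both directions by choosing $\sigma={\rm sp}(t)$ or $\sigma={\rm sp}(s)$. In fact you spell out the $\Leftarrow$ direction and the second bullet of the $\Rightarrow$ direction, which the paper dismisses as immediate, so nothing is missing.
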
 

\begin{proof} 
The direction $\Leftarrow$ follows immediately from Lemma~\ref{L:dsig}. For the other direction, let $s\in B$. Let $\sigma={\rm sp}(s)\in {\mathbf A}$. Then by Lemma~\ref{L:dsig} we have $s\in {\rm D}^B_\sigma$. Since $f$ is assumed to be grounded, we get $f(s)\in {\rm D}^A_\sigma$. So again by Lemma~\ref{L:dsig}, 
$f(s)\subseteq\sigma = {\rm sp}(s)$, and the conclusion follows. 
\end{proof}

{\bf Precise restatements of the definitions of weld maps, grounded isomorphisms, and division of grounded simplicial maps in the framework described above can be found in Appendix~\ref{A:isome}.}

\subsection{Internal description of faces of divided complexes}

We have a lemma that gives a description of faces of a complex $s_0\cdots s_m {\mathbf A}$ obtained by iterative division of the grounded complex $\mathbf A$ in terms of the support operation $\rm sp$ and the sequence $s_0\cdots s_m$. Note that condition (ii) in the lemma can be used to recursively check condition (i). Condition (ii) will be crucial in implementing our calculus of sequences of sets.

\begin{lemma}\label{L:conr}
Let $t$ and $s_0, \dots, s_m$ be sets in ${\rm Fin}^+$. Then (i) and (ii) are equivalent.
\begin{enumerate} 
\item[(i)] $t$ is a face of $s_0\cdots s_m{\mathbf A}$; 

\item[(ii)] ${\rm sp}(t)\in {\mathbf A}$ and one of the following two conditions holds: 
\begin{enumerate}
\item[---]  $t\subseteq {\rm Ur}$ and  $s_j\not\subseteq t$, for all $j\leq m$ with $s_j$ being a face of $s_{j+1}\cdots s_m{\mathbf A}$;

\item[---] there is $i\leq m$ such that 

\begin{enumerate}
\item[--] $s_i\in t$, 

\item[--] $\big(t\setminus \{ s_i\}\big)\cup s_i$ is a face of $s_{i+1}\cdots s_m{\mathbf A}$, 

\item[--] $s_j\not\subseteq t$, for all $j\leq i$ with $s_j$ being a face of $s_{j+1}\cdots s_m{\mathbf A}$.
\end{enumerate} 
\end{enumerate} 
\end{enumerate} 
\end{lemma}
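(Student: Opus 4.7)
The plan is to prove the equivalence by induction on the length of the sequence $s_0, \dots, s_m$. Condition (ii) is engineered to peel off $s_0$ and then refer to the tail sequence $s_1, \dots, s_m$, both through the phrase ``face of $s_{i+1}\cdots s_m\mathbf{A}$'' and through $(t\setminus\{s_i\})\cup s_i$ being a face of $s_{i+1}\cdots s_m\mathbf{A}$, so the induction should unfold cleanly. Setting $B = s_1\cdots s_m\mathbf{A}$, I would use the definition \eqref{E:divinf} to partition the faces of $s_0\cdots s_m\mathbf{A}=s_0 B$ into ``old'' faces $t\in B$ with $s_0\not\subseteq t$ and ``new'' faces $t=y\cup\{s_0\}$ with $s_0\not\subseteq y$ and $y\cup s_0\in B$, the new-face clause being vacuous precisely when $s_0$ is not a face of $B$. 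The base case $m=0$ is just this partition read off directly against the two clauses of (ii), with $i=0$ in the second clause.

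For the forward direction in the inductive step, I would split according to the case of \eqref{E:divinf} that exhibits $t$. For an old face $t\in B$, the inductive hypothesis applied to $t$ inside $B$ produces one of the two clauses of (ii) for the tail sequence; prepending the $j=0$ requirement $s_0\not\subseteq t$ (which is automatic when $s_0$ is not a face of $B$) upgrades this to the clause of (ii) for the full sequence. For a new face $t=y\cup\{s_0\}$, take $i=0$: the equality $(t\setminus\{s_0\})\cup s_0 = y\cup s_0 \in B$ is exactly what is needed, and $s_0\not\subseteq t$ follows from $s_0\not\subseteq y$ via the axiom of foundation, since $s_0\subseteq y\cup\{s_0\}$ would force every element of $s_0$ to lie in $y$ or equal $s_0$, and $s_0\in s_0$ is impossible (a use of the well-foundedness highlighted in Lemma~\ref{L:ok}).

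For the backward direction, I would split on which clause of (ii) holds. In the first clause, $t\subseteq\mathrm{Ur}$ forces that no $s_j$ lies in $t$ (since $s_j\in\mathrm{Fin}^+$ is not an urelement), so the inductive hypothesis places $t$ in $B$, and then $s_0\not\subseteq t$ gives $t\in s_0 B$. In the second clause with $i\geq 1$, the inductive hypothesis again delivers $t\in B$ and the $j=0$ non-containment completes the argument. In the remaining case $i=0$, setting $y=t\setminus\{s_0\}$ gives $y\cup s_0\in B$ by hypothesis, and $s_0\not\subseteq y$ follows from $s_0\not\subseteq t$ using once more that $s_0\notin s_0$; so $t=y\cup\{s_0\}\in s_0 B$ via the new-face clause of \eqref{E:divinf}. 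The side condition $\mathrm{sp}(t)\in\mathbf{A}$ propagates through the identity $\mathrm{sp}(t)=\mathrm{sp}((t\setminus\{s_i\})\cup s_i)$ supplied by \eqref{E:sppr} and ultimately reduces to Lemma~\ref{L:dsig}(i). The main obstacle I anticipate is the bookkeeping of the qualifier ``$s_j$ is a face of $s_{j+1}\cdots s_m\mathbf{A}$'': when it fails, the $j$-th division is the identity and the index $j$ should silently drop out of the non-containment requirement; aligning this across the recursive step, both when $s_0$ itself is not a face of $B$ and when later $s_j$ cease to be faces after the action of earlier divisions, is the delicate point of the argument.
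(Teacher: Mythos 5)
Your proposal is correct and follows essentially the same route as the paper: peel off the leftmost division, use the old-face/new-face dichotomy of \eqref{E:divinf}, and invoke well-foundedness (via condition \eqref{E:nottr}) to get $s_0\not\subseteq t$ for new faces, with the sp-condition handled by Lemma~\ref{L:dsig}(i). The only cosmetic difference is that you run both implications as a single induction on the length of the sequence, whereas the paper proves (ii)$\Rightarrow$(i) by a direct build-up from index $i$ (or $m$) down to $0$ and reserves the induction for (i)$\Rightarrow$(ii); the one-step facts used are identical.
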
 

\begin{proof} Set $\vec{s} = s_0\cdots s_m$.

We prove direction (ii)$\Rightarrow$(i) first. Assume $t$ satisfies (ii). If $t\subseteq {\rm Ur}$, then, since $t= {\rm sp}(t)\in {\mathbf A}$, we see that $t$ is a face of ${\mathbf A}$. 
It remains to show, inductively on $j\leq m$, that $t$ is a face of $s_j\cdots s_m {\mathbf A}$. If $s_j$ is not a face of $s_{j+1}\cdots s_m {\mathbf A}$, then $s_js_{j+1}\cdots s_m {\mathbf A} = s_{j+1}\cdots s_m {\mathbf A}$. If $s_j$ is a face of $s_{j+1}\cdots s_m {\mathbf A}$, then, by assumption, $s_j\not\subseteq t$. In either case, if $t$ is a face of $s_{j+1}\cdots s_m {\mathbf A}$, then it remains a face of $s_j s_{j+1}\cdots s_m {\mathbf A}$. It follows that $t$ is a face of $\vec{s}{\mathbf A}$. 

Now assume that there exists $i\leq m$ with the property: 
$s_i\in t$,  $(t\setminus \{ s_i\})\cup s_i$ is a face of $s_{i+1}\cdots s_m{\mathbf A}$, and $s_j\not\subseteq t$, for each $j\leq i$ such that $s_j$ is a face of $s_{j+1}\cdots s_m {\mathbf A}$. Note that, in particular, $s_i$ is a face of $s_{i+1}\cdots s_m{\mathbf A}$, and, therefore, $s_i\not\subseteq t$.
We have 
\[
t = \{ s_i\} \cup \big( t\setminus \{ s_i\}\big). 
\]
So, by the definition of dividing of $s_{i+1}\cdots s_m{\mathbf A}$ by $s_i$,  it follows that $t$ is a face of $s_is_{i+1}\cdots s_m {\mathbf A}$ as $\{ s_i\} \cup \big( t\setminus \{ s_i\}\big)$ is a face of $s_{i+1}\cdots s_m{\mathbf A}$ by our assumption on $t$. Now we argue that, for each $j\leq i$, $t$ is a face of $s_j\cdots s_m {\mathbf A}$. This is proved inductively. It is true for $j=i$. For each $j<i$, either $s_j$ is not a face of $s_{j+1}\cdots s_mA$, so 
\[
s_js_{j+1}\cdots s_m {\mathbf A}= s_{j+1}\cdots s_m{\mathbf A}, 
\]
or $s_j\not\subseteq t$. In either case, if $t$ is a face of $s_{j+1}\cdots s_m{\mathbf A}$, it continues to be a face of $s_js_{j+1}\cdots s_m {\mathbf A}$. It follows that $t$ is a face of $\vec{s} {\mathbf A}$.

We now prove (i)$\Rightarrow$(ii). The proof is by induction on the length of the sequence $\vec{s}$. If $\vec{s}$ is the empty sequence $\emptyset$, the implication is obvious since $\emptyset {\mathbf A}={\mathbf A}$, faces of $\mathbf A$ are subsets of $\rm Ur$, and, for $t\subseteq {\rm Ur}$, ${\rm sp}(t)=t$. 

Assume the implication holds for $\vec{s}$, and let $t$ be a face of $s\vec{s}{\mathbf A}$ for some $s\in {\rm Fin}^+$. By Lemma~\ref{L:dsig}(i), we see that ${\rm sp}(t)\in {\mathbf A}$. We show that $t$ satisfies the remainder of (ii) for $s\vec{s}$. By the definition of division of $\vec{s}{\mathbf A}$ by $s$, we have two cases.

Case 1. $t$ is a face of $\vec{s}{\mathbf A}$ and $s\not\subseteq t$ 

By our inductive assumption, $t$ satisfies (ii) for $\vec{s}$, which together with $s\not\subseteq t$, immediately gives that $t$ satisfies (ii) for $s\vec{s}$.

Case 2. $t$ is not a face of $\vec{s}{\mathbf A}$. 

We have that $t= \{ s\}\cup y$ for some $y$ with $s\cup y$ being a face $\vec{s}{\mathbf A}$ and $s\not\subseteq y$. Observe that, in this case, $s$ is a face $\vec{s}{\mathbf A}$ and 
\[
\big(t\setminus \{ s\}\big)\cup s = s\cup y\in \vec{s}{\mathbf A}.
\] 
Thus, we will get that $t$ satisfies (ii) for $s\vec{s}$ as long as we show that $s\not\subseteq t$.  
Since $s\not\in s$, as $\vec{s}{\mathbf A}$ fulfills condition \eqref{E:nottr} and $s$ is a face of $\vec{s}{\mathbf A}$, we see that $s\subseteq t$ implies  
$s\subseteq y$, which leads to a contradiction. 
\end{proof}

\subsection{Summary} 

In this section, we realized the following goals: 
\begin{enumerate}
\item[---] we represented an arbitrary complex as a grounded complex, in particular, as a divided complex, in particular, as a complex that is a subset of ${\rm Fin}^+$; 

\item[---] we showed that divided complexes fulfill condition \eqref{E:nottr};

\item[---] we showed that, for a divided complex, the face structure 
is expressed by the support operation as is the groundedness of simplicial surjections among divided complexes;

\item[---] we expressed the property of being a face of a divided complex, that is, a complex obtained by a sequence of divisions from a grounded complex, in terms of the sequence of sets performing the divisions.
\end{enumerate} 

As a consequence of the statements above, in what follows, {\bf we only need to consider divided complexes, divisions by sets in ${\rm Fin}^+$, and simplicial surjections with the property from Lemma~\ref{L:simsp}}.

\newpage

\part{Framework for the proofs}\label{P:frpr} 

To motivate the setup, we observe that finite sets are fundamental to our considerations as complexes are families of finite sets and 
finite sets applied to complex subdivide them as in \eqref{E:divin}.
Considering iterative divisions of complexes and simplicial maps among so divided complexes, we see that 
finite sequences of finite sets can be regarded as potential divisions performed on a complex, that is, if $s_0\cdots s_l$ is a finite sequence of finite sets and $A$ is a complex, then $s_0\cdots s_l$ can be applied to $A$ to form the complex $s_0(s_1(s_2\cdots (s_lA) \cdots ))$. Similarly, the appropriately defined functions among sequences of finite sets can be regarded as potential simplicial maps between the divided complexes, that is, in the presence of a complex $A$, a function from $t_0\cdots t_m$ to $s_0\cdots s_l$ will become a simplicial function from the complex obtained by applying $t_0\cdots t_m$ to $A$ to the complex obtained by applying $s_0\cdots s_l$ to $A$.

The paragraph above makes it plausible that arguments concerning divisions of complexes and simplicial maps among them can be carried out by performing computations on finite sequences of finite sets and functions among them. This will indeed be the case. 
To formalize these computations, or, in other words, to make them mathematically correct, we introduce the framework of this section.

\section{Sequences of sets and grounded simplicial functions}

\subsection{Sequences, their faces, and combinatorial equivalence of sequences}\label{Su:combs} 

We consider finite sequences of sets in ${\rm Fin}^+$. We refer to them simply as sequences. 
We define faces and vertices of sequences and combinatorial equivalence among sequences. These notions are motivated by Propositions~\ref{P:conr}, \ref{P:vecv}, and \ref{P:verd}. Lemmas~\ref{L:safa} and \ref{L:savr} determine the connections between combinatorial equivalence among sequences, on one side, and faces and vertices, on the other.

Our convention will be to write $s_0\cdots s_m$ for a sequence with $s_i\in {\rm Fin}^+$, $i\leq m$. The empty sequence, written $\emptyset$, is allowed. 
To abbreviate our notation, we will often write $\vec{s}$ for $s_0\cdots s_m$.

We introduce now the notion of face of a sequence $\vec{s}$. The definition is internal to $\vec{s}$ as this is the way it is used in the proofs. The motivation for it comes from 
Proposition~\ref{P:conr} below through Lemma~\ref{L:conr}. 
The definition of $t\in {\rm Fin}^+$ being a face of a sequence $s_0\cdots s_m$ of sets in ${\rm Fin}^+$ 
is recursive on $m$, that is, we assume the notion is defined for sequences strictly shorter than $s_0\cdots s_m$ when defining it for $s_0\cdots s_m$. 
A set $t\in {\rm Fin}^+$ is a {\bf face in} $s_0\cdots s_m$ if 
\begin{enumerate}
\item[---]  $t\subseteq {\rm Ur}$ and  $s_j\not\subseteq t$, for all $j\leq m$ with $s_j$ being a face of $s_{j+1}\cdots s_m$

\item[] or 

\item[---] there is $i\leq m$ such that $s_i\in t$, $\big(t\setminus \{ s_i\}\big)\cup s_i$ is a face of $s_{i+1}\cdots s_m$, and 
$s_j\not\subseteq t$, for all $j\leq i$ with $s_j$ being a face of $s_{j+1}\cdots s_m$.
\end{enumerate} 
Note that all finite non-empty subsets of $\rm Ur$ are faces of the empty sequence $\emptyset$.

The following proposition is an immediate consequence of Lemma~\ref{L:conr}. 

\begin{proposition}\label{P:conr}
Let $t$ be a set in ${\rm Fin}^+$, let $\vec{s}$ be a sequence, and let $A$ be a grounded complex. Then 
\[
t\hbox{ is a face of }\vec{s}A\;\Leftrightarrow \; t \hbox{ is a face of }\vec{s}\hbox{ and }{\rm sp}(t)\in A. 
\]
\end{proposition}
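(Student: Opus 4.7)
The plan is to reduce to Lemma~\ref{L:conr}. First I would note that neither the statement nor the proof of Lemma~\ref{L:conr} uses any property of the fixed complex ${\mathbf A}$ other than that it is a grounded complex, so the lemma applies verbatim with ${\mathbf A}$ replaced by the given $A$: namely, $t$ is a face of $\vec{s}A$ if and only if ${\rm sp}(t)\in A$ together with the disjunctive condition (ii) of the lemma, with every reference to ``face of $s_{j+1}\cdots s_m{\mathbf A}$'' now read as ``face of $s_{j+1}\cdots s_m A$.''

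I would then proceed by induction on the length of $\vec{s}$. The base case $\vec{s}=\emptyset$ is immediate, since $\emptyset A=A$, whose faces are the finite non-empty subsets of ${\rm Ur}$ that lie in $A$; these are exactly the faces of the empty sequence (all finite non-empty subsets of ${\rm Ur}$) whose support lies in $A$. For the inductive step, a direct inspection shows that the disjunctive clause (ii) in Lemma~\ref{L:conr} and the recursive definition of ``face in $\vec{s}$'' from Section~\ref{Su:combs} are word-for-word identical, except that the former speaks of ``face of the complex $s_{j+1}\cdots s_m A$'' wherever the latter speaks of ``face of the sequence $s_{j+1}\cdots s_m$.''

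The key observation that bridges the two notions is this: assuming ${\rm sp}(t)\in A$, one has ${\rm sp}(s_j)\subseteq{\rm sp}(t)\in A$ for any $s_j\subseteq t$ by the first line of \eqref{E:sppr}, and hence ${\rm sp}(s_j)\in A$ since $A$ is closed under non-empty subsets. Consequently, in the universal quantification ``$s_j\not\subseteq t$ for all $j$ with $s_j$ a face of $s_{j+1}\cdots s_m A$,'' any $j$ for which $s_j$ is a face of the sequence but fails to be a face of the complex automatically satisfies $s_j\not\subseteq t$, so the two quantifications impose the same restriction on $t$. The same remark covers the second clause through $(t\setminus\{s_i\})\cup s_i$, whose support is contained in ${\rm sp}(t)$ because $s_i\in t$ forces ${\rm sp}(s_i)\subseteq{\rm sp}(t)$. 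Applying the inductive hypothesis to each shorter sequence $s_{j+1}\cdots s_m$ to rewrite ``face of complex'' as ``face of sequence together with support in $A$'' then converts Lemma~\ref{L:conr}(ii) into the defining clauses of ``$t$ is a face of $\vec{s}$,'' completing the step.

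The only real obstacle is keeping the mutual recursion between the two notions of face honest; once the support-monotonicity properties in \eqref{E:sppr} are combined with closure of $A$ under non-empty subsets, the statement is indeed an ``immediate'' consequence of Lemma~\ref{L:conr}, as the paper advertises.
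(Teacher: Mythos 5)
Your argument is correct and follows exactly the route the paper intends: the paper simply declares Proposition~\ref{P:conr} an immediate consequence of Lemma~\ref{L:conr}, and your induction on the length of $\vec{s}$, using \eqref{E:sppr} and closure of $A$ under non-empty subsets to reconcile the quantifier ranges in the two recursive definitions of ``face,'' is precisely the bookkeeping that makes this immediacy precise (noting, correctly, that Lemma~\ref{L:conr} only uses that ${\mathbf A}$ is grounded). No gaps.
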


\begin{proposition}\label{P:trse} 
Let $\vec{s}$ be a sequence 
\begin{enumerate} 
\item[(i)] The family of all faces of $\vec{s}$ forms a complex. 

\item[(ii)] If $t_1$ and $t_2$ are faces of $\vec{s}$, then $t_1\not\in {\rm tc}(t_2)$. 
\end{enumerate} 
\end{proposition}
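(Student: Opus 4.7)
The plan is to deduce both parts of Proposition~\ref{P:trse} from Lemma~\ref{L:ok} via the bridge provided by Proposition~\ref{P:conr}. The key observation is that any face $t$ of a sequence $\vec{s}$ is, by definition, an element of ${\rm Fin}^+$, so ${\rm sp}(t)$ is a finite non-empty subset of $\rm Ur$. Consequently, for any finite set $\sigma\subseteq{\rm Ur}$ containing ${\rm sp}(t)$, the family of all non-empty subsets of $\sigma$ is a grounded complex that has ${\rm sp}(t)$ as one of its faces. This gives us the flexibility to place the support of any face (or any finite collection of faces) inside a grounded complex of our choosing.

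For part~(ii), given faces $t_1$ and $t_2$ of $\vec{s}$, I would set $\sigma={\rm sp}(t_1)\cup{\rm sp}(t_2)\subseteq{\rm Ur}$ and take $A$ to be the grounded complex consisting of all non-empty subsets of $\sigma$. Since ${\rm sp}(t_1),{\rm sp}(t_2)\in A$, Proposition~\ref{P:conr} yields that both $t_1$ and $t_2$ are faces of $\vec{s}A$. Applying Lemma~\ref{L:ok} to $\vec{s}A$, we obtain $t_1\notin{\rm tc}(t_2)$, which is the required conclusion.

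For part~(i), the task reduces to showing that the family of faces of $\vec{s}$ is closed under taking non-empty subsets (faces lie in ${\rm Fin}^+$, so they are automatically finite non-empty sets). Given a face $t$ of $\vec{s}$ and a non-empty $t'\subseteq t$, let $A$ be the grounded complex of all non-empty subsets of ${\rm sp}(t)$. Proposition~\ref{P:conr} gives that $t$ is a face of $\vec{s}A$. By Lemma~\ref{L:ok}, $\vec{s}A$ is a complex, so the non-empty subset $t'$ of $t$ is a face of $\vec{s}A$. A second application of Proposition~\ref{P:conr}, in the forward direction, then gives that $t'$ is a face of $\vec{s}$, as desired.

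I do not expect a serious obstacle: once the right ``universal'' grounded complex---namely, the simplex on ${\rm sp}(t)$ (or on ${\rm sp}(t_1)\cup{\rm sp}(t_2)$)---is identified, both statements fall out in one line each. A direct induction on the length of $\vec{s}$ is also feasible and would essentially replay the argument of Lemma~\ref{L:ok} in the combinatorial setting of sequences, but the reduction above avoids that duplication and highlights why Proposition~\ref{P:conr} is the correct tool: it transfers structural properties of divided complexes to properties that are internal to the sequence.
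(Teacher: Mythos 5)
Your proposal is correct and follows essentially the same route as the paper: the paper simply takes the single grounded complex $A$ consisting of \emph{all} finite non-empty subsets of ${\rm Ur}$, so that by Proposition~\ref{P:conr} the faces of $\vec{s}$ coincide outright with the faces of $\vec{s}A$ and both claims are read off from Lemma~\ref{L:ok}, whereas you localize $A$ to the simplex on the relevant supports, which is an immaterial difference. One small point: being a complex also requires that no face be an element of another face (condition \eqref{E:vrad}), which your part~(i) does not mention but which is immediate from part~(ii) since $t_1\in t_2$ implies $t_1\in{\rm tc}(t_2)$.
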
 

\begin{proof} By Proposition~\ref{P:conr} the family of all faces of $\vec{s}$ consists of all faces of $\vec{s}A$, where $A$ is the complex consisting of all finite non-empty subsets of $\rm Ur$. So, by Lemma~\ref{L:ok}, this is a complex and it satisfies the property in (ii) . 
\end{proof}

It will be convenient to identify some sequences with each other. To this end, we define an equivalence relation $\equiv$ on all sequences, which will be called {\bf combinatorial equivalence}. Proposition~\ref{P:vecv} motivates this definition. We let $\equiv$ be the smallest equivalence relation with the following three properties:
\begin{enumerate} 
\item[(a)] for $t\in {\rm Fin}^+$, if $t$ is not a face of $\vec{t}$, then 
\[
t\,\vec{t} \equiv \vec{t};
\]

\item[(b)] for $s, t\in {\rm Fin}^+$ such that $s\not\in t$ and $t\not\in s$, 
if $s\cup t$ is not a face of $\vec{t}$ or $s\cap t=\emptyset$, then 
\[
s\, t\,\vec{t}\equiv t\,s\, \vec{t};
\]

\item[(c)] if $\vec{t}\equiv \vec{t'}$, then, for each set $s\in {\rm Fin}^+$, 
\[
s\, \vec{t} \equiv s\, \vec{t'}.
\]
\end{enumerate}

As mentioned, Proposition~\ref{P:vecv} below gives the intuition behind the definition of the equivalence relation. 

\begin{proposition}\label{P:vecv}
Let $\vec{s}$ and $\vec{t}$ be sequences and let $A$ be a grounded complex. Then 
\[
\vec{s}\equiv \vec{t}\;\Rightarrow\; \vec{s} A =\vec{t} A.
\]
\end{proposition}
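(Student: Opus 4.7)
The plan is to define the auxiliary relation
\[
\vec{s}\sim \vec{t}\iff \vec{s}A=\vec{t}A \text{ for every grounded complex }A,
\]
observe that it is an equivalence relation, and verify that it satisfies the three generating clauses (a), (b), (c) used to define $\equiv$. Since $\equiv$ is the smallest equivalence relation satisfying those clauses, this gives $\equiv\,\subseteq\,\sim$, which is the desired implication.

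Clause (c) is immediate because $B\mapsto sB$ is a function on complexes. Clause (a) reduces to the observation in the excerpt that $sB=B$ whenever $s$ is not a face of $B$: if $t$ is not a face of $\vec{t}$, then Proposition~\ref{P:conr} gives that $t$ is not a face of $B=\vec{t}A$ for any grounded $A$, hence $t\vec{t}A=tB=B=\vec{t}A$.

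The heart of the argument is clause (b), which, using Proposition~\ref{P:conr} to transport ``face of $\vec{t}$'' to ``face of $\vec{t}A$'', reduces to the following commutativity lemma for two single divisions: \emph{let $B\subseteq {\rm Fin}^+$ be a complex satisfying condition \eqref{E:nottr} (which holds for $B=\vec{t}A$ by Lemma~\ref{L:ok}), and let $s,t\in {\rm Fin}^+$ satisfy $s\notin t$, $t\notin s$, and either $s\cup t\notin B$ or $s\cap t=\emptyset$; then $stB=tsB$.} I would prove this by expanding both sides using \eqref{E:divinf} twice and matching the resulting lists of faces. The hypotheses $s\notin t$ and $t\notin s$ make the new vertex $s$ (respectively $t$) invisible to the inclusion test for $t$ (respectively $s$), in the precise sense that $t\subseteq y\cup\{s\}$ iff $t\subseteq y$ and symmetrically.

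The case split in the commutativity lemma runs on how much of $\{s,t\}$ a face $y\in B$ contains. Faces containing at most one of $s,t$ are easy and symmetric. The delicate case is $s\cup t\subseteq y\in B$, which can happen only under the alternative $s\cap t=\emptyset$; here one checks by unwinding \eqref{E:divinf} twice that both $stB$ and $tsB$ replace $y$ by the same parametrized family
\[
\bigl\{(y\setminus(s\cup t))\cup u\cup v\cup\{s,t\}:u\subsetneq s,\ v\subsetneq t\bigr\}.
\]
Under the other hypothesis $s\cup t\notin B$, no face of $B$ contains both $s$ and $t$ as subsets, so the ``extra'' inclusion conditions produced by the second division in each order (for example $t\not\subseteq y\cup s$) are automatically satisfied. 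The main obstacle is precisely the bookkeeping in the delicate case: one must carefully track which elements of $y$ enter the ``old'' part of each new face and which enter into the subscript beside $\{s\}$ or $\{t\}$, and then see that the two orders of division produce identical parametrizations.
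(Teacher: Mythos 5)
Your argument is correct, and the overall reduction is the same as the paper's: both proofs observe that it suffices to verify the implication on the three generating clauses of $\equiv$, with (a) and (c) immediate. Where you diverge is clause (b). The paper disposes of it in one line by citing Corollary~\ref{C:comg}, which in turn is deduced from Theorem~\ref{T:ord} (the type~2 ground isomorphism between $\big((s\setminus t)\cup\{t\}\big)\,s\,t\,A$ and $\big((t\setminus s)\cup\{s\}\big)\,t\,s\,A$), itself proved through the face-calculation machinery of Lemmas~\ref{L:calc}, \ref{L:calc2}, and \ref{L:calc4}: under the hypotheses of (b) the compensating divisions are vacuous and the isomorphism collapses to the identity, giving $s\,t\,A=t\,s\,A$. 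You instead prove this commutativity directly, by expanding \eqref{E:divinf} twice in each order and matching face lists, with the correct observation that $s\notin t$, $t\notin s$ make each new vertex invisible to the other inclusion test, and with the correct parametrized family $\{(y\setminus(s\cup t))\cup u\cup v\cup\{s,t\} : u\subsetneq s,\ v\subsetneq t\}$ in the only delicate case $s\cup t\subseteq y$ (which indeed forces the alternative $s\cap t=\emptyset$); your appeal to Lemma~\ref{L:ok} for condition \eqref{E:nottr} on $B=\vec{t}A$ is what keeps the bookkeeping honest, and the remaining equivalences (e.g.\ $t\not\subseteq s\cup w$ versus $t\not\subseteq w$) do check out under either hypothesis of (b). Your route is more elementary and self-contained for this particular proposition, essentially re-proving Corollary~\ref{C:comg} from scratch; the paper's route is economical in context because Theorem~\ref{T:ord} is needed anyway for the type~2 combinatorial isomorphisms later in the paper, so the corollary comes for free, but it rests on considerably heavier machinery than your two-step expansion.
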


\begin{proof}
One only needs to show that the implication holds if $\vec{s}$ and $\vec{t}$ are related by one of the three conditions (a)--(c) in the definition of $\equiv$. The cases of (a) and (c) are obvious. The case of (b) is handled immediately by Corollary~\ref{C:comg}.
\end{proof}

The following lemma gives the connection between combinatorial equivalence and faces.

\begin{lemma}\label{L:safa} 
If $\vec{s}\equiv \vec{t}$, then $\vec{s}$ and $\vec{t}$ have the same faces. 
\end{lemma}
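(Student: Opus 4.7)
The plan is to bootstrap the lemma from the previously established Propositions~\ref{P:conr} and \ref{P:vecv} by testing both $\vec{s}$ and $\vec{t}$ against a single universal grounded complex. Let $A$ denote the simplicial complex consisting of all finite non-empty subsets of ${\rm Ur}$. Since ${\rm Ur}$ is a set by assumption (Section~\ref{Su:finp}), $A$ is a legitimate simplicial complex, and every face of $A$ is a subset of ${\rm Ur}$, so $A$ is grounded in the sense of Section~\ref{S:divset}.

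For every $u\in {\rm Fin}^+$, a quick induction on the least $n$ with $u\in {\rm Fin}^+_n$ (using the first and third clauses of \eqref{E:sppr}) shows that ${\rm sp}(u)$ is a finite non-empty subset of ${\rm Ur}$, hence ${\rm sp}(u)\in A$ automatically. Therefore the side condition ``${\rm sp}(u)\in A$'' appearing in Proposition~\ref{P:conr} is vacuous for our choice of $A$, and Proposition~\ref{P:conr} collapses to the equivalence
\begin{equation*}
u \text{ is a face of } \vec{w} \iff u \text{ is a face of } \vec{w}A,
\end{equation*}
valid for every sequence $\vec{w}$ and every $u\in {\rm Fin}^+$.

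Now, assuming $\vec{s}\equiv\vec{t}$, Proposition~\ref{P:vecv} yields $\vec{s}A=\vec{t}A$ as simplicial complexes. Applying the displayed equivalence once with $\vec{w}=\vec{s}$ and once with $\vec{w}=\vec{t}$, and interpolating the equality $\vec{s}A=\vec{t}A$, one obtains that the faces of $\vec{s}$ and the faces of $\vec{t}$ coincide. There is essentially no obstacle: the proof is a one-line substitution once one observes that the universal complex on ${\rm Ur}$ absorbs the support condition in Proposition~\ref{P:conr}. The only mild point is to verify that this universal $A$ is a legitimate grounded complex in our framework, which is immediate from ${\rm Ur}$ being a set. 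In particular, no separate induction on the derivation of $\equiv$ is needed, since that induction has already been carried out inside the proof of Proposition~\ref{P:vecv}.
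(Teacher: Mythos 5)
Your proof is correct and essentially the same as the paper's: the paper also deduces Lemma~\ref{L:safa} directly from Proposition~\ref{P:conr} together with Proposition~\ref{P:vecv}, quantifying over grounded complexes $A$, and your choice of the universal complex of all finite non-empty subsets of ${\rm Ur}$ is exactly the instantiation the paper itself uses in the proof of Proposition~\ref{P:trse}. No gap; fixing that single complex just makes the "for some grounded complex $A$" step of the paper's argument concrete.
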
 

\begin{proof} 
By Proposition~\ref{P:conr}, $r$ is a face of $\vec{s}$ if and only if $r$ is a face of $\vec{s} A$ for some grounded complex $A$. The same equivalence holds for $\vec{t}$. Now, the conclusion follows from Proposition~\ref{P:vecv}. 
\end{proof}

Now we define the set of vertices of a sequence in ${\rm Fin}^+$. 
For a sequence $\vec{t}$, define 
\begin{equation}\label{E:vrt}
{\rm vr}(\vec{s}\,) = \bigcup\{ t\mid t\hbox{ is a face of } \vec{s}\,\}. 
\end{equation} 
The following proposition follows immediately from Proposition~\ref{P:conr}. 

\begin{proposition}\label{P:verd} 
Let $A$ be a grounded complex. Then, for $x\in {\rm Fin}^+\cup {\rm Ur}$, we have 
\[
x\in {\rm Vr}(\vec{s}A)\; \Leftrightarrow\; x\in {\rm vr}(\vec{s}\,)\hbox{ and } {\rm sp}(\{ x\})\in A. 
\]
In particular, ${\rm Vr}( \vec{s}A)\subseteq {\rm vr}(\vec{s})$. 
\end{proposition}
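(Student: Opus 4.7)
The plan is to reduce the biconditional to an application of Proposition~\ref{P:conr} to singleton sets. First, I would unpack the definition: by convention, $x \in {\rm Vr}(\vec{s}A)$ means precisely that the singleton $\{x\}$ is a face of the complex $\vec{s}A$. Note that this singleton belongs to ${\rm Fin}^+$ for any $x \in {\rm Fin}^+ \cup {\rm Ur}$, so Proposition~\ref{P:conr} applies to it and yields the equivalence
\[
\{x\} \text{ is a face of } \vec{s}A \;\Leftrightarrow\; \{x\} \text{ is a face of } \vec{s}\ \text{ and }\ {\rm sp}(\{x\}) \in A.
\]

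The remaining step is to identify the condition ``$\{x\}$ is a face of $\vec{s}$'' with the condition ``$x \in {\rm vr}(\vec{s}\,)$'' as defined in \eqref{E:vrt}. One direction is immediate from the definition of ${\rm vr}(\vec{s}\,)$: if $\{x\}$ is itself a face of $\vec{s}$, then $x$ belongs to the union in \eqref{E:vrt}. For the converse, suppose $x \in t$ for some face $t$ of $\vec{s}$. Here I invoke Proposition~\ref{P:trse}(i), which guarantees that the collection of faces of $\vec{s}$ forms a complex and, in particular, is closed under taking non-empty subsets. Therefore $\{x\}$ is a face of $\vec{s}$.

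Combining these two equivalences yields the main biconditional, and the ``in particular'' clause is then a trivial weakening (just drop the conjunct on ${\rm sp}(\{x\})$). I do not anticipate any obstacle here: the content is entirely bookkeeping once Proposition~\ref{P:conr} is in hand, and the only subtle point is remembering that membership in ${\rm vr}(\vec{s}\,)$ is converted to singleton-face-hood via the complex-structure of faces given by Proposition~\ref{P:trse}(i).
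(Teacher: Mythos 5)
Your proof is correct and matches the paper's approach: the paper simply records Proposition~\ref{P:verd} as an immediate consequence of Proposition~\ref{P:conr}, and your argument spells out exactly that reduction, applying \ref{P:conr} to the singleton $\{x\}\in{\rm Fin}^+$ and converting between ``$x\in{\rm vr}(\vec{s}\,)$'' and ``$\{x\}$ is a face of $\vec{s}$'' via the subset-closure of faces (Proposition~\ref{P:trse}(i), equivalently Lemma~\ref{L:subf}(i)). The only point worth noting is that identifying $x\in{\rm Vr}(\vec{s}A)$ with ``$\{x\}\in\vec{s}A$'' likewise uses that $\vec{s}A$ is a complex (Lemma~\ref{L:ok}), which you implicitly rely on and which holds since $A$ is grounded.
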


\begin{lemma}\label{L:savr} 
If $\vec{s}\equiv \vec{t}$, then ${\rm vr}(\vec{s}\,)= {\rm vr}(\vec{t}\,)$. 
\end{lemma}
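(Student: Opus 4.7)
The plan is to derive this immediately from Lemma~\ref{L:safa}. Recall that by definition
\[
{\rm vr}(\vec{s}\,) = \bigcup\{ t\mid t\hbox{ is a face of } \vec{s}\,\},
\]
and the analogous formula holds for $\vec{t}\,$. So the equality of vertex sets is nothing more than the equality of the two unions, and the two unions are taken over the same collection of sets provided $\vec{s}$ and $\vec{t}$ have the same families of faces.

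First I would invoke Lemma~\ref{L:safa}, which, under the hypothesis $\vec{s}\equiv \vec{t}\,$, gives precisely that $\vec{s}$ and $\vec{t}$ have identical families of faces. Then I would conclude by substituting this equality of face families into the defining formula \eqref{E:vrt} for ${\rm vr}$, obtaining ${\rm vr}(\vec{s}\,)= {\rm vr}(\vec{t}\,)$.

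There is no real obstacle here: the statement is a direct corollary of Lemma~\ref{L:safa} together with the definition of ${\rm vr}$. The only thing worth noting is that the argument goes through purely combinatorially at the level of sequences, without any appeal to a specific complex $A$ or to Proposition~\ref{P:verd}; one simply uses that the operator sending a complex to the union of its faces is monotone (indeed, equal on equal families), which makes the lemma a one-line consequence.
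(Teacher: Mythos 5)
Your proposal is correct and matches the paper's argument: the paper also deduces the lemma directly from Lemma~\ref{L:safa}, since ${\rm vr}$ is defined in \eqref{E:vrt} as the union of the faces, which coincide for combinatorially equivalent sequences.
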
 

\begin{proof} The conclusion is clear from Lemma~\ref{L:safa}. 
\end{proof}

We finish the section with three technical lemmas that will be needed later on. 

\begin{lemma}\label{L:subf} 
Let $\vec{t}=t_0\cdots t_n$ be a sequeence.  
\begin{enumerate}
\item[(i)] If $s$ be a non-empty subset of a face of $\vec{t}$, then $s$ is a face of $\vec{t}$. 

\item[(ii)] If $s\in {\rm vr}(\vec{t}\,)$, then $s\in {\rm Ur}$ or, for some $i\leq n$, $t_i=s$ and $t_i$ is a face of $t_{i+1}\cdots t_n$. 
\end{enumerate} 
\end{lemma}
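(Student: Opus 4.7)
The plan is to obtain (i) as an immediate consequence of Proposition~\ref{P:trse}(i), and then to derive (ii) from (i) by specialising to a singleton face.

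For (i), Proposition~\ref{P:trse}(i) asserts that the family of faces of $\vec{t}$ forms a simplicial complex, and a simplicial complex is by definition closed under taking non-empty subsets. Hence any non-empty $s\subseteq t$ with $t$ a face of $\vec{t}$ is itself a face of $\vec{t}$. A direct inductive proof on $n$ is also possible: in the first clause of the internal definition of face, $t\subseteq{\rm Ur}$ transfers to $s$; in the second, let $i$ be the witness and apply the inductive hypothesis to $s$ or $(s\setminus\{t_i\})\cup t_i$, according to whether $t_i\notin s$ or $t_i\in s$, as a non-empty subset of the face $(t\setminus\{t_i\})\cup t_i$ of $t_{i+1}\cdots t_n$. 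The non-inclusion conditions $t_j\not\subseteq s$ for $j\leq i$ follow from $t_j\not\subseteq t$, and lifting $s$ from a face of $t_{i+1}\cdots t_n$ back up to a face of $\vec{t}$ through the indices $j=i,i-1,\ldots,0$ uses those same non-inclusion conditions at each step.

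For (ii), suppose $s\in{\rm vr}(\vec{t}\,)$. By the definition of ${\rm vr}$, $s$ is an element of some face $t$ of $\vec{t}$. Since the elements of a set in ${\rm Fin}^+$ lie in ${\rm Ur}\cup{\rm Fin}^+$, the singleton $\{s\}$ belongs to ${\rm Fin}^+$ and is a non-empty subset of $t$. By (i), $\{s\}$ is a face of $\vec{t}$. Unpacking the two clauses of the definition of face for the singleton $\{s\}$ then gives precisely the disjunction in the statement: either $\{s\}\subseteq{\rm Ur}$, forcing $s\in{\rm Ur}$, or there exists $i\leq n$ with $t_i\in\{s\}$, which forces $t_i=s$, together with the requirement that $(\{s\}\setminus\{t_i\})\cup t_i = t_i$ be a face of $t_{i+1}\cdots t_n$.

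No substantive obstacle is anticipated. Part (i) is essentially a restatement of the closure property packaged in Proposition~\ref{P:trse}(i), and (ii) is obtained by specialising (i) to the singleton generated by a vertex and then reading off the defining clauses.
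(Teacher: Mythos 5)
Your proposal is correct. Part (i) is handled exactly as the paper intends: the paper dismisses it as ``an easy induction \dots or use Proposition~\ref{P:conr}'', and your appeal to Proposition~\ref{P:trse}(i) (which is just the packaged form of Proposition~\ref{P:conr}, stating that the faces of $\vec{t}$ form a complex and hence are closed under non-empty subsets) is the same idea; your sketched inductive alternative, including the lifting of $s$ back up through the indices $j=i,\dots,0$ using the inherited non-inclusions $t_j\not\subseteq s$, is also sound. For part (ii) you take a genuinely different and shorter route than the paper. The paper proves (ii) by induction on the length of the sequence, tracing the element $s$ of a face $t$ through the recursive definition: either $t\subseteq{\rm Ur}$, or a witness $t_i\in t$ exists, and then the subcase $s=t_i$ is settled by (i) while the subcase $s\in(t\setminus\{t_i\})\cup t_i$ is settled by the inductive hypothesis applied to the shorter suffix. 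You instead observe that $\{s\}\in{\rm Fin}^+$ is a non-empty subset of a face, invoke (i) to conclude $\{s\}$ is itself a face of $\vec{t}$, and then read the two clauses of the definition of face at the singleton, where the witness clause collapses (since $\{s\}\setminus\{t_i\}=\emptyset$) to exactly ``$t_i=s$ and $t_i$ is a face of $t_{i+1}\cdots t_n$''. This avoids a separate induction for (ii) -- all the recursion is delegated to (i) -- at the mild cost of routing the argument through singleton faces, whereas the paper's version stays at the level of elements of an arbitrary face. Both are complete; yours is the more economical of the two.
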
 

\begin{proof} (i) is proved by an easy induction on the length of $\vec{t}$ or use Proposition~\ref{P:conr}. We leave the details to the reader.

(ii) The proof is by induction on the length of the sequence $t_0\cdots t_n$. If this sequence is empty, the conclusion follows since all faces of the empty sequence 
$\emptyset$ are subsets of $\rm Ur$, so $s\in {\rm Ur}$, if $s\in t$ and $t$ is a face of $\emptyset$. 

Assume now that $s\in t$ and $t$ is a face of $t_0\cdots t_n$ and the conclusion holds for all sequences of length $<n+1$. Then either $t\subseteq {\rm Ur}$ or there is $i\leq n$ such that $t_i\in t$ and $(t\setminus \{ t_i\}) \cup t_i$ is a face of $t_{i+1}\cdots t_n$. In the first case, $s\in {\rm Ur}$. 
In the second case, we have two subcases: $s= t_i$, so $s\subseteq (t\setminus \{ t_i\})\cup t_i$, or $s\in (t\setminus \{ t_i\})\cup t_i$. 
In the first subcase, by (i), $s$ is a face of $t_{i+1}\cdots t_n$ as it is a subset of a face and it is not empty (being equal to $t_i$). In the second subcase, we apply our inductive assumption. 
\end{proof}

Point (ii) of the second lemma strengthens one of the conditions in the definition of face. Point (i) will be useful in assessing if a set is a face of a sequence.

\begin{lemma}\label{L:nonfa}
Let $\vec{t} = t_0\cdots t_n$ be a sequence.
\begin{enumerate} 
\item[(i)] If $t$ is a face of $\vec{s}\,\vec{t}$, for some sequence $\vec{s}$, and $t\subseteq \bigcup_{i\leq n} {\rm vr}(t_i\cdots t_n\,)$, then $t$ is a face of $\vec{t}$. 

\item[(ii)] If $t$ is a face of $\vec{t}$, then $t_j\not\subseteq t$ for each $j\leq n$ with $t_j\subseteq \bigcup_{j<i\leq n} {\rm vr}(t_{i}\cdots t_n)$. 
\end{enumerate} 
\end{lemma}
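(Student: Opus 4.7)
The plan is to prove (i) by induction on the length of $\vec{s}$, and then to deduce (ii) from (i) by a separate induction on the length of $\vec{t}$.

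For (i), the base case $\vec{s}=\emptyset$ is trivial. In the inductive step, write $\vec{s}=s_0\vec{s}'$ and unpack the recursive definition of ``face of a sequence'' for $t$ in $s_0\vec{s}'\vec{t}$. The easy branches (the ``$t\subseteq{\rm Ur}$'' branch and the ``new-vertex'' branch at an index inside $\vec{s}'\vec{t}$) reduce immediately to $t$ being a face of $\vec{s}'\vec{t}$, and the inductive hypothesis finishes them. The delicate branch is when $s_0\in t$, so that $t=y\cup\{s_0\}$ with $s_0\cup y$ a face of $\vec{s}'\vec{t}$ and $s_0\not\subseteq y$. Here the hypothesis $t\subseteq\bigcup_{i\leq n}{\rm vr}(t_i\cdots t_n)$ combined with $s_0\in{\rm Fin}^+$ and Lemma~\ref{L:subf}(ii) forces $s_0=t_{i'}$ for some $i'\leq n$ with $t_{i'}$ a face of $t_{i'+1}\cdots t_n$. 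The idea is then to re-witness the ``new-vertex'' role played by $s_0$ at position $0$ by the identical element $t_{i'}$ appearing later in the combined sequence, and so recognise $t$ as a face of $\vec{t}$ via Lemma~\ref{L:conr} applied with index $i'$ in $\vec{t}$.

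For (ii), assume toward a contradiction that $t_j\subseteq t$ for some $j\leq n$ with $t_j\subseteq\bigcup_{j<i\leq n}{\rm vr}(t_i\cdots t_n)$. Lemma~\ref{L:subf}(i) makes $t_j$ a face of $\vec{t}$, and (i) applied with $\vec{s}:=t_0\cdots t_j$ and $\vec{t}:=t_{j+1}\cdots t_n$ upgrades this to $t_j$ being a face of $t_{j+1}\cdots t_n$. Now induct on the length of $\vec{t}$ and apply Lemma~\ref{L:conr} to $t$. Case (a) yields $t_j\not\subseteq t$ at once. In case (b) with witness index $i$, the subcase $i\geq j$ is immediate from the non-containment clause; the subcase $i<j$ splits on whether $t_i\in t_j$: if not, then $t_j\subseteq(t\setminus\{t_i\})\cup t_i$ and we invoke the inductive hypothesis on the shorter sequence $t_{i+1}\cdots t_n$; if yes, then Lemma~\ref{L:subf}(ii) applied to the face $t_j$ of $t_{j+1}\cdots t_n$ produces an index $i^\ast>j$ with $t_{i^\ast}=t_i\in t$, and we use (b) with that index instead, reducing to the easy case $i^\ast\geq j$.

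The principal obstacle is the ``new-vertex'' sub-case of (i): one must show that $(t\setminus\{s_0\})\cup s_0$ is a face not merely of $\vec{t}$ (which follows readily from the inductive hypothesis applied to $\vec{s}'$ together with Lemma~\ref{L:subf}), but of the shorter tail $t_{i'+1}\cdots t_n$. This calls for a secondary, nested application of (i) to the pair $\vec{s}:=t_0\cdots t_{i'}$, $\vec{t}:=t_{i'+1}\cdots t_n$, and the hypothesis of that second application, namely that the elements of $(t\setminus\{s_0\})\cup s_0$ lie in $\bigcup_{i'+1\leq k\leq n}{\rm vr}(t_k\cdots t_n)$, has to be verified by tracing each vertex through Lemma~\ref{L:subf} to either ${\rm Ur}$ or to some $t_\ell$ with $\ell\geq i'+1$, possibly after replacing $\ell$ by a larger index with the same set-value $t_\ell$.
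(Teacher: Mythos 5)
Your skeleton (induction over $\vec{s}$ for (i), Lemma~\ref{L:subf}(ii) to identify $s_0=t_{i'}$, and deducing (ii) from (i)) matches the paper, but the proposal is missing the key auxiliary fact the paper proves first, and this creates a real gap. The paper's proof begins with a Claim: if $t$ is a face of $t_0\cdots t_n$, then $t_j\not\subseteq t$ for \emph{every} $j\leq n$ such that $t_j$ is a face of $t_{j+1}\cdots t_n$ (not merely for $j$ up to the witness index). In your delicate branch of (i), to recognise $t$ as a face of $\vec{t}$ with witness $i'$ you must verify, besides $t_{i'}\in t$ and your ``principal obstacle,'' the third clause of the definition: $t_j\not\subseteq t$ for all $j\leq i'$ with $t_j$ a face of $t_{j+1}\cdots t_n$. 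Your proposal never addresses this clause, and it does not follow from what you have in hand (the face conditions for $t$ in $s_0\vec{s}'\vec{t}$ with witness at position $0$ only give non-containment for the single entry $s_0$); it is precisely an instance of the Claim, which needs its own induction. With the Claim available the branch is handled much more cheaply: $s_0=t_{i'}$ is an entry of $\vec{s}'\vec{t}$ that is a face of its tail $t_{i'+1}\cdots t_n$ and is contained in the putative face $(t\setminus\{s_0\})\cup s_0$ of $\vec{s}'\vec{t}$ --- a contradiction, so the delicate branch is vacuous and no re-witnessing is needed. Two further problems with your route: the induction ``on the length of $\vec{s}$'' does not license the nested application of (i) with $\vec{s}:=t_0\cdots t_{i'}$, which may be longer than the original $\vec{s}$ (you would need to induct on the length of the whole concatenation); and the tracing of vertices of $(t\setminus\{s_0\})\cup s_0$ into $\bigcup_{k>i'}{\rm vr}(t_k\cdots t_n)$ fails when a vertex equals $t_\ell$ with $\ell\leq i'$ --- ``replacing $\ell$ by a larger index with the same set-value'' is not justified by anything you cite.

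In (ii), the subcase $i<j$ with $t_i\in t_j$ is also flawed as written: the witness index in the definition of face is existentially quantified, so you cannot ``use (b) with that index instead'' --- to use $i^\ast$ as a witness you would have to verify that $(t\setminus\{t_{i^\ast}\})\cup t_{i^\ast}$ is a face of $t_{i^\ast+1}\cdots t_n$ and the non-containment clause for all indices up to $i^\ast$, neither of which is known. A correct repair, which is essentially the paper's move inside its Claim, is: $t_i$ is a face of $t_{i+1}\cdots t_n$ by Lemma~\ref{L:subf}(i) (it is a non-empty subset of the face $(t\setminus\{t_i\})\cup t_i$), and $t_{i^\ast}=t_i\subseteq{\rm vr}(t_{i^\ast+1}\cdots t_n)$, so the inductive hypothesis of (ii) for the shorter sequence $t_{i+1}\cdots t_n$, applied to the face $t_i$ and the index $i^\ast$, yields $t_i\not\subseteq t_i$, a contradiction. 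The rest of your (ii) is fine, though note that once the Claim is proved, the paper obtains (ii) in two lines (by L:subf(i), $t_j$ is a face of $\vec{t}$; by (i) it is a face of $t_{j+1}\cdots t_n$; this contradicts the Claim), with no fresh induction. So the fix is structural: prove the Claim first, and both parts collapse to short arguments.
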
 

\begin{proof} We start with a claim that will be useful in proving both (i) and (ii). 

\begin{claim*} 
Let $t$ be a face of $\vec{t} = t_0\cdots t_n$. Then, for each $j\leq n$ such that $t_j$ is a face of $t_{j+1}\cdots t_n$, we have $t_j\not\subseteq t$. 
\end{claim*}

\noindent {\em Proof of Claim.} This is proved by induction on the length of the sequence $t_0\cdots t_n$. The conclusion is clear for the empty sequence. 

If $t\subseteq {\rm Ur}$, then the conclusion holds by definition of face. Otherwise, there exists $i\leq n$ such that $t_i\in t$ and $(t\setminus \{ t_i\})\cup t_i$ is a face of $t_{i+1}\cdots t_n$, and if $j\leq i$ is such that $t_j$ is a face of $t_{j+1}\cdots t_n$, then $t_j\not\subseteq t$. Thus, if there exists $j\leq n$ such that $t_j$ is a face of $t_{j+1}\cdots t_n$ and  $t_j\subseteq t$, then $i<j$. If we show that $t_i\not\in t_j$, then from $t_j\subseteq t$, we will see that $t_j\subseteq t\setminus \{ t_i\}$, which, by induction, will give that 
$(t\setminus \{ t_i\})\cup t_i$ is not a face of $t_{i+1}\cdots t_n$, a contradiction. So, it suffices to see that $t_i\not\in t_j$. But if $t_i\in t_j$, then, by Lemma~\ref{L:subf}(ii), 
for some $k>j$, $t_i=t_k$ and $t_k$ is a face of $t_{k+1}\cdots t_n$. But then, by induction, $t_i$ is not a face of $t_{i+1}\cdots t_n$, a contradiction, which proves the claim. 

\smallskip

(i) is proved by induction on the length of $\vec{s}$. If $\vec{s}$ is the empty sequence, then the conclusion is evident. Assume 
that $t$ is a face of $\vec{s}\,\vec{t}$ and $\vec{s} = s_0\cdots s_m$. If $t$ is a face of $s_1\cdots s_m\vec{t}$, then we are done by induction. So assume 
that $t$ is not a face of $s_1\cdots s_m\vec{t}$. Then, by the definition of face, we see that $s_0\in t$ and $(t\setminus \{ s_0\})\cup s_0$ is a face of $s_1\cdots s_m \vec{t}$. It follows that $s_0\in {\rm vr}(t_i\cdots t_n)$ for some $i\leq n$ as we assume that $t\subseteq \bigcup_{i\leq n} {\rm vr}(t_i\cdots t_n)$. Since $s_0\not\in {\rm Ur}$, it follows from Lemma~\ref{L:subf}(ii) that, for some $i\leq j\leq n$, $s_0= t_j$ and $s_0$ is a face of $t_{j+1}\cdots t_n$. So, by Claim, the set $(t\setminus \{ s_0\})\cup s_0$ is not a face of $s_1\cdots s_m \vec{t}$ as $s_0$ is included in this set, a contradiction. 

(ii) Assume towards a contradicion that $t_j\subseteq t$ and $t_j\subseteq \bigcup_{j<i\leq n} {\rm vr}(t_{i}\cdots t_n)$. By Lemma~\ref{L:subf}(i), $t_j$ is a face of $\vec{t}$. By (i) of the current lemma, $t_j$ is a face of $t_{j+1}\cdots t_n$ contradicting Claim. 
\end{proof}

\subsection{Grounded simplicial maps between sequences} 

We now define maps among sequences of sets in ${\rm Fin}^+$. We call these maps grounded simplicial maps. The intuition for the introduction of such maps is contained in Proposition~\ref{P:simmad}. 

Given sequences $\vec{t}$ and $\vec{s}$, function
\begin{equation}\label{E:funde} 
f\colon {\rm vr}(\vec{t}\,) \to {\rm vr}(\vec{s}\,)
\end{equation} 
is called {\bf grounded simplicial from $\vec{t}$ to $\vec{s}$} if the following conditions hold: 
\begin{enumerate}
\item[---] for each face $t$ of $\vec{t}$, $f(t)$ is a face of $\vec{s}$ and ${\rm sp}\big(f(t)\big)\subseteq {\rm sp}(t)$;

\item[---] for each face $s$ of $\vec{s}$, there is a face $t$ of $\vec{t}$ with $f(t)=s$ and 
${\rm sp}(s)= {\rm sp}(t)$. 
\end{enumerate}
Strictly speaking the definition above concerns not just the function $f$ but the triple $(f, \vec{t}, \vec{s})$. To emphasize this, we write 
\[
f\colon \vec{t} \to \vec{s}.
\]

We have a proposition that explains the intuition behind the definition above. In the statement, we use the assertion ${\rm Vr}( \vec{t}A)\subseteq {\rm vr}(\vec{t})$ from Proposition~\ref{P:verd}. 

\begin{proposition}\label{P:simmad} 
Let $\vec{t}$ and $\vec{s}$ be sequences and let $f\colon {\rm vr}(\vec{t}\,)\to {\rm vr}(\vec{s}\,)$. Then 
$f$ is a grounded simplicial map from $\vec{t}$ to $\vec{s}$ if and only if 
$f\res {\rm Vr}( \vec{t}A)$ is a grounded simplicial map from $\vec{t}A$ to $\vec{s}A$, for each grounded complex $A$. 
\end{proposition}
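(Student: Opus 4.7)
The plan is to translate between the internal, sequence-based definition of ``grounded simplicial'' (phrased in terms of faces of $\vec{t}$ and $\vec{s}$ and the support operation $\mathrm{sp}$) and the external characterization (phrased in terms of the divided complexes $\vec{t}A$ and $\vec{s}A$), using Proposition~\ref{P:conr} and Proposition~\ref{P:verd} as the bridge. The former identifies the faces of $\vec{t}A$ as exactly those faces $t$ of $\vec{t}$ with $\mathrm{sp}(t)\in A$, and the latter does the analogous thing for vertices. Once this bridge is set up, the external characterization of groundedness from Lemma~\ref{L:simsp} lines up almost word for word with the two defining clauses of a grounded simplicial map between sequences.

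For the forward direction I would fix an arbitrary grounded complex $A$ and verify three items. First, that $f\res {\rm Vr}(\vec{t}A)$ actually lands in ${\rm Vr}(\vec{s}A)$: for $v\in {\rm Vr}(\vec{t}A)$, the singleton $\{v\}$ is a face of $\vec{t}$ (Lemma~\ref{L:subf}(i)) with $\mathrm{sp}(\{v\})\in A$, so $\{f(v)\}$ is a face of $\vec{s}$ with $\mathrm{sp}(\{f(v)\})\subseteq \mathrm{sp}(\{v\})\in A$, and being a non-empty subset of a face of $A$ it lies in $A$; by Proposition~\ref{P:verd} we get $f(v)\in {\rm Vr}(\vec{s}A)$. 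Second, that $f\res {\rm Vr}(\vec{t}A)$ is simplicial: for a face $t$ of $\vec{t}A$, Proposition~\ref{P:conr} gives that $t$ is a face of $\vec{t}$ with $\mathrm{sp}(t)\in A$, so $f(t)$ is a face of $\vec{s}$ with $\mathrm{sp}(f(t))\subseteq \mathrm{sp}(t)\in A$, hence $f(t)$ is a face of $\vec{s}A$. Third, that the two conditions of Lemma~\ref{L:simsp} hold: the support containment is immediate from the sequence-level assumption, and given a face $s$ of $\vec{s}A$ the sequence-level condition produces a face $t$ of $\vec{t}$ with $f(t)=s$ and $\mathrm{sp}(t)=\mathrm{sp}(s)\in A$, which makes $t$ a face of $\vec{t}A$ by Proposition~\ref{P:conr}.

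For the backward direction, the trick is to pick a single universal grounded complex that ``sees everything.'' Let $A_0$ be the complex of all finite non-empty subsets of $\mathrm{Ur}$, which is a grounded complex. For any $r\in {\rm Fin}^+$, the well-foundedness of $\in$ together with the inductive structure of $\mathrm{Fin}^+$ gives that $\mathrm{sp}(r)$ is a non-empty finite subset of $\mathrm{Ur}$, hence lies in $A_0$. Consequently, by Proposition~\ref{P:conr} the faces of $\vec{t}A_0$ coincide with the faces of $\vec{t}$, and by Proposition~\ref{P:verd} we have ${\rm Vr}(\vec{t}A_0)={\rm vr}(\vec{t}\,)$; analogous equalities hold for $\vec{s}$. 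Applying the hypothesis at $A=A_0$, the restriction $f\res {\rm vr}(\vec{t}\,)=f$ is a grounded simplicial map $\vec{t}A_0\to \vec{s}A_0$, and then Lemma~\ref{L:simsp} yields exactly the two clauses in the definition of grounded simplicial from $\vec{t}$ to $\vec{s}$.

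There is no real obstacle of substance; the whole proof is a careful dictionary translation. The one point that requires attention is the existence of a ``universal'' grounded complex in the reverse direction, which relies on $\mathrm{Ur}$ being a set large enough that all relevant supports belong to the complex of its finite non-empty subsets, and on the fact that $\mathrm{sp}(r)\neq\emptyset$ for every $r\in \mathrm{Fin}^+$, which in turn uses the well-foundedness of $\in$ combined with the recursive definition of $\mathrm{Fin}^+$ from $\mathrm{Ur}$.
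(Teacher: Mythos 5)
Your proof is correct and follows essentially the same route as the paper, whose proof is just the observation that the statement is immediate from Proposition~\ref{P:conr} and Lemma~\ref{L:simsp}; you simply carry out that dictionary translation explicitly. Your use of the universal grounded complex of all finite non-empty subsets of ${\rm Ur}$ for the backward direction is exactly the device the paper itself employs in the proof of Proposition~\ref{P:trse}, so it is fully in the spirit of the intended argument.
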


\begin{proof} The conclusion is immediate from Proposition~\ref{P:conr} and Lemma~\ref{L:simsp}.
\end{proof}

The next lemma asserts that being a grounded simplicial map between sequences depends only on the combinatorial equivalence classes of the sequences. 

\begin{lemma}\label{L:grsi} 
If $f\colon \vec{t} \to \vec{s}$ is a grounded simplicial map and $\vec{t'}\equiv \vec{t}$ and $\vec{s'}\equiv \vec{s}$, then
$f$ is a grounded simplicial map from $\vec{t'}$ to $\vec{s'}$.
\end{lemma}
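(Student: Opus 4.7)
The plan is to reduce the lemma immediately to the two preceding results, Lemma~\ref{L:safa} and Lemma~\ref{L:savr}. The point is that the definition of a grounded simplicial map $f\colon \vec{t}\to \vec{s}$ refers to the sequences $\vec{t}$ and $\vec{s}$ only through their vertex sets ${\rm vr}(\vec{t}\,)$ and ${\rm vr}(\vec{s}\,)$ (as domain and codomain of $f$) and through the families of faces of $\vec{t}$ and $\vec{s}$. The support operator ${\rm sp}$ is a purely set-theoretic operation on sets in ${\rm Fin}^+$ and does not depend on the sequence at all. Hence, once we know that vertex sets and face families are invariants of combinatorial equivalence, the lemma is essentially bookkeeping.

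First, I would use Lemma~\ref{L:savr} to conclude that ${\rm vr}(\vec{t}\,)={\rm vr}(\vec{t'}\,)$ and ${\rm vr}(\vec{s}\,)={\rm vr}(\vec{s'}\,)$, so that the very same function $f$ qualifies as a map ${\rm vr}(\vec{t'}\,)\to {\rm vr}(\vec{s'}\,)$ with the correct domain and codomain to even be a candidate for a grounded simplicial map from $\vec{t'}$ to $\vec{s'}$.

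Second, I would invoke Lemma~\ref{L:safa} to see that $\vec{t}$ and $\vec{t'}$ have exactly the same faces, and likewise $\vec{s}$ and $\vec{s'}$. Consequently, each of the two defining conditions for $f$ to be grounded simplicial from $\vec{t}$ to $\vec{s}$, namely
\begin{enumerate}
\item[---] for each face $t$ of $\vec{t}$, $f(t)$ is a face of $\vec{s}$ with ${\rm sp}\big(f(t)\big)\subseteq {\rm sp}(t)$, and
\item[---] for each face $s$ of $\vec{s}$, there is a face $t$ of $\vec{t}$ with $f(t)=s$ and ${\rm sp}(s)={\rm sp}(t)$,
\end{enumerate}
translates word for word into the corresponding condition for $\vec{t'}$ and $\vec{s'}$, since quantifying over faces of $\vec{t}$ (respectively $\vec{s}$) is the same as quantifying over faces of $\vec{t'}$ (respectively $\vec{s'}$), and the inclusions among supports are conditions on the sets $t$, $s$, $f(t)$ themselves and not on the sequences.

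I do not expect any real obstacle: the proof is a one-line appeal to Lemmas~\ref{L:safa} and \ref{L:savr}, and the only thing to be careful about is to note that the groundedness condition is phrased entirely in terms of faces and the support operator, both of which are preserved under $\equiv$.
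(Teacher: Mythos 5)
Your proposal is correct and matches the paper's proof, which likewise deduces the lemma immediately from Lemmas~\ref{L:safa} and \ref{L:savr}; you have simply spelled out the bookkeeping that the paper leaves implicit.
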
 

\begin{proof} 
The conclusion is an immediate consequence of  Lemmas~\ref{L:safa} and \ref{L:savr}.   
\end{proof} 

Because of Lemma~\ref{L:grsi}, 
we regard the domain and codomain of grounded simplicial map $f\colon \vec{t}\to \vec{s}$ to be the combinatorial equivalence class of $\vec{t}$ and of $\vec{s}$, respectively.

\subsection{Additive families}

Let $\vec{t}$ be a sequence of sets. A family $S$ of faces of $\vec{t}$ is called {\bf additive with respect to $\vec{t}$}, or simply {\bf additive} if $\vec{t}$ is clear from the context, if, for all $s_1, s_2\in S$, if $s_1\cup s_2$ is a face of $\vec{t}$, then $s_1\cup s_2\in S$. Note that, by Lemma~\ref{L:safa}, if $\vec{t}$ and $\vec{t'}$ are two sequences such that $\vec{t}\equiv \vec{t'}$ and $S$ is additive with respect to $\vec{t}$, then $S$ is additive with respect to $\vec{t'}$. Recall the definition of a non-decreasing enumeration of $S$ from Section~\ref{Su:addsu}.

\begin{lemma}\label{L:dedf} 
Let $\vec{t}$ and $\vec{t'}$ be two sequences of sets with $\vec{t}\equiv \vec{t'}$. Let $S$ be a family additive with respect to $\vec{t}$ and so also with respect to $\vec{t'}$. If $\vec{S}_0$ and $\vec{S}_1$ are non-decreasing enumerations of $S$, then 
\[
\vec{S}_0\, \vec{t} \equiv \vec{S}_1\vec{t'}. 
\]
\end{lemma}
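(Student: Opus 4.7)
The plan breaks into three parts: reduce to the case $\vec{t}=\vec{t'}$; reduce the passage from $\vec{S}_0$ to $\vec{S}_1$ to a chain of elementary adjacent transpositions; and certify each elementary transposition as a legal application of clause (b) in the definition of $\equiv$.

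For the first reduction, iterating clause (c) gives $\vec{S}_1\vec{t}\equiv \vec{S}_1\vec{t'}$, so by transitivity it suffices to prove $\vec{S}_0\vec{t}\equiv \vec{S}_1\vec{t}$. For the second, I will invoke the standard ``bubble-sort'' fact that any two linear extensions of a finite poset are connected by a chain of linear extensions whose successive terms differ by a single adjacent transposition of $\subseteq$-incomparable elements; applied to $(S,\subseteq)$, this reduces the problem to showing that a single adjacent swap of two $\subseteq$-incomparable entries $s=p_i$, $s'=p_{i+1}$ in a non-decreasing enumeration $p_0\cdots p_n$ of $S$ yields a valid (b)-move in the extended sequence $p_0\cdots p_n\vec{t}$.

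To apply (b), writing $\vec{r}:=p_{i+2}\cdots p_n\vec{t}$, I must check $s\not\in s'$, $s'\not\in s$, and either $s\cap s'=\emptyset$ or $s\cup s'$ is not a face of $\vec{r}$. The two $\in$-conditions are immediate from Proposition~\ref{P:trse}(ii) applied to $\vec{t}$, since $s$ and $s'$ are both faces of $\vec{t}$. The main obstacle is the remaining check: assuming $s\cap s'\neq\emptyset$, I must verify that $s\cup s'$ is not a face of $\vec{r}$.

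I will handle this obstacle by contradiction. Suppose $s\cup s'$ is a face of $\vec{r}$. Since $s, s'$ are faces of $\vec{t}$, they lie in ${\rm vr}(\vec{t}\,)$, hence $s\cup s'\subseteq {\rm vr}(\vec{t}\,)$; Lemma~\ref{L:nonfa}(i) then pulls $s\cup s'$ back to a face of $\vec{t}$. Additivity of $S$ upgrades this to $s\cup s'\in S$, so $s\cup s'=p_k$ for some index $k$. The strict containments $s, s'\subsetneq p_k$, forced by $\subseteq$-incomparability of $s$ and $s'$, combined with the non-decreasing enumeration, force $k\geq i+2$, placing $p_k$ inside $\vec{r}$. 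Next, using that no later $p_l$ (with $l>k$) is a subset of $p_k$ (another consequence of non-decreasing) and the routine observation that dividing by a set not contained in a given face preserves that face, a short induction shows that $p_k$ remains a face of its own proper suffix $p_{k+1}\cdots p_n\vec{t}$. The Claim inside the proof of Lemma~\ref{L:nonfa}, applied to the sequence $\vec{r}$ with the face $t:=s\cup s'$, now yields $p_k\not\subseteq s\cup s'$, contradicting $p_k=s\cup s'$. Chaining $\vec{S}_0\vec{t}\equiv \vec{S}_1\vec{t}\equiv \vec{S}_1\vec{t'}$ finishes the proof.
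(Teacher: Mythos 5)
Your proposal is correct and follows essentially the same route as the paper's proof: reduce to $\vec{t'}=\vec{t}$ via clause (c), reduce to adjacent transpositions of $\subseteq$-incomparable entries, and certify each swap as a (b)-move by using additivity of $S$ together with Lemma~\ref{L:nonfa} to rule out $s\cup s'$ being a face of the tail. The only (harmless) differences are that you cite the standard connectivity of linear extensions under adjacent transpositions where the paper runs its own cardinality-sorting argument, and you organize the face-check as a contradiction via Lemma~\ref{L:nonfa}(i) and the Claim inside its proof, whereas the paper applies Lemma~\ref{L:nonfa}(ii) directly; both versions are sound.
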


\begin{proof} Note first that by condition (c) in the definition of $\equiv$, it suffices to show the lemma for $\vec{t'}=\vec{t}$. 

By $\#(s)$ we denote the cardinality of $s$. 

We first show that if $\vec{S}=s_0\cdots s_l$ is a nondecreasing enumeration of $S$ and $s_{i_0-1}, s_{i_0}$ are two consecutive entries in it such that 
\begin{equation}\label{E:card}
\#(s_{i_0-1}) \geq \#(s_{i_0}), 
\end{equation}
then the sequence $\vec{S}'$ resulting from switching the positions of $s_{i_0-1}$ and $s_{i_0}$ 
is also a non-decreasing enumeration of $S$ and 
\begin{equation}\label{E:swi}
\vec{S}\, \vec{t} \equiv \vec{S}'\vec{t}.
\end{equation}
To see this observe that 
\begin{equation}\label{E:nont} 
s_{i_0-1}\not\subseteq s_{i_0}\;\hbox{ and }\;s_{i_0}\not\subseteq s_{i_0-1}. 
\end{equation} 
Indeed, the inclusion $s_{i_0-1}\subseteq s_{i_0}$ would lead by \eqref{E:card} to $s_{i_0-1}=s_{i_0}$, contradicting injectivity of the enumeration $\vec{S}$, while the inclusion $s_{i_0}\subseteq s_{i_0-1}$ would contradict the fact that this enumeration $\vec{S}$ is non-decreasing. The first formula in \eqref{E:nont} gives that $\vec{S'}$ is a non-decreasing enumeration of $S$. Now, set $t= s_{i_0-1}\cup s_{i_0}$ and note that, by \eqref{E:nont}, $t$ properly contains both $s_{i_0-1}$ and $s_{i_0}$. So, by additivity of $S$, we have that $t$ is not a face of $\vec{t}$ or $t\in S$, in which case $t$ is listed in $\vec{S}$ to the right of $s_{i_0-1}$ and $s_{i_0}$. Applying the definition of face and Lemma~\ref{L:nonfa}(ii), 
we see that 
in either case, $t$ is not a face of $s_{i_0+1}\cdots s_m \vec{t}$. Thus, we get \eqref{E:swi} from condition (b) in the definition of $\equiv$. 
Furthermore, observe that if $\#(s_{i_0-1}) > \#(s_{i_0})$, then the following quantity strictly increases
\begin{equation}\label{E:quant}
\sum \{ \#(s_{i+1})- \#(s_{i}) \mid i\leq l\; \hbox{ and }\; \#(s_{i}) \leq \#(s_{i+1}) \}.
\end{equation} 
after the switch of $s_{i_0-1}$ and $s_{i_0}$ is performed. 

It follows that by preforming switches as above to maximize \eqref{E:quant}, we can transform a given sequence $\vec{S}$ to a
sequence $\vec{S}' = s_0' \cdots s_m'$ that is a non-decreasing enumeration of $S$, fulfills \eqref{E:swi},
and is such that the function 
\begin{equation}\label{E:funo} 
i\to \#(s_i')\hbox{ is non-decreasing.}
\end{equation}

Now enumerations $\vec{S}_0$ and $\vec{S}_1$ as in the assumption of the lemma can be transformed to enumerations $\vec{S}'_0$ and $\vec{S}'_1$, for which conditions \eqref{E:swi} and \eqref{E:funo} hold. By \eqref{E:funo}, $\vec{S}'_0$ can be transformed to $\vec{S}'_1$ by switches as above since, by the argument above, any pair of consecutive faces of the same cardinality can be switched.
\end{proof}

Lemma~\ref{L:dedf} makes it possible to introduce the following piece of notation. If $S$ is a family additive with respect to $\vec{t}$, we write 
\begin{equation}\label{E:addin}  
S\,\vec{t}
\end{equation} 
for the combinatorial equivalence class of the sequence $\vec{S} \,\vec{t}$ for some, or equivalently, each, non-decreasing enumeration $\vec{S}$ of $S$.

\subsection{Division of grounded simplicial maps}

We define now the division of grounded simplicial maps by sets in ${\rm Fin}^+$. 
We need the following lemma. 

\begin{lemma}\label{L:adco} 
Let $\vec{s}$ and $\vec{t}$ be sequences of sets, let $f\colon \vec{t}\to \vec{s}$ be a simplicial map, and let $s$ be a set in ${\rm Fin}^+$. Consider the family 
\[
f^{-1}(s)= \{ t\mid t\hbox{ a face of }\, \vec{t} \hbox{ and } f(t)=s\}.
\]
\begin{enumerate} 
\item[(i)] $f^{-1}(s)$ is additive with respect to $\vec{t}$.

\item[(ii)] $f^{-1}(s)$ is convex, that is, if $t_1, t_2\in f^{-1}(s)$ and $t_1\subseteq r\subseteq t_2$, then $r\in f^{-1}(s)$. 
\end{enumerate} 
\end{lemma}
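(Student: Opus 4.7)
The proof of both parts should follow directly from the vertex-wise definition of a grounded simplicial map combined with the hereditary behaviour of faces established in Lemma~\ref{L:subf}(i). Recall that $f\colon \vec{t}\to\vec{s}$ is a function on vertex sets, and for a face $t$ of $\vec{t}$ the image $f(t)$ is literally $\{f(v):v\in t\}$; in particular $f$ commutes with unions on the vertex level.

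For part (i), the plan is to take $t_1,t_2\in f^{-1}(s)$ with $t_1\cup t_2$ a face of $\vec{t}$, and simply compute
\[
f(t_1\cup t_2)=f(t_1)\cup f(t_2)=s\cup s=s.
\]
Since $t_1\cup t_2$ is a face of $\vec{t}$ by hypothesis, this places $t_1\cup t_2$ in $f^{-1}(s)$, giving additivity. The only subtlety worth flagging is that the definition of grounded simplicial map in \eqref{E:funde} already guarantees that $f$ sends faces of $\vec{t}$ to faces of $\vec{s}$, so once we know $t_1\cup t_2$ is a face we do not need to verify separately that its image is a face.

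For part (ii), let $t_1,t_2\in f^{-1}(s)$ and $t_1\subseteq r\subseteq t_2$. I would first observe that $r$ is a non-empty subset of the face $t_2$ (non-empty because it contains the face $t_1$), so by Lemma~\ref{L:subf}(i) $r$ is itself a face of $\vec{t}$. Monotonicity of images under $f$ gives
\[
s=f(t_1)\subseteq f(r)\subseteq f(t_2)=s,
\]
so $f(r)=s$ and hence $r\in f^{-1}(s)$.

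There is essentially no obstacle here: both statements are formal consequences of the fact that a simplicial map is determined by its action on vertices and that face families are closed under non-empty subsets (Lemma~\ref{L:subf}(i)). The lemma's role in the paper is organizational, making it legitimate to apply the notation $f^{-1}(s)\,\vec{t}$ from \eqref{E:addin} when later defining the division $sf$ of a grounded simplicial map along $s$.
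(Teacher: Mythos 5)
Your argument is correct and matches the paper's intent: the paper dismisses both points as clear, and the intended justification is exactly your computation $f(t_1\cup t_2)=f(t_1)\cup f(t_2)=s$ for additivity and the sandwich $s=f(t_1)\subseteq f(r)\subseteq f(t_2)=s$ (with $r$ a face by Lemma~\ref{L:subf}(i)) for convexity. No gap; your write-up simply makes the routine verification explicit.
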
 

\begin{proof} 
Both points are clear. 
\end{proof} 

Let now $\vec{t}$ and $\vec{s}$ be two sequences of sets, let $f\colon \vec{t}\to\vec{s}$ be a grounded simplicial map, and let $s$ be a 
set in ${\rm Fin}^+$. Define the map 
\[
s f\colon  {\rm vr}\big( f^{-1}(s)\,\vec{t}\,\big) \to {\rm vr}(s\, \vec{s}\,), 
\]
where the domain ${\rm vr}\big( f^{-1}(s)\,\vec{t}\,\big)$ of $sf$ is specified by Lemma~\ref{L:vrst} and Proposition~\ref{P:verd} as 
\[
f^{-1}(s)\cup \big({\rm vr}(\vec{t}\,)\setminus \{ x\mid \{ x\}\in f^{-1}(s)\}\big). 
\]
The map $sf$ is the function that maps each element $t$ of $f^{-1}(s)$ to $s$, and each other element $v$ of the domain of $sf$ to $f(v)$. So, in symbols, we have  
\[
(sf)(x) = f(x), \hbox{ for } x\in f^{-1}(s) \cup \Big( {\rm vr}(\vec{t}\,)\setminus \{ x\mid \{ x\}\in f^{-1}(s)\}\Big).
\]
To clarify the expression above, observe that if $x\in f^{-1}(s)$, then $x$ is a face of $\vec{t}$, so a subset of ${\rm vr}(\vec{t}\,)$, and $f(x)$ above stands for the pointwise image of the set $x$ under $f$; while if $x\in {\rm vr}(\vec{t}\,)\setminus \{ x\mid \{ x\}\in f^{-1}(s)\}$, then $f(x)$ stands for the value of $f$ at $x$. 

Note finally that if $s$ is not a face of $\vec{s}$, then $s\vec{s}\equiv \vec{s}$ and $f^{-1}(s)=\emptyset$. In this case, $sf$ is a function from ${\rm vr}(\vec{t}\,)$ to ${\rm vr}(\vec{s}\,)$ and it is equal to $f$.

The lemma below uses the notation introduced in \eqref{E:addin}.

\begin{lemma}\label{L:divde} 
If $f\colon \vec{t}\to\vec{s}$ is grounded simplicial and $s\in {\rm Fin}^+$, then $sf\colon f^{-1}(s) \, \vec{t}\to s \vec{s}$ is a grounded simplicial map. 
\end{lemma}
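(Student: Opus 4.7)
The plan is to translate the sequence-level claim into a family of complex-level claims via Proposition~\ref{P:simmad} and invoke the complex-version of this lemma stated at the end of Section~\ref{Su:stdm} (the one proved as Lemma~\ref{L:divpr}). Concretely, I will show that for every grounded complex $A$, the restriction $sf\res {\rm Vr}((f^{-1}(s)\vec{t})A)$ is grounded simplicial from $(f^{-1}(s)\vec{t})A$ to $(s\vec{s})A$, and then apply Proposition~\ref{P:simmad} in reverse. The degenerate case when $s$ is not a face of $\vec{s}$ is immediate from the remarks preceding the lemma, since then $s\vec{s}\equiv \vec{s}$, $f^{-1}(s)=\emptyset$, and $sf=f$; so I may assume $s$ is a face of $\vec{s}$.

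Fix a grounded complex $A$. By Proposition~\ref{P:simmad} applied to $f$, the restriction $f_A := f\res{\rm Vr}(\vec{t}A)$ is a grounded simplicial map $\vec{t}A \to \vec{s}A$, and so the complex-level lemma yields that $sf_A\colon f_A^{-1}(s)\cdot(\vec{t}A)\to s\cdot(\vec{s}A)$ is grounded simplicial, where $f_A^{-1}(s)$ denotes the additive family of faces of $\vec{t}A$ that $f_A$ sends to $s$. The step that has to be done by hand is to identify this domain and codomain with the applications of the sequences $f^{-1}(s)\vec{t}$ and $s\vec{s}$ to $A$. The codomain identification $s\cdot(\vec{s}A) = (s\vec{s})A$ is immediate from \eqref{E:itsi}. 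For the domain, Proposition~\ref{P:conr} gives $f_A^{-1}(s) = \{u\in f^{-1}(s): {\rm sp}(u)\in A\}$, since a face $u$ of $\vec{t}$ lies in $\vec{t}A$ iff ${\rm sp}(u)\in A$. For any $u\in f^{-1}(s)\setminus f_A^{-1}(s)$ we have ${\rm sp}(u)\notin A$, so by Proposition~\ref{P:conr} again, $u$ is not a face of any complex $\vec{r}A$; hence dividing by such a $u$ has no effect. Given a non-decreasing enumeration of $f^{-1}(s)$, deleting its entries with support outside $A$ yields a non-decreasing enumeration of $f_A^{-1}(s)$ that produces the same complex when applied to $\vec{t}A$, giving $(f^{-1}(s)\vec{t})A = f_A^{-1}(s)\cdot (\vec{t}A)$ via Lemma~\ref{L:dedf}.

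With the complexes identified, I would verify that $sf\res {\rm Vr}((f^{-1}(s)\vec{t})A)$ and $sf_A$ agree as functions: both send each element of $f_A^{-1}(s)$ (the "new" vertices) to $s$ and every other vertex $v$ to $f(v)=f_A(v)$, which is exactly the definition of both maps on their common domain. Consequently $sf\res {\rm Vr}((f^{-1}(s)\vec{t})A)$ is grounded simplicial from $(f^{-1}(s)\vec{t})A$ to $(s\vec{s})A$ for every grounded complex $A$, and Proposition~\ref{P:simmad} applied in the reverse direction yields the claim.

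The main obstacle is the domain identification in the second paragraph: one must combine the characterization of faces of $\vec{r}A$ from Proposition~\ref{P:conr}, the observation that divisions by sets not appearing as faces act trivially, and the fact that restricting a non-decreasing enumeration to a subfamily remains non-decreasing (so one stays inside the combinatorial equivalence class packaged by Lemma~\ref{L:dedf}). Everything else is bookkeeping that follows directly from the setup.
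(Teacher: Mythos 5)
Your proposal is correct and is essentially the paper's own argument: the paper proves this lemma by the same reduction through Proposition~\ref{P:simmad} to the complex-level division lemma (cited there as Lemma~\ref{L:welgr2}, evidently a slip for Lemma~\ref{L:divpr}), and you simply spell out the domain/codomain identifications that this reduction needs. The only small point to add is the per-complex degenerate case: when $s$ is a face of $\vec{s}$ but ${\rm sp}(s)\notin A$, Lemma~\ref{L:divpr} does not literally apply since $s$ is not a face of $\vec{s}A$, but then $f_A^{-1}(s)=\emptyset$, $s(\vec{s}A)=\vec{s}A$, and the restricted map is just $f_A$, so the conclusion is immediate in that case as well.
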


\begin{proof} The lemma follows from Proposition~\ref{P:simmad} and Lemma~\ref{L:welgr2}.
\end{proof}

\subsection{Notational conventions}\label{Su:notc}

We introduce two notational conventions. First, we abandon the symbol $\equiv$ and we simply write 
\[
\vec{s} = \vec{t}, \hbox{ for } \vec{s}\equiv \vec{t}. 
\]

Second, often, it will be very convenient to specify a grounded simplicial map $f\colon \vec{t}\to \vec{s}$ by what we call an {\bf assignment}. An assignment is a function $\phi\colon S\to T$, where $S$ and $T$ are sets such that 
\[
\phi\big( {\rm vr}(\vec{t}\,) \cap S\big) \subseteq 
 {\rm vr}(\vec{s}\,)\cap T \;\hbox{ and }\; {\rm vr}(\vec{t}\,)\setminus S = {\rm vr}(\vec{s}\,)\setminus T.
\]
Given such an assignment $\phi$, $f$ is determined by the formulas 
\begin{equation}\label{E:assign}
\begin{split}
f\res \big( {\rm vr}(\vec{t}\,)\cap S\big) &= \phi\res \big( {\rm vr}(\vec{t}\,)\cap S\big)\\
f\res \big( {\rm vr}(\vec{t}\,)\setminus S \big) &= {\rm id}.
\end{split} 
\end{equation} 
Of course, in each case we apply the above approach, we need to check that $f$ defined by \eqref{E:assign} is a grounded simplicial map from $\vec{t}$ to $\vec{s}$.

\section{Relevant categories}

\subsection{The ambient category}\label{Su:agbo}

We define a natural broad category of which all categories relevant to our arguments are subcategories. 
The {\bf objects} of the category are  sequences of sets in ${\rm Fin}^+$ taken up to combinatorial 
equivalence. The {\bf morphisms} are grounded simplicial maps between (combinatorial equivalence classes of) sequences; see Lemma~\ref{L:grsi}. 
Composition of morphisms is simply the composition of functions.
We point out that $f\colon \vec{t}\to \vec{s}$ is an {\bf isomorphism} in this ambient category if 
\begin{enumerate}
\item[---] $f$ is a bijection from ${\rm vr}(\vec{t}\,)$ to ${\rm vr}(\vec{s}\,)$; 

\item[---] $t$ is a face of $\vec{t}$ if and only if $f(t)$ is a face of $\vec{s}$;

\item[---] if $t$ is a face $\vec{t}$, then ${\rm sp}(f(t)) = {\rm sp}(t)$.
\end{enumerate}

Lemma~\ref{L:divde} defines the operation of division on morphisms. We consider the ambient category to be equipped with this division operation.

\subsection{Weld maps and neat weld maps}\label{Su:wene}

Weld maps form the most important to our considerations class of maps. Let $t$ be a set in ${\rm Fin}^+$, and let $x\in t$. This $x$ may be an element of ${\rm Fin}^+$ or of ${\rm Ur}$. 
Assume $t$ is not a vertex of $\vec{t}$. 
Then the map given by the assignment 
\begin{equation}\label{E:ttx}
t \to  x
\end{equation} 
is a morphism 
\[
\hbox{from }\; t\,\vec{t}\;  \hbox{ to }\;  \vec{t}.
\]
We observe that the assumption that $t$ is not a vertex of $\vec{t}$ yields the implication 
\[
t\in {\rm vr}(t\, \vec{t}\,) \;\Rightarrow\; x\in {\rm vr}(\vec{t}\,). 
\]
Indeed, if $t\in {\rm vr}(t\, \vec{t}\,)$, then $t$ is a face of $\vec{t}$ or $t\in {\rm vr}(\vec{t}\,)$, and since the latter possibility is excluded, the former holds implying $x\in {\rm vr}(\vec{t}\,)$. Thus, the assignment \eqref{E:ttx} is well defined. 
We denote the morphism above by 
\[
\pi^{\vec{t}}_{x,t}.
\]
As usual, the domain of this morphism is the combinatorial equivalence class of $t\,\vec{t}$ and the codomain in the combinatorial equivalence class of 
$\vec{t}$.

\begin{lemma}\label{L:grasi} 
Weld maps are grounded simplicial maps. 
\end{lemma}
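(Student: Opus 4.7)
The plan is to unfold both defining conditions of ``grounded simplicial map from $t\,\vec{t}$ to $\vec{t}$'' for the map $\pi = \pi^{\vec{t}}_{x,t}$ and verify each by splitting according to the structure of faces of $t\,\vec{t}$. By Proposition~\ref{P:conr} (or directly the definition of face of a sequence, combined with the definition of division \eqref{E:divinf}), every face $r$ of $t\,\vec{t}$ is either an \emph{old face}, namely a face of $\vec{t}$ with $t\not\subseteq r$, or a \emph{new face} of the form $r = y\cup\{t\}$ where $t\cup y$ is a face of $\vec{t}$ and $t\not\subseteq y$. The assumption that $t\notin {\rm vr}(\vec{t}\,)$ guarantees that $t$ does not occur as an element of any face of $\vec{t}$, so in the old-face case the pointwise image $\pi(r)$ equals $r$.

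For the forward condition (image of a face is a face, with support decreasing), the old-face case is immediate. In the new-face case, I would write $\pi(r) = y\cup\{x\}$ and note that, since $x\in t$, this is a non-empty subset of the face $t\cup y$ of $\vec{t}$; so it is itself a face of $\vec{t}$ by Lemma~\ref{L:subf}(i). The support inequality is then a direct computation using \eqref{E:sppr}:
\[
{\rm sp}(\pi(r)) = {\rm sp}(y)\cup {\rm sp}(x) \;\subseteq\; {\rm sp}(y)\cup {\rm sp}(t) = {\rm sp}(y\cup\{t\}) = {\rm sp}(r),
\]
where ${\rm sp}(x)\subseteq {\rm sp}(t)$ because $x\in t$.

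For the backward condition (every face of $\vec{t}$ is hit by a face with matching support), let $s$ be a face of $\vec{t}$. Split on whether $t\subseteq s$. If $t\not\subseteq s$, then $s$ itself is an old face of $t\,\vec{t}$ and $\pi(s)=s$ with equal supports. If $t\subseteq s$, I would set $r = (s\setminus\{x\})\cup\{t\}$ and take $y = s\setminus\{x\}$. Then $t\cup y = s$ (using $x\in t\subseteq s$) is a face of $\vec{t}$, and $t\not\subseteq y$ because $x\in t$ but $x\notin y$; so $r$ is a new face of $t\,\vec{t}$. Pointwise, $\pi(r) = y\cup\{x\} = s$, and a short calculation with \eqref{E:sppr} gives ${\rm sp}(r) = {\rm sp}(y)\cup {\rm sp}(t) = {\rm sp}(s)$, using $t\subseteq s$ for one inclusion and ${\rm sp}(x)\subseteq{\rm sp}(t)$ for the other.

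The only point that needs a little care is the hidden assumption that $t\notin{\rm vr}(\vec{t}\,)$, which is used twice: once to ensure $\pi$ is well defined as in the paragraph following \eqref{E:ttx}, and once to guarantee that $t$ does not appear as an element of an old face, so that the pointwise image $\pi(r) = r$ in that case. Beyond that, the proof is just bookkeeping with the definitions of face of a sequence and of support, so I do not anticipate a real obstacle.
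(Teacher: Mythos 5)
Your verification is correct: the old/new decomposition of the faces of $t\,\vec{t}$ (justified via Proposition~\ref{P:conr} applied to the complex of all finite non-empty subsets of $\rm Ur$, together with \eqref{E:divinf}), the use of $t\notin{\rm vr}(\vec{t}\,)$ to see that $\pi$ is the identity on old faces and on the $y$-part of new faces, the appeal to Lemma~\ref{L:subf}(i) for the image of a new face, the witness $r=(s\setminus\{x\})\cup\{t\}$ for a face $s\supseteq t$, and the support computations via \eqref{E:sppr} are all sound. Where you differ from the paper is only in packaging: the paper disposes of this lemma in one line by invoking Proposition~\ref{P:simmad}, which reduces groundedness of a map between sequences to groundedness of its restrictions $\vec{t}A\to\vec{s}A$ for every grounded complex $A$, and then citing Lemma~\ref{L:welgr2} of Appendix~\ref{A:isome}, where the old-face/new-face case analysis and the support bookkeeping are carried out once and for all at the level of divided complexes. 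Your argument transplants essentially that same computation (same case split, same witness, same support identities) directly to the sequence setting, bypassing Proposition~\ref{P:simmad}. What the paper's route buys is reuse of the appendix lemma and of the general transfer principle; what your route buys is self-containedness at the sequence level, at the cost of re-deriving the face description of $t\,\vec{t}$, which is exactly the step where you correctly flagged that Proposition~\ref{P:conr} (rather than the bare definition of face of a sequence) is the clean justification. No gap.
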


\begin{proof} The lemma follows Proposition~\ref{P:simmad} and Lemma~\ref{L:welgr2}.
\end{proof}

We will state the main amalgamation theorem in a way that involves a subclass of morphisms of the ambient category. 
The subclass is defined by allowing only compositions of weld maps that are organized in a certain way. 
Let $S$ be an additive family of faces of $\vec{t}$. Let $\iota$ be a function defined on $S$ such that $\iota(s)\in s$ for each $s\in S$. 
Let 
\[
\pi^{\vec{t}}_\iota\colon S\,\vec{t}\to \vec{t}
\] 
be the map defined as follows. Observe that, by Lemma~\ref{L:vrst} and Proposition~\ref{P:verd}, 
\[
{\rm vr}(S\vec{t}\,) = S\cup \big( {\rm vr}(\vec{t}\,)\setminus \{ x\mid \{ x\}\in S\}\big),
\]
and so $\iota$ is a surjection from ${\rm vr}(S\vec{t}\,)$ to ${\rm vr}(\vec{t}\,)$, and we consider $\iota$ as an assignment determining a map $S\,\vec{t}\to \vec{t}$. This map is $\pi^{\vec{t}}_\iota$. We will say that $\iota$ is {\bf based on} the family $S$. We allow the family $S$, on which $\pi_\iota$ is based, to be empty, in which case $\pi^{\vec{t}}_\iota$ is the identity map.

\begin{lemma}\label{L:ioco} 
Let $S$ be an additive family of faces of $\vec{t}$, on which $\iota$ is based. 
The map $\pi^{\vec{t}}_\iota\colon S\,\vec{t}\to \vec{t}$ is a composition of weld maps, so it is a grounded simplicial map. 
\end{lemma}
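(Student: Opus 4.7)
The plan is to fix a non-decreasing enumeration $\vec{S} = s_0 \cdots s_l$ of $S$, so that $S\vec{t} = s_0 s_1 \cdots s_l \vec{t}$ as combinatorial equivalence classes (Lemma~\ref{L:dedf}), and then to peel the divisions off one at a time from the outside in. Concretely, I would propose
\[
\pi^{\vec{t}}_\iota \;=\; \pi^{\vec{t}}_{\iota(s_l),\, s_l} \circ \pi^{s_l \vec{t}}_{\iota(s_{l-1}),\, s_{l-1}} \circ \cdots \circ \pi^{s_1\cdots s_l \vec{t}}_{\iota(s_0),\, s_0},
\]
and then check two things: that each factor is a legitimate weld map, and that the composition acts as $\pi^{\vec{t}}_\iota$ on vertices.

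For the first point, the $i$-th factor $\pi^{s_{i+1}\cdots s_l \vec{t}}_{\iota(s_i),\, s_i}$ needs $\iota(s_i)\in s_i$, which is built into the hypothesis, and $s_i \notin {\rm vr}(s_{i+1}\cdots s_l\vec{t}\,)$. I would obtain the latter by first arguing that $s_i$ survives as a face through the divisions by $s_{i+1},\dots,s_l$: since the enumeration is non-decreasing, $s_k\not\subseteq s_i$ for every $k>i$, so the old-face clause of \eqref{E:divinf} keeps $s_i$ at each step. Once $s_i$ is a face of $s_{i+1}\cdots s_l\vec{t}$, Proposition~\ref{P:trse}(ii) gives that $s_i\notin{\rm tc}(r)$ for any face $r$ of the same sequence, hence $s_i$ cannot belong to any such $r$, hence $s_i\notin{\rm vr}(s_{i+1}\cdots s_l\vec{t}\,)$.

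For the second point, I would trace both maps on ${\rm vr}(S\vec{t}\,) = S \cup \big({\rm vr}(\vec{t}\,)\setminus\{x:\{x\}\in S\}\big)$. An old vertex $x\in{\rm vr}(\vec{t}\,)$ is fixed by every factor, matching $\pi^{\vec{t}}_\iota(x)=x$. A new vertex $s_i$ is fixed by the first $i$ factors because the enumeration is injective, is sent to $\iota(s_i)$ by the $(i{+}1)$-st factor, and is then fixed by each subsequent factor: we need $\iota(s_i)\neq s_j$ for $j>i$, and this follows because $\iota(s_i)\in s_i$ while Proposition~\ref{P:trse}(ii) gives $s_j\notin{\rm tc}(s_i)\supseteq s_i$. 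The end value is $\iota(s_i)=\pi^{\vec{t}}_\iota(s_i)$, as required.

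The main obstacle is precisely the first verification---ensuring that no intermediate $s_i$ is already a vertex of the relevant tail sequence. The set-theoretic non-reflexivity encoded in Proposition~\ref{P:trse}(ii) (faces of a divided sequence are $\in$-antichains in the transitive closure sense) is exactly what keeps the factorization from collapsing; this is the reason the framework of Part~\ref{P:stfra} was set up as it was. Grounded simplicity of the composition then follows at once from Lemma~\ref{L:grasi} and closure of the ambient category of Section~\ref{Su:agbo} under composition.
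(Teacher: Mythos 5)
Your proposal is correct and is essentially the paper's argument: the paper reduces via Proposition~\ref{P:simmad} to Lemma~\ref{L:ioco2}, whose proof is exactly this factorization of $\pi^{\vec{t}}_\iota$ into single welds along a non-decreasing enumeration of $S$. The only difference is that you carry out the verification (that each $s_i$ is not a vertex of the tail sequence, and that the composed assignments agree with $\iota$) directly in the sequence calculus using Proposition~\ref{P:trse}(ii), rather than transferring to divided complexes, which is a harmless variation.
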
 

\begin{proof} By Proposition~\ref{P:simmad} and Lemma~\ref{L:ioco2}, $\pi^{\vec{t}}_\iota$ is a composition of weld maps. The rest of the conclusion follows from Lemma~\ref{L:grasi}.
\end{proof}

We call a map of the form $\pi^{\vec{t}}_\iota$ a {\bf neat weld} if $\iota$ is based on an upward closed family. 
Clearly, each upward closed family of faces is additive.

\subsection{Two subcategories}

In each category, the objects are all sequences taken up to the combinatorial equivalence. 
Each morphism will be a grounded simplicial map. Of course, in each category morphisms will be closed under composition. 

Below, we define two categories. We only need to specify morphisms in each of them. 
First we define the {\bf weld-division category}, which we will denote by 
\[
{\mathcal D}. 
\]
Morphisms in the weld-division category are obtained by closing under composition and division all weld maps and all isomorphisms. Morphisms in $\mathcal D$ will be called {\bf weld-division maps}. 
It will be convenient to state the main amalgamation theorem in a way that involves a subclass of weld-division maps. 
The subclass is defined by allowing only compositions of weld maps that are organized in a certain way. The second category is the {\bf neat weld category} denoted by 
\[
\mathcal{N}.
\]
The morphisms in $\mathcal N$ are obtained by closing under composition all neat weld maps. 
We observe that ${\mathcal N} \subseteq {\mathcal D}$.

\newpage

\part{Refined projective amalgamation and its proof}\label{P:repr}

\section{Amalgamation theorem for the category $\mathcal D$}

Theorem~\ref{T:fraiab} below is the main theorem on projective amalgamation. 
We state it in its optimal form, that is, with $f$ coming from ${\mathcal N}$. This is done partly to get the strongest result possible, but also it is this optimal 
form that will be used in the applications.

\begin{theorem}\label{T:fraiab} 
For $f',\, g'\in {\mathcal D}$ with the same codomain, there exist 
$f \in {\mathcal D}$ and $g\in {\mathcal N}$ 
such that 
\begin{equation}\label{E:amfrab} 
f'\circ f = g'\circ g. 
\end{equation} 
\end{theorem}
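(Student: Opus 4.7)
The plan is to prove Theorem~\ref{T:fraiab} by structural induction on how $f'$ and $g'$ are built from the generators of $\mathcal{D}$ (weld maps and isomorphisms) via composition and division of simplicial maps. The induction breaks into three cases. When $f' = f'_1 \circ f'_2$ is a composition, I would apply the inductive hypothesis first to $(f'_2, g')$, producing $\bar f, \bar g$ with $f'_2 \circ \bar f = g' \circ \bar g$, then apply it again to $f'_1$ paired with $\bar g$; the resulting $g$-side is a composition of two neat weld maps, which remains in $\mathcal{N}$. When $f'$ is a division $u f''$, I would lift the amalgamation for $f''$ through the division operation on morphisms given by Lemma~\ref{L:divde}, exploiting a commutativity of division with amalgamation squares. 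A symmetric analysis handles compositions and divisions on the $g'$ side.

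This reduces the problem to the base case in which both $f' = \pi^{\vec s}_{p, t_1}$ and $g' = \pi^{\vec s}_{q, t_2}$ are weld maps. For this, the plan is to take $\vec r$ to be an appropriate iterated subdivision of $\vec s$ by $t_1$, $t_2$, and possibly auxiliary sets obtained from $t_1 \cap t_2$ together with the vertices $p, q$, and then exhibit $f$ and $g$ via the assignment notation of Section~\ref{Su:notc}. The combinatorial equivalence $\equiv$ developed in Part~\ref{P:frpr} is essential here: it allows one to reorder subdivisions, recognize for instance when $t_1 t_2 \vec s = t_2 t_1 \vec s$, and thereby identify the common sequence $\vec r$ with its two natural presentations over the two domains. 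A case analysis based on the mutual position of $t_1$ and $t_2$ (disjoint, nested, or properly overlapping) and of $p$ and $q$ relative to them covers the various subcases.

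The main obstacle, reflecting the optimal form of the conclusion, is ensuring that $g$ always lies in the strictly smaller class $\mathcal{N}$ of compositions of neat weld maps $\pi^{\vec t}_\iota$ based on \emph{upward closed} families of faces. An arbitrary composition of welds need not arise from an upward closed family, so each stage of the induction has to enforce this constraint explicitly on the $g$-side. I expect that the set-theoretic character of $\mathrm{Fin}^+$---the convention that the new vertex produced by dividing by $s$ is literally the set $s$, together with the well-foundedness property recorded in \eqref{E:nottr}---will be indispensable at precisely this point, since it lets distinct subdivision steps be distinguished combinatorially, prevents collisions between new vertices and earlier faces, and allows upward closure of the generating families to be tracked purely by inspecting support data via $\mathrm{sp}$ rather than appealing to any geometric realization.
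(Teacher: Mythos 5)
The crux of your plan is the division case of the structural induction, and that is precisely where it breaks down. Suppose $f'=u f''$ with $f''\colon \vec{t}\to\vec{s}$, so $f'$ maps into $u\vec{s}$, and you are given an arbitrary $g'\in\mathcal{D}$ with codomain $u\vec{s}$. There is no pair to which the inductive hypothesis for $f''$ applies: $g'$ does not map into $\vec{s}$, and $g'$ carries no structure compatible with the decomposition of $f'$. If you try to repair this by composing $g'$ with a weld $\pi^{\vec{s}}_{p,u}\colon u\vec{s}\to\vec{s}$, amalgamating $f''$ against $\pi^{\vec{s}}_{p,u}\circ g'$, and then dividing the resulting square by $u$, the actual commutation rule available (Lemma~\ref{L:X}, $S(f\circ g)=Sf\circ\bigl(f^{-1}(S)\,g\bigr)$) divides \emph{both} legs: you end up with an identity of the form $f'\circ\tilde f=(u\pi^{\vec{s}}_{p,u})\circ\bigl(\pi^{-1}(u)\,g'\bigr)\circ(\cdots)$, in which a division of $g'$ appears rather than $g'$ itself, together with the extra factor $u\pi^{\vec{s}}_{p,u}$. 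That is not an amalgamation of $f'$ with the given $g'$, and no "commutativity of division with amalgamation squares" in the needed sense exists in the paper or is supplied by your outline. The symmetric treatment of divisions on the $g'$ side has the same defect, compounded by the requirement that the output on that side be neat.

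This gap is not incidental: it is exactly the difficulty the paper's proof is organized around. Instead of inducting on the structure of both maps, the paper proves a coinitiality theorem (Theorem~\ref{T:clsu}): every $g'\in\mathcal{D}$ admits a precomposition landing in the weld category $\mathcal{W}$. Granting that, the amalgamation theorem follows formally from the genuinely elementary base case, Lemma~\ref{L:ama} (a \emph{single weld} against an \emph{arbitrary} $\mathcal{D}$-map, where neatness of $g$ is arranged by passing from $S=(g')^{-1}(s_0)$ to its upward closure $T$ and invoking Lemma~\ref{L:org6}), together with Lemmas~\ref{L:ama2} and~\ref{C:cam}. Note that the correct base case is weld-versus-$\mathcal{D}$, not the weld-versus-weld case you sketch, because your induction never reduces the second coordinate to a weld. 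The coinitiality theorem in turn rests on the Main Lemma — that $S\,\pi^{\vec{q}}_{p,T}$ is a pure weld-division map under the stated compatibility of $S$ and $T$ — whose proof (the reductions of Section~\ref{Su:redu1}--\ref{Su:redu2}, the special order $\preceq$ of Lemma~\ref{L:orb}, and Lemma~\ref{L:cruc}) constitutes the bulk of the paper. Your proposal contains no substitute for this analysis of how divisions of welds by additive families decompose, so the heart of the theorem remains unproved.
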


We say that $(f', f)$, $(g', g)$ form a {\bf projective amalgamation} if condition \eqref{E:amfrab} 
in the conclusion of Theorem~\ref{T:fraiab} hold.

\section{More weld-division maps}

\subsection{Combinatorial isomorphisms} 

We single out four types of isomorphisms in the ambient category; see Lemma~\ref{L:types} for a justification that these are, in fact, isomorphisms. They are given by combinatorial descriptions according to the conventions in Section~\ref{Su:notc}. Such descriptions will be used in the computations later in the paper. As before, the domains and codomains of the maps defined below are combinatorial equivalence classes. 

We illustrate the definitions with schematic drawings. In the drawings, we assume that the set of urelements is ${\rm Ur}=\{ a, b, c\}$. The isomorphisms in the drawings map circled vertices to circled vertices and squared ones to squared ones. The edges internal to the triangles are marked with strokes. The number of strokes indicates the order in which the edges are produced in the process of iterated division.  

\medskip

{\bf Type 1.} 

Let $r, s, t$ be sets in ${\rm Fin}^+$ such that $r, s\subseteq t$, $r\cup s\not=\emptyset$, and $r\cap s=\emptyset$. 
Assume that $t$ is not a vertex of $\vec{t}$. Then the map given by the assignment 
\begin{equation}\label{E:ast1}
t\to s\cup \{ t\}, \; r\cup \{ t\}\to t
\end{equation} 
is a morphism 
\[
\hbox{from }\; \big(r\cup \{ t\}\big) (r\cup s) \,t\, \vec{t}\;\hbox{ to }\; \big(s\cup \{ t\}\big) (r\cup s) \,t\, \vec{t}. 
\] 
We denote the above morphism by $\alpha^{\vec{t}}_{r,s,t}$.

As in the case of weld maps, the assumption that $t$ is not a vertex of $\vec{t}$ gives, by Theorem~\ref{T:ord3}(i) and Proposition~\ref{P:verd}, the implications 
\[
\begin{split}
t\in {\rm vr}\big(\big(r\cup \{ t\}\big) (r\cup s) \,t\, \vec{t}\,\big) \;&\Rightarrow\; s\cup \{ t\}\in {\rm vr}\big(  \big(s\cup \{ t\}\big) (r\cup s) \,t\, \vec{t}\,\big)\\
r\cup \{ t\}   \in {\rm vr}\big(\big(r\cup \{ t\}\big) (r\cup s) \,t\, \vec{t}\,\big) \;&\Rightarrow\; t \in {\rm vr}\big(  \big(s\cup \{ t\}\big) (r\cup s) \,t\, \vec{t}\,\big),
\end{split} 
\]
so the assignment \eqref{E:ast1} is well defined. 

The following drawing illustrates a type 1 isomorphism according to the rules from the beginning of this section.

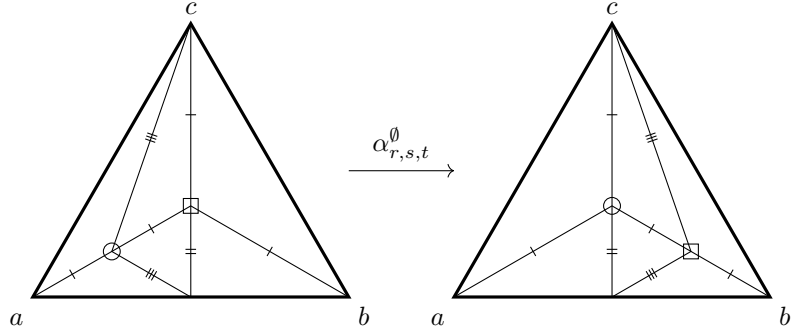
\begin{figure}[htb]
  \centering
  \begin{center}
    \begin{tikzpicture}[scale=1.4]
      \begin{scope}
        \coordinate (A) at (0,0);
        \coordinate (B) at (\side,0);
        \coordinate (C) at (\side/2,{(sqrt(3)/2)*\side});
        \coordinate (D) at (barycentric cs:A=1,B=1,C=1);;
        \coordinate (E) at ($(A)!0.5!(D)$);
        \coordinate (F) at (\side/2,0);

        \draw[line width=\lwa] (A) -- (B) -- (C) -- cycle;
        \draw (A) -- (D) -- (B);
        \draw (C) -- (D) -- (F);
        \draw (C) -- (E) -- (F);

        \node[below left] at (A) {\(a\vphantom{b}\)};
        \node[below right]  at (B) {\(b\)};
        \node[above]  at (C) {\(c\)};

        \tkzMarkSegment[size=2,pos=.5,mark=](A,C)
        \tkzMarkSegment[size=2,pos=.5,mark=](C,B)
        \tkzMarkSegment[size=2,pos=.5,mark=](A,F)
        \tkzMarkSegment[size=2,pos=.5,mark=](F,B)
        \tkzMarkSegment[size=2,pos=.5,mark=|](C,D)
        \tkzMarkSegment[size=2,pos=.5,mark=|](D,B)
        \tkzMarkSegment[size=2,pos=.5,mark=|](D,E)
        \tkzMarkSegment[size=2,pos=.5,mark=|](A,E)
        \tkzMarkSegment[size=2,pos=.5,mark=||](D,F)
        \tkzMarkSegment[size=2,pos=.5,mark=|||](E, C)
        \tkzMarkSegment[size=2,pos=.5,mark=|||](E, F)

        \draw (D) \Square{\crb};
        \draw (E) circle (0.8mm);
      \end{scope}
      \begin{scope}[xshift=4cm]
        \coordinate (A) at (0,0);
        \coordinate (B) at (\side,0);
        \coordinate (C) at (\side/2,{(sqrt(3)/2)*\side});
        \coordinate (D) at (\side/2,{(sqrt(3)/6)*\side});
        \coordinate (E) at ($(B)!0.5!(D)$);
        \coordinate (F) at (\side/2,0);

        \draw[line width=\lwa] (A) -- (B) -- (C) -- cycle;
        \draw (A) -- (D) -- (B);
        \draw (C) -- (D) -- (F);
        \draw (C) -- (E) -- (F);

        \node[below left] at (A) {\(a\vphantom{b}\)};
        \node[below right]  at (B) {\(b\)};
        \node[above]  at (C) {\(c\)};

        \tkzMarkSegment[size=2,pos=.5,mark=](A,C)
        \tkzMarkSegment[size=2,pos=.5,mark=](C,B)
        \tkzMarkSegment[size=2,pos=.5,mark=](A,F)
        \tkzMarkSegment[size=2,pos=.5,mark=](F,B)
        \tkzMarkSegment[size=2,pos=.5,mark=|](C,D)
        \tkzMarkSegment[size=2,pos=.5,mark=|](D,A)
        \tkzMarkSegment[size=2,pos=.5,mark=|](D,E)
        \tkzMarkSegment[size=2,pos=.5,mark=|](B,E)
        \tkzMarkSegment[size=2,pos=.5,mark=||](D,F)
        \tkzMarkSegment[size=2,pos=.5,mark=|||](E, C)
        \tkzMarkSegment[size=2,pos=.5,mark=|||](E, F)

        \draw (E) \Square{\crb};
        \draw (D) circle (0.8mm);
      \end{scope}
      \draw[->] (\side,1.2) -- node[pos=0.5,anchor=south]{\(\alpha^\emptyset_{r,s,t}\)} (\side+1,1.2);
    \end{tikzpicture}
  \end{center}
  \caption{A type 1 isomorphism with \(r = \{a\}\), \(s = \{b\}\), \(t = \{a,b,c\}\) }
  \label{fig:typeI-isomorphism}
\end{figure}

{\bf Type 2.} 
Let $s, t$ be sets in ${\rm Fin}^+$. 
Assume that $s, t$ are not vertices of $\vec{t}$. Then the map given by the assignment 
\begin{equation}\label{E:ast2}
(s\setminus t)\cup \{ t \} \to  (t\setminus s)\cup \{ s \}  
\end{equation} 
is a morphism 
\[
\hbox{ from } \big( (s\setminus t)\cup \{ t \} \big) \, s\, t \,\vec{t} \;\hbox{ to }\; \big( (t\setminus s)\cup \{ s\}\big) \,t\, s \,\vec{t}. 
\]
We denote the map above by $\beta^{\vec{t}}_{s,t}$. 

Again as above we see that the assumption that $s$ and $t$ are not vertices of $\vec{t}$ implies, by Theorem~\ref{T:ord}(i) and Proposition~\ref{P:verd}, that 
\[
(s\setminus t)\cup \{ t \}\in {\rm vr}\big(  \big( (s\setminus t)\cup \{ t \} \big) \, s\, t \,\vec{t}\,\big)\; \Rightarrow\; 
(t\setminus s)\cup \{ s \}  \in {\rm vr}\big(  \big( (t\setminus s)\cup \{ s\}\big) \,t\, s \,\vec{t}\big),
\]
and so assignment \eqref{E:ast2} is well defined. 

The following drawing illustrates a type 2 isomorphism. 

\begin{figure}[htb]
  \centering
  \begin{center}
    \begin{tikzpicture}[scale=1.4]
      \begin{scope}
        \coordinate (A) at (0,0);
        \coordinate (B) at (\side,0);
        \coordinate (C) at (\side/2,{(sqrt(3)/2)*\side});
        \coordinate (D) at ($(A)!0.5!(C)$);
        \coordinate (E) at ($(B)!0.5!(C)$);
        \coordinate (F) at ($(A)!0.5!(E)$);

        \draw[line width=\lwa] (A) -- (B) -- (C) -- cycle;
        \draw (D) -- (E) -- (F) -- cycle;
        \draw (A) -- (F) -- (B);

        \node[below left] at (A) {\(a\vphantom{b}\)};
        \node[below right]  at (B) {\(b\)};
        \node[above]  at (C) {\(c\)};

        \tkzMarkSegment[size=2,pos=.5,mark=](A,D)
        \tkzMarkSegment[size=2,pos=.5,mark=](D,C)
        \tkzMarkSegment[size=2,pos=.5,mark=](C,E)
        \tkzMarkSegment[size=2,pos=.5,mark=](E,B)
        \tkzMarkSegment[size=2,pos=.5,mark=](A,B)
        \tkzMarkSegment[size=2,pos=.5,mark=|](A,F)
        \tkzMarkSegment[size=2,pos=.5,mark=|](F,E)
        \tkzMarkSegment[size=2,pos=.5,mark=||](D,E)
        \tkzMarkSegment[size=2,pos=.5,mark=|||](D,F)
        \tkzMarkSegment[size=2,pos=.5,mark=|||](F,B)

        \draw (F) \Square{\crb};

      \end{scope}
      \begin{scope}[xshift=4cm]
        \coordinate (A) at (0,0);
        \coordinate (B) at (\side,0);
        \coordinate (C) at (\side/2,{(sqrt(3)/2)*\side});
        \coordinate (D) at ($(A)!0.5!(C)$);
        \coordinate (E) at ($(B)!0.5!(C)$);
        \coordinate (F) at ($(D)!0.5!(B)$);

        \draw[line width=\lwa] (A) -- (B) -- (C) -- cycle;
        \draw (D) -- (E) -- (F) -- cycle;
        \draw (A) -- (F) -- (B);

        \node[below left] at (A) {\(a\vphantom{b}\)};
        \node[below right]  at (B) {\(c\vphantom{b}\)};
        \node[above]  at (C) {\(b\)};

        \tkzMarkSegment[size=2,pos=.5,mark=](A,D)
        \tkzMarkSegment[size=2,pos=.5,mark=](D,C)
        \tkzMarkSegment[size=2,pos=.5,mark=](C,E)
        \tkzMarkSegment[size=2,pos=.5,mark=](E,B)
        \tkzMarkSegment[size=2,pos=.5,mark=](A,B)
        \tkzMarkSegment[size=2,pos=.5,mark=|](B,F)
        \tkzMarkSegment[size=2,pos=.5,mark=|](F,D)
        \tkzMarkSegment[size=2,pos=.5,mark=||](D,E)
        \tkzMarkSegment[size=2,pos=.5,mark=|||](E,F)
        \tkzMarkSegment[size=2,pos=.5,mark=|||](F,A)

        \draw (F) \Square{\crb};

      \end{scope}
      \draw[->] (\side,1.2) -- node[pos=0.5,anchor=south]{\(\beta^\emptyset_{s,t}\)} (\side+1,1.2);
    \end{tikzpicture}
  \end{center}
  \caption{A type 2 isomorphism with \(s = \{a,c\}\), \(t = \{b,c\}\) }
  \label{fig:typeII-isomorphism}
\end{figure}
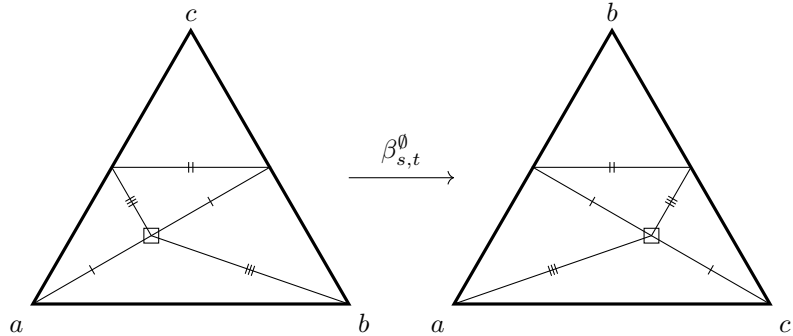

We define now two types of morphisms that essentially rename a vertex. They are given through assignments \eqref{E:ast3} and \eqref{E:ast4}. To see that these assignments are well defined, note that, for $x\in {\rm Fin}^+\cup {\rm Ur}$, under the assumption that $\{ x\}$ is not a vertex of $\vec{t}$, we have 
\[
x \in {\rm vr}\big( \vec{t}\,\big) \;\Leftrightarrow \; \{ x\} \in {\rm vr}(\{ x\} \, \vec{t}\,). 
\]

\medskip

{\bf Type 3a.} 

Let $x$ be in ${\rm Fin}^+\cup {\rm Ur}$. Assume that $\{ x\}$ is not a vertex of $\vec{t}$. 
Then the map given by the assignment 
\begin{equation}\label{E:ast3}
x \to  \{ x \}  
\end{equation} 
is a morphism 
\[
\hbox{ from } \vec{t} \;\hbox{ to }\; \{ x\}\, \vec{t}. 
\]

\medskip

{\bf Type 3b.} 

Let $x$ be in ${\rm Fin}^+\cup {\rm Ur}$. Assume that $\{ x\}$ is not a vertex of $\vec{t}$. 
Then the map given by the assignment 
\begin{equation}\label{E:ast4} 
\{ x\} \to  x
\end{equation} 
is a morphism 
\[
\hbox{ from } \{ x\} \;\vec{t} \;\hbox{ to }\;  \vec{t}. 
\]

\medskip
We denote maps of type 3a and 3b, respectively, by $\delta^{x, \vec{t}}\;\hbox{ and }\; \delta^{\vec{t}}_{\{ x\}}$.
Note that the map of type 3b is a weld map, but it will be convenient to have an additional name for this very particular kind of weld maps. 

The following drawing illustrates these two types of isomorphisms. 

\begin{figure}[htb]
  \centering
  \begin{center}
    \begin{tikzpicture}[scale=1.4]
      \begin{scope}
        \coordinate (A) at (0,0);
        \coordinate (B) at (\side,0);
        \coordinate (C) at (\side/2,{(sqrt(3)/2)*\side});

        \draw[line width=\lwa] (A) -- (B) -- (C) -- cycle;
        \node[below left] at (A) {\(a\vphantom{\{}\)};
        \node[below right]  at (B) {\(b\vphantom{\{}\)};
        \node[above]  at (C) {\(c\)};

        \tkzMarkSegment[size=2,pos=.5,mark=](A,B)
        \tkzMarkSegment[size=2,pos=.5,mark=](A,C)
        \tkzMarkSegment[size=2,pos=.5,mark=](B,C)

        \draw (A) \Square{\crb};

      \end{scope}
      \begin{scope}[xshift=4cm]
        \coordinate (A) at (0,0);
        \coordinate (B) at (\side,0);
        \coordinate (C) at (\side/2,{(sqrt(3)/2)*\side});

        \draw[line width=\lwa] (A) -- (B) -- (C) -- cycle;
        \node[below left, xshift=1mm] at (A) {\(\{a\}\vphantom{\{}\)};
        \node[below right]  at (B) {\(b\vphantom{\{}\)};
        \node[above]  at (C) {\(c\)};

        \tkzMarkSegment[size=2,pos=.5,mark=](A,B)
        \tkzMarkSegment[size=2,pos=.5,mark=](A,C)
        \tkzMarkSegment[size=2,pos=.5,mark=](B,C)

        \draw (A) \Square{\crb};

      \end{scope}
      \draw[->] (\side,1.5) -- node[pos=0.5,anchor=south]{\(\delta^{a, \emptyset}\)} (\side+1,1.5);
      \draw[<-] (\side,0.9) -- node[pos=0.5,anchor=south]{\(\delta^\emptyset_{\{a\}}\)} (\side+1,0.9);
    \end{tikzpicture}
  \end{center}
  \caption{Type 3a and 3b isomorphisms}
  \label{fig:type3a3b-isomorphisms}
\end{figure}
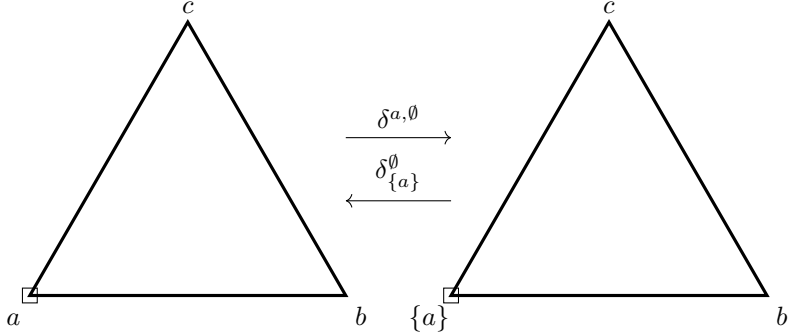

\begin{lemma}\label{L:types}
\begin{enumerate}
\item[(i)] Morphisms of types 1--3 are grounded simplicial maps. 

\item[(ii)] Morphisms of types 1--3 are isomorphisms in the ambient category; in fact, 
\[
\alpha^{\vec{t}}_{r,s,t}\circ \alpha^{\vec{t}}_{s,r,t} = {\rm id},\;\; \beta^{\vec{t}}_{s,t}\circ \beta^{\vec{t}}_{t,s}={\rm id},\;\;
\delta^{\vec{t}}_{\{ x\}}\circ \delta^{x,\vec{t}} = \delta^{x,\vec{t}}\circ \delta_{\{ x\}}^{\vec{t}}={\rm id}. 
\]
\end{enumerate} 
\end{lemma}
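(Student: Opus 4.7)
The plan is to verify both parts in parallel for all four types. For part~(i), I would invoke Proposition~\ref{P:simmad} to reduce the grounded simplicial property to the statement that, for every grounded complex $A$, the restriction of the map to the vertex set of the divided complex is grounded simplicial; equivalently, by Lemma~\ref{L:simsp}, the assignment sends faces to faces with support contained in the source's support, and each face of the codomain has a preimage face of equal support.

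For types~3a and 3b, the argument is essentially trivial: type~3b is a weld map, namely $\pi^{\vec{t}}_{x,\{x\}}$, so Lemma~\ref{L:grasi} applies directly; for type~3a, the assignment $x\mapsto\{x\}$ is support-preserving since ${\rm sp}(\{x\})={\rm sp}(x)$ by \eqref{E:sppr}, and, under the assumption that $\{x\}$ is not a vertex of $\vec{t}$, the natural correspondence between faces of $\vec{t}$ containing $x$ and faces of $\{x\}\vec{t}$ containing $\{x\}$ is bijective. For types~1 and 2, I would use Lemma~\ref{L:conr} to enumerate the faces of the domain according to which of the new vertices $t$, $r\cup s$, $r\cup\{t\}$ (respectively $(s\setminus t)\cup\{t\}$, $s$, $t$) they contain, and verify directly that swapping $t$ and $r\cup\{t\}$ (respectively $(s\setminus t)\cup\{t\}$ and $(t\setminus s)\cup\{s\}$) produces a face of the codomain. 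Support preservation follows from $r,s\subseteq t$, which gives ${\rm sp}(r\cup\{t\})={\rm sp}(s\cup\{t\})={\rm sp}(t)$, and from the analogous identity ${\rm sp}((s\setminus t)\cup\{t\})={\rm sp}(s\cup t)={\rm sp}((t\setminus s)\cup\{s\})$ for type~2.

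For part~(ii), once the grounded simplicial property is established, the isomorphism claims reduce to direct verification of the three composition identities on vertices. For $\alpha$, composing $\alpha^{\vec{t}}_{r,s,t}$ (which sends $t\mapsto s\cup\{t\}$ and $r\cup\{t\}\mapsto t$) after $\alpha^{\vec{t}}_{s,r,t}$ (which sends $t\mapsto r\cup\{t\}$ and $s\cup\{t\}\mapsto t$), one computes $t\mapsto r\cup\{t\}\mapsto t$ and $s\cup\{t\}\mapsto t\mapsto s\cup\{t\}$, with every other vertex fixed by both factors. The $\beta$ identity is analogous, tracing $(s\setminus t)\cup\{t\}\mapsto (t\setminus s)\cup\{s\}\mapsto (s\setminus t)\cup\{t\}$; here one also uses that $r\cup s$ in the middle division is symmetric in $r,s$, and similarly that $s\cup t$ is symmetric in $s,t$, so that the intermediate divided complex is genuinely common to both factors. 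For the $\delta$ maps, $x\mapsto\{x\}\mapsto x$ and $\{x\}\mapsto x\mapsto\{x\}$.

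The main obstacle will be the bookkeeping in (i) for types~1 and~2. Since the domain involves three nested divisions, a face may contain up to three of the new vertices, each governed by the constraints of Lemma~\ref{L:conr}; one must confirm that when the entire face is transformed under the assignment by replacing new vertices according to the specified rule, the result still satisfies the defining conditions for a face of the codomain, including the non-containment clauses $s_j\not\subseteq t$. The hypothesis that $t$ (respectively $s$ and $t$) are not vertices of $\vec{t}$, together with the disjointness $r\cap s=\emptyset$ for type~1, is the key device preventing collisions between old and new vertices and making all the case distinctions collapse to parallel checks on the two sides of the map.
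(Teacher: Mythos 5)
Your proposal is correct and follows essentially the same route as the paper: the reduction via Proposition~\ref{P:simmad}, the observation that type 3b is the weld map $\pi^{\vec{t}}_{x,\{x\}}$ so Lemma~\ref{L:grasi} applies, the easy direct check for type 3a, and the verification of part (ii) by composing the defining assignments are all exactly what the paper does. The one difference is organizational: for types 1 and 2 the paper does not redo any face bookkeeping at this point but simply cites Theorems~\ref{T:ord3}(ii) and \ref{T:ord}(ii), whose proofs (in the appendix, via the explicit membership criteria of Lemma~\ref{L:calc4} and its precursors) carry out precisely the case analysis you sketch from Lemma~\ref{L:conr} but do not execute. Be aware that this deferred verification is where virtually all the content of the lemma lives, and that it is more delicate than the sketch suggests for type 2, since there the two sides involve the divisions $s\,t\,\vec{t}$ and $t\,s\,\vec{t}$ applied in different orders, so one must derive membership conditions for faces of $\big((s\setminus t)\cup\{t\}\big)\,s\,t\,\vec{t}$ and $\big((t\setminus s)\cup\{s\}\big)\,t\,s\,\vec{t}$ separately and check that they become symmetric under interchanging $s$ and $t$ after the vertex swap; your support computations and your tracing of the composition identities in (ii) are fine as stated.
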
 

\begin{proof} (i) for types 1 and 2 follows from Proposition~\ref{P:simmad} and Theorems~\ref{T:ord3}(ii) and \ref{T:ord}(ii), respectively. 
The assertion for type 3(b) is a special case of Lemma~\ref{L:grasi}. Handling type 3(a) is easy and is left to the reader.   

(ii) is checked using the defining formulas of the maps. 
\end{proof}

It will be convenient to introduce the following notion. The class of {\bf combinatorial isomorphisms} is the smallest class obtained by closing combinatorial isomorphisms of type 1, 2, 3a, and 3b under composition and division.

\subsection{Defining more weld-division maps}\label{S:produ}

The goal of this section is to prove Lemmas~\ref{L:commut} and \ref{L:org6}---the first one of the two produces new combinatorial isomorphisms, the second one produces weld-division maps. But we start with a lemma that will ease certain computations.

Given a grounded simplicial map $f\colon \vec{t}\to \vec{s}$, define 
\begin{equation}\label{E:defuu} 
{\rm u}(f) = \{ v\in {\rm vr}(\vec{s}\,)\mid v=f(w)\hbox{ for a unique } w\in {\rm vr}(\vec{t}\,)\}. 
\end{equation} 
This definition is used in Lemma~\ref{L:mapdif} and in the proof of Lemma~\ref{L:org6}. It will also be important later in the paper.

\begin{lemma}\label{L:mapdif} 
Let $f\colon \vec{t}\to \vec{s}$ be a grounded simplicial map. 
Let $r_0\cdots r_l$ be a non-decreasing sequence of faces of $\vec{t}$ such that $f(r_i)\subseteq {\rm u}(f)$, for each $i\leq l$. Then the domain and codomain of the grounded simplicial map 
$\big( f(r_0)\cdots f(r_l) \big) f$ are 
\[
r_0\cdots r_l\,\vec{t}\;\hbox{ and }\; f(r_0)\cdots f(r_l)\, \vec{s},
\]
respectively, and the map is given by the assignment 
\[
r_i\to f(r_i), \hbox{ for }i\leq l, \;\hbox{ and }\; {\rm vr}(\vec{t}\,) \ni x \to f(x)\in {\rm vr}(\vec{s}\,).
\]
\end{lemma}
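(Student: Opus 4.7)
My plan is to induct on $l$, with the convention that the empty sequence (no divisions) leaves $f$ unchanged and fulfills the lemma tautologically. The key technical observation driving the argument is the following: if $r$ is a face of $\vec{t}$ with $f(r) \subseteq {\rm u}(f)$, then $f^{-1}(f(r)) = \{r\}$ as a family of faces of $\vec{t}$. Indeed, if $t$ is a face of $\vec{t}$ with $f(t) = f(r)$ pointwise, then for each $v \in f(r)$ the unique preimage $w_v \in {\rm vr}(\vec{t})$ must lie in both $t$ and $r$, which forces $t = \{w_v : v \in f(r)\} = r$. A corollary is that the sets $f(r_0), \ldots, f(r_l)$ are themselves distinct.

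For the inductive step, assume the result for the shorter sequence $r_1 \cdots r_l$, so that
\[
g := \big(f(r_1)\cdots f(r_l)\big)\,f
\]
is a grounded simplicial map from $r_1 \cdots r_l\,\vec{t}$ to $f(r_1)\cdots f(r_l)\,\vec{s}$ given by the stated assignment. I will then compute $\big(f(r_0)\big)\,g$ directly from the definition of division of a simplicial map. Its codomain is tautologically $f(r_0)\cdot f(r_1)\cdots f(r_l)\,\vec{s}$, and its action on the vertex set will be $r_0 \mapsto f(r_0)$ together with the action of $g$ elsewhere, which combined with the inductive description of $g$ matches the stated assignment. What remains is to identify the domain, and for this I must show $g^{-1}(f(r_0)) = \{r_0\}$.

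For the inclusion of $r_0$ in $g^{-1}(f(r_0))$: the non-decreasing assumption gives $r_i \not\subseteq r_0$ for $i \geq 1$, and Proposition~\ref{P:trse}(ii) gives $r_i \notin r_0$, so $r_0$ survives each successive division as an old face, remains a face of $r_1 \cdots r_l\,\vec{t}$, and $g(r_0) = f(r_0)$ by the inductive description of $g$. Conversely, suppose $t$ is a face of $r_1 \cdots r_l\,\vec{t}$ with $g(t) = f(r_0)$. If some $r_i$ with $i \in \{1, \ldots, l\}$ were a member of $t$, then $f(r_i) = g(r_i) \in g(t) = f(r_0)$ would make $f(r_i)$ simultaneously a face of $\vec{s}$ and an element of a face of $\vec{s}$, contradicting Proposition~\ref{P:trse}(ii). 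Thus every vertex of $t$ lies in ${\rm vr}(\vec{t})$; by Lemma~\ref{L:nonfa}(i), $t$ is then itself a face of $\vec{t}$, on which $g$ acts as $f$. So $f(t) = f(r_0)$, and the key observation forces $t = r_0$.

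The main obstacle is this uniqueness step. Without the set-theoretic non-membership property encoded in Proposition~\ref{P:trse}(ii) (coming from \eqref{E:nottr}), a new vertex $r_i$ could a priori slip into $t$ because its image set $f(r_i)$ might coincide with some vertex of $f(r_0)$; this is precisely the obstruction that the cumulative hierarchy framework is designed to rule out. With this rigidity in place, the rest of the argument is structural bookkeeping about division of simplicial maps and a repeated appeal to the non-decreasing hypothesis.
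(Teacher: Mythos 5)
Your argument is correct, and it rests on the same pivot as the paper's proof: the observation that $f(r)\subseteq {\rm u}(f)$ forces $f^{-1}(f(r))=\{r\}$ among faces of $\vec{t}$, fed into an induction on the number of divisions. Where you differ is in how the induction is closed. The paper proves a one-step statement and, crucially, the inclusion ${\rm u}(f)\cap {\rm vr}(s\,\vec{s})\subseteq {\rm u}(sf)$, so that the purity condition $f(r_i)\subseteq {\rm u}(\cdot)$ is propagated to each successively divided map; the uniqueness of the preimage face at every stage is then just the key observation applied to the \emph{current} map. You instead keep the key observation at the level of the original $f$ and show directly that any face $t$ of $r_1\cdots r_l\,\vec{t}$ with $g(t)=f(r_0)$ can contain no new vertex $r_i$ (by the $\in$-rigidity of Proposition~\ref{P:trse}(ii), since $f(r_i)\in f(r_0)$ is impossible for two faces of $\vec{s}$), hence lies in ${\rm vr}(\vec{t})$ and, by Lemma~\ref{L:nonfa}(i), is an old face of $\vec{t}$, reducing to $f^{-1}(f(r_0))=\{r_0\}$. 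Your route spares you the monotonicity of ${\rm u}$ under division but requires the pullback argument for preimage faces; the paper's route packages the propagation of ${\rm u}$ into a reusable one-step lemma and iterates it. Two points deserve one more line each in a polished write-up: in the key observation, the inclusion $t\subseteq\{w_v\mid v\in f(r)\}$ should be said explicitly (every vertex of $t$ maps into $f(r)$, hence equals some $w_v$), and the step ``every vertex of $t$ lies in ${\rm vr}(\vec{t})$'' implicitly uses that ${\rm vr}(r_1\cdots r_l\,\vec{t})\subseteq\{r_1,\dots,r_l\}\cup{\rm vr}(\vec{t})$, which follows from the definition of division but is worth recording.
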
 

\begin{proof} First we make the following general observation. If $f\colon \vec{t}\to\vec{s}$ is a grounded simplicial map and $s$ is a set in ${\rm Fin}^+$, then 
\begin{equation}\label{E:exin} 
{\rm u}(f)\cap {\rm vr}(s\vec{s}) \subseteq {\rm u}(sf). 
\end{equation} 
Indeed, only vertices of ${\rm vr}\big(f^{-1}(s)\,\vec{t}\,\big)\cap {\rm vr}(\vec{t}\,)$ are mapped by $sf$ to vertices of ${\rm vr}(s\vec{s}\,)\cap {\rm vr}(\vec{s}\,)$. Further, 
\[
u(f)\cap {\rm vr}(s\vec{s}) \subseteq {\rm vr}(s\vec{s}\,)\cap {\rm vr}(\vec{s}\,)
\]
 $sf$ is equal to $f$ on ${\rm vr}\big(f^{-1}(s)\,\vec{t}\,\big)\cap {\rm vr}(\vec{t}\,)$. 

The conclusion of the lemma is obtained by an easy induction from the following statement:

\noindent {\em Let $f\colon \vec{t}\to \vec{s}$ be a grounded simplicial map, and let $r_1$ a face of $\vec{t}$ with $f(r_1)\subseteq {\rm u}(f)$. 
Set $g= (f(r_1)) f$. Then the domain and codomain of $g$ are $r_1\vec{t}$ and $f(r_1)\vec{s}$, respectively, and $g$ is given by the assignment 
\begin{equation}\label{E:raff} 
r_1\to f(r_1) \;\hbox{ and }\; {\rm vr}(\vec{t}\,) \ni x \to f(x).
\end{equation} 
Additionally, if $r_0$ is a face of $\vec{t}$ with $r_1\not\subseteq r_0$ and $f(r_0)\subseteq u(f)$, then  $r_0$ is a face of $r_1\vec{t}$ and $g(r_0)\subseteq {\rm u}(g)$.}

So, it suffices to prove the statement above. Note that $f(r_{1})\subseteq {\rm u}(f)$ implies that $r_{1}$ is the only face $t$ of $\vec{t}$ with $f(t) =f(r_{1})$, that is, 
$f^{-1}\big( f(r_{1}) \big) = \{ r_{1}\}$. Now the first sentence of the conclusion follows directly from the definition of the division of grounded simplicial maps. Additionally, $r_0$ is a face of $r_1\vec{t}$ since it is a face of $\vec{t}$ and $r_1\not\subseteq r_0$. Obviously, we have 
\begin{equation}\label{E:bdaa}
g(r_0)\subseteq {\rm vr}\big((f(r_1)) \vec{s}\,\big).
\end{equation}
Using \eqref{E:raff} and taking into account that $r_1\not\in r_0$, we get 
\begin{equation}\label{E:bdaaa}
g(r_0) =f(r_0)\subseteq {\rm u}(f). 
\end{equation} 
We obtain $g(r_0)\subseteq {\rm u}(g)$ from \eqref{E:exin}, \eqref{E:bdaa} and \eqref{E:bdaaa}. 
\end{proof}

The next lemma constructs new combinatorial isomorphisms.

\begin{lemma}\label{L:commut} 
Let $\vec{q}$ be a sequence, let $t$ be a face of $\vec{q}$, and let $\emptyset\not= s\subseteq t$. 
\begin{enumerate}
\item[(i)] The assignment 
\[
t\to \big( (t\setminus s)\cup \{ s\}\big)
\]
induces a combinatorial isomorphism 
\[
\hbox{from }\;s\, t\, \vec{q}\;\hbox{ to }\; \big( (t\setminus s)\cup \{ s\}\big) \, s\, \vec{q}.
\]

\item[(ii)]
If $r\subseteq s$, then the assignment 
\[
t \to \big( (t\setminus r)\cup \{ s \}\big) , \; \big( r\cup \{ t \}\big) \to \big( (t\setminus s)\cup \{ s\}\big)
\]
induces a combinatorial isomorphism 
\begin{equation}\notag
\hbox{from }\; \Big( r\cup \{ t \} \Big) \, s\, t \,\vec{q} \;\hbox{ to }\; \Big( (t\setminus s)\cup \{ s \} \Big)\, \Big((t\setminus r)\cup \{ s\}\Big) \, s\, \vec{q}.
\end{equation} 
\end{enumerate}
\end{lemma}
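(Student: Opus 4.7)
The plan is to realize each isomorphism as an explicit composition of morphisms of types 1, 2, 3a, 3b, using closure of combinatorial isomorphisms under composition, division, and (by Lemma~\ref{L:types}(ii)) inversion. A useful preliminary is that any face of $\vec{q}$ is \emph{not} a vertex of $\vec{q}$, by Proposition~\ref{P:trse}(ii); in particular $t$ and every subface $s\subseteq t$ (which is a face, by Lemma~\ref{L:subf}(i)) meet the non-vertex hypotheses needed to invoke $\beta$ and $\delta$ below.

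For part~(i), I would compose $\delta^{t,\,s\,t\,\vec{q}}\colon s\,t\,\vec{q}\to\{t\}\,s\,t\,\vec{q}$ (the type-3a renaming $t\mapsto\{t\}$, valid because $\{t\}$ is not a vertex of $s\,t\,\vec{q}$) with $\beta^{\vec{q}}_{s,t}$: since $s\subseteq t$, the identity $(s\setminus t)\cup\{t\}=\{t\}$ makes the source of $\beta$ match the codomain of $\delta$, and the target of $\beta$ is $((t\setminus s)\cup\{s\})\,t\,s\,\vec{q}$. Again because $s\subseteq t$, $t$ is not a face of $s\,\vec{q}$, so by condition~(a) of $\equiv$, $t\,s\,\vec{q}=s\,\vec{q}$, collapsing the codomain to $((t\setminus s)\cup\{s\})\,s\,\vec{q}$. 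The composite assignment traces $t\mapsto\{t\}\mapsto(t\setminus s)\cup\{s\}$, as required.

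For part~(ii), set $u:=(t\setminus s)\cup\{s\}$ and $v:=(t\setminus r)\cup\{s\}$; the hypothesis $r\subseteq s$ yields $u\subseteq v$ and $v\setminus u=s\setminus r$. In the generic case $\emptyset\neq r\subsetneq s\subsetneq t$, I chain three isomorphisms. \emph{Step 1:} $\alpha^{\vec{q}}_{r,\,s\setminus r,\,t}$ (type~1, available since $r$ and $s\setminus r$ are disjoint subsets of $t$ with non-empty union~$s$) gives $(r\cup\{t\})\,s\,t\,\vec{q}\to((s\setminus r)\cup\{t\})\,s\,t\,\vec{q}$, sending $r\cup\{t\}\mapsto t$ and $t\mapsto(s\setminus r)\cup\{t\}$. \emph{Step 2:} Let $\phi\colon s\,t\,\vec{q}\to u\,s\,\vec{q}$ be the part-(i) isomorphism (so $\phi(t)=u$); dividing $\phi$ by the face $(s\setminus r)\cup\{u\}$ of its codomain via Lemma~\ref{L:mapdif} produces a morphism $((s\setminus r)\cup\{t\})\,s\,t\,\vec{q}\to((s\setminus r)\cup\{u\})\,u\,s\,\vec{q}$ sending $t\mapsto u$ and $(s\setminus r)\cup\{t\}\mapsto(s\setminus r)\cup\{u\}$. \emph{Step 3:} Apply the inverse of part~(i) instantiated with $t':=v$, $s':=u$, $\vec{q}':=s\,\vec{q}$: since $v\setminus u=s\setminus r$, part~(i) gives $u\,v\,s\,\vec{q}\to((s\setminus r)\cup\{u\})\,u\,s\,\vec{q}$, and its inverse yields $((s\setminus r)\cup\{u\})\,u\,s\,\vec{q}\to u\,v\,s\,\vec{q}$ sending $(s\setminus r)\cup\{u\}\mapsto v$ and fixing $u$. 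The composite assignment becomes $r\cup\{t\}\mapsto t\mapsto u\mapsto u$ and $t\mapsto(s\setminus r)\cup\{t\}\mapsto(s\setminus r)\cup\{u\}\mapsto v$, as required. Degenerate cases ($r=\emptyset$, $r=s$, or $s=t$) are handled by observing that some intermediate divisions become trivial by condition~(a) of $\equiv$ (a relevant set ceases to be a face) and certain of the three steps collapse to identities; in particular, when $r=\emptyset$ the codomain simplifies via $t\cup\{s\}$ not being a face of $s\,\vec{q}$, and the required isomorphism reduces to $\phi\circ\delta^{s\,t\,\vec{q}}_{\{t\}}$.

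The main obstacle will be the set-theoretic bookkeeping at each step: verifying that each purported divisor really is a face of the current sequence so that the intended morphism is non-trivial, confirming the identities $u\subseteq v$, $v\setminus u=s\setminus r$, and the resulting combinatorial equivalences (such as $t\,s\,\vec{q}=s\,\vec{q}$ and $v\,u\,s\,\vec{q}=u\,s\,\vec{q}$, both used silently above), and checking that sets of the form $\{t\}$ and $\{v\}$ are not already vertices where type-3 renamings are invoked. Once these routine verifications are done, closure of combinatorial isomorphisms under composition, division, and inversion delivers the desired maps.
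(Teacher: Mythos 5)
Your proposal is correct and follows essentially the same route as the paper: part (i) is the composition of the type-3a renaming $t\mapsto\{t\}$ with the type-2 isomorphism $\beta$ (using $(s\setminus t)\cup\{t\}=\{t\}$) and the collapse $t\,s\,\vec{q}=s\,\vec{q}$, and part (ii) is the paper's composition $h^{-1}\circ g'\circ f$ in your notation — a type-1 isomorphism $\alpha^{\vec{q}}_{r,\,s\setminus r,\,t}$, followed by a Lemma~\ref{L:mapdif}-division of the part-(i) isomorphism, followed by the inverse of another instance of part (i), with the paper likewise treating $r=\emptyset$ by reduction to (i). The only (harmless) difference is that the paper runs the $r\neq\emptyset$ argument uniformly, so $r=s$ and $s=t$ need no separate degenerate-case treatment, and it carries out the face-verifications you defer (e.g.\ that $(s\setminus r)\cup\{t\}$ is a face of $s\,t\,\vec{q}$ and $(t\setminus r)\cup\{s\}$ is a face of $s\,\vec{q}$).
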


\begin{proof} 
(i) Note that $\{ t\}$ is not a vertex of $s\,t\, \vec{q}$ since $\{ t\}\not= s, t$ (as $t\not\in t$) and $\{ t\}$ is not a vertex of $\vec{q}$ since $t$ is a face of $\vec{q}$ (so $t\not\in {\rm tc}\big( {\rm vr}(\vec{q}\,)\big)$). So, the sequences 
\[
s\,t\, \vec{q}\;\hbox{ and }\; \{ t \}\, s\, t\, \vec{q}
\]
are combinatorially isomorphic by the type 3 assignment $t\to \{ t\}$. 
Since $s$ and $t$ are faces of $\vec{q}$, they are not vertices of that sequence. So, under the assumption $s\subseteq t$, we have a type 2 isomorphism between 
\[
\{ t \}\, s\, t \,\vec{q} \;\hbox{ and }\; \big( (t\setminus s)\cup \{ s \} \big) t\, s\, \vec{q} 
\]
via the assignment $\{ t \}\to (t\setminus s)\cup \{ s \}$.  
Further, since $s\subseteq t$, we see that $t$ is not a face of $s\,\vec{q}$ and, therefore, 
\[
\big( (t\setminus s)\cup \{ s \} \big) t\, s\, \vec{q} =  \big( (t\setminus s)\cup \{ s \} \big) s\, \vec{q}. 
\]
Putting together this equality and the two combinatorial isomorphisms above, we get the conclusion.

(ii) If $r=\emptyset$, then the two sequences in the conclusion become
\[
\{ t \}  \, s\, t \,\vec{q} \;\hbox{ and }\; \big( (t\setminus s)\cup \{ s \} \big)\, \big(t \cup \{ s\}\big) \, s\, \vec{q}.
\]
As in the proof of (i), $\{ t \}  \, s\, t \,\vec{q}$ and $s\, t \,\vec{q}$ are combinatorially isomorphic and $t \cup \{ s\}$ is not a face of $s\, \vec{q}$ since $s\subseteq t$, so 
\[
\big( (t\setminus s)\cup \{ s \} \big)\, \big(t \cup \{ s\}\big) \, s\, \vec{q} = \big( (t\setminus s)\cup \{ s \} \big)\, s\, \vec{q}.
\]
Thus, (ii) for $r=\emptyset$ follows directly from (i). 

Assume $r\not=\emptyset$, and set 
\[
v=(t\setminus s)\cup \{ s\}.
\]

Since $t$ is not a vertex of $\vec{q}$, 
\[
f\colon \big( r\cup \{ t \} \big) \, s\, t\, \vec{q} \to \big( (s\setminus r)\cup \{ t \} \big) \, s\, t\, \vec{q} 
\]
given by the assignment 
\begin{equation}\label{E:raz}
t\to \big(s\setminus r)\cup \{ t \} \;\hbox{ and }\;  r\cup \{ t \} \to t
\end{equation} 
is a  type 1 isomorphism. 

Next, we use point (i) to see that 
\[
g\colon s\,t\,\vec{q}\to v\, s\, \vec{q}
\]
given by the assignment 
\[
t\to v 
\] 
is a combinatorial isomorphism. So, 
\[
u(g)= {\rm vr}\big( v\, s\, \vec{q}\,\big).
\]
Also $(s\setminus r)\cup \{ t \}$ is a face of $s\, t\, \vec{q}$ since $(s\setminus r)\cup t = t$ is a face of $\vec{q}$ and $s\not\subseteq s\setminus r$ (since $r\not=\emptyset$). 
Thus, by Lemma~\ref{L:mapdif}, since $v= g(t)$, we have that 
\[
g'\colon \big( (s\setminus r)\cup \{ t \} \big) \, s\, t\, \vec{q} \to \big( (s\setminus r)\cup \{ v \} \big)\, 
v\, s\, \vec{q},  
\]
is a combinatorial isomorphism given by the assignment 
\begin{equation}\label{E:dwa} 
t\to v\;\hbox{ and }\; (s\setminus r)\cup \{ t \} \to  (s\setminus r)\cup \{ v \} .
\end{equation} 

Finally, observe that $(t\setminus r)\cup \{ s\}$ is a face of $s\,\vec{q}$ and $v \subseteq (t\setminus r)\cup \{ s\}$. 
Thus, we are allowed to use (i) again (with $v$ playing the role of $s$, $(t\setminus r)\cup \{ s\}$ playing the role of $t$, and $s\,\vec{q}$ playing the role of $\vec{q}\,$) to see that 
\[
h\colon v\, \big((t\setminus r)\cup \{ s\}\big) \, \big( s\,\vec{q}\,\big)\to 
 \big( (s\setminus r)\cup \{ v \} \big)\,v \, \big( s\,\vec{q}\,\big). 
\]
given by the assignment 
\begin{equation}\label{E:trz}
(t\setminus r)\cup \{ s\} \to (s\setminus r)\cup \{ v \} 
\end{equation} 
is a combinatorial isomorphism. 

The composition $h^{-1}\circ g'\circ f$ is a combinatorial isomorphism and, by composing the assignments \eqref{E:raz}, \eqref{E:dwa}, and the inverse of \eqref{E:trz}, we see that it is equal to the map in the conclusion of (ii).
\end{proof}

We have the following lemma identifying a weld-division map.

\begin{lemma}\label{L:org6} 
Let $\vec{q}$ be a sequence.  Let $\vec{p}$ be a sequence of faces of $\vec{q}$ and,  for $i=1, \dots, m$, let 
$r_i, s_{i1}, \dots, s_{in_i}$ be faces of $\vec{q}$ such that $r_i \subseteq s_{ij}$, for all $j\leq n_i$. Assume that 
the sequence 
\begin{equation}\label{E:weln} 
\vec{p}\, (r_1s_{11} \cdots s_{1n_1}) \cdots  (r_ms_{m1} \cdots s_{mn_m})
\end{equation} 
is nondecreasing.
Then $s_{ij}\not= s_{i'j'}$, if $i\not= i'$ or $j\not= j'$, and the map 
\begin{equation}\notag 
\vec{p}\,(r_1s_{11} \cdots s_{1n_1}) \cdots  (r_ms_{m1} \cdots s_{mn_m})\, \vec{q} \to \vec{p}\,r_1\cdots r_m\vec{q}
\end{equation} 
given by the assignment 
\begin{equation}\label{E:wela} 
s_{ij}\to r_i,\;\hbox{ for }i\leq m\hbox{ and }j\leq n_i, 
\end{equation} 
is a weld-division map. 
\end{lemma}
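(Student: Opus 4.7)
The distinctness of the $s_{ij}$'s is immediate from the hypothesis: by the convention in Section~\ref{Su:addsu}, a non-decreasing enumeration is injective on its entries, so an equality $s_{ij}=s_{i'j'}$ could only hold for $(i,j)=(i',j')$.

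For the main claim, I would argue by induction on the total number $N=\sum_{i=1}^{m}n_i$ of $s$-entries. When $N=0$ the map is the identity on $\vec p\,r_1\cdots r_m\,\vec q$, which lies in $\mathcal D$. For the inductive step with $N\geq 1$, let $i_0$ be the smallest index with $n_{i_0}\geq 1$ and remove the single entry $s_{i_0 1}$ from \eqref{E:weln} to obtain a reduced sequence $\vec S'$; the hypotheses of the lemma are inherited, so by induction the associated map $f'\colon\vec S'\,\vec q\to \vec p\,r_1\cdots r_m\,\vec q$ is a weld-division map. Since $\mathcal D$ is closed under composition, it suffices to build a weld-division map $h\colon \vec S\,\vec q\to \vec S'\,\vec q$ sending $s_{i_0 1}\mapsto r_{i_0}$ and the identity on every other vertex.

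To construct $h$, decompose the divisor sequence as $\vec S=\vec L\; r_{i_0}\, s_{i_0 1}\,\vec R$, and put $u=(s_{i_0 1}\setminus r_{i_0})\cup\{r_{i_0}\}$. Working locally on the sub-sequence $r_{i_0}\, s_{i_0 1}\,\vec R\,\vec q$, the face $s_{i_0 1}$ remains a face of $\vec R\,\vec q$ because, by non-decreasingness combined with injectivity of \eqref{E:weln}, no entry of $\vec R$ can be a subset of $s_{i_0 1}$. Thus Lemma~\ref{L:commut}(i), applied with $s=r_{i_0}$, with $s_{i_0 1}$ in the role of its ``$t$'', and with base sequence $\vec R\,\vec q$, provides a combinatorial isomorphism $r_{i_0}\,s_{i_0 1}\,\vec R\,\vec q\xrightarrow{\cong}u\,r_{i_0}\,\vec R\,\vec q$ realizing $s_{i_0 1}\mapsto u$. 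Composing with the weld map $\pi^{r_{i_0}\vec R\vec q}_{r_{i_0},\,u}$ (valid since $r_{i_0}\in u$ and $u$ has at least two elements under the strict inclusion $r_{i_0}\subsetneq s_{i_0 1}$, so $u$ is not a vertex of $r_{i_0}\vec R\vec q$) yields a weld-division map $h_0\colon r_{i_0}\,s_{i_0 1}\,\vec R\,\vec q\to r_{i_0}\,\vec R\,\vec q$ that sends $s_{i_0 1}\mapsto r_{i_0}$ and is the identity elsewhere.

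It remains to suspend $h_0$ by the divisors in $\vec L$, producing $h$. I would do this by dividing $h_0$ successively by the entries of $\vec L$, read right to left, each division staying in $\mathcal D$ by closure. The technical step at each stage is to verify that the relevant preimage family equals the singleton $\{v\}$, so that dividing the previously constructed map by $v\in\vec L$ produces exactly $v$ prepended to the previous codomain and to the previous domain, without any extraneous subdivisions. This reduces to ruling out the ``collapse-type'' preimage $(v\setminus\{r_{i_0}\})\cup\{s_{i_0 1}\}$, which can only exist when $r_{i_0}\in v$. \textbf{The hard part} will be exactly this preimage bookkeeping: one must use the set-theoretic character of ${\rm Fin}^+$ and the non-decreasing enumeration of \eqref{E:weln} to show that no entry $v$ lying to the left of $r_{i_0}$ in the sequence can have $r_{i_0}$ as an $\in$-element, since any such elementhood would have to arise from a prior subdivision by $r_{i_0}$, which has not yet occurred at that stage of the sequence. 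Once this is settled, the desired $h$ is built within $\mathcal D$, and $f'\circ h$ realizes the full assignment, completing the induction.
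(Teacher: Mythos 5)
Your construction is correct and is, in substance, the paper's own argument with different bookkeeping: the working ingredients are identical (Lemma~\ref{L:commut}(i) to trade $s_{i_01}$ for $u=(s_{i_01}\setminus r_{i_0})\cup\{r_{i_0}\}$, a weld map $\pi_{r_{i_0},u}$ to collapse $u$ onto $r_{i_0}$, Lemma~\ref{L:mapdif} to carry the left prefix along by divisions, and closure of $\mathcal D$ under composition and division), while the paper iterates block by block in two passes (first replacing every $s_{ij}$ by $s_{ij}'=(s_{ij}\setminus r_i)\cup\{r_i\}$, then welding the $s_{ij}'$ down to $r_i$) and you remove one $s$-entry at a time by induction on $N=\sum_i n_i$; your scheme is a little leaner for the statement as given, and since it too uses only pure divisions it would equally yield the strengthening recorded as Lemma~\ref{L:org6pure}. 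The step you defer as the hard part is in fact a one-line check, and it is exactly how the paper discharges the corresponding inclusions \eqref{E:later1} and \eqref{E:later2}: each entry $v$ of $\vec{L}$ and the sets $r_{i_0}$, $s_{i_01}$ are all faces of $\vec{q}$, so Proposition~\ref{P:trse}(ii) gives $r_{i_0}\notin{\rm tc}(v)$ and $s_{i_01}\notin{\rm tc}(v)$ outright; hence $v$ is a face of both domain and codomain (no later entry of \eqref{E:weln} is a subset of $v$ by nondecreasingness), $h_0$ is the identity on $v$, the collapse-type preimage $(v\setminus\{r_{i_0}\})\cup\{s_{i_01}\}$ cannot occur, and $v\subseteq{\rm u}(h_0)$, so Lemma~\ref{L:mapdif} applies directly --- no argument about when the subdivision by $r_{i_0}$ ``has occurred'' is needed. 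Two small repairs: the distinctness of the entries is not an injectivity convention (the sequence \eqref{E:weln} is only assumed nondecreasing), but it follows at once, since equal entries at positions $i<j$ would force $j\leq i$; and $u$ fails to be a vertex of $r_{i_0}\vec{R}\,\vec{q}$ not because it has two elements, but because $r_{i_0}\in u$, while Proposition~\ref{P:trse}(ii) prevents $u$ from coinciding with $r_{i_0}$, with an entry of $\vec{R}$, or with an element of ${\rm vr}(\vec{q}\,)$.
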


\begin{proof} First we observe that the condition $s_{ij}\not= s_{i'j'}$ if $i\not= i'$ or $j\not= j'$, follows from the sequence \eqref{E:weln} being nondecreasing. 
This condition implies that the assignment \eqref{E:wela} is well defined.

Since weld-division maps are closed under composition, it will suffice to show that, for a fixed $i\leq m$, the map 
\begin{equation}\label{E:later}
\vec{p'}\,r_is_{i1} \cdots s_{in_i}\vec{q'}\to \vec{p'} r_i\vec{q'}
\end{equation}
given by \eqref{E:wela} for the fixed $i$, is a weld-division map, where 
\[
\vec{p'} = \vec{p}\, r_1\cdots r_{i-1}\;\hbox{ and }\; \vec{q'} = (r_{i+1}s_{i+11} \cdots s_{i+1n_{i+1}}) \cdots  (r_ms_{m1} \cdots s_{mn_m})\, \vec{q}.
\]

Now, fix also $j\leq n_i$.  Observe that all entries of $\vec{p'}$ are faces of $r_is_{i1} \cdots s_{in_i}\vec{q'}$ since they are all faces of $\vec{q}$ and no entry of 
\[
(r_is_{i1}\cdots s_{in_i}) (r_{i+1}s_{i+11} \cdots s_{i+1n_{i+1}}) \cdots  (r_ms_{m1} \cdots s_{mn_m})
\]
is included in an entry of $\vec{p'}$ by the sequence \eqref{E:weln} being nondecreasing. We show that the map 
\begin{equation}\label{E:fixj}
\vec{p'}s_{i1}'\cdots s_{ij-1}' r_is_{ij} \cdots s_{in_i}\vec{q'}\to \vec{p'}s_{i1}'\cdots s_{ij}' r_is_{ij+1} \cdots s_{in_i}\vec{q'}
\end{equation} 
given by $s_{ij}\to s_{ij}'$ is a weld-division map, where 
\[
s_{ij}'= (s_{ij}\setminus r_i)\cup \{ r_i\},\; j=1, \dots, n_i, 
\]
and that all entries of $\vec{p'}s_{i1}'\cdots s_{ij-1}'$ are faces of $r_is_{ij} \cdots s_{in_i}\vec{q'}$. 
Indeed, the assumption that \eqref{E:weln} is nondecreasing implies that $r_i$ and $s_{ij}$ are faces of the sequence $s_{ij+1}\cdots s_{in_i} \vec{q'}$, so by Lemma~\ref{L:commut}(i), the map 
\[
f\colon r_is_{ij} \cdots s_{in_i}\vec{q'}\to s_{ij}' r_is_{ij+1} \cdots s_{in_i}\vec{q'}
\]
given by the assignment $s_{ij}\to s_{ij}'$ is a combinatorial isomorphism. Since $f$ is an isomorphism, ${\rm u}(f)$ is equal to the whole set 
\[
{\rm vr}\big( s_{ij}' r_is_{ij+1} \cdots s_{in_i}\vec{q'}\big), 
\]
and so we have 
\begin{equation}\label{E:later1} 
t\subseteq {\rm u}(f),\;\hbox{ for each entry } t \hbox{ of the sequence }\vec{p'}s_{i1}'\cdots s_{ij-1}'. 
\end{equation} 
Furthermore, $f$ is equal to the identity on each such set $t$ since $s_{ij}\not\in t$ as both $s_{ij}$ and $t$ are faces of $\vec{q}$. Thus, iteratively dividing $f$ by the entries of the sequence $\vec{p'}s_{i1}'\cdots s_{ij-1}'$ and using Lemma~\ref{L:mapdif}, we see that the map \eqref{E:fixj} is a weld-division map. Note further that $s_{ij}'$ is a face of $r_is_{ij+1}\cdots s_{in_i}\vec{q'}$ since $r_i$ and $\big(s_{ij}'\setminus \{ r_i\}\big)\cup r_i= s_{ij}$ are faces of $s_{ij+1}\cdots s_{in_i}\vec{q'}$. 

Using closure under composition of weld-division maps and the fact that, for each $j\leq n_i$, map \eqref{E:fixj} is a weld-division map, we see that the map 
\begin{equation}\label{E:rsf}
\vec{p'}r_is_{i1} \cdots s_{in_i}\vec{q'}\to \vec{p'} s_{i1}'\cdots s_{in_i}'r_i\vec{q'}
\end{equation}
given by 
\begin{equation}\label{E:ssp}
s_{ij}\to s_{ij}', \;\hbox{ for all }j\leq n_i, 
\end{equation} 
is a weld-division map. 

As observed above, $s_{ij}'$ is a face of $s_{ij+1}'\cdots s_{in_i}' r_i\vec{q'}$ and, clearly, $r_i\in s_{ij}'$. Thus, for each $j\leq n_i$, the map 
\begin{equation}\label{E:ssj}
g\colon s_{ij}' s_{i j+1}'\cdots s_{i n_i}' r_i\vec{q'}\to s_{i j+1}'\cdots s_{i n_i}' r_i \vec{q'}
\end{equation} 
given by the assignment 
\begin{equation}\label{E:sijr} 
s_{ij}'\to r_i
\end{equation} 
is a weld map. Let $t$ be an entry of $\vec{p'}$. Observe that $t$ is a face of $s_{ij}' s_{i j+1}'\cdots s_{i n_i}' r_i\vec{q'}$ as it is a face of $\vec{q'}$ and no entry of the sequence $s_{ij}' s_{i j+1}'\cdots s_{i n_i}' r_i$ is included in $t$. 
Note that, on the one hand, $r_i\not \in t$ as $t$ and $r_i$ are faces of $\vec{q}$, and on the other hand, by \eqref{E:sijr}, $g$ is equal to the identity on $t$ since $s_{ij}'\not\in t$ as $r_i\not\in {\rm tc}(t)$ by $t$ and $r_i$ both being faces of $\vec{q}$. It follows that 
\begin{equation}\label{E:later2}
t\subseteq {\rm u}(g),\;\hbox{ for each entry of the sequence }\vec{p'}.
\end{equation} 
So, by Lemma~\ref{L:mapdif}, the map 
\begin{equation}\label{E:colc2}
\vec{p'} s_{ij}' s_{i j+1}' \cdots s_{i n_i}' r_i\vec{q'}\to \vec{p'} s_{i j+1}'\cdots s_{i n_i}' r_i \vec{q'}
\end{equation} 
given by \eqref{E:sijr} is an iterative division by the entries of $\vec{p'}$ of the map \eqref{E:ssj}; thus, it is a weld-division map. 

We obtain the conclusion that the map \eqref{E:later} is a weld-division map as follows. We compose the $n_i$ weld-division maps \eqref{E:colc2}, which yields a weld-division map 
\[
\vec{p'}s_{i1}' \cdots s_{in_i}' r_i\vec{q'} \to \vec{p'} r_i\vec{q'}.
\]
Then we precompose this resulting map with \eqref{E:rsf} and note that the assignment \eqref{E:wela} is the composition of \eqref{E:ssp} and \eqref{E:sijr}. 
\end{proof}

\subsection{Dividing grounded simplicial maps by additive families of faces} 

Let $f\colon \vec{t}\to \vec{s}$ be grounded simplicial. If $r_1\cdots r_l$ is a sequence of sets in ${\rm Fin}^+$, then let 
\[
r_1\cdots r_l f = r_1(r_2(\cdots r_{l-1}(r_l\,f)\cdots )).
\]
The map above is grounded simplicial by Lemma~\ref{L:divde}.
For a family $S$ of faces of $\vec{s}$, let 
\[
f^{-1}(S) = \{ t\mid t\hbox{ is a face of }\vec{t}\hbox{ and } f(t)\in S\}.
\]

\begin{lemma}\label{L:difsa} 
Let $f\colon \vec{t}\to\vec{s}$. Assume $S$ is an additive set of faces of $\vec{s}$. 
\begin{enumerate} 
\item[(i)] $f^{-1}(S)$ is an additive set of faces of $\vec{t}$. 

\item[(ii)] If $\vec{S}$ is a non-decreasing enumeration of $S$, then the domain and codomain of $\vec{S} f$ are 
\begin{equation}\label{E:dcsf}
\begin{split}
{\rm vr}(S\,\vec{s}\,) &= S \cup \big({\rm vr}(\vec{s}\,)\setminus \{ x\mid \{ x\}\in S\}\big),\\
{\rm vr}(f^{-1}(S)\, \vec{t}\,) &= f^{-1}(S) \cup \big( {\rm vr}(\vec{t}\,)\setminus \{ y\mid \{ y\}\in f^{-1}(S)\}\big),
\end{split}
\end{equation}
respectively, and $\vec{S} f$ is the function 
\begin{equation}\label{E:ssff}
{\rm vr}(f^{-1}(S)\, \vec{t}\,)\ni x\to f(x)\in {\rm vr}(S\,\vec{s}\,).
\end{equation} 

\item[(iii)] For two non-decreasing enumerations $\vec{S}$ and $\vec{S'}$ of $S$, we have $\vec{S}f=\vec{S'}f$. 
\end{enumerate} 
\end{lemma}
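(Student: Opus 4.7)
The plan is to prove (i) by a direct computation, prove (ii) by induction on the length of the enumeration $\vec{S}$, and deduce (iii) immediately from (ii) since the description of domain, codomain, and assignment in (ii) does not mention the enumeration.

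For (i), I would argue as follows. Let $t_1, t_2 \in f^{-1}(S)$ and suppose $t_1 \cup t_2$ is a face of $\vec{t}$. Since $f$ is a grounded simplicial map, $f(t_1 \cup t_2)$ is a face of $\vec{s}$, and pointwise this image equals $f(t_1) \cup f(t_2)$. By additivity of $S$, $f(t_1) \cup f(t_2) \in S$, so $t_1 \cup t_2 \in f^{-1}(S)$.

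For (ii), the base case $\vec{S} = \emptyset$ is trivial. For the inductive step, write $\vec{S} = s_0 \vec{S}'$, where $s_0$ is the first entry (hence minimal in $S$ under inclusion). Set $S' = S \setminus \{s_0\}$. Using that $\vec{S}$ is non-decreasing, $S'$ is additive and $\vec{S}'$ is a non-decreasing enumeration of $S'$; the inductive hypothesis then describes $\vec{S}'f \colon f^{-1}(S')\vec{t} \to S'\vec{s}$ as the assignment $x \mapsto f(x)$. I would then invoke Lemma~\ref{L:divde} to divide this map by $s_0$, obtaining a grounded simplicial map whose domain is $(\vec{S}'f)^{-1}(s_0)\,(f^{-1}(S')\vec{t})$ and whose codomain is $s_0(S'\vec{s})$. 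The codomain equals $S\vec{s}$ in the sense of \eqref{E:addin}, since $s_0\vec{S}'$ is a non-decreasing enumeration of $S$ and Lemma~\ref{L:dedf} then applies. The decisive computation is that $(\vec{S}'f)^{-1}(s_0) = f^{-1}(s_0)$. The inclusion $\supseteq$ uses the non-decreasing property of $\vec{S}$ to ensure that each $r \in f^{-1}(s_0)$ survives as a face of $f^{-1}(S')\vec{t}$ (no $r' \in f^{-1}(S')$ satisfies $r' \subseteq r$, else $f(r') \subseteq s_0$ with $f(r') \in S'$). The inclusion $\subseteq$ requires ruling out faces $r$ of $f^{-1}(S')\vec{t}$ that contain a new vertex $r' \in f^{-1}(S')$: such $r$ would have pointwise image containing $f(r') \in S'$, which cannot belong to $s_0 \subseteq {\rm vr}(\vec{s})$ because faces of $\vec{s}$ are not vertices of $\vec{s}$ by \eqref{E:nottr}. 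With this identification, the domain becomes $f^{-1}(s_0)(f^{-1}(S')\vec{t}) = f^{-1}(S)\vec{t}$ after concatenating non-decreasing enumerations (again by Lemma~\ref{L:dedf}), and the resulting assignment reads $r \mapsto s_0 = f(r)$ on new vertices and $x \mapsto f(x)$ elsewhere, matching \eqref{E:ssff}.

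The main obstacle will be the careful identification $(\vec{S}'f)^{-1}(s_0) = f^{-1}(s_0)$ described above; ruling out the "new-face" case depends crucially on the structural property \eqref{E:nottr} combined with the fact that $s_0$ precedes all elements of $S'$ in the non-decreasing enumeration. Everything else is bookkeeping with the combinatorial equivalence relation and repeated use of Lemma~\ref{L:dedf} to reassemble iterated divisions along different non-decreasing enumerations.
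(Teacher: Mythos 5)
Your proposal is correct and follows essentially the same route as the paper: induction on the non-decreasing enumeration, peeling off the minimal first entry $s_0$, applying the inductive description to $S\setminus\{s_0\}$, and then dividing by $s_0$, with (i) by the pointwise-image computation and (iii) read off from the enumeration-independence of the description in (ii). The paper compresses the inductive step into an "easy induction," whereas you spell out the key identification $(\vec{S}'f)^{-1}(s_0)=f^{-1}(s_0)$ (using minimality of $s_0$ and property \eqref{E:nottr}), which is exactly the content that induction hides.
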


To clarify \eqref{E:ssff}, observe that if $x\in f^{-1}(S)$, then $x$ is a face of $\vec{t}$, so a subset of 
${\rm vr}(\vec{t}\,)$, and $f(x)$ stands for the pointwise image of this set under $f$; if $x\in {\rm vr}(\vec{t}\,)\setminus \{ y\mid \{ y\}\in f^{-1}(S)\}$, 
then $f(x)$ stands for the value of $f$ at $x$.

\begin{proof} (i) follows directly from $f(t_1\cup t_2)= f(t_1)\cup f(t_2)$ for all $t_1, t_2\subseteq {\rm vr}(\vec{t})$. 

(ii) Fix a non-decreasing enumeration $\vec{S}= s_0s_1\cdots s_n$ of $S$. We note that $S\setminus \{ s_0\}$ is an additive family of faces of $A$ and $s_1\cdots s_n$ is its non-decreasing enumeration. The statements in this proof are proved by induction assuming that they hold for $S\setminus \{ s_0\}$ and establishing them for $S$. By easy induction, we prove that the codomain and domain of the map $\vec{S} f$ are equal to $S\vec{s}$ and $f^{-1}(S)\,\vec{t}$, respectively, and by Lemma~\ref{L:vrst} and Proposition~\ref{P:verd}, we get formulas \eqref{E:dcsf}. 
Again by induction, we see that $\vec{S} f$ is given by the formula \eqref{E:ssff}. 

(ii) Since formulas \eqref{E:dcsf} and \eqref{E:ssff} depend only on $S$ and not on the enumeration $\vec{S}$, we get the conclusion of (iii). 
\end{proof}

If $f\colon \vec{t}\to\vec{s}$ is grounded simplicial and $S$ is an additive family of faces of $\vec{s}$, then Lemma~\ref{L:difsa}(iii) allows us to define 
\[
Sf
\]
to be the map $\vec{S} f$, where $\vec{S}$ is an arbitrary nondecreasing enumeration of $S$. 

\begin{lemma}\label{L:X} Let $f\colon \vec{t}\to\vec{s}$ and let $g\colon \vec{u}\to\vec{t}$ be grounded simlipcial maps. Let $S$ be an additive family of faces of $\vec{s}$. Then 
\[
S(f\circ g)= Sf\circ \big( f^{-1}(S) g\big).
\]
\end{lemma}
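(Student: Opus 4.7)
The plan is to compute the domain, codomain, and action of both sides by directly applying Lemma~\ref{L:difsa}(ii) and checking that they coincide.

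First I would unpack the right-hand side. By Lemma~\ref{L:difsa}(i), $f^{-1}(S)$ is an additive family of faces of $\vec{t}$, so the expression $f^{-1}(S)g$ is defined. Applying Lemma~\ref{L:difsa}(ii) to $f$ and $S$, the map $Sf$ has codomain $S\vec{s}$, domain $f^{-1}(S)\vec{t}$, and acts by $x\mapsto f(x)$ on ${\rm vr}(f^{-1}(S)\vec{t}\,)$. Applying Lemma~\ref{L:difsa}(ii) again to $g$ and $f^{-1}(S)$, the map $f^{-1}(S)g$ has codomain $f^{-1}(S)\vec{t}$, domain $g^{-1}(f^{-1}(S))\,\vec{u}$, and acts by $x\mapsto g(x)$ on its domain. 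Hence the composition $Sf\circ\big(f^{-1}(S)g\big)$ has codomain $S\vec{s}$, domain $g^{-1}(f^{-1}(S))\,\vec{u}$, and sends $x$ to $f(g(x))=(f\circ g)(x)$.

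Next I would unpack the left-hand side. Again by Lemma~\ref{L:difsa}(ii), $S(f\circ g)$ has codomain $S\vec{s}$, domain $(f\circ g)^{-1}(S)\,\vec{u}$, and acts by $x\mapsto (f\circ g)(x)$.

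The remaining point is the set-theoretic identity
\[
(f\circ g)^{-1}(S)\;=\;g^{-1}(f^{-1}(S)),
\]
considered as families of faces of $\vec{u}$. This is immediate: if $x$ is a face of $\vec{u}$, then, since $g$ is grounded simplicial, $g(x)$ is a face of $\vec{t}$, so the condition $(f\circ g)(x)\in S$ is equivalent to $g(x)\in f^{-1}(S)$. Consequently the domains of the two sides of the asserted equality coincide, as do their codomains and the functional rules on vertices. I expect no real obstacle here; the lemma is essentially a bookkeeping statement, with the only substantive step being the equality of preimage families above, and its proof amounts to pasting together two applications of Lemma~\ref{L:difsa}(ii).
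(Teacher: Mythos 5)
Your proposal is correct and follows essentially the same route as the paper: both rest on the preimage identity $(f\circ g)^{-1}(S)=g^{-1}\big(f^{-1}(S)\big)$ and then invoke Lemma~\ref{L:difsa}(ii) to match domains, codomains, and the action on vertices (and on faces in the preimage family). The extra details you supply — additivity of $f^{-1}(S)$ via Lemma~\ref{L:difsa}(i) and the use of groundedness of $g$ to justify the preimage identity — are consistent with, and only slightly more explicit than, the paper's argument.
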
 

\begin{proof} We have 
\begin{equation}\notag
(f\circ g)^{-1}(S) = g^{-1}\big( f^{-1}(S)\big). 
\end{equation} 
It follows that $S(f\circ g)$ and $Sf\circ \big( f^{-1}(S) g\big)$ have the same codomain, $S\vec{s}$, and domain given by 
\begin{equation}\label{E:doco}
(f\circ g)^{-1}(S)\,\vec{u}  = g^{-1}\big( f^{-1}(S)\big)\,\vec{u}.
\end{equation} 
For each subset or element $x$ of 
\[
{\rm vr}\Big( (f\circ g)^{-1}(S)\,\vec{u}\,\Big) = {\rm vr}\Big( g^{-1}\big( f^{-1}(S)\big)\,\vec{u}\Big),
\]
we obviously have $(f\circ g)(x) = f(g(x))$, and the conclusion of the current lemma follows from Lemma~\ref{L:difsa}(ii). 
\end{proof}

\section{Proof of amalgamation for $\mathcal D$ from a coinitiality theorem for $\mathcal D$}\label{S:ampr}

In this section, we state a coinitiality result, Theorem~\ref{T:clsu}, from which the amalgamation result, Theorem~\ref{T:fraiab}, will be derived. Note, however, that Theorem~\ref{T:clsu} is a special case of Theorem~\ref{T:fraiab}. We also provide the proof of this implication. Consequently, Theorem~\ref{T:clsu} will be what will remain to be shown.

Define the {\bf weld category} denoted by 
\[
{\mathcal W}. 
\]
Morphisms of the weld category are obtained by closing the set of all weld maps under composition.

\begin{theorem}\label{T:clsu} 
Weld category ${\mathcal W}$ is coinitial in the weld-division category ${\mathcal D}$, that is, 
for each $g'\in {\mathcal D}$, there exists $g\in {\mathcal D}$ such that 
\[
g'\circ g\in {\mathcal W}.
\]
\end{theorem}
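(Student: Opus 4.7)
The strategy is structural induction on the construction of $g' \in \mathcal{D}$. Since $\mathcal{D}$ is the closure of weld maps and isomorphisms under composition and division, it suffices to show that the class
\[
\mathcal{D}_0 := \{\, g' \in \mathcal{D} : \exists\, g \in \mathcal{D}\ \text{with}\ g'\circ g \in \mathcal{W}\,\}
\]
contains each generator and is closed under both operations. Weld maps lie in $\mathcal{D}_0$ trivially (take $g = \mathrm{id}$), and an arbitrary isomorphism $\phi$ belongs to $\mathcal{D}_0$ via $g = \phi^{-1}$, since $\phi \circ \phi^{-1} = \mathrm{id}$ may be viewed as the empty weld composition. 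For the inductive step to go through, however, it is important to handle combinatorial isomorphisms of types 1 and 2 more informatively: Lemma~\ref{L:commut} realizes such isomorphisms as byproducts of nested division patterns, and Lemma~\ref{L:org6} identifies precisely those patterns as compositions of welds, producing witness maps $g$ with the right combinatorial structure.

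For closure under composition $g' = g'_1 \circ g'_2$, the plan is: apply the inductive hypothesis to $g'_2$ to obtain $g_2$ with $g'_2 \circ g_2 \in \mathcal{W}$, then reroute by applying the hypothesis to the single weld-division map $g'_1 \circ (g'_2 \circ g_2)$; the measure of complexity used for the induction should count the number of non-weld generators in the construction, so the composition with a pure weld block does not reopen the induction. For closure under division, suppose $g' = sf$ and $f \in \mathcal{D}_0$ with witness $g_0$, so $f \circ g_0 \in \mathcal{W}$. Lemma~\ref{L:X} gives
\[
s(f\circ g_0) \;=\; sf \circ \bigl(f^{-1}(s)\,g_0\bigr),
\]
which means that $sf$ precomposed with $f^{-1}(s)\,g_0$ equals a division of a weld composition. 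The task then reduces to showing that such divisions lie in $\mathcal{D}_0$, i.e., that divisions of welds can be converted into weld compositions after further precomposition.

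The main obstacle is exactly this last step. A division of a single weld $\pi^{\vec{t}}_{x,t}$ by a face need not be a weld; it is a weld-division map whose structure is dictated by the additive family $\bigl(\pi^{\vec{t}}_{x,t}\bigr)^{-1}(s)$ (see Lemmas~\ref{L:adco} and \ref{L:difsa}). Turning such a map into a composition of welds, modulo precomposition by a weld-division, requires careful combinatorial bookkeeping of how the entries of the defining sequence split under iterated division. Lemma~\ref{L:org6}, together with the combinatorial isomorphism toolkit of Lemma~\ref{L:commut}, is designed for exactly this kind of reassembly: one rearranges the additive family $f^{-1}(s)$ into a nondecreasing list of the form required by Lemma~\ref{L:org6}, and then applies that lemma to recognize the resulting weld-division map as a composition of welds. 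I expect the technical heart of the argument to consist of identifying the right precomposition that brings the data into the hypotheses of Lemma~\ref{L:org6}, which is where the ``calculus of finite sequences of finite sets'' developed in Part~\ref{P:frpr} will carry the weight.
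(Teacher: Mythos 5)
Your top-level skeleton matches the paper's: the paper also sets ${\mathcal Z}=\{h\in{\mathcal D}\mid h\circ f\in{\mathcal W}\hbox{ for some }f\in{\mathcal D}\}$ and proves ${\mathcal Z}={\mathcal D}$ by checking generators and closure under composition and division, and your use of Lemma~\ref{L:X} to push a division through a composition of welds is exactly step (f) there. But two essential ingredients are missing, and in both places your sketch would not go through as written.

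First, closure under composition is not a bookkeeping matter. To conclude that $h_1\circ h_2\in{\mathcal Z}$ from witnesses $f_1,f_2$, one must replace $f_1$ by a map factoring through the weld composition $h_2\circ f_2$, and this is precisely an amalgamation over ${\mathcal W}$: the paper invokes Lemma~\ref{C:cam} (built on Lemma~\ref{L:ama}) to produce $g\in{\mathcal D}$ and $u\in{\mathcal N}\subseteq{\mathcal W}$ with $(h_2\circ f_2)\circ g=f_1\circ u$. Your measure ``number of non-weld generators'' does not substitute for this: the critical case is when the right-hand factor $g_2'$ contains no isomorphism generators yet is not a composition of welds (for instance an iterated division of a weld, or a composition of such maps); then $g_1'\circ(g_2'\circ g_2)$ has the same measure as $g_1'\circ g_2'$ and the induction stalls. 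In fact all the difficulty of the theorem lives at measure zero under your count, since divisions are operations rather than generators.

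Second, the obstacle you correctly isolate --- showing that a division $S\pi$ of a weld map by an additive family becomes a composition of welds after a further precomposition --- is not resolved by Lemma~\ref{L:org6} together with Lemma~\ref{L:commut}. Lemma~\ref{L:org6} only collapses ``star'' patterns $r_i\,s_{i1}\cdots s_{in_i}$ in which every $s_{ij}$ contains the core $r_i$, whereas the preimage $\pi^{-1}(S)$ has the much richer structure described in Lemma~\ref{L:desco}, involving all chains $X\subseteq T$ and the sets $\un{s}(X)$, $s(X)$. The paper's route is substantially heavier: it introduces pure weld-division maps and characterizes them (Lemma~\ref{L:chrpu}), analyzes divisions $s\pi^{\vec q}_{p,t}$ with $p\notin s$ (Claims~\ref{Cl:1} and \ref{Cl:2} in the proof), enlarges the single face $t$ to the additive family $T$ and factors $S\pi_{p,T}=(S\pi)\circ(S'\rho)$, and then appeals to the Main Lemma --- whose proof (the reductions of Sections~\ref{Su:redu1}--\ref{Su:redu2}, the special order of Lemma~\ref{L:orb}, and Lemma~\ref{L:cruc}) is the technical core of the whole argument. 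Your proposal acknowledges that ``the calculus will carry the weight'' at exactly this point, but supplies no argument there, so the heart of the proof is missing rather than merely deferred.
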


We start with some amalgamation lemmas. 
Recall that pairs $(f', f),\, (g', g)$, where $f', f, g', g$ are weld-division maps, form an amalgamation if 
\begin{equation}\label{E:gfpr}
 f'\circ f  = g'\circ g. 
\end{equation}

Lemma~\ref{L:ama} below gives the base case of the amalgamation theorem. 

\begin{lemma}\label{L:ama} 
Let $\pi$ be a weld map and $g'\in {\mathcal D}$ with $\pi$ and $g'$ having the same codomain. 
Then there exist a neat weld map $\pi_\iota$ and $f\in {\mathcal D}$ such that 
$(\pi, f),\, (g', \pi_\iota)$ is an amalgamation. 
\end{lemma}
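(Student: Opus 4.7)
My plan is the following. Write $\pi = \pi^{\vec{t}}_{x,t}\colon t\vec{t}\to\vec{t}$, where $x\in t$ and $t$ is not a vertex of $\vec{t}$, and let $g'\colon \vec{u}\to\vec{t}$ be the given weld-division map.

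First I would define
\[
S = \{s : s \text{ is a face of } \vec{u},\ t \subseteq g'(s)\}.
\]
Because $g'$ is simplicial, $S$ is upward closed in the face complex of $\vec{u}$, hence additive, so $S\,\vec{u}$ is well defined by Lemma~\ref{L:dedf}. For each $s\in S$ one has $x\in t\subseteq g'(s)$, and since $g'(s)$ is the pointwise image of $s$ under $g'$, I may fix $\iota(s)\in s$ with $g'(\iota(s))=x$. The resulting map $\pi^{\vec{u}}_\iota\colon S\,\vec{u}\to\vec{u}$ is then a neat weld directly from the definition of $\mathcal{N}$.

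For the complementary leg I would construct $f\colon S\,\vec{u}\to t\vec{t}$ as a composition $f = (tg')\circ h$. Here $tg'\colon g'^{-1}(t)\,\vec{u}\to t\vec{t}$ is the stellar subdivision of $g'$ by $t$, which belongs to $\mathcal{D}$ by closure of weld-division maps under division. The auxiliary map $h\colon S\,\vec{u}\to g'^{-1}(t)\,\vec{u}$ would act as the identity on the old vertices of $\vec{u}$ and on those new vertices already coming from $g'^{-1}(t)$, and as $s\mapsto r(s)$ on each extra new vertex $s\in S\setminus g'^{-1}(t)$, where $r(s)\in g'^{-1}(t)$ is some chosen face with $r(s)\subseteq s$. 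Such an $r(s)$ exists because $g'(s)\supseteq t$ lets one select a subset of $s$ mapped by $g'$ bijectively onto $t$.

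To verify that $h\in\mathcal{D}$ I would invoke Lemma~\ref{L:org6}. Taking a non-decreasing enumeration $r_1,\ldots,r_m$ of $g'^{-1}(t)$, and for each $i$ a non-decreasing enumeration $s_{i1},\ldots,s_{in_i}$ of $\{s\in S\setminus g'^{-1}(t):r(s)=r_i\}$, I would assemble the sequence
\[
r_1\, s_{11}\cdots s_{1n_1}\ r_2\, s_{21}\cdots s_{2n_2}\ \cdots\ r_m\, s_{m1}\cdots s_{mn_m},
\]
which enumerates $S$. Provided the representatives $r(s)$ are chosen coherently with inclusion so that this sequence is non-decreasing, Lemma~\ref{L:org6} produces the map defined by $s_{ij}\mapsto r_i$, which is precisely $h$, as a weld-division map. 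Composing with $tg'$ then yields $f\in\mathcal{D}$.

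The commutativity $\pi\circ f = g'\circ\pi^{\vec{u}}_\iota$ is a vertex-wise verification. On an old vertex $v$, both sides equal $g'(v)$, since $\pi$ is the identity on ${\rm vr}(\vec{t})$. On a new vertex $s\in S$, the right-hand side is $g'(\iota(s))=x$, while the left-hand side is $\pi\bigl((tg')(h(s))\bigr)=\pi(t)=x$, because $h(s)\in g'^{-1}(t)$ is a new vertex of $g'^{-1}(t)\,\vec{u}$ sent to $t$ by $tg'$.

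I expect the main obstacle to be the combinatorial bookkeeping in the application of Lemma~\ref{L:org6}: selecting the representatives $r(s)$ so that the assembled sequence is actually non-decreasing. A naive choice can easily create an inversion $s_{ij}\subseteq s_{kl}$ with $i>k$. The cleanest resolution is to choose $r(s)$ compatibly with inclusion (for instance via a minimality principle on $g'^{-1}(t)$), but if an inversion remains one can interpose combinatorial isomorphisms from Lemma~\ref{L:commut} to permute the offending consecutive pairs before applying Lemma~\ref{L:org6}. The sequence-of-sets calculus developed in Part~\ref{P:repr} is tailored precisely to such rearrangements, which is why the amalgamation can be carried out entirely within $\mathcal{D}$.
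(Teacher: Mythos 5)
Your construction is essentially the paper's proof. Your family $S=\{s:\ t\subseteq g'(s)\}$ coincides with the upward closure $T$ of $g'^{-1}(t)$ used in the paper (since $t\subseteq g'(s)$ exactly when some face $s'\subseteq s$ has $g'(s')=t$), your $h$ is the paper's collapsing map $f_2$ obtained from Lemma~\ref{L:org6}, your $(tg')\circ h$ is the paper's $f=f_1\circ f_2$ with $f_1$ the division of $g'$, and the vertexwise check of $\pi\circ f=g'\circ\pi_\iota$ replaces the paper's factorization $\pi_{\iota_S}\circ f_2=\pi_\iota$ (your freer choice of $\iota$ is harmless for this). The one point to fix is the choice of representatives: you should take $r(s)$ to be the \emph{largest} face of $g'^{-1}(t)$ contained in $s$, not a minimal one. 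This maximum exists because $g'^{-1}(t)$ is additive (Lemma~\ref{L:adco}): the union of all its members contained in $s$ is again a face of $\vec{u}$ mapping onto $t$. With this choice the assembled sequence is automatically non-decreasing — if $r_k\subseteq s_{ij}$ or $s_{kl}\subseteq s_{ij}$, then maximality gives $r_k\subseteq r_i$, hence $k\leq i$ for a non-decreasing enumeration of $g'^{-1}(t)$ — so no interposed combinatorial isomorphisms are needed; a minimal representative can leave genuine inversions (e.g.\ when two incomparable members of $g'^{-1}(t)$ and their union all sit inside $s$). This maximal choice is exactly the paper's $s^t$.
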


\begin{proof} Let $\pi=\pi^{\vec{p}}_{p,s_0} \colon s_0\, \vec{p}\to \vec{p}$, with $p\in s_0$ and $s_0\not\in {\rm vr}(\vec{p}\,)$. If $s_0$ is not a face of $\vec{p}$, then $s_0\, \vec{p}=\vec{p}$ and $\pi={\rm id}$. Then we take $f= g'$ and $\pi_\iota= {\rm id}$, that is, $\pi_\iota$ based on the empty family of faces. 

Assume $s_0$ is a face of $\vec{p}$. Let $\vec{q}$ be the domain of $g'$, that is, $g'\colon \vec{q}\to\vec{p}$. Set 
\[
S= (g')^{-1}(s_0). 
\]
By Lemma~\ref{L:adco}, $S$ is additive in $\vec{q}$ and has the convexity property: for $s_1, s_2\in S$
\begin{equation}\label{E:sfd}
\hbox{if }s_1\subseteq s\subseteq s_2, \hbox{ then } s\in S. 
\end{equation}
For each $s\in S$, let $\iota_S(s)\in s$ be such that $g'(\iota_S(s))= p$. Since $g'(s)=s_0$ and $p_0\in s_0$ such $\iota_S(s)\in s$ exists. Consider 
$\pi_{\iota_S}\colon S \vec{q}\to \vec{q}$. Let $f_1\colon S\vec{q}\to s_0\, \vec{p}$ be defined as $f_1= s_0 g'$. Then $f_1$ is a weld-division map.  
We have 
\begin{equation}\label{E:half}
g'\circ \pi_{\iota_S}= \pi\circ f_1.
\end{equation}
This identity is easily checked by evaluating the left- and right-hand sides on ${\rm vr}(S\vec{q})\cap {\rm vr}(\vec{q})$ and ${\rm vr}(S\vec{q})\setminus {\rm vr}(\vec{q})$. 

Now define $T$ by
\[
t\in  T\;\Leftrightarrow \; \big( t\hbox{ is a face of } \vec{q}\hbox{ and } s\subseteq t,\hbox{ for some }s\in S\big). 
\]
Clearly $T$ is upwards closed in $\vec{q}$. For $t\in T$, let $s^t$ be the largest with respect to inclusion element of $S$ contained in $T$. Such $s^t$ exists by additivity of $S$ in $\vec{q}$. 
For $s\in S$, let 
\[
T_s= \{ t\in T\mid s^t=s\}. 
\] 
Note that, for each $s\in S$, we have that $s\in T_s$, $s$ is included in each set in $T_s$, and, by \eqref{E:sfd}, the family $T_s$ is additive in $\vec{q}$. 
Furthermore, the families $T_s$, $s\in S$, are pairwise disjoint and their union is equal to $T$. 
Let $s_1, \dots, s_n$ be a non-decreasing enumeration of $S$. Observe that if $i<j$, then no element of $T_{s_j}$ is included in an element of $T_{s_i}$. 
Indeed, if $t\in T_{s_i}$, $t'\in T_{s_j}$, and $t'\subseteq t$, then $s_j\subseteq t$. Thus, since $s_i$ is the largest under inclusion element of $S$ included in $t$, we have $s_j\subseteq s_i$, contradicting $i<j$.  
So, we can write the sequence $T\vec{q}$ as 
\[
T\vec{q} = \big(s_1 (T_{s_1}\setminus \{ s_1\}) \big) \cdots \big( s_n (T_{s_n}\setminus \{ s_n\})\big) \vec{q}, 
\]
that is, we can enumerate $T$ in a non-decreasing manner so that in the enumeration $s_1$ is the first entry, followed by a non-decreasing enumeration of $T_{s_1}\setminus \{ s_1\}$, followed by $s_2$ and a non-decreasing enumeration of $T_{s_2}\setminus \{ s_2\}$, etc. 
The map 
\[
f_2\colon T \vec{q} \to  s_1\cdots s_n \vec{q} = S\vec{q}
\]
given by the assignment $T_{s_i}\setminus \{ s_i\} \ni t\to s_i$, for $i\leq n$, is a weld-division map by Lemma~\ref{L:org6}. 

Define now $\iota(t)= \iota_S(s^t)$, for $t\in T$. Note that $\iota(t)\in s^t\subseteq t$. Clearly, we also have 
\begin{equation}\label{E:pig} 
\pi_{\iota_S} \circ f_2 = \pi_\iota. 
\end{equation} 
Set $f=f_1\circ f_2$ and observe that $(\pi, f),\, (g', \pi_\iota)$ is an amalgamation as 
equality \eqref{E:gfpr} follows from \eqref{E:half} and \eqref{E:pig}. 
\end{proof}

Lemma~\ref{L:ama2} gives the relevant formal properties of the amalgamation relation.

\begin{lemma}\label{L:ama2} 
\begin{enumerate} 
\item[(i)] For $i=1,2$, let $(f_i', f_i),\, (g_i', g_i)$ be an amalgamation. Assume that $f_1=g_2'$. Then 
\[
(f_1'\circ f_2', f_2),\, (g_1', g_1\circ g_2)
\] 
is an amalgamation. 

\item[(ii)] Assume $(f'\circ f'', f),\, (g', g)$ is an amalgamation, then 
$(f', f''\circ f),\, (g', g)$ is an amalgamation. 
\end{enumerate}
\end{lemma}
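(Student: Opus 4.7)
The statement is essentially formal: both parts are diagram chases using associativity of composition together with the defining equation $f'\circ f = g'\circ g$ of projective amalgamation. The plan is to write the desired identity on one side and rewrite it using associativity, the two given amalgamation equalities, and the hypothesis $f_1=g_2'$.

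For part (i), I start from the two hypotheses
\[
f_1'\circ f_1 = g_1'\circ g_1, \qquad f_2'\circ f_2 = g_2'\circ g_2,
\]
together with $f_1 = g_2'$. The goal is to verify
\[
(f_1'\circ f_2')\circ f_2 \;=\; g_1'\circ (g_1\circ g_2).
\]
I compute the left side step by step: use associativity to replace $(f_1'\circ f_2')\circ f_2$ with $f_1'\circ(f_2'\circ f_2)$; substitute the second amalgamation equation to get $f_1'\circ(g_2'\circ g_2)$; substitute $g_2' = f_1$ to reach $f_1'\circ (f_1\circ g_2)$; reassociate as $(f_1'\circ f_1)\circ g_2$; apply the first amalgamation equation to obtain $(g_1'\circ g_1)\circ g_2$; reassociate to $g_1'\circ(g_1\circ g_2)$. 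Composition of weld-division maps stays inside $\mathcal{D}$, so $f_1'\circ f_2'$ and $g_1\circ g_2$ are again morphisms of $\mathcal{D}$, as required.

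For part (ii), the hypothesis reads $(f'\circ f'')\circ f = g'\circ g$. A single application of associativity gives $f'\circ(f''\circ f) = g'\circ g$, which is exactly the amalgamation identity for $(f',\, f''\circ f),\,(g',\,g)$. Again $f''\circ f\in\mathcal{D}$ by closure of weld-division maps under composition.

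I do not anticipate any obstacle: no combinatorial or set-theoretic content from Parts~\ref{P:stfra}--\ref{P:frpr} is invoked, and no property of weld-division maps beyond closure under composition is used. The lemma functions as a purely formal bookkeeping tool that will be used in the sequel to splice together amalgamations constructed one weld map at a time (as in Lemma~\ref{L:ama}) and to rebracket composite morphisms so that the induction hypothesis applies.
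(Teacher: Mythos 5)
Your proof is correct and matches the paper's approach: the paper disposes of both parts with exactly this kind of associativity/diagram-chase argument, which you have simply spelled out in full.
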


\begin{proof} 
Property \eqref{E:gfpr} of being an amalgamation is checked in points (i) and (ii) by an easy diagram chase. 
%
%
\end{proof}

The following lemma is a consequence of Lemma~\ref{L:ama} and \ref{L:ama2}~(i). 

\begin{lemma}\label{C:cam} 
Let $f' \in {\mathcal W}$ and $g'\in {\mathcal D}$ have the same codomain. Then there exist $f\in {\mathcal D}$ and $g\in {\mathcal N}$ such that 
$(f', f),\, (g', g)$ is an amalgamation. 
\end{lemma}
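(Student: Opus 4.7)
The plan is to prove Lemma~\ref{C:cam} by induction on the number of weld maps composed to form $f'$. Recall that $f' \in \mathcal{W}$ means $f'$ is a composition $\pi_1 \circ \cdots \circ \pi_n$ of weld maps. The base case is handled directly by Lemma~\ref{L:ama}, and the inductive step combines Lemma~\ref{L:ama} with the formal composition property of amalgamations given by Lemma~\ref{L:ama2}(i).

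For the base case $n=1$, we have $f' = \pi_1$ a single weld map. Lemma~\ref{L:ama} applied to $\pi_1$ and $g'$ produces a neat weld map $\pi_\iota$ and an $f \in \mathcal{D}$ such that $(\pi_1, f), (g', \pi_\iota)$ is an amalgamation. Since every neat weld map is a morphism of $\mathcal{N}$, taking $g = \pi_\iota$ finishes this case.

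For the inductive step, write $f' = \pi_1 \circ h$ with $h \in \mathcal{W}$ a composition of $n-1$ weld maps. First apply Lemma~\ref{L:ama} to $\pi_1$ and $g'$ to obtain a neat weld $\pi_\iota$ and $f_1 \in \mathcal{D}$ with $\pi_1 \circ f_1 = g' \circ \pi_\iota$. The map $f_1$ has the same codomain as $h$ (namely the domain of $\pi_1$), so the inductive hypothesis, applied to $h$ and $f_1$, yields $f_2 \in \mathcal{D}$ and $g_2 \in \mathcal{N}$ with $h \circ f_2 = f_1 \circ g_2$. Now invoke Lemma~\ref{L:ama2}(i) with the first amalgamation $(\pi_1, f_1), (g', \pi_\iota)$ and the second amalgamation $(h, f_2), (f_1, g_2)$; the composability assumption $f_1 = g_2'$ is met by construction. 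The conclusion gives that $(\pi_1 \circ h, f_2), (g', \pi_\iota \circ g_2)$ is an amalgamation, i.e., $f' \circ f_2 = g' \circ (\pi_\iota \circ g_2)$.

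To finish, set $f = f_2 \in \mathcal{D}$ and $g = \pi_\iota \circ g_2$. Since $\mathcal{N}$ is closed under composition and both $\pi_\iota$ (a neat weld) and $g_2$ (from the inductive hypothesis) are morphisms of $\mathcal{N}$, we obtain $g \in \mathcal{N}$, as required. There is no real obstacle here: the entire argument is a routine induction whose two ingredients are precisely the lemmas cited in the hint. The only minor point to verify is that the output of the inductive construction remains in the narrow subcategory $\mathcal{N}$, which holds because both amalgamation steps contribute only neat welds or compositions thereof on the $g$-side.
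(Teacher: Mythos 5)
Your proof is correct and follows essentially the same route as the paper: induction on the number of weld maps composing $f'$, with the base case given by Lemma~\ref{L:ama} and the inductive step obtained by combining Lemma~\ref{L:ama} with Lemma~\ref{L:ama2}(i) and the closure of $\mathcal{N}$ under composition. Your write-up simply makes explicit the bookkeeping (the matching of codomains and the identification $f_1=g_2'$) that the paper leaves to the reader.
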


\begin{proof} The map $f'$ is a composition of a finite number $n$ of weld maps. The proof proceeds by induction on $n$. For $n=1$, the conclusion follows from Lemma~\ref{L:ama}. In the passage from $n$ to $n+1$, we use Lemma~\ref{L:ama} and Lemma~\ref{L:ama2}~(i), and then the closure of $\mathcal N$ under composition. 
\end{proof}

We are ready to present an argument proving Theorem~\ref{T:frai} from Theorem~\ref{T:clsu}.

\begin{proof}[Proof of Theorem~\ref{T:frai} from Theorem~\ref{T:clsu}]
Define ${\mathcal A}$ to consist of all $f'\in {\mathcal D}$ with the following property: for each 
$g'\in {\mathcal D}$ with the same codomain as $f'$, 
there exist $f\in {\mathcal D}$ and $g\in {\mathcal N}$ such that $(f', f),\; (g', g)$ is an amalgamation. 
We need to show that ${\mathcal A}={\mathcal D}$. 
By Lemma~\ref{L:ama2}(ii), if $f', f''\in {\mathcal D}$ and  
$f'\circ f''\in {\mathcal A}$, then $f'\in {\mathcal A}$. 
Thus, it suffices to show that, given $f'\in {\mathcal D}$, there exists $f''\in {\mathcal D}$ such that $f'\circ f'' \in {\mathcal A}$. By 
Theorem~\ref{T:clsu}, for a given $f'$, there exists $f''\in {\mathcal D}$ such that $f'\circ f''$ is in $\mathcal W$. By Lemma~\ref{C:cam}, 
$\mathcal A$ contains all maps in $\mathcal W$ and the conclusion follows. 
\end{proof}

\section{Main Lemma and the proof of the coinitiality theorem from it}\label{S:math}

Recall the definition of maps $\pi_\iota$ from Section~\ref{Su:wene}. If $\pi_\iota$ is based on an additive family $T$ of faces of $\vec{q}$, all faces of $T$ have $p$ in common, and $\iota(t)=p$ for each $t\in T$, we write 
\begin{equation}\notag
\pi^{\vec{q}}_{p,T}
\end{equation} 
for $\pi_\iota$.

We state below the main technical result of the proof of Theorem~\ref{T:clsu}. Its point is to elucidate the structure of 
subdivisions $S\pi_{p,T}$ of $\pi_{p,T}$ for certain additive families $S$ and $T$. Such maps are always weld-division maps. Main Lemma shows that, assuming appropriate interactions between $S$ and $T$, the map $S\pi_{p,T}$ is better than just a weld-division map.

We introduce a class of maps that lies between weld maps and weld-division maps. 
The class is auxiliary but crucial for our arguments in this section and in Section~\ref{S:mal}. Recall the definition \eqref{E:defuu} of the set ${\rm u}(f)$ for a grounded simplicial map $f$. We say that $sf$ is obtained from $f$ by a {\bf pure division based on $s$} if $s\subseteq {\rm u}(f)$. 
The class of {\bf pure weld-division maps} is the smallest class closed under compositions and pure division and 
containing all weld maps and all combinatorial isomorphisms.

\begin{main}
Let $S$ and $T$ be additive families of faces of a sequence $\vec{q}$ such that 
for $s\in S$ and $t\in T$, if $s\cup t \hbox{ is a face of }\vec{q}$, then $s\cup t \in T$.
Let $p$ be a vertex of $\vec{q}$ contained in every face from $T$. 
Then the map $S\,\pi^{\vec{q}}_{p,T}$ is a pure weld-division map. 
\end{main}

The proof of Main Lemma is rather involved, and we postpone it till Section~\ref{S:mal}. Now, we proceed with the proof of Theorem~\ref{T:clsu} from Main Lemma. We start by giving a characterization of pure weld-division maps.

\begin{lemma}\label{L:chrpu} 
A map is a pure division-weld map if and only if it is a composition of combinatorial isomorphisms and maps of the form 
\begin{equation}\label{E:welpur} 
s_0\cdots s_m\, \pi^{\vec{q}}_{p,t}, 
\end{equation} 
where $s_0,\dots, s_m, t\in {\rm Fin}^+$, $p\in t\not\in {\rm vr}(\vec{q}\,)$, and $p\notin s_i$ for all $i\leq m$. 
\end{lemma}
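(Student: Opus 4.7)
The plan is to prove the two directions by induction on the defining operations.

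For $(\Leftarrow)$, it suffices to show that each map of the form \eqref{E:welpur} is a pure weld-division map, since combinatorial isomorphisms belong to the pure weld-division class by definition and the class is closed under composition. I will argue by induction on $m$. The base case $m=-1$ is just the weld map $\pi^{\vec{q}}_{p,t}$. For the inductive step, set $f = s_1\cdots s_m\pi^{\vec{q}}_{p,t}$ with codomain $s_1\cdots s_m\vec{q}$. The key observation is that $p$ is the unique vertex in the codomain of $\pi^{\vec{q}}_{p,t}$ having two preimages (namely $p$ itself and the new vertex $t$); since each subsequent division is by a set not containing $p$, an inductive application of Lemma~\ref{L:mapdif} yields ${\rm u}(f) = {\rm vr}(s_1\cdots s_m\vec{q}) \setminus \{p\}$. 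Once trivial divisions (by things not lying inside the vertex set of the relevant subdivided sequence) are discarded---these leave the map literally unchanged---the remaining $s_0$ automatically satisfies $s_0\subseteq {\rm u}(f)$, so $s_0 f = s_0\cdots s_m\pi^{\vec{q}}_{p,t}$ is a pure division of $f$.

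For $(\Rightarrow)$, I will prove by structural induction on the generation of the pure weld-division class that every such map is a composition of combinatorial isomorphisms and maps of form \eqref{E:welpur}. The generators---weld maps (already of form \eqref{E:welpur} with $m=-1$) and combinatorial isomorphisms---sit in the target class, and that class is closed under composition. The crucial step is closure under pure division. Given a composition $f = g\circ h$ in the target class and $s\subseteq {\rm u}(f)$, Lemma~\ref{L:X} gives $s(g\circ h) = sg \circ \bigl(g^{-1}(s)\,h\bigr)$, and I will verify that $g^{-1}(s)$ is a singleton family $\{u\}$ with $u\subseteq {\rm u}(h)$, so that both $sg$ and $uh$ are again pure divisions. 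Iterating this distribution reduces the problem to pure division of a single generator: pure division of a combinatorial isomorphism is again a combinatorial isomorphism (that class being closed under division by definition), and pure division of $s_0\cdots s_m\pi^{\vec{q}}_{p,t}$ by $s$ yields $s\,s_0\cdots s_m\pi^{\vec{q}}_{p,t}$, which is again of form \eqref{E:welpur}; the required condition $p\notin s$ is forced by $s\subseteq {\rm u}(s_0\cdots s_m\pi^{\vec{q}}_{p,t}) = {\rm vr}(s_0\cdots s_m\vec{q})\setminus\{p\}$ from the analysis in direction $(\Leftarrow)$.

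The main obstacle will be the verification that $g^{-1}(s)$ is a singleton family sitting inside ${\rm u}(h)$. This relies on surjectivity of grounded simplicial maps on vertices (since singletons are always faces by Lemma~\ref{L:subf}(i)): if some $v\in s$ had two $g$-preimages $u, u'$, then by surjectivity of $h$ each of $u, u'$ would have an $h$-preimage, producing two distinct $(g\circ h)$-preimages of $v$ and contradicting $v\in {\rm u}(g\circ h)$. An analogous argument shows the unique $g$-preimage $u_v$ of each $v\in s$ lies in ${\rm u}(h)$, and then the face-surjectivity part of the grounded simplicial definition forces $g^{-1}(s) = \{\{u_v : v\in s\}\}$. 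All remaining steps are routine bookkeeping using Lemma~\ref{L:X}, the associativity of iterated division from \eqref{E:itsi}, and the observation that trivial divisions by non-faces leave maps unchanged and can be freely inserted or deleted.
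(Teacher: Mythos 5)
Your proposal follows the paper's proof in both directions: the implication from compositions to pure weld-division maps rests on the computation ${\rm u}(s_0\cdots s_m\,\pi^{\vec{q}}_{p,t})={\rm vr}(s_0\cdots s_m\,\vec{q}\,)\setminus\{p\}$ (the paper's condition (b) in the proof of Lemma~\ref{L:chrpu}), and the converse is proved exactly as in the paper, by showing the class of such compositions is closed under pure division via the distribution $s(g\circ h)=(sg)\circ(u\,h)$ with $u$ the unique preimage face --- your ``singleton family'' verification is the content of \eqref{E:insu} and \eqref{E:puco}.

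The one place where your argument, as written, fails is the boundary case in which $\pi^{\vec{q}}_{p,t}$ is itself a combinatorial isomorphism, namely when $t$ is not a face of $\vec{q}$ (the map is the identity) or $t$ is the singleton $\{p\}$ (a type 3b isomorphism). There the map is a bijection on vertices, so ${\rm u}(s_0\cdots s_m\,\pi^{\vec{q}}_{p,t})$ is all of ${\rm vr}(s_0\cdots s_m\,\vec{q}\,)$, not ${\rm vr}(s_0\cdots s_m\,\vec{q}\,)\setminus\{p\}$; consequently a pure division can be based on a set $s$ with $p\in s$, and $s\,s_0\cdots s_m\,\pi^{\vec{q}}_{p,t}$ is then not of the form \eqref{E:welpur}, so your claim that ``$p\notin s$ is forced'' breaks down (and your ``key observation'' that $p$ has exactly two preimages is false there as well). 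The paper handles this with the explicit dichotomy \eqref{E:dile}: either $\pi^{\vec{q}}_{p,t}$ is a combinatorial isomorphism, in which case $s_0\cdots s_m\,\pi^{\vec{q}}_{p,t}$ and every division of it are combinatorial isomorphisms and so remain in the class, or $t$ is a face of $\vec{q}$ with at least two elements, and only then does (b) hold and your argument go through verbatim. Adding this case split --- and noting that your appeal to Lemma~\ref{L:mapdif} for the computation of ${\rm u}$ is really the containment \eqref{E:exin} from that lemma's proof, which the paper establishes by a direct induction --- turns your sketch into the paper's proof.
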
 

\begin{proof} We make some preliminary observations about maps in \eqref{E:welpur}. Note that if $t$ is not a face of $\vec{q}$ or $t$ is a one element set, then $\pi^{\vec{q}}_{p,t}$ is a combinatorial isomorphism---it is the identity, in the first case, and a combinatorial isomorphism of type 3b, in the second case. 
Assume $t$ is a face of $\vec{q}$ and has at least two elements. Set $\pi= \pi^{\vec{q}}_{p,t}$. Let now $s_0, \dots , s_m\in {\rm Fin}^+$ be such that $p\not\in s_i$, for $i\leq m$. 
By an easy induction one proves that the domain and codomain of $s_0\cdots s_m \pi$ are $s_0\cdots s_m t\,\vec{q}$ and $s_0\cdots s_m \vec{q}$, respectively, and that we have 
\begin{enumerate} 
\item[(a)] ${\rm vr}(s_0\cdots s_m t\,\vec{q}\,) \setminus \{ t\} = {\rm vr}(s_0\cdots s_m \vec{q}\,)$\; and \;$s_0\cdots s_m\pi\res {\rm vr}(s_0\cdots s_m \vec{q}\,) = {\rm id}$;

\item[(b)] ${\rm u}(s_0\cdots s_m\pi) = {\rm vr}(s_0\cdots s_m \vec{q}\,)\setminus \{ p\}$.
\end{enumerate} 
We use the assumptions that $t$ is a face of $\vec{q}$ and $p\not\in s_i$ to see (b) and the assumption that $t$ has at least two elements to see (a). 

We conclude that 
\begin{equation}\label{E:dile} 
\hbox{$\pi$ 
is a combinatorial isomorphism or $s_0\cdots s_m\pi$ fulfills (b)}.
\end{equation}
We proceed to proving the equivalence from the conclusion of the lemma. 

($\Leftarrow$) Since each combinatorial isomorphism $f\colon \vec{t}\to \vec{s}$ is a bijection as a function ${\rm vr}(\vec{t}\,)\to {\rm vr}(\vec{s}\,)$, we see that ${\rm u}(f) = {\rm vr}(\vec{s}\,)$. Thus, each division of a combinatorial isomorphism is a pure division, so, inductively, each combinatorial isomorphism is a pure weld-division map. It remains to show that maps in \eqref{E:welpur} arepure weld-division maps assuming that $\pi^{\vec{q}}_{p,t}$ is not a combinatorial isomorphism. In that case, the conclusion follows from (b) again by induction.

($\Rightarrow$) It suffices to prove that the family $\mathcal B$ consisting of compositions of combinatorial isomorphisms and maps in \eqref{E:welpur} is closed under pure division. We note that if $g\colon \vec{t}\to \vec{s}$, $f\colon \vec{s}\to \vec{r}$ are grounded simplicial maps then 
\begin{equation}\label{E:insu}
{\rm u}(f\circ g)\subseteq {\rm u}(f) \cap f({\rm u}(g)), 
\end{equation} 
which is a consequence of the surjectivity of $g$ as a function ${\rm vr}(\vec{t}) \to {\rm vr}(\vec{s})$. Inclusion \eqref{E:insu} gives, for $r\in {\rm Fin}^+$, that if $r\subseteq {\rm u}(f\circ g)$, then $r\subseteq {\rm u}(f)$ and, for the unique set $s\subseteq {\rm vr}(\vec{s}\,)$ with $f(s)=r$, we have $s\subseteq {\rm u}(g)$. Thus, with the assumption $r\subseteq {\rm u}(f\circ g)$, we have, for some $s\in {\rm Fin}^+$, 
\begin{equation}\label{E:puco}
\begin{split}
r(f\circ g) &= 
\begin{cases}
f\circ g, &\text{ if $r$ is not a face of $\vec{r}$;}\\
(rf) \circ (s g), &\text{ if  $r$ is a face of $\vec{r}$},
\end{cases}\\
r &\subseteq {\rm u}(f),\\ 
s &\subseteq {\rm u}(g).
\end{split}
\end{equation} 
Further, we observe that if $s_0\cdots s_m\, \pi^{\vec{q}}_{p,t}$ is as in \eqref{E:welpur}, $\emptyset\not=s\subseteq u(s_0\cdots s_m\, \pi^{\vec{q}}_{p,t})$, and 
$\pi^{\vec{q}}_{p,t}$ is not a combinatorial isomorphism, then 
\[
ss_0\cdots s_m\, \pi^{\vec{q}}_{p,t}, 
\]
is also as in \eqref{E:welpur}, that is, $p\not\in s$. This follows directly from condition (b). By this observation, assertion \eqref{E:dile}, the fact that combinatorial isomorphisms are closed under division, and by \eqref{E:puco}, we conclude that $\mathcal B$ is closed under pure division. 
\end{proof}

\begin{proof}[Proof of Theorem~\ref{T:clsu} from Main Lemma] Define 
\[
{\mathcal Z} = \{ h\in {\mathcal D} \mid h\circ f \in {\mathcal W}, \hbox{ for some }f\in {\mathcal D} \}.
\]
Proving Theorem~\ref{T:clsu} amounts to showing that ${\mathcal Z}= {\mathcal D}$. 
Obviously, this will be accomplished if we show points (b), (c), (e), and (f) listed below. We state them along with auxiliary properties (a) and (d) and then prove them in turn.

\begin{enumerate}
\item[(a)] If $h\circ f\in {\mathcal Z}$, for some $f\in {\mathcal D}$, then $h\in {\mathcal Z}$. 

\item[(b)] $\mathcal Z$ contains all combinatorial isomorphisms.

\item[(c)] ${\mathcal Z}$ is closed under composition. 

\item[(d)] $\mathcal Z$ contains allpure weld-division maps. 

\item[(e)] ${\mathcal Z}$ contains all maps of the form $S\pi$, where $\pi$ is a weld map and $S$ is an additive family of faces of the codomain of $\pi$.

\item[(f)] $\mathcal Z$ is closed under division. 
\end{enumerate}

Point (a) is clear since $\mathcal D$ is closed under composition.

For (b), note that if $g$ is a combinatorial isomorphism, then the combinatorial isomorphism $f= g^{-1}$ witnesses that $g\in {\mathcal Z}$.

To see (c), let $h_1, h_2\in {\mathcal Z}$ be such that $h_1\circ h_2$ is defined. 
We aim to show that $h_1\circ h_2\in {\mathcal Z}$. Fix $f_1, f_2\in {\mathcal D}$ witnessing that $h_1, h_2\in {\mathcal Z}$, that is, 
\[
g_1= h_1\circ f_1,\; g_2=h_2\circ f_2\in {\mathcal W}.
\]
Note that $g_2$ and $f_1$ have the same codomain and 
apply Lemma~\ref{C:cam} to $g_2\in {\mathcal W}$ and $f_1\in {\mathcal D}$ to obtain $f\in {\mathcal N}\subseteq {\mathcal W}$ and $g \in {\mathcal D}$ such that 
$g_2\circ g = f_1\circ f$. Then notice that 
\[
(h_1\circ h_2)\circ (f_2\circ g) = h_1\circ g_2\circ g= h_1\circ f_1\circ f=g_1\circ f\in {\mathcal W}. 
\]
It follows that $f_2\circ g\in {\mathcal D}$ witnesses that $h_1\circ h_2\in {\mathcal Z}$.

We now show (d), that is, we prove that all pure weld-division maps are in $\mathcal Z$. 
To this end, we start with the following claim analyzing maps of the form $\pi^{\vec{q}}_{p,t}$. 
\begin{claim}\label{Cl:1} 
Let $s, t$ be faces of $\vec{q}$ with $p\in t$ and $p\not\in s$. Consider the weld map $\pi^{\vec{q}}_{p,t}\colon t\,\vec{q}\to \vec{q}$ and its division $s\pi^{\vec{q}}_{p,t}\colon st\, \vec{q}\to s\,\vec{q}$. Then 
\[
(s\pi^{\vec{q}}_{p,t})\circ f =\pi_1\circ \pi_2, 
\]
for some $f\in {\mathcal D}$ and where, for some $t_1, t_2\in {\rm Fin}^+$ that are not vertices of $s\,\vec{q}$ and $t_1s\,\vec{q}$, respectively, we have $p\in t_1$, $p\in t_2$, and $\pi_1=\pi^{s\,\vec{q}}_{p,t_1}$ and $\pi_2=\pi^{t_1s\,\vec{q}}_{p,t_2}$. 
\end{claim}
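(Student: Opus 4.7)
The plan is to perform a careful case analysis based on the combinatorial relationship between $s$ and $t$ as faces of $\vec{q}$, and in each case construct explicit choices of $t_1, t_2\in{\rm Fin}^+$ and $f\in\mathcal{D}$ satisfying $(s\pi^{\vec{q}}_{p,t})\circ f=\pi_1\circ\pi_2$. The underlying idea is that the divided map $s\pi^{\vec{q}}_{p,t}\colon st\vec{q}\to s\vec{q}$ acts as a weld, sending the new vertex $t$ of $st\vec{q}$ to $p$, fixing the new vertex $s$, and being the identity on the remaining vertices. We seek to recognize this map, after precomposing with a suitable weld-division $f$, as a composition of two weld maps into $s\vec{q}$.

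In the easy case where $s\cap t=\emptyset$ or $s\cup t$ is not a face of $\vec{q}$, condition~(b) of combinatorial equivalence yields $st\vec{q}=ts\vec{q}$, so $s\pi^{\vec{q}}_{p,t}$ is literally the weld $\pi^{s\vec{q}}_{p,t}\colon ts\vec{q}\to s\vec{q}$. I would take $t_1=t$, $\pi_1=\pi^{s\vec{q}}_{p,t}$, $f={\rm id}$, and any $t_2\in{\rm Fin}^+$ with $p\in t_2$ but $t_2\notin{\rm vr}(ts\vec{q})$, which forces $\pi_2={\rm id}$; the identity $(s\pi^{\vec{q}}_{p,t})\circ f=\pi_1\circ\pi_2$ is immediate.

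When $s\cup t\in\vec{q}$ and $s\cap t\neq\emptyset$, the natural candidate for $t_1$ is $t_1:=(t\setminus s)\cup\{s\}$, which is a new face of $s\vec{q}$ obtained by $s$-dividing the face $s\cup t$ of $\vec{q}$; it lies in ${\rm Fin}^+$, contains $p\in t\setminus s$, and is not a vertex. In the sub-case $s\subseteq t$ (so $s\cup t=t$), Lemma~\ref{L:commut}(i) applied to the face $t$ of $\vec{q}$ with subset $s$ produces a combinatorial isomorphism $g\colon st\vec{q}\to t_1 s\vec{q}$ given by $t\mapsto t_1$; setting $f=g^{-1}$, $\pi_1=\pi^{s\vec{q}}_{p,t_1}$, and $\pi_2={\rm id}$ (via a dummy $t_2$) yields the claim, since $g^{-1}$ sends $t_1\mapsto t$ and $s\pi^{\vec{q}}_{p,t}$ then sends $t\mapsto p$. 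In the remaining sub-case $s\not\subseteq t$, the face $t$ survives in $t_1 s\vec{q}$ because $t_1\not\subseteq t$, so one may take $t_2=t$ and $\pi_2=\pi^{t_1s\vec{q}}_{p,t}$. The map $f\colon tt_1s\vec{q}\to st\vec{q}$ is then constructed as a composition of combinatorial isomorphisms from Lemma~\ref{L:commut} (used to reorganize the divisors so that the role of $t_1$ is mediated by a face genuinely containing $t$ as an element) and the organized welds of Lemma~\ref{L:org6}. The identity $(s\pi^{\vec{q}}_{p,t})\circ f=\pi_1\circ\pi_2$ is verified by evaluating both sides on the new vertices $s,t_1,t$ and on the underlying urelement vertices.

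The main obstacle is this last sub-case. Its difficulty is that $t_1=(t\setminus s)\cup\{s\}$ does not contain $t$ as a set-theoretic element, so the desired collapse $t_1\mapsto t$ cannot be expressed as a single weld map; moreover, the na\"ive vertex-renaming $tt_1s\vec{q}\to st\vec{q}$ sending $t_1\mapsto t$ and fixing everything else need not be grounded simplicial, since the source and target carry incomparable collections of faces (for instance $\{s,t\}$ may be a face of $st\vec{q}$ without being in the image). The correct construction of $f$ therefore requires combining the renaming supplied by Lemma~\ref{L:commut}(i) with the organized weld of Lemma~\ref{L:org6} in a way that matches both face sets. Verifying that every intermediate set remains in ${\rm Fin}^+$ and that combinatorial equivalences line up under the rearrangements is the technical heart of the argument.
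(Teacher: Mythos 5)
Your treatment of the degenerate cases and of the sub-case $s\subseteq t$ is essentially correct (via Lemma~\ref{L:commut}(i)), but the proof is missing exactly where the content of the claim lies: in the sub-case where $s\cup t$ is a face of $\vec q$, $s\cap t\not=\emptyset$ and $s\not\subseteq t$, you never actually construct $f$, never verify that it is a weld-division map, and never check the identity $(s\pi^{\vec q}_{p,t})\circ f=\pi_1\circ\pi_2$; you only assert that some combination of Lemma~\ref{L:commut} and Lemma~\ref{L:org6} should produce $f$ and concede that this is ``the technical heart'' of the argument. You even name the obstruction precisely --- the collapse onto $t$ cannot be a weld of $t_1=(t\setminus s)\cup\{s\}$ because $t\notin t_1$ --- but you do not resolve it, so the proposal has a genuine gap.

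The paper resolves it with a single uniform construction that needs no case analysis. Besides $t(s)=(t\setminus s)\cup\{s\}$ it introduces $s(t)=(s\setminus t)\cup\{t\}$, which \emph{does} contain $t$ as an element, so $\pi_0=\pi^{s\,t\,\vec q}_{t,\,s(t)}\colon s(t)\,s\,t\,\vec q\to s\,t\,\vec q$ is a genuine weld collapsing $s(t)$ onto $t$; the type~2 combinatorial isomorphism $h\colon t(s)\,t\,s\,\vec q\to s(t)\,s\,t\,\vec q$ given by $t(s)\to s(t)$ supplies the reordering. Taking $f=\pi_0\circ h$, $t_1=t$ and $t_2=t(s)$ (note this is the opposite of your choice $t_1=(t\setminus s)\cup\{s\}$, $t_2=t$), both $(s\pi^{\vec q}_{p,t})\circ f$ and $\pi_1\circ\pi_2$ have domain $t(s)\,t\,s\,\vec q$ and codomain $s\,\vec q$ and are given by the assignment $t\to p$, $t(s)\to p$, hence they coincide. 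Two smaller remarks: your dummy-$t_2$ device requires $t_2$ to fail to be a \emph{face} of $t_1 s\,\vec q$ (not merely a vertex) in order to force $\pi_2={\rm id}$, which is easily arranged but should be said; and in your hard sub-case the sequences $t\,t_1\,s\,\vec q$ and $t_1\,t\,s\,\vec q$ are not interchangeable, since $t\cup t_1=t\cup\{s\}$ is a face of $s\,\vec q$ there, so any completion of your route would have to respect that order --- another reason the construction of $f$ cannot be waved through.
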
 

\noindent {\em Proof of Claim~\ref{Cl:1}.} We keep in mind that $p\in t$ and $p\not\in s$. 
Set 
\[
t(s) = (t\setminus s) \cup \{ s\}\;\hbox{ and }\; s(t)= (s\setminus t)\cup \{ t\}. 
\]
Since $s$ and $t$ are not vertices of $\vec{q}$ (since they are its faces), we have a combinatorial isomorphism of type 2
\[
h\colon t(s) ts\, \vec{q}\to s(t) s\, t\, \vec{q}.
\]
given by the assignment 
\begin{equation}\label{E:easig} 
t(s)\to s(t).
\end{equation}
Note further that, since $s(t)$ and $t(s)$ are not equal to either $s$ or $t$, $s$ is not equal to $t$, and $s(t), t(s), s, t$ are not vertices of $\vec{q}$ (since $s$ and $t$ are its faces), we have 
\begin{enumerate}
\item[] $s(t)$ is a not a vertex of $st\vec{q}$ and $t\in s(t)$, 

\item[] $t(s)$ is not a vertex of $ts\vec{q}$ and $p\in t(s)$, and 

\item[] $t$ is not a vertex of $s\vec{q}$ and $p\in t$.
\end{enumerate}
So, we can consider the weld maps 
\[
\begin{split}
\pi_0&= \pi_{t, s(t)}^{st\vec{q}}\colon s(t)st\vec{q}\to st\vec{q},\\
\pi_1&=  \pi^{s\vec{q}}_{p,t}\colon ts\vec{q}\to s\vec{q},\\
\pi_2 &= \pi_{p, t(s)}^{ts\vec{q}} \colon t(s) ts\vec{q}\to ts\vec{q}.
\end{split} 
\]
Now note that maps $s\pi^{\vec{q}}_{p,t}\circ (\pi_0\circ h)$ and $\pi_1\circ \pi_2$ have the same domains and codomains, which are $t(s) ts\vec{q}$ and $s\vec{q}$, respectively, and, by \eqref{E:easig} and the definition of weld maps, they are both given by the assignment 
\[
t \to p\;\hbox{ and }\; t(s)\to p.
\]
It follows that 
\[
s\pi^{\vec{q}}_{p,t}\circ (\pi_0\circ h) = \pi_1\circ \pi_2, 
\]
and $f=\pi_0\circ h$, $t_1=t$, and $t_2= t(s)$ are as required by the conclusion of Claim~\ref{Cl:1}. 

\medskip

\begin{claim}~\label{Cl:2}
Let $r\in {\rm Fin}^+$ not be a vertex of a sequence $\vec{q}$ and let $p\in r$. Consider the weld map $\pi^{\vec{q}}_{q,r}\colon r\,\vec{q}\to \vec{q}$. 
Let $s_1, \dots, s_m \in {\rm Fin}^+$ be such that $p\notin s_i$ and $s_i$ is a face of $s_{i+1}\cdots s_m \vec{q}$, for all $1\leq i\leq m$. Then, for each $1\leq i\leq m$, the following conditions hold. 
\begin{enumerate} 
\item[(i)] $s_i$ is a face of $s_{i+1}\cdots s_m r\,\vec{q}$ and it is the only face of $s_{i+1}\cdots s_m r\,\vec{q}$ mapped by $s_{i+1}\cdots s_m \pi^{\vec{q}}_{q,r}$ to $s_i$. 

\item[(ii)] If $g\colon \vec{r}\to s_{i+1}\cdots s_m r\,\vec{q}$ is in $\mathcal D$, for some sequence $\vec{r}$, then 
\[
s_1\cdots s_i( s_{i+1}\cdots s_m 
\pi^{\vec{q}}_{q,r}
\circ g) = (s_1\cdots s_m \pi^{\vec{q}}_{q,r})\circ (s_1\cdots s_i g).
\]
\end{enumerate} 
\end{claim}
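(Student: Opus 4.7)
The plan is to prove (i) together with an explicit description of the map $\pi_i := s_{i+1}\cdots s_m\,\pi^{\vec{q}}_{p,r}$ simultaneously by downward induction on $i$, from $i=m$ down to $i=1$. The auxiliary description I carry through is that $\pi_i\colon s_{i+1}\cdots s_m r\,\vec{q} \to s_{i+1}\cdots s_m\,\vec{q}$ sends $r\mapsto p$, sends $s_j\mapsto s_j$ for every $j>i$, and is the identity on all other vertices.

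Base case $i=m$: the map $\pi_m=\pi^{\vec{q}}_{p,r}$ has this description by definition. For (i), since $p\in r$ and $p\notin s_m$ we have $r\not\subseteq s_m$, so $s_m$ persists as an ``old'' face of $r\,\vec{q}$; and if a face $\tau$ of $r\,\vec{q}$ is mapped to $s_m$, then $r\notin \tau$ (else its image contains $p$), whence $\tau$ is an old face mapped to itself, forcing $\tau=s_m$. Inductive step from $i+1$ to $i$: the description of $\pi_{i+1}$ together with (i) at $i+1$ gives $\pi_{i+1}^{-1}(s_{i+1})=\{s_{i+1}\}$, so by the definition of the division of a simplicial map, $\pi_i=s_{i+1}\pi_{i+1}$ has domain $s_{i+1}\cdots s_m r\,\vec{q}$ and codomain $s_{i+1}\cdots s_m\,\vec{q}$, sends the new vertex $s_{i+1}$ in the domain to the new vertex $s_{i+1}$ in the codomain, and agrees with $\pi_{i+1}$ on the other vertices---yielding the description of $\pi_i$. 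For (i) at $i$: grounded simplicial surjectivity on faces, applied to the hypothesis that $s_i$ is a face of the codomain $s_{i+1}\cdots s_m\,\vec{q}$, produces a face $\tau$ of the domain of $\pi_i$ with $\pi_i(\tau)=s_i$; since $p\notin s_i$, the vertex $r$ is not in $\tau$; and since $\pi_i$ is the identity on all vertices other than $r$, this forces $\tau=s_i$.

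For (ii) I would apply Lemma~\ref{L:X} iteratively. With $S=\{s_i\}$, $f=\pi_i$, and $\pi_i^{-1}(s_i)=\{s_i\}$ from (i) at $i$,
\[
s_i(\pi_i\circ g) \;=\; (s_i\pi_i)\circ \bigl(\pi_i^{-1}(s_i)\,g\bigr) \;=\; (s_i\cdots s_m\,\pi^{\vec{q}}_{p,r})\circ (s_i\,g).
\]
A second application with $S=\{s_{i-1}\}$ and $f=s_i\pi_i$, using (i) at $i-1$ to see $(s_i\pi_i)^{-1}(s_{i-1})=\{s_{i-1}\}$, strips off another division. Iterating through $j=i-1,i-2,\dots,1$ yields the claimed equality.

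The main obstacle is bookkeeping: at each step the additive family $(s_{j+1}\cdots s_m\,\pi^{\vec{q}}_{p,r})^{-1}(s_j)$ must actually be the singleton $\{s_j\}$, and the domains and codomains of the iterated divisions must match up as claimed on both sides of the equation in (ii). Both reductions rest on the single clean observation that, because $p\notin s_j$ for every $j$, no face mapped to $s_j$ by any partial division $\pi_j$ of the weld map can contain the vertex $r$, and this collapses a potentially intricate additive preimage to the single face $s_j$.
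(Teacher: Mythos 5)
Your proposal is correct and follows essentially the same route as the paper: part (ii) is exactly the paper's argument (iterate Lemma~\ref{L:X} with the singleton families $\{s_j\}$, using (i) to identify each preimage family as $\{s_j\}$), and the uniqueness half of (i) rests on the same observation that $p\notin s_i$ forbids the vertex $r$ from appearing in any preimage face, so that the map acts identically there. The only divergence is in the existence half of (i): the paper proves that $s_i$ is a face of $s_{i+1}\cdots s_m r\,\vec{q}$ by an induction through the recursive definition of face (casework on whether $s_i$ is an old face of $\vec{q}$ or a new face containing some $s_j$), whereas you obtain it from the face-surjectivity clause in the definition of grounded simplicial maps applied to $\pi_i=s_{i+1}\cdots s_m\pi^{\vec{q}}_{p,r}$, combined with your explicit downward-induction description of $\pi_i$ (the map $r\to p$, fixing all other vertices) — a description the paper asserts without proof. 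Your variant trades the face-structure casework for making that description an explicit induction via the singleton divisions; both are sound, and your existence argument is marginally slicker since it reuses uniqueness.
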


\noindent {\em Proof of Claim~\ref{Cl:2}.} Point (ii) follows from (i) and Lemma~\ref{L:X}.  

To prove (i), note that $s_i$ is a face of $s_{i+1}\cdots s_m r\vec{q}$. This statement is proved by induction as follows. Assume $s_i$ is a face of $s_{i+1}\cdots s_m \vec{q}$. If $s_i$ is a face of $\vec{q}$, then, since $r\not\subseteq s_i$, $s_i$ is a face of $r\vec{q}$, and so a face of $s_{i+1}\cdots s_m r \vec{q}$, as required, since we have $s_j\not\subseteq s_i$, for $j>i$, as $s_i$ is assumed to be a face of $s_{i+1}\cdots s_m \vec{q}$. If $s_i$ is not a face of $\vec{q}$, then there is $m\leq j>i$ such that $s_j\in s_i$ and $\big(s_i\setminus \{ s_j\}\big)\cup s_j$ is a face of $s_{j+1}\cdots s_m \vec{q}$. So, by induction, we have that 
$\big(s_i\setminus \{ s_j\}\big)\cup s_j$ is a face of $s_{j+1}\cdots s_m r \vec{q}$. So $s_i$ is a face of $s_{i+1}\cdots s_m \vec{q}$. 

Furthermore, the maps $s_{i+1}\cdots s_m \pi^F_{q,r}$, $i<m$, are given by the assignment $r\to p$. Thus, to see that $s_i$ is the only face of $s_{i+1}\cdots s_m rF$ mapped by $s_{i+1}\cdots s_m \pi^F_{q,r}$ to $s_i$, it suffices to note that $p\not\in s_i$, as is assumed. The claim follows.

\smallskip

Now, we finish proving (d), that is, we show that each pure weld-division map is in $\mathcal Z$. By Lemma~\ref{L:chrpu} and by (b) and (c), it will be enough to prove the following statement.

\noindent {\em Let $s_1, \dots, s_m\in {\rm Fin}^+$, let $t$ be a face of $\vec{q}$, and let $p\in t$ and $p\not\in s_i$, for $i\leq m$. Then 
\[
s_1\cdots s_m \pi^{\vec{q}}_{p,t}\in {\mathcal Z}. 
\]}

\noindent The statement is proved by induction on $m$. Clearly we can assume that, for all $i\leq m$, $s_i$ is a face of $s_{i+1}\cdots s_m \vec{q}$ since otherwise we remove from the sequence $s_1\cdots s_m$ all $s_i$ such that $s_i$ is not a face of $s_{i+1}\cdots s_m\vec{q}$, and the resulting map is equal to $s_1\cdots s_m \pi^{\vec{q}}_{p,t}$. If $m=0$, the statement is obvious since $\pi_{p,t}\in {\mathcal Z}$ by definition of $\mathcal Z$. If $m=1$, the statement follows from Claim~\ref{Cl:1}, (a), and (c). Assume $m\geq 2$ and suppose that the statement holds for $m-1$.

Apply Claim~\ref{Cl:1} to $\pi^{\vec{q}}_{p,t}$ and $s=s_m$, so the conclusion of Claim~\ref{Cl:1} holds with some $f\in {\mathcal D}$ and $\pi_1$ and $\pi_2$. We get from Claim~\ref{Cl:2}
\begin{equation}\label{E:altg} 
\begin{split}
(s_1\cdots s_{m-1} s_m \pi^{\vec{q}}_{p,t}) \circ (s_1\cdots s_{m-1} f) &= (s_1\cdots s_{m-1}) \big((s_m \pi^{\vec{q}}_{p,t}) \circ f\big)\\ 
&= s_1\cdots s_{m-1} (\pi_1\circ \pi_2). 
\end{split}
\end{equation} 
Since $\pi_1=\pi^{s_m\vec{q}}_{p,t_1}$ with $p\in t_1$ and since $p\not\in s_i$ for $i<m$, Claim~\ref{Cl:2} gives 
\[
s_1\cdots s_{m-1} (\pi_1\circ \pi_2)=(s_1\cdots s_{m-1} \pi_1)\circ (s_1\cdots s_{m-1} \pi_2).
\]
Combining this equation with \eqref{E:altg} and applying (a) and (c) makes it clear that it suffices to see that 
\[
s_1\cdots s_{m-1} \pi_1,\; s_1\cdots s_{m-1} \pi_2\in {\mathcal Z}. 
\]
But this follows from $\pi_1$ being equal to $\pi^{s_m\vec{q}}_{p,t_1}$, $\pi_2$ being equal to $\pi^{t_1s_m\vec{q}}_{p,t_2}$, $p\not\in s_i$ for $i<m$ 
and the inductive assumption for $m-1$. Thus, point (d) is proved.

We move to proving (e). This part of the proof involves an application of Main Lemma. Let 
\[
\pi= \pi^{\vec{q}}_{p,t}\colon t\,\vec{q}\to \vec{q}. 
\]
where $t\in {\rm Fin}^+$ is not a vertex of $\vec{q}$, and $p\in t$. 
If $t$ is not a face of $\vec{q}$, then $\pi$ and, so also $S\pi$, are identity maps and there is nothing to prove. So assume $t$ is a face of $\vec{q}$.   
By (a), it will suffice to find $f\in {\mathcal D}$ such that 
\begin{equation}\label{E:cirf}
(S\pi) \circ f \in {\mathcal Z}. 
\end{equation} 
We proceed to construct such a map $f\in {\mathcal D}$.

Define 
\[
T = \{ t\}\cup \{ t\cup s\mid s\in S,\, t\cup s\hbox{ a face of } \vec{q}\,\}. 
\]
This is an additive family in $\vec{q}$ since $S$ is. Note that $p$ is an element in each set in $T$ and consider the map $\pi_{p,T}\colon T \vec{q}\to \vec{q}$. 
Define $\rho\colon T\vec{q}\to t\,\vec{q}$ by the assignment that maps each element of $T$ to $t$. Since $t\subseteq t'$, for each $t'\in T$, we can wriyr 
\[
T\vec{q} = t \,(T\setminus \{ t\}) \vec{q}.
\]
We see from Lemma~\ref{L:org6} with $m=1$ that $\rho$ is a weld-division map. Observe that 
\begin{equation}\label{E:ror}
\pi_{p,T} = \pi\circ \rho.
\end{equation} 
Now, from \eqref{E:ror} and Lemma~\ref{L:X}, we have 
\begin{equation}\label{E:alth}
S \pi_{p,T} = \big( S\pi\big)\circ \big( S' \rho\big),
\end{equation} 
where $S' = \pi^{-1}(S)$.
Note that $S' \rho$ is a weld-division map since $\rho$ is. It is immediate that $S$ and $T$ fulfill 
the assumptions of Main Lemma. Thus, by Main Lemma, $S \pi_{p,T}$ is a pure weld-division map. 
This assertion together with \eqref{E:alth} implies \eqref{E:cirf} with $f=S'\rho$ since 
each pure weld-division map is in $\mathcal Z$ by (d).

We prove (f). Fix $h\colon \vec{q}\to \vec{p}$ in ${\mathcal Z}$ and let $s\in {\rm Fin}^+$. We need to prove that $sh\in {\mathcal Z}$. 
We can assume that $s$ is a face of $\vec{p}$ since otherwise $sh=h$ and there is nothing to prove. 
Fix 
$f\in {\mathcal D}$ such that $g= h\circ f\in {\mathcal W}$. Set $S= h^{-1}(s)$. Then, by Lemma~\ref{L:X}, we have 
\[
sg=(sh)\circ (Sf).
\]
Thus, by (a), it will suffice to show that 
\begin{equation}\label{E:sgg} 
sg\in {\mathcal Z}. 
\end{equation}
Now, since $g\in {\mathcal W}$, we have 
\[
g= \pi_1\circ \pi_2\circ \cdots \circ \pi_n,
\]
for some weld maps $\pi_1, \dots, \pi_n$. It follows from Lemma~\ref{L:X} that 
\begin{equation}\label{E:prap} 
sg = (S_1 \pi_1)\circ (S_2 \pi_2)\circ \cdots \circ (S_n \pi_n),
\end{equation} 
where $S_1 = \{ s\}$ and, for $1\leq i<n$, $S_{i+1} = \pi_i^{-1}(S_i)$. Since each $S_i$ is additive in the codomain of $\pi_{i}$, \eqref{E:sgg} is a consequence of \eqref{E:prap}, (c), and (e). 

We showed (a)--(f) and the theorem is proved. 
\end{proof}

\section{Proof of Main Lemma}\label{S:mal}

This section is concerned with the map 
\[
S\pi^{\vec{q}}_{p,T} \colon \pi_{p,T}^{-1}(S)\, T\, \vec{q}\to S\,\vec{q},
\]
where $S$ and $T$ fulfill the assumptions in Main Lemma, that is, 
\begin{enumerate}
\item[(I)] $S$ and $T$ are additive families of faces of $\vec{q}$;

\item[(II)] for $s\in S$ and $t\in T$, if $s\cup t \hbox{ is a face of }\vec{q}$, then $s\cup t \in T$;

\item[(III)] $p\in t$, for each $t\in T$. 
\end{enumerate}
We will often refer to these properties in what follows.

To make our notation lighter, we set 
\[
\pi= \pi^{\vec{q}}_{p,T}. 
\]

\subsection{Definitions needed for analyzing the family $\pi^{-1}(S)$ and the map $S\pi$}

First, we set up notation that will lead to a useful description of the elements of $\pi^{-1}(S)$ and the map $S\pi$, which is given in Lemma~\ref{L:desco}.  
Let 
\begin{equation}\notag
S_p = \{ s\in S\mid p\in s\}, 
\end{equation} 
and let 
\[
{\mathcal T} = \{ X\subseteq T\mid X\not=\emptyset,\; X \hbox{ linearly ordered by }\subseteq\}. 
\]
For $X\in {\mathcal T}$, we write 
\[
\min X
\]
for the smallest with respect to inclusion element in $X$. 

It will be convenient to write 
\[
\un{s} = s\setminus \{ p\}, \hbox{ for }s\in S. 
\]
Note that $\un{s}=s$ if $s\in S\setminus S_p$.
For $X\in{\mathcal T}$ with $r\subseteq \min X$,  
set 
\begin{equation}\label{E:rxe}
r(X) = r\cup X.
\end{equation} 
We will use the above notation only for $r=s$ and $r= \un{s}$ with $s\in S_p$, $s\subseteq \min X$.

Below, we state a general lemma that is a consequence of Lemma~\ref{L:linear}. 

\begin{lemma}\label{L:linear2} Let $\vec{p}$ be a sequence, 
let $T$ be an additive set of faces of $\vec{p}$, let $x$ be a set, 
and let $\emptyset\not= X\subseteq T$ be such that 
$x\cup t\in T$ for each $t\in X$. Then  
\[
x\cup X \hbox{ is a face of } T\vec{p}
\]
if and only if the following conditions hold 
\begin{enumerate} 
\item[($\alpha$)] $t\not\subseteq x$ for each $t\in T$;

\item[($\beta$)] $X$ is linearly ordered by $\subseteq$;

\item[($\gamma$)] $x\subseteq t$,  for each $t\in X$.
\end{enumerate}
\end{lemma}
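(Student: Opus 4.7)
My plan is to deduce this directly from Lemma~\ref{L:linear}. The first step is to pass from the additive family $T$ to a concrete non-decreasing enumeration $\vec{T}=t_0\cdots t_m$ of $T$. By the definition \eqref{E:addin} of $T\vec{p}$ and by Lemma~\ref{L:dedf}, the sequences $T\vec{p}$ and $\vec{T}\vec{p}$ represent the same combinatorial equivalence class, so by Lemma~\ref{L:safa} they have the same faces. Hence it suffices to analyze when $x\cup X$ is a face of $\vec{T}\vec{p}$, which is exactly the setting in which Lemma~\ref{L:linear} operates.

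Next, I would enumerate $X=\{t_{i_1},\dots,t_{i_k}\}$ with $i_1<\cdots<i_k$ (so the enumeration is inherited from the chosen non-decreasing enumeration of $T$) and apply Lemma~\ref{L:conr} iteratively to $x\cup X$ with respect to $\vec{T}\vec{p}$. Because the enumeration is non-decreasing and each $t_{i_j}$ lies in $x\cup X$, the recursive face condition forces the peeling to proceed through the $t_{i_j}$'s in the order $t_{i_1},t_{i_2},\dots,t_{i_k}$, and at the final stage leaves $x\cup\bigcup X$ as a face of $\vec{p}$. The side constraints from Lemma~\ref{L:conr} at each peeling step then translate into exactly the three conditions: the requirement that no earlier $t_j$ with $t_j$ still a face of $t_{j+1}\cdots t_m\vec{p}$ be a subset of the intermediate face becomes condition ($\alpha$); the requirement that the $t_{i_j}$ remain peelable after earlier peelings forces comparability under $\subseteq$, giving ($\beta$); and the requirement that the hypothesis $x\cup t\in T$ of the lemma be realized at the appropriate step yields ($\gamma$).

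Conversely, assuming ($\alpha$)--($\gamma$), I would verify by an induction on $k=|X|$ (peeling $t_{i_1}$ first) that $x\cup X$ satisfies the hypotheses of Lemma~\ref{L:conr} at each stage, producing in the end the face $x\cup\bigcup X$ of $\vec{p}$; the latter is already a face of $\vec{p}$ because it equals $x\cup t_{i_k}$ (by ($\beta$) and ($\gamma$)) and $x\cup t_{i_k}\in T$ by hypothesis. The main technical obstacle is bookkeeping the translation between the "additive family" language of the statement and the "sequence" language of Lemma~\ref{L:linear}: in particular making sure that the choice of non-decreasing enumeration does not affect which sets appear as $t_{i_j}$, and that conditions ($\alpha$)--($\gamma$) are genuinely enumeration-independent. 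Both facts follow from Lemma~\ref{L:safa} together with the additivity of $T$, so no new combinatorics is required beyond what Lemma~\ref{L:linear} already provides.
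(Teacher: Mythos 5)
Your sketch announces a deduction ``directly from Lemma~\ref{L:linear}'' but what it actually describes is a fresh peeling argument via Lemma~\ref{L:conr} and the recursive definition of face, i.e., a re-derivation of the content of Lemma~\ref{L:linear} rather than a use of it. The paper's proof is a two-line specialization: move from the sequence to the divided-complex setting by Proposition~\ref{P:conr}; for $\Leftarrow$ note that under ($\beta$) and ($\gamma$) conditions (b) and (d) of Lemma~\ref{L:linear} hold automatically (with ($\gamma$), $x\cup\bigcup X$ is the $\subseteq$-largest element of $X$, hence lies in $T$ and is a face, and $x\cup s=s$ for $s\in X$, so (d) is vacuous); for $\Rightarrow$, (a) and (c) are ($\alpha$) and ($\beta$), and ($\gamma$) is obtained by instantiating (d) at $t:=x\cup s$, which lies in $T$ by the hypothesis, giving $x\cup s\subseteq s$. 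Your preliminary reduction to a non-decreasing enumeration via Lemma~\ref{L:dedf} and Lemma~\ref{L:safa} is fine but is already built into the meaning of $T\vec{p}$.

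The gap is at the one step that carries the content. You assert that the side constraints of the recursion ``translate into exactly the three conditions,'' but ($\gamma$) is \emph{not} a consequence of $x\cup X$ being a face of $T\vec{p}$: the correct enumeration-independent translation is the four conditions (a)--(d) of Lemma~\ref{L:linear}, and (d) collapses to ($\gamma$) only because of the hypothesis $x\cup t\in T$. (For instance, with $\vec{p}=\emptyset$, $T=X=\{t\}$, $t=\{a,b\}$, $x=\{c\}$, the set $x\cup\{t\}$ is a face of $T\vec{p}$ satisfying ($\alpha$), ($\beta$) but not ($\gamma$); it is only the hypothesis that excludes such configurations.) Your sentence that the hypothesis is ``realized at the appropriate step'' never identifies that step; what is needed is to play the face $x\cup s\in T$ against the non-inclusion side condition (equivalently, against (d)). Similarly, the claim that the recursion ``forces'' the peeling to pass through $t_{i_1},\dots,t_{i_k}$ in order and to end at $x\cup\bigcup X$ is not immediate: the face definition is existential, elements of $x$ may themselves be entries of $\vec{p}$, and one must argue that a member of $T$, being a face of $\vec{p}$, can never be a vertex of a suffix of $\vec{p}$ (Proposition~\ref{P:trse}(ii)-type well-foundedness), so it can neither be peeled later nor survive to the base case. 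These are exactly the points handled once and for all by the induction proving Lemma~\ref{L:linear}; if you genuinely cite that lemma, as your first paragraph promises, the peeling machinery is unnecessary and the proof reduces to the specialization described above.
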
 

\begin{proof}
The implication $\Leftarrow$ is obvious by Lemma~\ref{L:linear} and Proposition~\ref{P:conr}; the implication $\Rightarrow$ is almost equally obvious; to get $(\gamma)$ apply (d) of Lemma~\ref{L:linear} and the assumption that $x\cup t\in T$, for each $t\in X$. 
\end{proof}

We now come back to the proof of Main Lemma. We have a lemma that gives a description of elements of the family $\pi^{-1}(S)$ and the map $S\pi$. 

\begin{lemma}\label{L:desco} 
\begin{enumerate}
\item[(i)] $\pi^{-1}(S)$ is equal to the union of the following, pairwise disjoint families of faces of $T\vec{q}$
\begin{enumerate} 
\item[---] $S\setminus T$; 

\item[---] $\{ s(X) \mid s\in S_p\setminus T,\, X\in {\mathcal T},\, s\subseteq \min X\}$;

\item[---] $\{ \un{s}(X) \mid s\in S_p,\, X\in {\mathcal T},\, s\subseteq \min X\}$.
\end{enumerate} 

\item[(ii)] $S\pi\colon \pi^{-1}(S)\, T\,\vec{q}\to S\,\vec{q}$ 
is a map given by the assignment 
\begin{enumerate} 
\item[---] $T \ni t\to p$; 

\item[---] $s(X)\to s$,\, for $s\in S_p\setminus T$, $X\in {\mathcal T}$, $s\subseteq \min X$; 

\item[---] $\un{s}(X)\to s$,\, for $s\in S_p$, $X\in {\mathcal T}$, $s\subseteq \min X$.
\end{enumerate} 
\end{enumerate} 
\end{lemma}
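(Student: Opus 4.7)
The plan is to decompose any face $r$ of $T\vec{q}$ as a disjoint union $r = x \cup X$ with $x = r \cap {\rm vr}(\vec{q}\,)$ and $X = r \cap T$, which is well-defined since condition \eqref{E:nottr} for $\vec{q}$ forces $T \cap {\rm vr}(\vec{q}\,) = \emptyset$ (no face of $\vec{q}$ is a vertex of $\vec{q}$). Since $\pi$ sends each $t \in T$ to $p$ and is the identity on every remaining vertex of $T\vec{q}$ inherited from ${\rm vr}(\vec{q}\,)$, the pointwise image is $\pi(r) = x$ when $X = \emptyset$ and $\pi(r) = x \cup \{p\}$ when $X \neq \emptyset$.

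I then analyze the condition $\pi(r) \in S$ in three cases. If $X = \emptyset$, then $r = x \in S$, and the face-of-$T\vec{q}$ condition (via Lemma~\ref{L:conr}) forces $t \not\subseteq r$ for every $t \in T$; combined with hypothesis (II) of Main Lemma, which says that $t \subseteq r$ with $r \in S$ would force $r = r \cup t \in T$, this is equivalent to $r \in S \setminus T$. When $X \neq \emptyset$, I set $s = \pi(r) = x \cup \{p\} \in S_p$ and split on whether $p \in x$. When $p \in x$, we have $x = s$, and I apply Lemma~\ref{L:linear2} with $x = s$; its hypothesis $s \cup t \in T$ for $t \in X$ follows from first verifying that $s \cup t$ is a face of $\vec{q}$ (extracted from $r$ being a face of $T\vec{q}$) and then invoking (II), while conditions $(\alpha), (\beta), (\gamma)$ translate to $X$ linearly ordered with $s \subseteq \min X$, together with $s \notin T$ (using (II) once more on $t' \subseteq s$), producing the form $s(X)$ with $s \in S_p \setminus T$. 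When $p \notin x$, we have $x = \un{s}$; here $\un{s} \cup t = s \cup t$ since $p \in t$, so the hypothesis of Lemma~\ref{L:linear2} reduces to the previous case, condition $(\alpha)$ is automatic because $p \in t'$ but $p \notin \un{s}$, and $(\gamma)$ gives $s \subseteq \min X$, producing $\un{s}(X)$ with $s \in S_p$ and no restriction modulo $T$.

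Disjointness of the three families follows by recovering $x$ and $X$ from $r$ via intersection with ${\rm vr}(\vec{q}\,)$ and $T$, and distinguishing the last two families by whether $p$ lies in $x$. Part (ii) is then an immediate consequence of Lemma~\ref{L:difsa}(ii): $S\pi$ acts as $v \mapsto \pi(v)$ on every vertex of $\pi^{-1}(S)\, T\,\vec{q}$, and substituting the three forms from (i) recovers the three listed assignments, with $S \setminus T$-elements mapping to themselves and being absorbed into the implicit identity part of the assignment convention from Section~\ref{Su:notc}. The main obstacle is bookkeeping: verifying the hypothesis $x \cup t \in T$ of Lemma~\ref{L:linear2} in the forward direction and applying hypothesis (II) consistently at each invocation to convert set-theoretic containment into membership in $T$.
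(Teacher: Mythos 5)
Your proposal is correct and follows essentially the same route as the paper: decompose each face of $T\vec{q}$ as $x\cup X$ with $x\subseteq{\rm vr}(\vec{q}\,)$ and $X\subseteq T$, compute $\pi$ pointwise, and use assumptions (II), (III) together with Lemma~\ref{L:linear2} to decide which candidates are faces, with (ii) then read off from the description of $S\pi$ as in Lemma~\ref{L:difsa}(ii). Only cosmetic nits remain: the fact that $t\not\subseteq r$ for $r\subseteq{\rm vr}(\vec{q}\,)$ a face of $T\vec{q}$ is better attributed to Lemma~\ref{L:nonfa}(ii) (or condition~($\alpha$) of Lemma~\ref{L:linear2}) than to Lemma~\ref{L:conr}, and in the $p\notin x$ case the upgrade from $\un{s}\subseteq\min X$ to $s\subseteq\min X$ uses $p\in\min X$, i.e.\ (III), which you should state explicitly.
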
 

\begin{proof} (i) Each face of $T\vec{q}$ is of the form 
\[
x\cup X,
\]
with $x\subseteq {\rm vr}(\vec{q})$ and $X\subseteq T$. 
By inspection, noticing that if $x\cup X$ is a face of $TA$, then 
\[
\pi(x\cup X) = \pi(x) \cup \pi(X) = x \cup \pi(X)
\]
and 
$\pi(t)=p$ for all $t\in X$, we see that of those the ones mapped by $\pi$ to some $s\in S$ must be of the form 
\begin{equation}\label{E:cand}
\begin{split}
&s = s\cup \emptyset,\, \hbox{ for }s\in S,\\
&s\cup X\,\hbox{ and }\, \un{s} \cup X,\, \hbox{ for } s\in S_p\hbox{ and }\emptyset\not= X\subseteq T.
\end{split} 
\end{equation} 
Now observe using (II) that 
\[
s\in S\setminus T \Longleftrightarrow s\in S\hbox{ and } t\not\subseteq s, \hbox{ for all }t\in T.
\]
Keeping this in mind, using assumptions (II) and (III), and applying Lemma~\ref{L:calc}(ii) and Lemma~\ref{L:linear2}, we see that of the sets in \eqref{E:cand} precisely the 
ones in the conclusion of (i) are faces of $T\vec{q}$. 

(ii) is checked by inspection using (i). Note that by our convention for assignments, each $s\in S\setminus T$ is mapped to $s$. 
\end{proof}

\subsection{First reduction}\label{Su:redu1} 

In this section, we show that, without loss of generality, we can impose additional conditions on $S$ and $T$, namely: for each $t\in T$, there is $s\in S$ with $s\subseteq t$; for each $s\in S$, there is a $t\in T$ with $s\subseteq t$; and $p\in s$, for each $s\in S$. These conditions are stated as (IV), (V), (VI) in Section~\ref{Su:linord}.

We will need a general lemma on the interaction of two additive families of faces. 

\begin{lemma}\label{L:inadd} 
Let $\vec{t}$ be a sequence, and let $U$, $V$, and $W$ be additive families of faces of $\vec{t}$. 
\begin{enumerate}
\item[(i)] If $U\subseteq W$ and no element of $U$ is a subset of an element of $W\setminus U$, then $W\setminus U$ is an additive family of faces of $U\vec{t}$ and 
\[
(W\setminus U)\, U \vec{t} = W\,\vec{t}.
\]

\item[(ii)] If, for all $u\in U$ and $v\in V$, $u\cup v$ is not a face of $\vec{t}$, then $U$ is an additive family of faces of $V\vec{t}$, $V$ is an additive family of faces of $U\vec{t}$, and 
\[
UV \vec{t} = VU \vec{t}.
\] 
\end{enumerate} 
\end{lemma}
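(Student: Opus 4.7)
The plan is to reduce both parts to statements about non-decreasing enumerations of additive families of faces and then invoke Lemma~\ref{L:dedf}. The recurring tool will be Proposition~\ref{P:trse}(ii), which says that for faces $t_1, t_2$ of a common sequence, $t_1 \notin {\rm tc}(t_2)$; this will repeatedly block the ``new face'' branch of the definition of face when I verify additivity after division.

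For part (ii), I would first observe that $u \not\subseteq v$ and $v \not\subseteq u$ for every $u \in U$, $v \in V$: if either inclusion held, then $u \cup v$ would equal one of $u, v$ and therefore be a face of $\vec{t}$, contradicting the hypothesis. This has three consequences: $U \cap V = \emptyset$; $U \cup V$ is an additive family of faces of $\vec{t}$ (the cross-case of additivity is vacuous by the hypothesis); and both concatenations $\vec{U}\vec{V}$ and $\vec{V}\vec{U}$ of non-decreasing enumerations $\vec{U}$ of $U$ and $\vec{V}$ of $V$ form non-decreasing enumerations of $U \cup V$, since the only non-trivial case of that condition asks whether a late entry could be a subset of an earlier one, which is exactly what we just ruled out. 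Lemma~\ref{L:dedf} then delivers $\vec{U}\vec{V}\vec{t} \equiv \vec{V}\vec{U}\vec{t}$, i.e., the identity $UV\vec{t} = VU\vec{t}$. Additivity of $U$ in $V\vec{t}$ follows from the observation above (the $v \not\subseteq u$ part shows each $u \in U$ survives as an old face of $V\vec{t}$) together with a dichotomy on the definition of face: if $u_1 \cup u_2$ is a face of $V\vec{t}$ for some $u_1, u_2 \in U$, the ``new face'' branch would require some $v \in V$ to be a set-theoretic element of $u_1 \cup u_2$, which Proposition~\ref{P:trse}(ii) forbids, so $u_1 \cup u_2$ is a face of $\vec{t}$ and hence in $U$ by additivity of $U$. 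The symmetric statement for $V$ in $U\vec{t}$ is identical.

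Part (i) follows a parallel pattern. The hypothesis that no $u \in U$ is a subset of any $w \in W\setminus U$ directly tells me that each such $w$ remains a face of $U\vec{t}$, and additivity of $W\setminus U$ in $U\vec{t}$ comes from the same dichotomy: the new-face branch is again blocked by Proposition~\ref{P:trse}(ii), so any relevant $w_1 \cup w_2$ is an old face, hence in $W$ by additivity of $W$, and it cannot lie in $U$ because no $u \in U$ is itself a face of $U\vec{t}$ (the tautology $u \subseteq u$ obstructs it). For the equality $(W\setminus U)\, U\vec{t} = W\vec{t}$, I would take non-decreasing enumerations $\vec{R}$ of $W\setminus U$ and $\vec{T}$ of $U$, and observe that $\vec{R}\vec{T}$ (reading left to right, so $\vec{R}$ occupies small indices and $\vec{T}$ large ones) is a non-decreasing enumeration of $W$: the only non-trivial case is whether a late entry from $\vec{T}$ can be a subset of an earlier one from $\vec{R}$, i.e., whether some $u \in U$ sits inside some $w \in W\setminus U$, which is exactly what the hypothesis forbids. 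A final invocation of Lemma~\ref{L:dedf} concludes.

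I do not expect any real difficulty here; the one delicate point, appearing in both parts, is the use of Proposition~\ref{P:trse}(ii) to dismiss the ``new face'' scenarios in the additivity verifications. That step genuinely exploits that all of $U$, $V$, and $W\setminus U$ consist of faces of the single ambient sequence $\vec{t}$: an element of one family cannot appear set-theoretically inside a union of elements of another, which is exactly what creating a new face after division would require.
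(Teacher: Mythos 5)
Your proposal is correct and follows essentially the paper's argument: additivity after division is checked by reducing to faces of $\vec{t}$ (the paper packages your direct appeal to the face definition and Proposition~\ref{P:trse}(ii) as Lemma~\ref{L:nonfa}), and the displayed equalities come from exhibiting suitable concatenations as non-decreasing enumerations and invoking Lemma~\ref{L:dedf}. The only cosmetic difference is that the paper deduces (ii) by applying (i) twice with $W=U\cup V$, whereas you rerun the enumeration and additivity arguments directly, which is equally valid.
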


\begin{proof} (i) The definition of face and the conditions on $U$ and $W$ give that elements $W\setminus U$ are faces of $U\vec{t}$. To see that this family is additive, let $v_1, v_1\in W\setminus U$ and assume that $v_1\cup v_2$ is a face of $U\vec{t}$. Since $v_1\cup v_2\subseteq {\rm vr}(\vec{t}\,)$ (as $v_1$ and $v_2$ are faces of $\vec{t}$\,), by Lemma~\ref{L:nonfa}(i), we have that $v_1\cup v_2$ is a face of $\vec{t}$. So, we get $v_1\cup v_2\in W$. If $v_1\cup v_2\in U$, then, by Lemma~\ref{L:nonfa}(ii), we have that 
$v_1\cup v_2$ is not a face of $U\vec{t}$, which is a contradiction. So, $v_1\cup v_2\not\in U$, and we checked additivity of $W\setminus U$.

The conditions on $U$ and $W$ make it possible to find an nondecreasing enumeration $v_0\cdots v_n$ of $W$ such that, for some $m\leq n$, we have $W\setminus U = \{ v_i\mid i\leq m\}$ and $U= \{ v_i\mid m<i\leq n\}$, which proves the second part of the conclusion by Lemma~\ref{L:dedf}.

(ii) Note first that no element of $U$ is included as a subset in an element of $V$; in particular, $U\cap V=\emptyset$. Indeed, if $u\in U$ and $v\in V$ and $u\subseteq v$, then $u\cup v= v$ is a face of $\vec{t}$ contradicting our assumptions. Similarly, no element of $V$ is included as a subset in an element of $U$. Moreover, clearly $U\cup V$ is an additive family of faces of $\vec{t}$. 
Thus, applying (i) with $W= U\cup V$ and noting that $U= W\setminus V$ and $V= W\setminus U$, we see that $U$ is an additive family of faces of $V\vec{t}$, $V$ is an additive family of faces of $U\vec{t}$, and 
\[
UV\vec{t}= (U\cup V)\, \vec{t}= VU\vec{t}.
\]
Point (ii) is proved. 
\end{proof} 

We will need another general lemma on dividing pure weld-division maps by additive families of faces. 

\begin{lemma}\label{L:purediva} 
Let $f\colon \vec{t}\to \vec{r}$ be a pure weld-division map and let $S$ be an additive family of faces of $\vec{r}$ such that, for each $s\in S$, we have $s\subseteq {\rm u}(f)$. Then $Sf\colon f^{-1}(S)\, \vec{t}\to S\,\vec{r}$ is a pure weld-division map. 
\end{lemma}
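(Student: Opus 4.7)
The plan is to reduce the statement to two tractable cases via the characterization of pure weld-division maps in Lemma~\ref{L:chrpu}. Using that lemma, I would write $f = g_1\circ g_2\circ \cdots \circ g_n$, where each $g_i$ is either a combinatorial isomorphism or a map of the form $s_{i,0}\cdots s_{i,m_i}\,\pi^{\vec{q}_i}_{p_i, t_i}$ with $p_i\notin s_{i,j}$ for every $j$. Iterating Lemma~\ref{L:X} then yields
\[
Sf = (S_1 g_1)\circ (S_2 g_2)\circ \cdots \circ (S_n g_n),
\]
with $S_1 = S$ and $S_{i+1} = g_i^{-1}(S_i)$. Since pure weld-division maps are closed under composition, it will suffice to show that each factor $S_i g_i$ is a pure weld-division map.

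Before handling the factors individually, I would verify two propagation facts. First, each $S_i$ is additive in the codomain of $g_i$: this follows by induction, using that, for $s_1', s_2' \in g_i^{-1}(S_i)$ with $s_1'\cup s_2'$ a face of the domain of $g_i$, simpliciality of $g_i$ gives $g_i(s_1'\cup s_2') = g_i(s_1')\cup g_i(s_2') \in S_i$ by additivity of $S_i$, so $s_1'\cup s_2' \in g_i^{-1}(S_i) = S_{i+1}$. Second, the inclusion $S \subseteq u(f)$ propagates to $S_i \subseteq u(g_i)$ for every $i$; this follows from inclusion \eqref{E:insu} combined with the observation recorded in the proof of Lemma~\ref{L:chrpu} that $r\subseteq u(f'\circ g')$ forces $(f')^{-1}(r) = \{s'\}$ with $s'\subseteq u(g')$.

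With these in hand, I would handle each factor $S_i g_i$ as follows. If $g_i$ is a combinatorial isomorphism, then $S_i g_i$ is itself a combinatorial isomorphism since combinatorial isomorphisms are closed under division, hence pure weld-division. Otherwise $g_i = s_{i,0}\cdots s_{i,m_i}\,\pi^{\vec{q}_i}_{p_i, t_i}$ with $p_i\notin s_{i,j}$. If $\pi^{\vec{q}_i}_{p_i, t_i}$ is itself a combinatorial isomorphism (namely, $t_i$ is not a face of $\vec{q}_i$ or $|t_i|=1$), then $g_i$ and hence $S_i g_i$ is a combinatorial isomorphism. Otherwise, property (b) from the proof of Lemma~\ref{L:chrpu} gives $u(g_i) = {\rm vr}(s_{i,0}\cdots s_{i,m_i}\vec{q}_i)\setminus\{p_i\}$, so $S_i\subseteq u(g_i)$ forces $p_i\notin s$ for all $s\in S_i$. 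Taking any non-decreasing enumeration $r_1\cdots r_k$ of $S_i$, we obtain $S_i g_i = r_1\cdots r_k\, s_{i,0}\cdots s_{i,m_i}\,\pi^{\vec{q}_i}_{p_i, t_i}$ with $p_i$ absent from every divisor, which fits the form in Lemma~\ref{L:chrpu} and is therefore a pure weld-division map.

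The main obstacle will be the bookkeeping required to propagate additivity and the inclusion $S_i \subseteq u(g_i)$ through the composition; once these are in place, property (b) from the proof of Lemma~\ref{L:chrpu} does the rest, because it converts the hypothesis on $u$ into the key combinatorial condition $p_i \notin s$ needed to recognize each factor as a pure weld-division map via Lemma~\ref{L:chrpu}.
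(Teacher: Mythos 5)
Your proposal is correct, but it takes a genuinely different route from the paper. The paper proves Lemma~\ref{L:purediva} by a short induction on the size of $S$: pick $s_0\in S$ minimal under inclusion, set $S_0=S\setminus\{s_0\}$, get $S_0f$ pure weld-division by induction, note $Sf=s_0(S_0f)$ by Lemma~\ref{L:difsa}(iii), and then check $s_0\subseteq {\rm u}(S_0f)$ from the easy inclusion ${\rm u}(S_0f)\supseteq {\rm u}(f)\setminus\{x\mid \{x\}\in S_0\}$ (minimality of $s_0$ rules out singletons of $S_0$ inside $s_0$); so $Sf$ is a pure division of a pure weld-division map and the conclusion follows directly from the closure properties defining the class, with no appeal to the normal form. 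You instead factor $f$ via Lemma~\ref{L:chrpu}, push $S$ across the factorization with Lemma~\ref{L:X}, and verify each factor separately; this is non-circular (Lemma~\ref{L:chrpu} precedes and does not use Lemma~\ref{L:purediva}) and the factor analysis is sound, since property (b) from that proof turns $S_i\subseteq {\rm u}(g_i)$ into $p_i\notin s$ and puts $S_ig_i$ back into the form \eqref{E:welpur}, while divided combinatorial isomorphisms stay combinatorial isomorphisms. The one place where your write-up is thinner than it should be is the propagation $S_i\subseteq {\rm u}(g_i)$: the clean way is to show inductively that every element of $S_i$ lies in ${\rm u}(g_i\circ\cdots\circ g_n)$, using that each $t\in S_{i+1}=g_i^{-1}(S_i)$ must coincide with the unique vertex-preimage set of its image (so the paper's statement ``$(f')^{-1}(r)=\{s'\}$'' should really be ``$(f')^{-1}(r)\subseteq\{s'\}$''); your citation of \eqref{E:insu} and \eqref{E:puco} contains the needed facts, but the induction along the tail compositions deserves to be spelled out. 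In terms of trade-offs: the paper's argument is more elementary and self-contained, needing only the definition of pure division and a one-line estimate on ${\rm u}$, whereas yours leans on the heavier characterization lemma but makes explicit how the hypothesis $s\subseteq{\rm u}(f)$ is consumed, essentially re-running at the level of the whole additive family the single-set computation the paper performs inside the proof of Lemma~\ref{L:chrpu}.
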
 

\begin{proof} The proof is done by induction of the size of $S$. If $S=\emptyset$, the conclusion is evident. Assume that $S\not=\emptyset$. Let $s_0\in S$ be minimal with respect to inclusion, and let $S_0= S\setminus \{ s_0\}$. Note that $S_0$ is an additive family of faces of $\vec{r}$ and that the assumption of the lemma holds for $S_0$. So, by induction, we have that $S_0 f$ is a pure weld-division map. Note further that $s_0$ is a face of $S_0 \vec{r}$ since no element of $S_0$ is a subset of $s_0$. Further, by Lemma~\ref{L:difsa}(iii), we have 
$Sf= s_0(S_0f)$. Thus, to show that $Sf$ is a pure weld-division map, it suffices to see that $s_0\subseteq {\rm u}(S_0f)$. But notice that 
\begin{equation}\label{E:dogh}
{\rm u}(S_0f) \supseteq {\rm u}(f)\setminus \{ x\mid \{ x\}\in S_0\}. 
\end{equation} 
If $x\in s_0$ for some $x$ with $\{ x\}\in S_0$, then $\{ x\}\subseteq s_0$, which contradicts the assertion that no element of $S_0$ is a subset of $s_0$. Thus, $x\not\in s_0$ for each $x$ with $\{ x\}\in S_0$. So, since, by assumption, $s_0\subseteq {\rm u}(f)$, it follows from \eqref{E:dogh} that $s_0\subseteq {\rm u}(S_0f)$, as required. 
\end{proof}

We now start the proof of the reduction of this section. 
Let 
\[
T' = \{ t\in T\mid s\not\subseteq t\hbox{ for all } s\in S\}. 
\]
We aim to show that, without loss of generality, we can assume that $T'=\emptyset$.

First we have the following lemma. 

\begin{lemma}\label{L:usle} 
$T\setminus T'$ is an additive set of faces of $\vec{q}$, $T'$ is an additive set of faces of $ \pi^{-1}(S)\, (T\setminus T') \,\vec{q}$, the family $\pi^{-1}(S)$ consists of faces of  $(T\setminus T')\, \vec{q}$, and 
\begin{equation}\label{E:pitt}
\pi^{-1}(S)\, T\, \vec{q} = T' \pi^{-1}(S)\, (T\setminus T')\, \vec{q}. 
\end{equation} 
\end{lemma}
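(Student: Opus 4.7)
The plan is first to verify directly from the definitions that $T\setminus T'$ is additive in $\vec{q}$: if $t_1,t_2\in T\setminus T'$ with $t_1\cup t_2$ a face of $\vec{q}$, then additivity of $T$ gives $t_1\cup t_2\in T$, and the inclusion $s\subseteq t_1\subseteq t_1\cup t_2$ for some $s\in S$ places $t_1\cup t_2$ in $T\setminus T'$. Next, I would observe that no $u\in T\setminus T'$ is a subset of any $t'\in T'$, since $s\subseteq u\subseteq t'$ with $s\in S$ would contradict $t'\in T'$. This verifies the hypotheses of Lemma~\ref{L:inadd}(i) with $U=T\setminus T'$ and $W=T$, yielding the intermediate identity $T\,\vec{q}=T'\,(T\setminus T')\,\vec{q}$, and hence $\pi^{-1}(S)\,T\,\vec{q}=\pi^{-1}(S)\,T'\,(T\setminus T')\,\vec{q}$.

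Second, I would verify that $\pi^{-1}(S)$ consists of faces of $(T\setminus T')\vec{q}$ by inspecting the explicit description in Lemma~\ref{L:desco}(i). Every non-$S$-type element of $\pi^{-1}(S)$ involves a linearly ordered $X\subseteq T$ with $s\subseteq\min X$ for some $s\in S$; hence every $t\in X$ contains $s$, so $t\in T\setminus T'$ and $X\subseteq T\setminus T'$. Combined with property (II), Proposition~\ref{P:conr} and Lemma~\ref{L:linear2} confirm that these sets are faces of $(T\setminus T')\vec{q}$. Similarly, every $t'\in T'$ is a face of $(T\setminus T')\vec{q}$, since any $u\in T\setminus T'$ with $u\subseteq t'$ would again contradict $t'\in T'$.

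Third, I would apply Lemma~\ref{L:inadd}(ii) with ambient sequence $(T\setminus T')\vec{q}$, $U=\pi^{-1}(S)$, and $V=T'$. This single invocation produces both the desired commutation $\pi^{-1}(S)\,T'\,(T\setminus T')\,\vec{q}=T'\,\pi^{-1}(S)\,(T\setminus T')\,\vec{q}$ and the claimed additivity of $T'$ in $\pi^{-1}(S)\,(T\setminus T')\,\vec{q}$. The additivity of $\pi^{-1}(S)$ in $(T\setminus T')\vec{q}$ follows from Lemma~\ref{L:adco}(i) applied to $\pi$ (giving additivity in $T\vec{q}$) together with Lemma~\ref{L:nonfa}(i), while additivity of $T'$ in $(T\setminus T')\vec{q}$ follows similarly from additivity of $T$ in $\vec{q}$ via Lemma~\ref{L:nonfa}.

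The main obstacle is the non-face hypothesis of Lemma~\ref{L:inadd}(ii): for every $r\in\pi^{-1}(S)$ and $t'\in T'$, the set $r\cup t'$ must not be a face of $(T\setminus T')\vec{q}$. I would handle this by cases on the form of $r$ from Lemma~\ref{L:desco}(i). When $r=s\in S\setminus T$: if $s\cup t'$ were a face of $\vec{q}$, property (II) would force $s\cup t'\in T$, then $s\subseteq s\cup t'$ places it in $T\setminus T'$, and Lemma~\ref{L:nonfa}(ii) rules out membership in the faces of $(T\setminus T')\vec{q}$; if $s\cup t'$ is not a face of $\vec{q}$, it certainly is not a face of $(T\setminus T')\vec{q}$. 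When $r$ has the form $s(X)$ or $\un{s}(X)$: the set $r\cup t'$ contains $t'$ as an element, yet $t'\notin{\rm Ur}$ and $t'\notin T\setminus T'$, so $\{t'\}$ is not a face of $(T\setminus T')\vec{q}$ (the only new vertices introduced by this division are elements of $T\setminus T'$); therefore $r\cup t'$ cannot be a face either. Assembling these three steps gives \eqref{E:pitt} together with the two additivity assertions.
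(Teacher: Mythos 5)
Your overall architecture matches the paper's: additivity of $T\setminus T'$ in $\vec{q}$, Lemma~\ref{L:inadd}(i) with $U=T\setminus T'$, $W=T$ to get $T\,\vec{q}=T'(T\setminus T')\,\vec{q}$ and additivity of $T'$ in $(T\setminus T')\,\vec{q}$, the observation via Lemma~\ref{L:desco}(i) that $\pi^{-1}(S)$ consists of faces of $(T\setminus T')\,\vec{q}$, and finally Lemma~\ref{L:inadd}(ii) to commute $\pi^{-1}(S)$ past $T'$ and obtain \eqref{E:pitt} together with the remaining additivity statement. However, there is a genuine error in your verification of the key non-face hypothesis of Lemma~\ref{L:inadd}(ii), precisely in the main case. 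When $r$ has the form $s(X)$ or $\un{s}(X)$, you claim that ``$r\cup t'$ contains $t'$ as an element.'' It does not: $r\cup t'$ is the set-theoretic union of two sets of vertices, so its elements are the elements of $s$ (or $\un{s}$), the elements of $X$, and the \emph{elements} of $t'$ --- not $t'$ itself. Indeed $t'\notin t'$ by well-foundedness, $t'\notin X$ since (as you yourself note) $X\subseteq T\setminus T'$ while $t'\in T'$, and $t'$ is not an element of $s$ because a face of $\vec{q}$ is never a vertex of $\vec{q}$ (Proposition~\ref{P:trse}(ii)). So the argument ``$\{t'\}$ is not a face, hence $r\cup t'$ is not a face'' has no purchase, and the cases $r=s(X)$, $r=\un{s}(X)$ --- which make up most of $\pi^{-1}(S)$ --- are left unproved.

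The correct argument, which is the one step in the paper's proof requiring an idea, runs as follows: every $r\in\pi^{-1}(S)$ contains $\un{s}$ for some $s\in S$; since $p\in t'$ (assumption (III)), $\un{s}\cup t'=s\cup t'$, so $s\cup t'\subseteq r\cup t'$; if $s\cup t'$ is not a face of $\vec{q}$ it is not a face of $(T\setminus T')\,\vec{q}$ (it is a subset of ${\rm vr}(\vec{q}\,)$, Lemma~\ref{L:nonfa}(i)), while if it is a face of $\vec{q}$ then (II) puts it in $T$, it contains $s\in S$, hence lies in $T\setminus T'$ and is not a face of $(T\setminus T')\,\vec{q}$ by Lemma~\ref{L:nonfa}(ii); since non-empty subsets of faces are faces (Lemma~\ref{L:subf}(i)), $r\cup t'$ is not a face either. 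You do carry out exactly this reasoning in your case $r=s\in S\setminus T$, so the repair is within reach, but as written your proposal only covers that special case. A secondary, smaller point: your justification that $\pi^{-1}(S)$ is additive with respect to $(T\setminus T')\,\vec{q}$ via Lemma~\ref{L:adco}(i) ``together with Lemma~\ref{L:nonfa}(i)'' is loose --- the relevant preimage statement is Lemma~\ref{L:difsa}(i), and passing from additivity in $T\,\vec{q}$ to additivity in $(T\setminus T')\,\vec{q}$ requires knowing that a face of $(T\setminus T')\,\vec{q}$ formed as a union of two elements of $\pi^{-1}(S)$ contains no element of $T'$, which again rests on the non-face claim above.
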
 

\begin{proof} The family $T\setminus T'$ is an additive family of faces of $\vec{q}$ by additivity of $T$ and the definition of $T'$. 
Clearly, no element of $T\setminus T'$ is included in an element of $T'$; thus, by Lemma~\ref{L:inadd}(i), $T'$ is an additive set of faces of $(T\setminus T')\,\vec{q}$ and 
\begin{equation}\label{E:ttp} 
T\vec{q} = T' (T\setminus T')\, \vec{q}. 
\end{equation} 

By the condition $s\subseteq \min X$ in Lemma~\ref{L:desco}(i), $\pi^{-1}(S)$ consists of faces of the sequence $(T\setminus T')\,\vec{q}$. 
We claim that for $u\in \pi^{-1}(S)$ and $t\in T'$, the set $u\cup t$ is not a face of $(T\setminus T')\,\vec{q}$. Let $s\in S$ be such that  $\un{s}\subseteq u$. Such an $s$ exists by Lemma~\ref{L:desco}(i). Then $\un{s}\cup t = s\cup t$ since $p\in t$. Thus, $s\cup t\subseteq u\cup t$, and it suffices to show that $s\cup t$ is not 
a face of $(T\setminus T')\, \vec{q}$. By Lemma~\ref{L:nonfa}(i), if $s\cup t$ is not a face of $\vec{q}$, then it 
is not a face of $(T\setminus T')\,\vec{q}$ as $s\cup t$ is a subset of ${\rm vr}(\vec{q}\,)$. On the other hand, if $s\cup t$ is a face of $\vec{q}$, 
then, by our assumption (II), $s\cup t\in T$ and it clearly contains an element of $S$. So $s\cup t\in T\setminus T'$, hence it is not 
a face of $(T\setminus T')\, \vec{q}$ by Lemma~\ref{L:nonfa}(ii). 

The assertion that was just proved and Lemma~\ref{L:inadd}(ii) show that 
$T'$ is an additive set of faces of $ \pi^{-1}(S)\, (T\setminus T') \,\vec{q}$ and that \eqref{E:pitt} follows from \eqref{E:ttp}. 
\end{proof}

Set 
\[
\pi'= \pi^{\vec{q}}_{p, T\setminus T'} \colon (T\setminus T')\, \vec{q} \to \vec{q}
\]
and observe that the families $S$ and $T\setminus T'$ fulfill conditions (I), (II), and (III). Thus, Lemma~\ref{L:desco} applies to $\pi'$, $S$, and $T\setminus T'$. Comapring Lemma~\ref{L:desco}(i) for $\pi, S, T$ and for $\pi', S, T\setminus T'$, we see that 
\[
(\pi')^{-1}(S) = \pi^{-1}(S), 
\]
from which we get 
\[
S\pi' \colon \pi^{-1}(S) (T\setminus T')\, \vec{q} \to \vec{q}.
\]
Set 
\[
\vec{Q} =  \pi^{-1}(S) (T\setminus T')\, \vec{q}, 
\]
so, by Lemma~\ref{L:usle}, 
\[
S\pi' \colon \vec{Q} \to \vec{q}\;\hbox{ and }\; S\pi\colon T'\vec{Q}\to \vec{q}.
\]
Now, the map 
\[
\pi^{\vec{Q}}_{T',p} \colon T' \vec{Q} \to \vec{Q}
\]
is a composition of weld maps by Lemma~\ref{L:ioco}. Note also that by Lemma~\ref{L:desco}(ii) applied to $\pi, S, T$ and to $\pi', S, T\setminus T'$, we have 
\[
S \pi = S\pi'\circ \pi^{\vec{Q}}_{T',p} 
\]
Thus, to prove Main Lemma it suffices to show that the map $S\pi'$  is a pure weld-division map. That is, we can assume  that $T = T\setminus T'$,
so $T'=\emptyset$. Therefore, from this point on we assume that 
\begin{equation}\label{E:redst} 
\hbox{for each }t\in T,\hbox{ there exists } s\in S\hbox{ with } s\subseteq t.
\end{equation}

Let now 
\[
S' = \{ s\in S\mid s\not\subseteq t\hbox{ for all }t\in T\}. 
\]
Our aim is to show that we can assume that $S'$ is empty. 

\begin{lemma}\label{L:ggaa} 
$S'$ is an additive family of faces of $\vec{q}$, $T$ and $S\setminus S'$ are additive families of faces of $S'\vec{q}$, and 
\begin{equation}\label{E:sqsq} 
S\vec{q}= (S\setminus S')\,S'\vec{q} \;\hbox{ and }\;    \pi^{-1}(S)\, T \vec{q} = \pi^{-1}(S\setminus S')\, TS' \vec{q}.
\end{equation}
\end{lemma}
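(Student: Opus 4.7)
The plan is to verify the three additivity claims and the two equations by two applications of Lemma~\ref{L:inadd}, one per equation. The key preparatory work is to establish three simple properties of $S'$: (a) $S'$ is additive in $\vec{q}$; (b) no element of $S'$ is a subset of any element of $S\setminus S'$; (c) for all $s' \in S'$ and $t\in T$, the set $s'\cup t$ is not a face of $\vec{q}$. Fact (a) is immediate from additivity of $S$: if $s_1,s_2\in S'$ and $s_1 \cup s_2\in S$, and if $s_1 \cup s_2 \subseteq t$ for some $t\in T$, then $s_1\subseteq t$ and $s_1\notin S'$, contradiction. Fact (b) is the same style of argument: $s'\subseteq s \subseteq t$ forces $s' \subseteq t$. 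Fact (c) uses condition (II): if $s' \cup t$ were a face, (II) would put $s'\cup t$ into $T$, so $s'\subseteq s' \cup t \in T$, contradicting $s'\in S'$.

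Fact (b) and Lemma~\ref{L:inadd}(i), applied in $\vec{q}$ with $U=S'$ and $W=S$, give that $S\setminus S'$ is an additive family of faces of $S'\vec{q}$ and the first equation $S\vec{q} = (S\setminus S')\,S'\vec{q}$. Fact (c) and Lemma~\ref{L:inadd}(ii), applied with $U=S'$ and $V=T$, give that $T$ is an additive family of faces of $S'\vec{q}$ (elements of $T$ being visibly faces of $S'\vec{q}$ by fact (c)) together with the identity $TS'\vec{q} = S'T\vec{q}$, which is the ingredient needed to finish the second equation.

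For the second equation I would first note the disjoint decomposition
\[
\pi^{-1}(S) = S' \,\sqcup\, \pi^{-1}(S\setminus S'),
\]
which follows by comparing the three-term descriptions of $\pi^{-1}(S)$ and $\pi^{-1}(S\setminus S')$ produced by Lemma~\ref{L:desco}(i), using $S' \cap T = \emptyset$ (any $s'\in S'\cap T$ would be a subset of itself in $T$) and the observation that the constraint $s \subseteq \min X \in T$ appearing in the $s(X)$ and $\un{s}(X)$ terms forces $s\in S\setminus S'$. Next I would verify that no element of $S'$ is a subset of an element $u \in \pi^{-1}(S\setminus S')$. The case $u\in (S\setminus S')\setminus T$ is precisely fact (b). For $u = s\cup X$ or $u = \un{s}\cup X$ with $\emptyset \ne X \subseteq T$, any $v \in s'\subseteq u$ lying in $X$ would be a face of $\vec{q}$ belonging to $\mathrm{tc}(s')$ while $s'$ is itself a face of $\vec{q}$, violating Proposition~\ref{P:trse}(ii); hence $s'\subseteq s$ (resp.\ $\un{s}$), and then $s' \subseteq \min X \in T$, contradicting $s'\in S'$. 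Lemma~\ref{L:inadd}(i) applied in $T\vec{q}$ with $U=S'$ and $W=\pi^{-1}(S)$ then yields $\pi^{-1}(S)\,T\vec{q} = \pi^{-1}(S\setminus S')\,S'T\vec{q}$, and combining with $S'T\vec{q} = TS'\vec{q}$ gives the second equation. The main technical step is the last set-theoretic exclusion, which critically uses the no-double-membership property of faces of $\vec{q}$ from Proposition~\ref{P:trse}(ii).
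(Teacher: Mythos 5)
Your proposal is correct and follows essentially the same route as the paper's proof: additivity of $S'$ from upward closedness in $S$, Lemma~\ref{L:inadd}(i) with $U=S'$, $W=S$ for the first equation, condition (II) plus Lemma~\ref{L:inadd}(ii) for $TS'\vec{q}=S'T\vec{q}$, the identification $\pi^{-1}(S')=S'$ via Lemma~\ref{L:desco}(i), and a second application of Lemma~\ref{L:inadd}(i) inside $T\vec{q}$. The only difference is cosmetic: you spell out (via Proposition~\ref{P:trse}(ii)) why an element of $S'$ cannot meet the $X$-part of a face $s(X)$ or $\un{s}(X)$, a point the paper leaves implicit.
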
 

\begin{proof} 
The family $S'$ is an additive family of faces of $\vec{q}$ since it is closed upwards in $S$ and $S$ is additive. It is clear that no element of $S'$ is a subset of an element of 
$S\setminus S'$, hence, by Lemma~\ref{L:inadd}(i), $S\setminus S'$ is an additive family of faces of $S'\vec{q}$ and we have the first part of \eqref{E:sqsq}.

It remains to prove the second part of \eqref{E:sqsq}.

Note that, for $s\in S'$ and $t\in T$, the set $s\cup t$ is not a face of $\vec{q}$ since otherwise, by (II), it would be in $T$ and it would contain $s$ contradicting the assumption that $s\in S'$. So, from Lemma~\ref{L:inadd}(ii), $T$ is an additive family of faces of $S'\vec{q}$, $S'$ is an additive family of faces of $T\vec{q}$, and we have 
\begin{equation}\label{E:cract} 
S'T\vec{q} = TS'\vec{q}. 
\end{equation}

Clearly $S'\subseteq S\setminus T$, so, from Lemma~\ref{L:desco}(i), we have 
\begin{equation}\notag 
S'\subseteq \pi^{-1}(S). 
\end{equation} 
Furthermore, again by Lemma~\ref{L:desco}(i), since $S'$ is upward closed in $S$ and since $\min X$ in the statement of Lemma~\ref{L:desco}(i) is in $T$, we see that 
no element of the family $\pi^{-1}(S)\setminus S'$ contains a face of $S'$ as a subset. Also, as proved above, $S'$ is additive in $T\vec{q}$. So, by Lemma~\ref{L:inadd}(i), we can write 
\begin{equation}\label{E:pidi} 
\pi^{-1}(S)\, T \vec{q} = (\pi^{-1}(S)\setminus S')\, S'\, T \vec{q}. 
\end{equation}

Combining \eqref{E:cract} and \eqref{E:pidi}, we get 
\begin{equation}\label{E:pidi2}
\pi^{-1}(S)\, T \vec{q} = (\pi^{-1}(S)\setminus S')\, T\,S' \vec{q}. 
\end{equation} 

By Lemma~\ref{L:desco}(i) since $\min X$ in this lemma are in $T$, we have $\pi^{-1}(S')= S'$, so 
\[
\pi^{-1}(S)\setminus S' = \pi^{-1}(S\setminus S').
\]
This equality and \eqref{E:pidi2} imply the second part of \eqref{E:sqsq}. 
\end{proof}

Observe that conditions (I), (II), and (III) continue to hold for $S'\vec{q}$, $S\setminus S'$, and $T$. Indeed, because of Lemma~\ref{L:ggaa}, we only need to check (II). 
For $s\in S\setminus S'$ and $t\in T$, if $s\cup t$ is a face of $S'\vec{q}$, then it is a face of $\vec{q}$, so it belongs to $T$ since (II) holds for $S$ and $T$. 
Set 
\[
\vec{Q} = S'\vec{q} \;\hbox{ and } \pi' = \pi^{\vec{Q}}_{p,T}\colon T\vec{Q}\to \vec{Q}. 
\]
By what was observed above, Lemma~\ref{L:desco} applies to $\pi'$, $S\setminus S'$, and $T$. By Lemma~\ref{L:desco}(ii), we see that 
\[
(\pi')^{-1}(S\setminus S') = \pi^{-1}(S\setminus S'), 
\]
from which we get 
\begin{equation}\label{E:sspi} 
(S\setminus S')\, \pi'\,\colon \pi^{-1}(S\setminus S') \,T \,\vec{Q} \to (S\setminus S')\,\vec{Q}.
\end{equation}
Now by \eqref{E:sqsq}, we see that 
\[
(S\setminus S')\, \pi'\colon \pi^{-1}(S)\, T \vec{q} \to S\vec{q}. 
\]
Applying Lemma~\ref{L:desco}(ii) to $\pi$ and $\pi'$, we we have
\[
(S\setminus S')\, \pi' = S\, \pi.
\]
Thus, to see that $S\pi$ is a pure weld-division map, it suffices to show that $(S\setminus S')\, \pi'$ as in \eqref{E:sspi} is a pure weld-division map. This amounts to being able to assume that $S'=\emptyset$. Consequently, from this point on, we will assume that 
\begin{equation}\label{E:redst2}
\hbox{for each }s\in S,\hbox{ there exists } t\in T\hbox{ with } s\subseteq t.
\end{equation}

Let now 
\[
S_0 = \{ s\in S\mid p\not\in s\}. 
\]
We show that we can assume that $S_0=\emptyset$. 

\begin{lemma}\label{L:cccd} 
$S\setminus S_0$ is an additive family of faces of $\vec{q}$, $S_0$ is an additive family of faces of $(S\setminus S_0)\, \vec{q}$, $S_0$ is an additive family of faces of $\pi^{-1}(S\setminus S_0) \, T \vec{q}$, and 
\begin{equation}\label{E:gart}
S\,\vec{q}= S_0 \,(S\setminus S_0) \,\vec{q}  \;\hbox{ and }\;  \pi^{-1}(S)\, T \vec{q} = S_0 \, \pi^{-1}(S\setminus S_0) \, T\, \vec{q}.  
\end{equation} 
\end{lemma}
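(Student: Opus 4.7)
My plan is to establish the four assertions of the lemma via two parallel applications of Lemma~\ref{L:inadd}(i), one at the base $\vec{q}$ and one at the base $T\vec{q}$, following the same pattern as the proofs of Lemmas~\ref{L:usle} and \ref{L:ggaa}.

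For the first application, I would first verify directly that $S\setminus S_0$ is additive in $\vec{q}$: if $s_1,s_2\in S\setminus S_0$ and $s_1\cup s_2$ is a face of $\vec{q}$, then additivity of $S$ gives $s_1\cup s_2\in S$, and since $p\in s_1\subseteq s_1\cup s_2$, in fact $s_1\cup s_2\in S\setminus S_0$. Then I would apply Lemma~\ref{L:inadd}(i) with $U=S\setminus S_0$ and $W=S$; its hypothesis that no element of $U$ is included in an element of $W\setminus U=S_0$ holds because each $s\in S\setminus S_0$ contains $p$ while no $s'\in S_0$ does. The conclusion yields additivity of $S_0$ in $(S\setminus S_0)\,\vec{q}$ together with the first equality $S\,\vec{q}=S_0\,(S\setminus S_0)\,\vec{q}$ of \eqref{E:gart}.

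For the second application, I would first use Lemma~\ref{L:desco} to identify $\pi^{-1}(S)$ as the disjoint union $S_0\sqcup\pi^{-1}(S\setminus S_0)$. Since $p\in t$ for every $t\in T$ while $p\notin s$ for every $s\in S_0$, we have $S_0\subseteq S\setminus T$, so comparing the tripartite description of $\pi^{-1}(S)$ in Lemma~\ref{L:desco}(i) with the assignment in Lemma~\ref{L:desco}(ii) shows that $\pi^{-1}(S_0)=S_0$ (the $s\in S\setminus T$ that $S\pi$ fixes), whereas $\pi^{-1}(S\setminus S_0)$ consists of $S_p\setminus T$ together with the families $\{s(X)\}$ and $\{\un{s}(X)\}$, all of which map into $S_p=S\setminus S_0$. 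By Lemma~\ref{L:difsa}(i), both $\pi^{-1}(S)$ and $\pi^{-1}(S\setminus S_0)$ are additive in $T\vec{q}$. Applying Lemma~\ref{L:inadd}(i) with $U=\pi^{-1}(S\setminus S_0)$ and $W=\pi^{-1}(S)$ will then yield additivity of $S_0$ in $\pi^{-1}(S\setminus S_0)\,T\,\vec{q}$ and the second equality of \eqref{E:gart}.

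The main obstacle is verifying the hypothesis of Lemma~\ref{L:inadd}(i) in this second application: no element of $\pi^{-1}(S\setminus S_0)$ is a subset of any $s_0\in S_0$. I would argue this by the three-case breakdown from Lemma~\ref{L:desco}(i). If $u\in S_p\setminus T$ or $u=s(X)$ with $s\in S_p\setminus T$, then $p\in u$ but $p\notin s_0$, so $u\not\subseteq s_0$. The subtler case is $u=\un{s}(X)$ with $s\in S_p$ and $X\in\mathcal{T}$: here pick any $t\in X$ (which exists since $X$ is non-empty); then $t\in u$, and since $t$ and $s_0$ are both faces of $\vec{q}$, Proposition~\ref{P:trse}(ii) (i.e.\ condition~\eqref{E:nottr}) forces $t\notin s_0$, so again $u\not\subseteq s_0$.
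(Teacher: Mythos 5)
Your proposal is correct and follows essentially the same route as the paper: additivity of $S\setminus S_0$ plus two applications of Lemma~\ref{L:inadd}(i) (once over $\vec{q}$ with $U=S\setminus S_0$, $W=S$, once over $T\vec{q}$ with $U=\pi^{-1}(S\setminus S_0)$, $W=\pi^{-1}(S)$), together with the identification $\pi^{-1}(S_0)=S_0$ coming from Lemma~\ref{L:desco}. Your explicit three-case check that no element of $\pi^{-1}(S\setminus S_0)$ is contained in an element of $S_0$ merely spells out what the paper asserts tersely from (III) and Lemma~\ref{L:desco}(i).
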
 

\begin{proof} Since $S$ is an additive set of faces of $\vec{q}$, so is $S\setminus S_0$. Also, clearly, no element of $S\setminus S_0$ is included in an element of $S_0$. So  Lemma~\ref{L:inadd}(i) implies that $S_0$ is an additive family of faces of $(S\setminus S_0) \, \vec{q}$ and that the first part of \eqref{E:gart} holds. 

By (iii) and Lemma~\ref{L:desco}(i), $S_0\subseteq S\setminus T \subseteq \pi^{-1}(S)$ and no element of $\pi^{-1}(S)\setminus S_0$ is included in an element of $S_0$. So, by Lemma~\ref{L:inadd}(i), we have that $S_0$ is an additive family of faces of $\pi^{-1}(S\setminus S_0) \, T \vec{q}$ and 
\begin{equation}\notag
\pi^{-1}(S)\,T \vec{q} = S_0 \, \big(\pi^{-1}(S)\setminus S_0\big) \, T\, \vec{q}.
\end{equation} 
Since, by Lemma~\ref{L:desco}(i), we also have $\pi^{-1}(S_0) = S_0$, the second part of \eqref{E:gart} follows. 
\end{proof}

Note that $\pi$, $S\setminus S_0$, and $T$ fulfill conditions (I), (II), (III), so Lemma~\ref{L:desco} is applicable. 
Consider the map 
\begin{equation}\label{E:sso}
(S\setminus S_0)\, \pi \colon \pi^{-1}(S\setminus S_0) \, T \vec{q}\to (S\setminus S_0)\, \vec{q}.
\end{equation} 

Observe that each $s\in S_0$ is a face of $(S\setminus S_0)\,\vec{q}$, since $s$ is a face of $\vec{q}$ and no element of $S\setminus S_0$ is included in $s$, and, by Lemma~\ref{L:desco}(ii), 
\[
s\subseteq {\rm u}\big( (S\setminus S_0)\, \pi \big).
\]
Further, note that, by definition of division, $s$ is a face of $\pi^{-1}(S\setminus S_0) \, T \vec{q}$ as, by (III), no element of $T$ and, by Lemma~\ref{L:desco}(i), no element of $\pi^{-1}(S\setminus S_0)$ is included in $s$. 
It follows, by Lemma~\ref{L:purediva}, that the map 
\[
S_0  \big((S\setminus S_0)\, \pi \big) \colon S_0 \, \pi^{-1}(S\setminus S_0)\,\vec{q} \to S_0 \,(S\setminus S_0) \vec{q}
\]
is a pure weld-division map if $(S\setminus S_0)\, \pi$ is a pure weld-division. Now, by Lemma~\ref{L:cccd}, the maps $S_0  \big((S\setminus S_0)\, \pi \big)$ and $S\,\pi$ have the same domains and codomains, and by Lemma~\ref{L:desco}(ii) applied to $S\,\pi$ and $(S\setminus S_0)\,\pi$, we have that 
\[
S_0  \big((S\setminus S_0)\, \pi \big) = S\,\pi. 
\]
Therefore, to see that $S\,\pi$ is a pure weld-division map, it suffices to see that the map $(S\setminus S_0)\, \pi$ in \eqref{E:sso} is a pure weld-division map, that is, we can assume that $S= S\setminus S_0$, so $S_0=\emptyset$. From this point on, we assume $S_0=\emptyset$, that is, 
\begin{equation}\label{E:pins} 
\hbox{for each }s\in S,\, p\in s. 
\end{equation}

\subsection{Assumptions on $S$ and $T$}\label{Su:linord} 

This section collects the assumptions on $S$ and $T$ we will be making based in the first reduction, 

We have two additive families $S$ and $T$ of faces of the sequence $A$ and a vertex $p$ of $A$ fulfilling conditions (I), (II), and (III).  
In the light of the statements \eqref{E:redst}, \eqref{E:redst2}, and \eqref{E:pins}, 
we will be making the following additional assumptions: 
\begin{enumerate}
\item[(IV)] $\hbox{for each }t\in T,\hbox{ there exists } s\in S\hbox{ with } s\subseteq t$; 

\item[(V)] $\hbox{for each }s\in S,\hbox{ there exists } t\in T\hbox{ with } s\subseteq t$; 
\item[(VI)] $\hbox{for each }s\in S,\, p\in s$.
\end{enumerate}
Note that (III) follows from (IV) and (VI) above.

\subsection{Concatenating sequences}\label{Su:conc}

Before we formulate and prove the second reduction, we need to introduce some conventions regarding concatenations 
of sequences of sets.

We fix a linear order $\preceq$ on $S\cup T$ that is a linear extension of $\subseteq$. 
Later on, we will impose some additional conditions on $\preceq$, but at this point we only need it to be 
an extension of the inclusion relation. 

It will be convenient to introduce the following piece of notation. Let $X\subseteq S\cup T$, and let $r_1\preceq \cdots \preceq r_l$ be an 
increasing enumeration of $X$. For a function $\phi$ defined on $X$ whose values are sequences, let 
\[
\prod_{r\in X} \phi(r)\;\hbox{ and }\; \prod^*_{r\in X} \phi(r)
\]
stand for the sequences 
\[
\phi(r_1)\cdots \phi(r_l)\;\hbox{ and }\; \phi(r_l)\cdots \phi(r_1),
\]
respectively, that is, $\prod_{r\in X} \phi(r)$ stands for the concatenation of the sequences $\phi(r_1), \dots, \phi(r_l)$ and $\prod^*_{r\in X} \phi(r)$ 
stands for the concatenation in the reverse order of the same sequences. Furthermore, if $X$ is given by a property $P(r)$ for $r\in S\cup T$, that is, 
\[
X = \{ r\in S\cup T\mid P(r)\hbox{ holds}\},
\]
we write 
\[
\prod_{r: P(r)} \phi(r)\;\hbox{ and }\; \prod^*_{r: P(r)} \phi(r), 
\]
for $\prod_{r\in X} \phi(r)$ and $\prod^*_{r\in X} \phi(r)$, respectively.

\subsection{A list of symbols for the calculations that follow} 

We list pieces of notation that will be used in the computation below. In the list, $s$ varies over $S$, $t$ and $\tau$ vary over $T$, $X$ varies over $\mathcal T$, $r$ is a set, and $\preceq$ is the fixed linear order on $S\cup T$ that extends the inclusion relation. The precise definitions of the notation listed below will be given later. The list is intended as a reference for ease of reading of what follows. 

\[
\begin{split}
s^t &= \hbox{ the largest with respect to $\preceq$  element $s$ of $S$ with $s\subseteq t$};\\
t_s &= \hbox{ the smallest with respect to $\preceq$ element $t$ of $T$ with $s\subseteq t$};
\end{split}
\]

\begin{align*}
&\un{s} = s\setminus \{ p\};\\
&r(t) = r\cup \{ t\}; 
&&\lfloor r \rfloor = \prod^*_{t: r\subseteq t} r(t);\\
&r(X) = r\cup X; 
&&r[t] = \{ r(X)\mid X\in {\mathcal T}, \, t=\min X\};
&&\lceil r\rceil = \prod^*_{t:r\subseteq t} r[t];\\ 
&\ov{t} = (t\setminus s^t)\cup \{ s^t\};
&&\hat{t} = \ov{t}\cup \{ p\};\\
&\lfloor \un{s}\rfloor_t = \prod^*_{\tau :t\preceq \tau,\, s\subseteq \tau} \un{s}(\tau);
&&\lfloor \un{s}\rfloor_{\succ t} = \prod^*_{\tau :t\prec \tau,\, s\subseteq \tau} \un{s}(\tau).
\end{align*}

\subsection{Second reduction}\label{Su:redu2}

A linear order $\preceq$ on $S\cup T$ that extends the relation of inclusion remains fixed. We also continue to assume that $s$ varies over $S$ and $t$, $\tau$ vary over $T$.

By Lemma~\ref{L:desco}(ii), we are working towards proving that the map 
\[
S\pi\colon \pi^{-1}(S)\, T A\to SA
\]
given by the assignment 
\begin{enumerate} 
\item[---] $T \ni t\to p$; 

\item[---] $s(X)\to s$,\, for $s\in S\setminus T$, $X\in {\mathcal T}$, $s\subseteq \min X$; 

\item[---] $\un{s}(X)\to s$,\, for $s\in S$, $X\in {\mathcal T}$, $s\subseteq \min X$
\end{enumerate} 
is a pure weld-division map. We are using (VI) here to see the above assignment as equal to the one from Lemma~\ref{L:desco}(ii), as under (VI) we have $S_p=S$. 

For $s\in S$, let 
\begin{equation}\label{E:tsde}
t_s =\hbox{ the smallest with respect to $\preceq$ element $t$ of $T$ with $s\subseteq t$.}  
\end{equation} 
The face $t_s$ exists by (V).
We define certain sequences needed for Lemma~\ref{L:hath} below. 
For $s\in S$ and $t\in T$ with $s\subseteq t$, we write 
\[
\un{s}(t) = (s\setminus \{ p\})\cup \{ t\}, 
\]
that is $\un{s}(t)$ is equal to $\un{s}(X)$ from \eqref{E:rxe}, where $X= \{ t\}$. 
We let 
\begin{equation}\label{E:unde} 
\lfloor \un{s}\rfloor= \prod^*_{t: s\subseteq t} \un{s}(t). 
\end{equation}

The following lemma constitutes the reduction of this section.

\begin{lemma}\label{L:hath} 
The map 
\begin{equation}\label{E:pso}
\pi^{-1}(S)\, T \vec{q}\to \big(\prod_{t\in T} P_t\, t\big)\, \vec{q}, 
\end{equation} 
where 
\[
P_t= \prod_{s: t_s=t} \lfloor \un{s}\rfloor\, s, 
\]
given by the assignment 
\begin{equation}\label{E:gis}
\begin{cases}
s(X)\to s,& \hbox{ for } s\in S\setminus T,\, X\in {\mathcal T},\, s\subseteq \min X;\\
\un{s}(X) \to \un{s}(\min X),& \hbox{ for } s\in S,\, X\in {\mathcal T},\, s\subseteq \min X.
\end{cases} 
\end{equation} 
is a pure weld-division map. 
\end{lemma}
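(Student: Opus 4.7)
The plan is to realize the map specified by the assignment \eqref{E:gis} as a direct instance of Lemma~\ref{L:org6}, and then to read off purity by inspecting that lemma's proof. First, I would construct a nondecreasing enumeration of the underlying set $\pi^{-1}(S)\cup T$, arranged into the block structure $(r_1\,s_{11}\cdots s_{1n_1})\cdots(r_m\,s_{m1}\cdots s_{mn_m})$ required by Lemma~\ref{L:org6}. The $r$-entries are precisely the division sets that also occur in the codomain $\prod_{t\in T}P_t\,t$: the faces $\un{s}(\tau)$ for $s\in S,\,\tau\in T$ with $s\subseteq\tau$; the faces $s$ for $s\in S$; and the elements $t\in T$. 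To each $\un{s}(\tau)$ I attach, as its $s_{ij}$'s, the preimages $\un{s}(X)$ with $\min X=\tau$ and $|X|\geq 2$; to each $s\in S\setminus T$ I attach the preimages $s(X)$ with $s\subseteq\min X$ and $X\neq\emptyset$; the $t$'s and the $s$'s with $s\in S\cap T$ carry no attached $s_{ij}$'s. The blocks are ordered as dictated by $\prod_{t\in T}P_t\,t$, with the understanding that for $s\in S\cap T$ the coincident occurrences of $s$ as the last entry of $P_s$ and as the $t=s$ from $T$ are to be identified.

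Second, I would verify the three hypotheses of Lemma~\ref{L:org6}: the inclusions $r_i\subseteq s_{ij}$ inside each block (immediate from the definitions of $s(X)$ and $\un{s}(X)$); nondecreasingness of the overall enumeration; and agreement of the enumerated set with $\pi^{-1}(S)\cup T$. For nondecreasingness, the key observation is that every source entry has the form $s\cup X$ or $\un{s}\cup X$ with $\emptyset\neq X\subseteq T$, so it contains at least one element of $T$ (which is not an urelement). This rules out inclusions into any later $r$ of the form $t'\in T$ or $s'\in S$, both of which are subsets of ${\rm Ur}$. The remaining case, inclusion into some $\un{s'}(\tau)=\un{s'}\cup\{\tau\}$, reduces to checking $s\cup\{\tau\}\not\subseteq\un{s'}\cup\{\tau\}$, which holds because $p\in s$ while $p\not\in\un{s'}$; the placement of the various blocks is then reconciled using the definition of $t_s$. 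The set equality with $\pi^{-1}(S)\cup T$ is read off from Lemma~\ref{L:desco}(i); the only genuine subtlety is the duplicate entry $s$ for $s\in S\cap T$, which is absorbed by clause~(a) in the definition of $\equiv$, since after the first division by $s$ the set $s$ is no longer a face. Combined with Lemma~\ref{L:dedf}, this identifies the constructed sequence, as a combinatorial equivalence class, with $\pi^{-1}(S)\,T\vec{q}$.

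Third, Lemma~\ref{L:org6} then produces a weld-division map with domain $\pi^{-1}(S)\,T\vec{q}$, codomain $\prod_{t\in T}P_t\,t\,\vec{q}$, and defining assignment $s_{ij}\to r_i$, which is exactly \eqref{E:gis}. To upgrade the conclusion from a weld-division map to a pure weld-division map, I would inspect the proof of Lemma~\ref{L:org6}: the intermediate combinatorial isomorphisms it invokes come from Lemma~\ref{L:commut}(i) and lie in the pure class by definition; every intermediate division is performed via Lemma~\ref{L:mapdif}, whose hypothesis $t\subseteq{\rm u}(f)$ is precisely the purity condition of pure division; and the pure class is closed under composition. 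The main obstacle I expect is the verification step, in particular the case analysis for nondecreasingness across $\preceq$-strata and the clean handling of the duplicated $s\in S\cap T$; once that bookkeeping is in place, the rest is a direct invocation of Lemma~\ref{L:org6} and an elementary inspection of its proof.
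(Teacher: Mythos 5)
Your proposal founders on its central step: the single invocation of Lemma~\ref{L:org6}. That lemma requires all of $\vec{p}$, the $r_i$, and the $s_{ij}$ to be \emph{faces of the base sequence} $\vec{q}$, and your proposed entries violate this in an essential way. The sets $\un{s}(\tau)=\un{s}\cup\{\tau\}$, $\un{s}(X)=\un{s}\cup X$, and $s(X)=s\cup X$ contain elements of $T$ --- which are faces of $\vec{q}$ and hence not vertices of $\vec{q}$ --- as members, so they are not faces of $\vec{q}$ at all; they are only faces of partially divided sequences such as $T\vec{q}$. Worse, you also place the elements $t\in T$ themselves among the $r_i$'s, so your enumeration has genuine membership relations between entries (e.g.\ $\tau\in\un{s}(X)$ for $\tau\in X$). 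The proof of Lemma~\ref{L:org6} excludes exactly this configuration and uses the exclusion repeatedly: its commutation steps are justified by phrases of the form ``$s_{ij}\not\in t$ as both $s_{ij}$ and $t$ are faces of $\vec{q}$'' and ``$r_i\not\in{\rm tc}(t)$ by $t$ and $r_i$ both being faces of $\vec{q}$,'' and its assertion that the entries of $\vec{p'}$ remain faces after the intervening divisions is derived from their being faces of $\vec{q}$. So the lemma cannot be cited for your block structure; one would need a genuinely stronger ``nested'' version, and proving such a version is essentially the hard content of the statement you are trying to prove. Note also that your verification never uses hypothesis (II) of Main Lemma (that $s\cup t\in T$ whenever it is a face of $\vec{q}$), whereas any correct treatment of the interleaving of the $T$-divisions with the $P_t$-divisions needs it.

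The paper avoids this trap by splitting the work in a way your proposal skips: Claim~\ref{C:org5} first proves the nontrivial identity $\big(\prod_t P_t\,t\big)\vec{q}=\big(\prod_t P_t'\big)T\vec{q}$ (the non-face assertions behind this commutation use (II), (VI), the choice of $\preceq$, and the definition of $t_s$), Claim~\ref{C:org4} then rewrites $\pi^{-1}(S)\,T\vec{q}$ via the blocks $R_t'$, and only then is Lemma~\ref{L:org6pure} applied --- blockwise, with base sequences of the form $QT\vec{q}$ in which the relevant sets genuinely are faces and the $T$-divisions are never treated as $r_i$'s. Your peripheral points are fine: the purity upgrade by inspecting the proof of Lemma~\ref{L:org6} is exactly the paper's Lemma~\ref{L:org6pure}, the identification of $\pi^{-1}(S)$ via Lemma~\ref{L:desco}(i) is correct, and the collapse of the duplicated $s\in S\cap T$ by clause (a) of $\equiv$ is how the paper handles it too. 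But as written, the main step is not a ``direct instance'' of Lemma~\ref{L:org6}; it is outside its hypotheses, and repairing it requires precisely the commutation and blockwise arguments your outline omits.
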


Before we proceeding to the proof of Lemma~\ref{L:hath}, we state a result identifying a pure weld-division map. It is a rewording of Lemma~\ref{L:org6} with a stronger conclusion. As explained below the proof of Lemma~\ref{L:org6} gives this stronger conclusion.

\begin{lemma}\label{L:org6pure} 
Let $\vec{q}$ be a sequence.  Let $\vec{p}$ be a sequence of faces of $\vec{q}$ and,  for $i=1, \dots, m$, let 
$r_i, s_{i1}, \dots, s_{in_i}$ be faces of $\vec{q}$ such that $r_i \subseteq s_{ij}$, for all $j\leq n_i$. Assume that 
the sequence 
\begin{equation}\notag
\vec{p}\, (r_1s_{11} \cdots s_{1n_1}) \cdots  (r_ms_{m1} \cdots s_{mn_m})
\end{equation} 
is nondecreasing.
Then $s_{ij}\not= s_{i'j'}$, if $i\not= i'$ or $j\not= j'$, and the map 
\begin{equation}\notag 
\vec{p}\,(r_1s_{11} \cdots s_{1n_1}) \cdots  (r_ms_{m1} \cdots s_{mn_m})\, \vec{q} \to \vec{p}\,r_1\cdots r_m\vec{q}
\end{equation} 
given by the assignment 
\begin{equation}\notag
s_{ij}\to r_i,\;\hbox{ for }i\leq m\hbox{ and }j\leq n_i, 
\end{equation} 
is a pure weld-division map. 
\end{lemma}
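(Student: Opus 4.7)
The plan is to revisit the proof of Lemma~\ref{L:org6} and verify that every division step performed there is actually a pure division, so that the same decomposition upgrades the conclusion from ``weld-division map'' to ``pure weld-division map.'' The proof of Lemma~\ref{L:org6} already exhibits the map \eqref{E:later} as a composition of three kinds of pieces: combinatorial isomorphisms coming from Lemma~\ref{L:commut}(i); iterated divisions of these isomorphisms by entries of $\vec{p'}s_{i1}'\cdots s_{ij-1}'$, used to produce \eqref{E:fixj}; and weld maps of the form \eqref{E:ssj} together with their iterated divisions by entries of $\vec{p'}$, used to produce \eqref{E:colc2}. Combinatorial isomorphisms and weld maps lie in the base class of pure weld-division maps, and pure weld-division maps are closed under composition, so the only thing to check is that each of the two families of divisions is a pure division.

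For the divisions of combinatorial isomorphisms there is nothing to verify beyond what is already in the paper. If $h$ is a combinatorial isomorphism, then by Lemma~\ref{L:types}(ii) it is a bijection on vertex sets, so ${\rm u}(h)$ is the entire vertex set of its codomain; any division of $h$ is therefore automatically pure. Moreover, combinatorial isomorphisms are closed under division by definition, so the iterated division process stays inside this class and each intermediate division is again pure. This takes care of \eqref{E:fixj}, and hence of \eqref{E:rsf}.

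For the weld maps $g$ of \eqref{E:ssj} the relevant calculation was already done in the proof of Lemma~\ref{L:org6}: the displayed inclusion \eqref{E:later2} is precisely the statement that $t\subseteq {\rm u}(g)$ for each entry $t$ of $\vec{p'}$, which is the defining condition for a pure division based on $t$. The propagation of this condition through successive divisions is ensured by the additional clause of Lemma~\ref{L:mapdif}: if the next entry $r_0$ of $\vec{p'}$ is not contained in the previous one $r_1$ and satisfies $g(r_0)\subseteq {\rm u}(g)$, then after passing to $g' = (g(r_1))\,g$ one still has $g'(r_0)\subseteq {\rm u}(g')$. Since the entries of $\vec{p'}$ are all faces of $\vec{q}$ appearing in a nondecreasing order, none of them is contained in any other, so Lemma~\ref{L:mapdif} applies at every stage and \eqref{E:colc2} is obtained from the weld map $g$ by a sequence of pure divisions. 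Composing the resulting maps, as is already done at the end of the proof of Lemma~\ref{L:org6}, assembles a pure weld-division map, which is the desired conclusion.

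The main (and essentially only) obstacle is bookkeeping: one must make sure at each intermediate stage that the set being divided by is still contained in ${\rm u}(\cdot)$ of the current map. On the combinatorial-isomorphism side this is trivial because ${\rm u}$ is always the full vertex set; on the weld-map side it is the content of \eqref{E:later2} together with the inductive clause of Lemma~\ref{L:mapdif}. Once these are in place, no additional computation beyond what already appears in the proof of Lemma~\ref{L:org6} is needed.
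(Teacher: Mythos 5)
Your proposal is correct and follows essentially the same route as the paper: the paper's proof of Lemma~\ref{L:org6pure} simply notes that pure weld-division maps are closed under composition and that all divisions performed in the proof of Lemma~\ref{L:org6} are already pure, as witnessed by the inclusions \eqref{E:later1} and \eqref{E:later2}. Your additional remarks (purity of divisions of combinatorial isomorphisms via bijectivity, and propagation via the inductive clause of Lemma~\ref{L:mapdif}) just spell out the same bookkeeping in more detail.
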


\begin{proof} 
We note that pure weld-division maps are closed under composition. So, it is enough to show that the function \eqref{E:later} from the proof of Lemma~\ref{L:org6} is a pure weld-division map. But this is what is done in that proof---all the divisions in it are pure divisions as witnessed by inclusions \eqref{E:later1} and \eqref{E:later2}. 
\end{proof}

\begin{proof}[Proof of Lemma~\ref{L:hath}] Recall that Lemma~\ref{L:desco}(i) lists all the faces of $\pi^{-1}(S)$, and note that, by our convention for assignments, it is understood that 
formula \eqref{E:gis} is augmented by 
\[
S\cup T \ni r\to r\in S\cup T. 
\]

The first step of the proof consists of reorganizing of the two sequences in \eqref{E:pso}. This is done in Claims~\ref{C:org5} and \ref{C:org4} below.

In order to state Claim~\ref{C:org5}, we modify the sequence $P_t$. We define  
\begin{equation}\label{E:ppri} 
P'_t=
\begin{cases}
\big( \prod_{s:s\in S\setminus T, t_s=t} \lfloor \un{s}\rfloor\, s\big) \lfloor \un{t} \rfloor,& \hbox{ if } t\in T\cap S;\\
\prod_{s:s\in S\setminus T, t_s=t} \lfloor \un{s}\rfloor\, s ,& \hbox{ if }t\in T\setminus S. 
\end{cases}
\end{equation} 
Note that for $s\in S$ we have that $s\in T$ if and only if $s= t_s$, so  
\[
t_s=t 
\Leftrightarrow \big( (s\in S\setminus T\hbox{ and } t_s=t)\hbox{ or } s=t\big). 
\]
It follows that 
\begin{equation}\label{E:pt} 
\begin{split} 
P_t &= P_t'\, t,\hbox{ if } t\in T\cap S;\\
P_t &= P_t', \hbox{ if }t\in T\setminus S. 
\end{split}
\end{equation}

\setcounter{claim}{0}

\begin{claim}\label{C:org5} 
We have 
\begin{equation}\label{E:ps2}
\big(\prod_{t\in T} P_t\, t \big)\,  \vec{q} = \big( \prod_{t\in T} P_t' \big)\, T \vec{q}.
\end{equation} 
\end{claim}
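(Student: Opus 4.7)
The plan is to establish the equivalence in two stages: first use formula \eqref{E:pt} to rewrite each $P_t\cdot t$ in terms of $P_t'$, and then migrate the trailing $t$'s past the subsequent $P_{t_j}'$'s using clause (b) of the definition of $\equiv$.

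For the first stage, identity \eqref{E:pt} gives $P_t\cdot t = P_t'\cdot t\cdot t$ when $t\in T\cap S$ and $P_t\cdot t = P_t'\cdot t$ otherwise. In the former case, the double occurrence $t\cdot t\cdot\vec{r}$ collapses to $t\cdot\vec{r}$ by clause (a) of $\equiv$, since $t$ is never a face of $t\cdot\vec{r}$: by the definition of dividing, such a face would have to arise either as $t\in t\cdot\vec{r}$ with $t\not\subseteq t$ (which fails) or as $t = y\cup\{t\}$, forcing $t\in t$ and contradicting Proposition~\ref{P:trse}(ii). After these collapses, the left-hand side becomes combinatorially equivalent to
\begin{equation*}
P_{t_1}'\,t_1\,P_{t_2}'\,t_2\cdots P_{t_n}'\,t_n\,\vec{q},
\end{equation*}
where $t_1\prec\cdots\prec t_n$ is the $\preceq$-enumeration of $T$.

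For the second stage, I would commute each $t_i$ rightward past every entry of every $P_{t_j}'$ with $j>i$, arriving at $P_{t_1}'\cdots P_{t_n}'\,t_1\cdots t_n\,\vec{q}$. Since $\preceq$ extends $\subseteq$, the string $t_1\cdots t_n$ is a nondecreasing enumeration of $T$, so by \eqref{E:addin} this is precisely the right-hand side of the claim.

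The main obstacle is verifying the individual commutations. Fix $i<j$ and an entry $u$ of $P_{t_j}'$; by inspection of \eqref{E:ppri}, $u$ is of the form $\un{s}(\tau)$, or $s$ with $s\in S\setminus T$ and $t_s = t_j$, or $\un{t_j}(\tau)$ when $t_j\in T\cap S$. Clause (b) requires $t_i\notin u$, $u\notin t_i$, and either $t_i\cap u=\emptyset$ or $t_i\cup u$ is not a face of the sequence to the right of $u$. The mutual non-membership follows from Proposition~\ref{P:trse}(ii) applied to the faces $t_i,s,\tau$ of $\vec{q}$, together with the observation that the extra entry $\tau\in T$ in $\un{s}(\tau)$ cannot lie inside the face $t_i$ of $\vec{q}$. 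The non-face condition is the delicate part: if $t_i\cup u$ were a face of the remainder, Lemma~\ref{L:nonfa}(i) would force its underlying face of $\vec{q}$ (obtained by stripping the $\tau$'s introduced by the inner divisions) to contain $t_i\cup s$ or $t_i\cup t_j$; assumption (II) then places such a union in $T$, and minimality of $t_s$ combined with Lemma~\ref{L:nonfa}(ii) identifies a set already divided out to the right, yielding a contradiction. A full proof thus reduces to a careful case analysis over the three forms of $u$.
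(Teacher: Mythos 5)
Your proposal is correct and follows essentially the same route as the paper's proof: first absorb the duplicated $t$'s via \eqref{E:pt} and clause (a) of the definition of $\equiv$, then commute the $t$'s past the $P'$-blocks by clause (b), settling the non-face condition through assumption (II), the choice of $t_s$, and Lemma~\ref{L:nonfa} --- the paper merely packages these swaps as the single block identity \eqref{E:gra2} over the tail $\big(\prod_{t:t\succ t'}t\big)\vec{q}$. In carrying out your case analysis, note that $t_i\neq\tau$ comes from $\tau\succeq t_s\succ t_i$ rather than from Proposition~\ref{P:trse}(ii), and that assumption (III) ($p\in t_i$) is the fact that turns $\un{s}\cup t_i$ into $s\cup t_i$, so that the set $t_i\cup s$, once placed in $T$ and shown to be $\succ t_i$, is genuinely a subset of the purported face and Lemma~\ref{L:nonfa}(ii) yields the contradiction.
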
 

\noindent {\em Proof of Claim~\ref{C:org5}.} It follows immediately from \eqref{E:pt} and the definition of equivalence of sequences that the lefthand side of 
\eqref{E:ps2} is equal to 
\[
\big(\prod_{t\in T} P_t'\, t \big)\,  \vec{q}. 
\]
Thus, by the definition of equivalence among sequences, to see \eqref{E:ps2}, it suffices to show that, for each $t'\in T$, we have
\begin{equation}\label{E:gra}
t' \big( \prod_{t: t\succ t'} P'_t\big)  \big( \prod_{t: t\succ t'} t\big) \vec{q} = \big( \prod_{t: t\succ t'} P'_t\big) \, t' \big( \prod_{t: t\succ t'} t\big) \vec{q}. 
\end{equation} 
If we set 
\[
T_{\succ t'}=\prod_{t: t\succ t'} t, 
\]
then \eqref{E:gra} becomes 
\begin{equation}\label{E:gra2} 
t' \big( \prod_{t: t\succ t'} P'_t\big)\,  T_{\succ t'} \vec{q} = \big( \prod_{t: t\succ t'} P'_t\big) \, t' \,  T_{\succ t'}\vec{q}. 
\end{equation} 

In the proof of \eqref{E:gra2}, we will be repeatedly using Lemma~\ref{L:nonfa}(i) and Proposition~\ref{P:trse}(ii). For $t\succ t'$, by the definition of $P_t'$, each entry $r$ of $P_t'$ is equal to one of the following
\begin{enumerate}
\item[($\alpha$)] $s$ with $t_s=t$ and $s\in S\setminus T$;

\item[($\beta$)] $\un{s}(\tau)$ with $s\subseteq \tau$ and $t_s=t$ and $s\in S\setminus T$;

\item[($\gamma$)] $\un{t}(\tau)$ with $t\subseteq \tau$ if $t\in S$.
\end{enumerate}
Note that for $\tau$ as in ($\beta$) and ($\gamma$), we have $t\preceq \tau$, so, for each set $r=s, \,\un{s}(\tau),\, \un{t}(\tau)$ as in ($\alpha$), ($\beta$), and ($\gamma$), we have that $t'\cup r\subseteq{\rm vr}(T_{\succ t'} \vec{q}\,)$. Thus, to see \eqref{E:gra2}, by the definition of the equivalence of sequences and by Lemma~\ref{L:nonfa}(i), 
it is enough to check that for each such $r$, we have 
\[
r\not\in t',\; t'\not\in r, \hbox{ and } t'\cup r \hbox{ is not a face of }T_{\succ t'} \vec{q}.
\]
We give an argument for this statement. In it, we use letters $s$ and $\tau$ to denote sets as in ($\alpha$), ($\beta$), and ($\gamma$). Since $t'$ is a face of $\vec{q}$, it is clear that for $r$ as in ($\alpha$), ($\beta$), and ($\gamma$), we have $r\not\in t'$ since $s, \tau\not\in {\rm tc}(t')$. If $t'\in r$, then, since $t'\not\in s, t$, we have that for some $\tau$, $t'=\tau$ and (there is $s\in S\setminus T$ with 
$s\subseteq \tau$ and $t_s=t$) or ($t\subseteq \tau$ and $t\in S$). In either case, we have $s\in S$ with $t_s=t$ and $s\subseteq \tau$. Since $\tau\in T$, 
by the definition of $t_s$, we get $t= t_s\preceq \tau=t'$. Hence we get $t\preceq t'$ contradicting our assumption $t'\prec t$. 
It remains to show that $t'\cup r$ is not a face of $T_{\succ t'} \vec{q}$. To do this, we note that all $r$ in question contain $\un{s}$ as a subset for some $s$ with $t_s=t$. Furthermore, since $p\in t'$, 
we have 
\[
\un{s}\cup t'= s\cup t'.
\]
So, in order to get the conclusion, it will be enough to show that $s\cup t'$ is not a face of $T_{\succ t'} \vec{q}$ provided that 
$t_s\succ t'$. 
If $s\cup t'$ is not a face of $\vec{q}$, then it is not a face of $T_{\succ t'} \vec{q}$ since $s\cup t'\subseteq {\rm vr}(\vec{q}\,)$. So assume that 
$s\cup t'$ is a face of $\vec{q}$. In this case, we have $s\cup t'\in T$ by assumption (II). 
So we will be done if we show that $s\cup t'\succ t'$ as then $s\cup t'$ is 
an entry of the sequence $T_{\succ t'}$. Since $s\cup t'\supseteq t'$, we have $s\cup t'\succeq t'$. 
Thus, we only need to rule out the possibility 
that $s\cup t'= t'$, that is, $s\subseteq t'$. But, by the defining property of $t_s$, 
this inclusion implies that $t_s\preceq t'$, which contradicts the assumption 
$t_s\succ t'$, and the claim follows.

\medskip

The goal now is to give a convenient for our arguments non-decreasing enumeration of $\pi^{-1}(S)$. This will be accomplished in Claim~\ref{C:org4}. 
Our immediate goal is to define sequences \eqref{E:rpri} below in analogy with \eqref{E:ppri}. 
These sequences will be used to provided the desired enumeration of $\pi^{-1}(S)$. 

For $t\in T$ and $r\subseteq t$, define 
\[
\begin{split}
r[t] \;\; = \{ r(X)\mid X\in {\mathcal T}, \, t=\min X\}. 
\end{split} 
\]
We will use this definition only for $r= \un{s}$ and $r=s$ with $s\in S$. 
We note that, by Lemma~\ref{L:linear}, $r[t]$ are additive sets of faces of $T\vec{q}$, so the choices of their non-decreasing enumerations are not material when using them to 
implement divisions. 
Observe further that assignment \eqref{E:gis} maps all elements of $s[t]$ to $s$, for $s\in S\setminus T$, 
and all elements of $\un{s}[t]$ to $\un{s}(t)$, for $s\in S$. 

Define for $r$ such that $r\subseteq t$, for some $t\in T$, 
\[
\lceil r\rceil= \prod^*_{t:r\subseteq t} r[t]. 
\]
Observe that 
\begin{equation}\label{E:nod1}
\lceil r\rceil\;\hbox{ is non-decreasing.} 
\end{equation}
Indeed it suffices to show that if $r\subseteq t, t'$ and $t\prec t'$, then no set in $r[t]$ is included in a set in $r[t']$. Sets in $r[t]$ have the form $r(X)$ with $\min X = t$ and sets in $r[t']$  have the form $r(X')$ with $\min X'=t'$. If $r(X)\subseteq r(X')$, then, since $t\not\in r$, we have that $t$ is in $X'$ and so $t'=\min X'\subseteq t$ contradicting $t\prec t'$.

For $t\in T$,  define  
\begin{equation}\label{E:rpri}
R'_t =
\begin{cases}
\big(\prod_{s:s\in S\setminus T, t_s=t}\, \lceil \un{s}\rceil\, s\, \lceil s\rceil\big) \lceil \un{t}\rceil, &\hbox{ if }t\in S;\\
\prod_{s:s\in S\setminus T, t_s=t}\, \lceil \un{s}\rceil\, s\, \lceil s\rceil, &\hbox{ if } t\not\in S. 
\end{cases}
\end{equation} 
Note that 
\begin{equation}\label{E:nod2}
R'_t \hbox{ is non-decreasing.} 
\end{equation}
Indeed, fix $t$. It follows from \eqref{E:nod1} and the obvious observation using (VI) that $s\not\subseteq \un{s}(X)$ and $s(X)\not\subseteq s$, for $X\in {\mathcal T}$, that $\lceil \un{s}\rceil\, s\, \lceil s\rceil$ is nondecreasing. So, it suffices to show that if $s\prec s'$ and $t_s= t_{s'}=t$, then no set in $\lceil \un{s'}\rceil\, s'\, \lceil s'\rceil$ or in $\lceil \un{t}\rceil$
 is contained in a set in $\lceil \un{s}\rceil\, s\, \lceil s\rceil$. Note that each set in $\lceil \un{s'}\rceil\, s'\, \lceil s'\rceil$ contains $\un{s'}$. So if some set in $\lceil \un{s'}\rceil\, s'\, \lceil s'\rceil$ was contained in some set in $\lceil \un{s}\rceil\, s\, \lceil s\rceil$, we would have $\un{s'}\subseteq s$. Since, by (VI), $p\in s$, we would then have $s'\subseteq s$ contradicting $s\prec s'$. Similarly, if some set in $\lceil \un{t}\rceil$ is contained in a set in $\lceil \un{s}\rceil\, s\, \lceil s\rceil$, then $t\subseteq s$. Since $s'\subseteq t_{s'}=t$, we would again have $s'\subseteq s$ leading to a contradiction with $s\prec s'$.

Now, we observe that 
\begin{equation}\label{E:rrnod} 
\prod_{t\in T} R_t'\hbox{ is nondecreasing}. 
\end{equation}
To see this we use \eqref{E:nod2}, and we check that if $t\prec t'$, then no set in 
 $R'_{t'}$ is included in a set in $R'_t$, that is, no set in $\lceil \un{s'}\rceil\, s'\, \lceil s'\rceil$ with $t_{s'}= t'$ or in $\lceil \un{t'}\rceil$, if $t'\in S$,  is included in a set in $\lceil \un{s}\rceil\, s\, \lceil s\rceil$ with $t_{s}= t$ or in $\lceil \un{t}\rceil$, if $t\in S$. It suffices to see that no 
 set of the form $\un{s'}$ with $t_{s'}= t'$ or of the form $\un{t'}$ is included in a set of the form $s$ with $t_s=t$ or of the form $t$. But this is clear, since otherwise, in the first case, we would have $s'\subseteq s$ or $s'\subseteq t$, which, by the definition of $t_{s'}$, would imply $t'= t_{s'}\preceq t_s=t$ or $t'=t_{s'}\preceq t$, reaching a contradiction with $t\prec t'$. In the second case, we would have $t'\subseteq s$ or $t'\subseteq t$ from which we again a contradiction with $t\prec t'$.

\begin{claim}\label{C:org4} 
We have 
\begin{equation}\label{E:ps}
\pi^{-1}(S)\, T \vec{q}= \big( \prod_{t} R_t'\big)\, T \vec{q}. 
\end{equation} 
\end{claim}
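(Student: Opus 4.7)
The plan is to show that the sequence $\prod_{t\in T} R_t'$ is a non-decreasing enumeration of the family $\pi^{-1}(S)$ viewed as a family of faces of $T\vec{q}$. The claimed equality will then be immediate from the convention introduced in \eqref{E:addin} applied to $\pi^{-1}(S)$ (which is additive in $T\vec{q}$ by Lemma~\ref{L:difsa}(i), since $S$ is additive in $\vec{q}$), together with Lemma~\ref{L:dedf}.

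First, I would check that the entries of $\prod_{t\in T} R_t'$, counted with multiplicity, coincide with the elements of $\pi^{-1}(S)$. Unpacking the definitions \eqref{E:rpri} of $R_t'$ and of $\lceil r\rceil$, and using assumption (VI) so that $S_p = S$ in Lemma~\ref{L:desco}(i), the entries that appear are of four kinds: the sets $s$ for $s\in S\setminus T$ (each in $R_{t_s}'$); the sets $s(X)$ for $s\in S\setminus T$ and $X\in \mathcal{T}$ with $s\subseteq \min X$ (each contributed by $\lceil s\rceil$ inside $R_{t_s}'$); the sets $\un{s}(X)$ for $s\in S\setminus T$ and $X\in\mathcal{T}$ with $s\subseteq \min X$ (contributed by $\lceil \un{s}\rceil$ inside $R_{t_s}'$); and the sets $\un{t}(X)$ for $t\in S\cap T$ and $X\in\mathcal{T}$ with $t\subseteq\min X$ (contributed by $\lceil \un{t}\rceil$ inside $R_t'$). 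Using that $s\in S\cap T$ is equivalent to $s=t_s$ (which follows because $\preceq$ extends $\subseteq$, so any $t\in T$ with $s\subseteq t$ satisfies $s\preceq t$), the last two families together enumerate $\{\un{s}(X)\mid s\in S,\, X\in \mathcal{T},\, s\subseteq \min X\}$, and combined with the first two families we recover exactly the three disjoint families of Lemma~\ref{L:desco}(i). A straightforward case inspection confirms the entries are pairwise distinct across the sub-products: within a single $R_t'$ the four types of entries have distinct set-theoretic shapes (the presence of a top-level $\{t\}$-style element versus not, and whether $p$ lies in the ``lower part''), and across distinct $t$ the index $t_s$ for the $s$-blocks and the label $t$ itself for the $\lceil \un{t}\rceil$-block distinguish the entries.

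Second, the non-decreasing property of $\prod_{t\in T} R_t'$ as a sequence of sets has already been verified in \eqref{E:nod1}, \eqref{E:nod2}, and \eqref{E:rrnod}. Combined with the previous paragraph, this shows $\prod_{t\in T} R_t'$ is a non-decreasing enumeration of $\pi^{-1}(S)$, and the claim follows from the definition of $\pi^{-1}(S)\,T\vec{q}$.

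The main obstacle is simply the bookkeeping for the bijection between entries of $\prod_t R_t'$ and elements of $\pi^{-1}(S)$---particularly the splitting of the family $\{\un{s}(X)\mid s\in S,\ldots\}$ according to whether $s\in S\setminus T$ (contributed via $\lceil \un{s}\rceil$ inside $R_{t_s}'$) or $s\in S\cap T$ (contributed via $\lceil \un{t}\rceil$ inside $R_s'$, with $t=s$), which is precisely what dictates the asymmetric ``if $t\in S$'' clause in the definition \eqref{E:rpri} of $R_t'$.
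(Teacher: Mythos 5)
Your proposal is correct and follows essentially the same route as the paper: the paper's proof likewise notes that $\pi^{-1}(S)$ is additive in $T\vec{q}$ (so any non-decreasing enumeration yields the same division), observes by inspection of Lemma~\ref{L:desco}(i) that $\prod_{t}R_t'$ lists exactly the elements of $\pi^{-1}(S)$, and concludes from \eqref{E:rrnod}. Your write-up merely fills in the bookkeeping behind the paper's ``by inspection'' step, which is consistent with the intended argument.
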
 

\noindent {\em Proof of Claim~\ref{C:org4}.}
Since $\pi^{-1}(S)$ is additive in $T\vec{q}$, all non-decreasing enumerations of it give the same result when applied to $T\vec{q}$. 
Using Lemma~\ref{L:desco}(i), by inspection, we see that the sequence $\prod_{t\in T} R_t'$ lists all elements of $\pi^{-1}(S)$. The claim follows from \eqref{E:rrnod}.

\medskip

Claims~\ref{C:org5} and \ref{C:org4} reduce proving the lemma to showing that the map 
\begin{equation}\label{E:srtp} 
 \big( \prod_{t} R_t'\big)\, T\vec{q} \to \big(\prod_{t} P_t' \big)\,T \vec{q}, 
\end{equation} 
defined by the assignment \eqref{E:gis} is a pure weld-division map. This is what we show in the remainder of this proof. 
It is helpful to keep in mind that assignment \eqref{E:gis} maps all elements of $\lceil s\rceil$ to $s$ and it maps 
elements of $\lceil \un{s}\rceil$ to elements of $\lfloor \un{s}\rfloor$; therefore, 
\eqref{E:gis} maps $R_t'$ to $P_t'$.

We will use Lemma~\ref{L:org6pure} and the closure of pure weld-division maps under composition to obtain the following claim. 

\begin{claim}\label{C:org7} 
Let $P$ and $Q$ be sequences of faces of $T\vec{q}$ and entries of $s\lceil s\rceil$ and $\lceil \un{s}\rceil$ are faces of $QT\vec{q}$. Assume that 
\[
P\, s\lceil s\rceil\, Q T\vec{q}\;\hbox{ and }\; P\, \lceil \un{s}\rceil\, QT\vec{q}
\]
are non-decreasing. 
\begin{enumerate}
\item[(i)] For $s\in S\setminus T$, the map 
\[
P\, s\lceil s\rceil\, Q T\vec{q} \to P\, s \,QT\vec{q}
\]
given by the restriction of \eqref{E:gis} to $\lceil s\rceil$ is a pure weld-division map.

\item[(ii)] For $s\in S$, the map 
\[
P\, \lceil \un{s}\rceil\, QT\vec{q} \to P\,\lfloor \un{s}\rfloor\, QT\vec{q}, 
\]
given by the restriction of \eqref{E:gis} to $\lceil \un{s}\rceil$ is a pure weld-division map.
\end{enumerate}
\end{claim}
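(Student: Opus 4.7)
The plan is to derive both parts directly from Lemma~\ref{L:org6pure}. In each case we single out, for each block, a ``base'' element that is $\subseteq$-minimal, arrange for it to sit at the head of its block, and then read the required assignment off Lemma~\ref{L:org6pure}.

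For part (i), apply Lemma~\ref{L:org6pure} in the sequence $QT\vec{q}$ with prefix $P$, $m=1$, $r_1=s$, and $s_{11},\ldots,s_{1n_1}$ a non-decreasing enumeration of $\lceil s\rceil$. The containment $r_1\subseteq s_{1j}$ is immediate since every entry of $\lceil s\rceil$ has the form $s(X)=s\cup X\supseteq s$. The sequence $P\,s\,\lceil s\rceil\,QT\vec{q}$ is non-decreasing by the assumption of the claim together with the observation that $s$ is properly contained in every $s(X)$, so placing $s$ at the head of the block $s\lceil s\rceil$ is compatible with non-decreasingness. Lemma~\ref{L:org6pure} yields a pure weld-division map with assignment $s(X)\to s$, which is exactly the restriction of \eqref{E:gis} to $\lceil s\rceil$ (recall $s\in S\setminus T$, so the first clause of \eqref{E:gis} applies).

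For part (ii), write $\lceil\un{s}\rceil=\prod^*_{t:s\subseteq t}\un{s}[t]$. For every $t$ with $s\subseteq t$ the set $\un{s}[t]$ is an additive family of faces of $T\vec{q}$ by Lemma~\ref{L:linear}, and within it the element $\un{s}(t)=\un{s}\cup\{t\}$ is the inclusion-minimum, since every other entry has the form $\un{s}\cup X$ with $\min X=t$ and $|X|\geq 2$. Hence $\un{s}[t]$ admits a non-decreasing enumeration having $\un{s}(t)$ as its first entry; by Lemma~\ref{L:dedf}, replacing $\un{s}[t]$ inside $\lceil\un{s}\rceil$ by $\un{s}(t)\,(\un{s}[t]\setminus\{\un{s}(t)\})$ does not change the combinatorial equivalence class. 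Now invoke Lemma~\ref{L:org6pure} in $QT\vec{q}$ with prefix $P$, the $r_i$'s equal to $\un{s}(t_i)$ for the $t_i$'s listed in the $\preceq$-decreasing order inherited from $\lceil\un{s}\rceil$, and the $s_{ij}$'s being the remaining entries of $\un{s}[t_i]$. The containment $r_i\subseteq s_{ij}$ holds because $\min X=t_i$ forces $t_i\in X$, so $\un{s}(t_i)\subseteq \un{s}(X)$. Non-decreasingness of the full sequence is inherited from that of $P\,\lceil\un{s}\rceil\,QT\vec{q}$ after the legitimate rearrangement above. Lemma~\ref{L:org6pure} then produces a pure weld-division map with assignment $\un{s}(X)\to \un{s}(\min X)$, whose codomain is $P\,\prod^*_{t:s\subseteq t}\un{s}(t)\,QT\vec{q}=P\,\lfloor\un{s}\rfloor\,QT\vec{q}$, as required.

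The only non-trivial point in either part is justifying the reorganization in (ii): one must check that each $\un{s}[t]$ is additive in $T\vec{q}$ (so that Lemma~\ref{L:dedf} applies, allowing us to reshuffle within the block) and that no entry of the reshuffled block causes a violation of non-decreasingness with entries outside that block. Both reduce to Lemma~\ref{L:linear} together with the already-verified non-decreasingness of $\lceil\un{s}\rceil$; the cross-block comparisons between $\un{s}[t]$ and $\un{s}[t']$ for $t\neq t'$ were handled in the verification of \eqref{E:nod1} above.
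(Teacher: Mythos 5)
Your proof is correct and follows essentially the same route as the paper: both parts are reduced to Lemma~\ref{L:org6pure}, with $m=1$, $r_1=s$ in (i), and with blocks $\un{s}[t]$ headed by their inclusion-minimum $\un{s}(t)$ in (ii). The only difference is cosmetic: your appeal to Lemma~\ref{L:dedf} to move $\un{s}(t)$ to the front is not needed, since $\un{s}(t)$ is contained in every other entry of $\un{s}[t]$ and therefore is automatically the first entry of any non-decreasing enumeration of that block, which is exactly the observation the paper makes.
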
 

\noindent {\em Proof of Claim~\ref{C:org7}.} (i)  Recall that 
\[
\lceil s\rceil= \prod^*_{t:s\subseteq t} s[t]
\]
and note that the set $s$ is a subset of each set in $\lceil s\rceil$. 
Thus, point (i) follows directly from Lemma~\ref{L:org6pure} with $m=1$ and $r_1=s$. 

(ii) Recall that 
\[
\lceil \un{s}\rceil= \prod^*_{t:s\subseteq t} \un{s}[t],
\]
and observe that, for each $t\supseteq s$, the set $\un{s}(t)$ is included in each entry of the sequence $\un{s}[t]$ and so it is also 
the first entry of this sequence. Thus, by an application of Lemma~\ref{L:org6pure}, we get that the map 
\[
P\, \lceil \un{s}\rceil\, QT\vec{q} \to P \big(\prod^*_{t:s\subseteq t} \un{s}(t)\big) QT\vec{q}= P \,  \lfloor \un{s}\rfloor\,QT\vec{q}, 
\]
given by the assignment \eqref{E:gis} ispure weld-division map, and the claim follows.

\smallskip

Now, by inspecting the definitions of $P_t'$ and $R_t'$ from \eqref{E:ppri} and \eqref{E:rpri}, we check that an iterative 
application of Claim~\ref{C:org7} combined with closure of pure weld-division maps under composition imply that the map \eqref{E:srtp} is a pure weld-division map. This iteration is implemented as follows. In this paragraph, the words "largest" and "smallest" refer to the order $\preceq$. 
Starting with the largest $t\in T$ and proceeding to the smallest $t\in T$, we map elements of $R_t'$ to elements of $P_t'$, that is, we produce maps 
\[
 \big( \prod_{t'\prec t} R_{t'}'\big)\, R_t'\big( \prod_{t\prec t'} P_{t'}'\big)\, T\vec{q} \to  \big( \prod_{t'\prec t} R_{t'}'\big)\big(\prod_{t\preceq t'} P_{t'}' \big)\,T \vec{q}. 
\]
Fix $t\in T$, and consider $R_t'$. If $t\in T\cap S$, 
\[
R_t'= \big(\prod_{s:s\in S\setminus T, t_s=t}\, \lceil \un{s}\rceil\, s\, \lceil s\rceil\big) \lceil \un{t}\rceil.
\]
First, we map sets in the sequence to $\lceil \un{t}\rceil$ to sets in the sequence $\lfloor \un{t}\rfloor$ using Claim~\ref{C:org7}(ii). Then we proceed from the largest $s\in S\setminus T$ with $t_s=t$ to the smallest such $s$. We map sets in $s\lceil s\rceil$ to $s$ using Claim~\ref{C:org7}(i) and then sets in $\lceil \un{s}\rceil$ to sets in $\lfloor \un{s}\rfloor$ using Claim~\ref{C:org7}(ii). This procedure produces 
\[
P_t' = \big( \prod_{s:s\in S\setminus T, t_s=t} \lfloor \un{s}\rfloor\, s\big) \lfloor \un{t} \rfloor.
\]
If $t\in T\setminus S$, then 
\[
R_t'= \prod_{s:s\in S\setminus T, t_s=t}\, \lceil \un{s}\rceil\, s\, \lceil s\rceil,
\]
and we proceed as above except that we omit the first step dealing with $ \lceil \un{t}\rceil$ as this entry does not appear in $R_t'$ for $t\not\in S$. The application of Claim~\ref{C:org7} requires that the sequences of the form 
\[
 \big( \prod_{t'\prec t} R_{t'}'\big) \big(\prod_{t\preceq t'} P_{t'}' \big)\,T \vec{q}
\]
are non-decreasing, which is handled by the argument used to prove \eqref{E:rrnod}. 
The lemma is proved. 
\end{proof}

\subsection{Stronger conditions on the order $\preceq$}

Recall that $\preceq$ is a linear order on $S\cup T$. The only property of $\preceq$ that has been used so far is that $\preceq$ extends the inclusion relation. Now, we will need to require an additional condition of $\preceq$. We spell it out in \eqref{E:ordad} and prove in Lemma~\ref{L:orb} that such an order can be constructed. 
In analogy with the definition of $t_s$ in \eqref{E:tsde}, for $t\in T$, set 
\begin{equation}\label{stde} 
s^t = \hbox{the largest with respect to $\preceq$  element $s$ of $S$ with $s\subseteq t$}. 
\end{equation} 
Observe that by additivity of $S$ and the fact that $\preceq$ extends $\subseteq$, we have 
\[
s^t = \hbox{the largest with respect to $\subseteq$  element $s$ of $S$ with $s\subseteq t$}. 
\]
Such an $s^t$ exists by (IV).

\begin{lemma}\label{L:orb} 
There exists a linear order $\preceq$ on $S\cup T$ that extends the inclusion relation and 
is such that, for $s\in S$ and $t\in T$, we have 
\begin{equation}\label{E:ordad} 
t\prec t_s\; \Leftrightarrow\; t\prec s\; \Leftrightarrow\; s^t\prec s,\;\hbox{ for all }s\in S,\, t\in T. 
\end{equation} 
\end{lemma}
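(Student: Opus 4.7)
The plan is to construct $\preceq$ by organizing $S \cup T$ into clusters indexed by the ``support set'' $S^* := \{s^t : t \in T\} \subseteq S$. First I would fix any linear extension $\preceq_*$ of $\subseteq$ restricted to $S^*$. For each $s^* \in S^*$, form the cluster $C_{s^*}$ consisting of $s^*$ followed by all $t \in T \setminus S$ with $s^t = s^*$, the tail ordered by any linear extension of $\subseteq$. For each $s \in S \setminus S^*$, assumption (V) together with the additivity of $S$ implies that $\sigma(s) := \{s^* \in S^* : s \subseteq s^*\}$ is nonempty: by (V) there is some $t \in T$ with $s \subseteq t$, hence $s \subseteq s^t \in S^*$. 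Set $\sigma^*(s) := \min_{\preceq_*} \sigma(s)$. The order $\preceq$ is then defined by traversing $S^*$ in $\preceq_*$-order and, at each $s^* \in S^*$, first listing all $s \in S \setminus S^*$ with $\sigma^*(s) = s^*$ (in any $\subseteq$-compatible order) and then the cluster $C_{s^*}$.

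With this setup, each $t \in T$ lies in $C_{s^t}$ immediately after $s^t$, so $s^t \prec t$ holds automatically. Checking that $\preceq$ extends $\subseteq$ on $S \cup T$ is a routine case analysis; the nontrivial ingredient is that $s \subseteq s'$ in $S$ yields $\sigma(s) \supseteq \sigma(s')$ and hence $\sigma^*(s) \preceq_* \sigma^*(s')$, so the gap placements respect inclusion. The equivalence $t \prec s \Leftrightarrow s^t \prec s$ then follows directly by comparing cluster positions, since $t$ and $s^t$ share a cluster, while $s$ sits in its own cluster (if $s \in S^*$) or gap (if not), and in each subcase $t$ precedes $s$ precisely when $s^t$ does.

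For the equivalence $t \prec t_s \Leftrightarrow t \prec s$, the key observation is that $t_s$ equals either $\sigma^*(s)$ itself (when $\sigma^*(s) \in T$) or the first $T$-element of $C_{\sigma^*(s)}$; in either case $t_s$ sits immediately after $s$ in $\preceq$, with no other $T$-element between them. I expect the main obstacle to be the verification of this equivalence for $s \in S \setminus S^*$, where one must rule out the existence of $t \in T$ with $s^t \prec_* \sigma^*(s)$ (so $s \not\subseteq s^t$) that slips between $s$ and $t_s$. This is exactly why $s$ is placed in the gap \emph{just before} $C_{\sigma^*(s)}$ rather than further back: any such $t$ lies in a cluster $C_{s^t}$ that strictly precedes $C_{\sigma^*(s)}$ in $\preceq_*$, hence precedes $s$'s gap as well, so in fact $t \prec s$ and no violation can occur.
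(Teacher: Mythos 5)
Your proposal is correct and is essentially the paper's construction: your $S^*$, cluster tails, and gaps coincide with the paper's $S_1$ and its intervals $A_{s}\setminus\{s\}$ and $B_{s}\setminus\{s\}$, arranged in the same block order around each head $s\in S_1$. The only divergence is presentational: the paper verifies the conclusion through the intermediate implication $s\preceq t\Rightarrow(s\preceq s^t\hbox{ and }t_s\preceq t)$, whereas you check the stated equivalences directly by comparing block positions after identifying $t_s$ inside the block of $\sigma^*(s)$.
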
 

\begin{proof} We produce a linear order $\preceq$ on $S\cup T$ extending the inclusion relation and fulfilling 
\begin{equation}\label{E:slare} 
s\preceq t \Rightarrow (s\preceq s^t \hbox{ and } t_s\preceq t). 
\end{equation} 
We claim such an order fulfills \eqref{E:ordad}. Indeed, we get $t\prec t_s\; \Rightarrow\; t\prec s$ from \eqref{E:slare}, while $t\prec t_s\; \Leftarrow\; t\prec s$ from $s\subseteq t_s$ and from $\preceq$ being an extension of $\subseteq$. 
Similarly, we get $t\prec s\; \Leftarrow\; s^t\prec s$ from \eqref{E:slare}, while $t\prec s\; \Rightarrow\; s^t\prec s$ from $s^t\subseteq t$ and from $\preceq$ being an extension of $\subseteq$.

To define such an ordering, let $S_1$ consists of all $s\in S$ such that 
\[
\hbox{for some } t\in T, \, s \hbox{ is the largest with respect to $\subseteq$ element of $S$ with } s\subseteq t, 
\]
that is, $S_1=\{ s^t\mid t\in T\}$. 
Let $\preceq_1$ be a linear order on $S_1$ extending the inclusion relation. 

For $s\in S_1$, let 
\[
A_s =\{ t\in T\mid s \hbox{ is the largest with respect to $\subseteq$ element of $S$ with $s\subseteq t$}\}.
\]
Note that the sets $A_s$, $s\in S_1$, are pairwise disjoint and their union is equal to $T$ by (IV) and additivity of $S$. Furthermore, for $s'\in S_1$, we have 
$s'\in A_s$ if and only if $s'=s\in T$. 

For $s\in S_1$, let 
\[
B_s= \{ s'\in S\mid s \hbox{ is the smallest with respect to $\preceq_1$ element of $S_1$ with $s'\subseteq s$}\}.
\]
Note that the sets $B_s$, $s\in S_1$, are pairwise disjoint, their union is equal to $S$ by (V). Furthermore, for $s'\in S_1$, we have 
$s'\in B_s$ if and only if $s'=s$.

From the observations above, we see that
\begin{equation}\label{E:lin1}
\begin{split}
 &\hbox{the sets $A_s\setminus \{ s\}$, $s\in S_1$, partition $T\setminus S_1$;}\\  
 &\hbox{the sets $B_s\setminus \{ s\}$, $s\in S_1$, partition $S\setminus S_1$;}\\
 &\hbox{the sets $\{ s\}$, $s\in S_1$, partition $S_1$.}
 \end{split}
\end{equation} 

Further, we make the following observation. For $s_1, s_2\in S_1$, we have 
\begin{equation}\label{E:lin2}
\begin{split}
\big(s_1\prec_1 s_2,\; t\in A_{s_1},\; s\in B_{s_1}, \; t'\in A_{s_2},\; & s'\in B_{s_2}\big) \Rightarrow\\
\big( &t'\not\subseteq t,\; t'\not\subseteq s,\; s'\not\subseteq t,\; s'\not\subseteq s\big). 
\end{split}
\end{equation} 
We prove the above non-inclusions by contradiction. If $t'\subseteq t$, then, since $t'\in A_{s_2}$, we have $s_2\subseteq t$, from which we get $s_2\subseteq s_1$ since $t\in A_{s_1}$. Then, $s_2\preceq_1 s_1$, contradicting $s_1\prec_1 s_2$. 
If $t'\subseteq s$, then $s_2\subseteq s$ since $t'\in A_{s_2}$, so $s_2\subseteq s_1$ since $s\in B_{s_1}$, and we again get $s_2\preceq_1 s_1$ reaching a contradiction. If $s'\subseteq t$, then $s'\subseteq s_1$ since $t\in A_{s_1}$. So, since $s'\in B_{s_2}$, we get $s_2\preceq s_1$, which gives a contradiction with $s_1\prec_1 s_2$. Finally, if $s'\subseteq s$, then $s'\subseteq s_1$ since $s\in B_{s_1}$, which gives the same contradiction as in the last sentence. Thus, \eqref{E:lin2} is proved.

Note that, for each $s\in S_1$, we have 
\begin{equation}\label{E:lin3}
(t'\in A_s,\; s'\in B_s) \Rightarrow s'\subseteq s\subseteq t'.
\end{equation} 

Finally observe 
\begin{equation}\label{E:lin4} 
S\cap T\subseteq S_1.
\end{equation}

Taking into account \eqref{E:lin1}, \eqref{E:lin2}, \eqref{E:lin3}, \eqref{E:lin4}, we see that we can define 
$\preceq$ to be the linear order on $S\cup T$ extending $\preceq_1$ on $S_1$, extending the inclusion relation, 
and having the following two properties for each $s\in S_1$: 
\begin{enumerate}
\item[---] $A_s$ is an interval with respect to $\preceq$ such that $s$ is the largest element of $S\cup T$ with respect to $\preceq$ with 
$s\preceq s'$ for all $s'\in A_s$;

\item[---] $B_s$ is an interval with respect to $\preceq$ such that $s$ is the smallest element of $S\cup T$ 
with respect to $\preceq$ with $s'\preceq s$ for all $s'\in B_s$.
\end{enumerate} 

We check that this linear order fulfills \eqref{E:slare}. 

Assume, towards a contradiction, that there is $s\in S$ and $t\in T$ such that $s^t\prec s\preceq t$. Note that $t\in A_{s^t}$. So, by the definition of $\preceq$, $s\in A_{s^t}$, which means, by the definition of $A_{s^t}$, that $s\in T\cap S$ and $s=s^t$, contradicting $s^t\prec s$. We proved 
\begin{equation}\label{E:inef1}
s\preceq t\Rightarrow s\preceq s^t,\;\hbox{ for all }s\in S,\; t\in T.
\end{equation}

Assume, towards a contradiction, that there are $s\in S$ and $t\in T$ such that $s\preceq t\prec t_{s}$. 
Then $s\in B_{s'}$, for some $s'\in S_1$. Note that $t\not\in B_{s'}\setminus \{s'\}$ since this last set does not contain elements of $T$, which follws from \eqref{E:lin4} and the observation that the only element of $B_{s'}\cap S_1$ is $s'$. By definition of $\preceq$, we have $s'\preceq t$.
So, $t$ is an element of $T$ such that $s'\preceq t\prec t_s$. 
From this assertion, by the definition of $\preceq$, we get an element $t'$ of $T$ with $t'\in A_{s'}$ and $t'\prec t_s$. But then $s\subseteq s'\subseteq t'$, which implies $t_{s}\preceq t'$ from the definition of $t_s$, and we reached a contradiction. Thus, we proved 
\begin{equation}\label{E:inef2}
s\preceq t\Rightarrow t_s\preceq t,\;\hbox{ for all }s\in S,\; t\in T.
\end{equation}
The lemma follows from \eqref{E:inef1} and \eqref{E:inef2}. 
\end{proof}

\subsection{Proof of Main Lemma}

It may be helpful to recall the definitions of $\lfloor \un{s}\rfloor$ from \eqref{E:unde} and $P_t$ from Lemma~\ref{L:hath}.  

By Lemma~\ref{L:hath}, with the notation from that lemma, it suffices to show that the map 
\begin{equation}\label{E:spa}
\big(\prod_{t} P_t\, t\big)\vec{q}\to  S\vec{q}
\end{equation} 
given by the assignment 
\begin{equation}\label{E:lasa} 
\begin{split}
&t\to p,\\
&\un{s}(t) \to s,\hbox{ for } s\subseteq t
\end{split}
\end{equation}
is a composition of combinatorial isomorphisms and weld maps. It may be convenient to fully spell out this assignment; in addition to the rules \eqref{E:lasa}, we have 
\begin{equation}\label{E:lasa2}
S\setminus T \ni s\to s\in S\setminus T. 
\end{equation} 
Note that the sets occurring as entries in the sequence $\prod_{t\in T} P_t\, t$ are precisely the sets in the family 
\[
S\cup T\cup \{ \un{s}(t) \mid s\in S, \, t\in T,\, s\subseteq t\};
\]
thus, the assignment given by \eqref{E:lasa} and \eqref{E:lasa2} does induce a map with the domain and codomain as in \eqref{E:spa}.

From this point on, we assume that variables $t, \tau$ run over the set $T$ and $s,\sigma$ over the set $S$.

Define, for $t\in T$, 
\begin{equation}\label{E:uneqa}
\lfloor \un{s}\rfloor_t = \prod^*_{\tau :t\preceq \tau,\, s\subseteq \tau} \un{s}(\tau)\;\hbox{ and }\;  \lfloor \un{s}\rfloor_{\succ t} = 
\prod^*_{\tau :t\prec \tau,\, s\subseteq \tau} \un{s}(\tau).
\end{equation} 
We note that 
\begin{equation}\label{E:flfl}
\lfloor \un{s}\rfloor_t =  \lfloor \un{s}\rfloor,\; \hbox{ if } t\preceq t_s.
\end{equation} 
To see \eqref{E:flfl}, recall the definition \eqref{E:unde} of $ \lfloor \un{s}\rfloor$ and note that it suffices to see that $t\preceq \tau$ follows from $s\subseteq \tau$ and $t\preceq t_s$. But 
$s\subseteq \tau$ implies $t_s\preceq \tau$ by the definition \eqref{E:tsde} of $t_s$, and $t\preceq \tau$ follows. 
Note further that 
\begin{equation}\label{E:ungr} 
\lfloor \un{s}\rfloor_t = 
\begin{cases} 
\lfloor \un{s}\rfloor_{\succ t}\, \un{s}(t), &\hbox{ if }s\subseteq t;\\
\lfloor \un{s}\rfloor_{\succ t}, &\hbox{ if } s\not\subseteq t.
\end{cases} 
\end{equation} 
Finally, it is easy to see that the sequences $\lfloor \un{s}\rfloor_t$ and $\lfloor \un{s}\rfloor_{\succ t}$ are non-decreasing.

Define  
\[
\ov{t}= (t\setminus s^t)\cup \{ s^t\}\;\hbox{ and }\; \hat{t}= (t\setminus \un{s}^t)\cup \{ s^t\}. 
\]
With this notation, for $s\subseteq t$, let 
\[
\overline{t}(s) = \overline{t}\cup \{ s\}.
\]
Note that $\ov{t}(s^t) = \ov{t}$.

\begin{lemma}\label{L:cruc} Fix $t$. Let 
\[
\vec{b_t} =  \big( \prod_{\tau: t\prec \tau} P_\tau\, \tau\big) \, \vec{q}.
\]
We have a combinatorial isomorphism 
\begin{equation}\label{E:coim} 
\big(\prod_{s:s\preceq t} \lfloor \un{s}\rfloor_t\, s\big)\, t\, \vec{b_t} 
\to
\big( \prod^*_{s: s\subseteq t} \overline{t}(s)\big)\, \hat{t}\,\big( \prod_{s: s\preceq t} \lfloor \un{s}\rfloor_{\succ t} \, s\big)\,\vec{b_t}
\end{equation} 
given by the assignment 
\begin{equation}\label{E:finassi} 
t\to \hat{t}\;\hbox{ and }\;\un{s}(t)\to \ov{t}(s),\hbox{ for $s$ with $s\subseteq t$}. 
\end{equation} 
\end{lemma}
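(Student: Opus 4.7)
The plan is to construct the claimed combinatorial isomorphism as an explicit iterated composition of maps of types~1, 2, 3a, 3b, anchored by Lemma~\ref{L:commut}(ii).

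For the ``anchor step,'' I would apply Lemma~\ref{L:commut}(ii) with $r = \un{s^t}$, with the role of $s$ played by $s^t$, and with the role of $t$ played by $t$ itself. The required chain $\un{s^t} \subseteq s^t \subseteq t$ follows from assumption~(VI) and the definition of $s^t$. A direct calculation gives $(t \setminus \un{s^t}) \cup \{s^t\} = \hat{t}$ and $(t \setminus s^t) \cup \{s^t\} = \ov{t} = \ov{t}(s^t)$, so the resulting combinatorial isomorphism exactly realizes the restriction of the assignment~\eqref{E:finassi} to the pair of entries $t$ and $\un{s^t}(t)$, while every other vertex is fixed. This provides the ``backbone'' of the final iso: it is responsible for replacing $t$ by $\hat{t}$ and for contributing the leftmost factor $\ov{t} = \ov{t}(s^t)$ on the right-hand side of \eqref{E:coim}.

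For each remaining $s \subseteq t$ with $s \prec s^t$, I would realize $\un{s}(t) \to \ov{t}(s)$ through a short subcomposition, processing these $s$ in decreasing $\preceq$-order. Since $s \subseteq s^t$, one computes $s \setminus \ov{t} = s$ and $\ov{t} \setminus s = \ov{t}$, so that a type~2 isomorphism $\beta$ with parameters $s$ and $\ov{t}$ (available as a vertex once the anchor step has been performed) realizes $s \cup \{\ov{t}\} \to \ov{t} \cup \{s\} = \ov{t}(s)$. Its input vertex $s \cup \{\ov{t}\}$ is not literally $\un{s}(t) = \un{s} \cup \{t\}$, so preparatory type~1 and type~3 renamings are required as a bridge: they exchange $p$ for the new vertex $s$ and $t$ for the new vertex $\ov{t}$, taking advantage of $s = \un{s} \cup \{p\}$ and $\hat{t} = \ov{t} \cup \{p\}$. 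Iterating over all such $s$ then assembles the full assignment in \eqref{E:finassi}.

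Each of these moves requires reordering of sequence entries via condition~(b) in the definition of $\equiv$. For each swap I would verify the ``not an element of'' condition (using Proposition~\ref{P:trse}(ii) together with the observation that faces of $\vec{q}$ consist of urelements while sets like $\un{s}(\tau)$, $\ov{t}$, $\hat{t}$ live at strictly higher set-theoretic levels), and then check ``disjoint or non-face'' using assumption~(II) on $S, T$, Lemma~\ref{L:nonfa}, and the order properties from Lemma~\ref{L:orb}. The main obstacle will be precisely this combinatorial bookkeeping: after the anchor step changes $t$ into $\hat{t}$ and injects $\ov{t}$ into the sequence, the remaining entries $\un{s'}(t)$ must still be brought into adjacency with $s'$ and with $\ov{t}$ in the triple form required by Lemma~\ref{L:commut}(ii) or by $\beta$, and the newly-produced entries $\ov{t}(s')$ must subsequently be permuted into the nondecreasing order $\prod^*_{s : s \subseteq t} \ov{t}(s)$ demanded by the right-hand side of \eqref{E:coim}. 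That the $s^t$-anchor perturbs the positions of the remaining $\un{s'}(t)$ entries is the delicate point of the induction, and is where the extra properties of $\preceq$ established in Lemma~\ref{L:orb} (in particular \eqref{E:ordad}) are used to guarantee that the required permutations are all admissible.
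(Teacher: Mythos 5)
Your overall architecture coincides with the paper's: the proof does begin by applying Lemma~\ref{L:commut}(ii) with $r=\un{s^t}$, $s=s^t$ and $t=t$ (after first rewriting the left-hand side, via \eqref{E:ordad} and \eqref{E:ungr}, so that the triple $\un{s^t}(t)\,s^t\,t$ sits at the right end), and it then processes the remaining $s\prec s^t$ one at a time in decreasing $\preceq$-order, moving $\hat{t}$ leftward by commutation steps justified exactly by the kind of ``disjoint or union-not-a-face'' checks you describe. So the anchor step and the iteration scheme are the paper's.

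The gap is in your per-$s$ step. After the anchor isomorphism is extended across the remaining entries by Lemma~\ref{L:mapdif}, the entries $\un{s}(t)$ on the codomain side become $\un{s}(\hat{t}\,)$ (the assignment $t\to\hat{t}$ renames them); you keep writing $\un{s'}(t)$ and miss this, yet it is precisely what makes the step work: since $p\in s$ by (VI) and $s\subseteq s^t$, one has $s\setminus\hat{t}=\un{s}$ and $\hat{t}\setminus s=\ov{t}$, so a \emph{single} type~2 isomorphism with parameters $s$ and $\hat{t}$ sends $\un{s}(\hat{t}\,)$ to $\ov{t}(s)$ with no preparation (this is Claim~2(iii) in the paper's proof). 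Your variant, a type~2 move with parameters $s$ and $\ov{t}$, requires the literal division entry $s\cup\{\ov{t}\}$ to be present, and the ``bridge'' you invoke --- type~1/type~3 renamings that ``exchange $p$ for the vertex $s$ and $t$ for the vertex $\ov{t}$'' --- is not an available operation: type~3 only toggles $x\leftrightarrow\{x\}$, and type~1 acts only on a configuration $(r\cup\{t\})\,(r\cup s)\,t$ with $r,s\subseteq t$ disjoint; you neither exhibit such a composition nor argue it exists, and in fact none is needed. A second, smaller omission: the admissibility of the swaps does not follow from (II), Lemma~\ref{L:nonfa} and \eqref{E:ordad} alone; the delicate case is $s\not\subseteq s^t$, where one must show that $\un{s}\cup\{s^t\}$ is not a face of the relevant extensions of $\vec{b_t}$, and this uses additivity of $S$ together with the fact that the elements of $S$ strictly $\succ s^t$ occur as entries of $\vec{b_t}$ (the paper's Claim~1). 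With the per-$s$ move replaced by the $(s,\hat{t}\,)$ type~2 isomorphism and that non-face argument supplied, your plan becomes the paper's proof.
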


\begin{proof} The face $t\in T$ will remain fixed in this proof, and we set $\vec{b}= \vec{b_t}$. To ease reading the formulas in this proof, we introduce the following piece of notation. 
Let $\vec{c}$ be a sequence. For two sequences $\vec{s}$ and $\vec{t}$, we write 
\[
\vec{s} \ec \vec{t}
\]
for $\vec{s}\,\vec{c}= \vec{t}\,\vec{c}$, and, similarly, we write 
\[
\vec{s}\ac \vec{t}
\]
to indicate a map $\vec{s}\,\vec{c}\to \vec{t}\,\vec{c}$.

Let $L$ be 
\[
L= \big(\prod_{s:s\preceq t} \lfloor \un{s}\rfloor_t\, s\big) t, 
\]
so $L\vec{b}$ is the left hand side of \eqref{E:coim}.
The proof consists of transforming $L$ until the right hand side is reached. 
We start with rewriting $L$, using \eqref{E:ordad} to get the first equality and using \eqref{E:ungr} and the relation $s^t\subseteq t$ to get the second one, 
\begin{equation}\label{E:rew}
\begin{split} 
L &\eb
\big(\prod_{s:s\prec s^t} \lfloor \un{s}\rfloor_t\, s\big)  \big(  \lfloor \un{s^t}\rfloor_t\, s^t\big) t\\ 
&\eb \big(\prod_{s:s\prec s^t} \lfloor \un{s}\rfloor_{\succ t}\, M_{s}\big)  \big( \lfloor \un{s^t}\rfloor_{\succ t} \, \un{s}^t(t)\, s^t\, t\big), 
\end{split} 
\end{equation} 
where 
\[
M_{s}= \begin{cases}
\un{s}(\,t\,) s,&\text{ if }s\subseteq t;\\
s, &\text{ if } s\not\subseteq t.
\end{cases}
\]

Observe that $t$ is a face of $\vec{b}$ since $t$ is a face of $\vec{q}$, no $\tau$ with $t\prec \tau$ is included in $t$, and no entry of $P_\tau$ with $t\prec \tau$ is included in $t$. Further, note that the relations $s^t\subseteq t$ and $\ov{t}(s^t)= \ov{t}$. So, we can apply Lemma~\ref{L:commut}(ii) (with $r= \un{s}^t$, $s= s^t$, $t=t$) to the rightmost three entries in the sequence in \eqref{E:rew}. This way we get, for each sequence $\vec{c}$ extending $\vec{b}$, a combinatorial isomorphism 
\begin{equation}\label{E:neve} 
 \un{s}^t(t)\, s^t\, t \ab \ov{t}(s^t)\, \hat{t}\, s^t
\end{equation} 
implemented by the assignment 
\begin{equation}\label{E:nevea} 
 t\to \hat{t}   \;\hbox{ and }\;   \un{s}^t(t)\to \ov{t}(s^t). 
\end{equation}
We observe that in the sequence in \eqref{E:rew} no entry to the left of $\un{s}^t(t)\, s^t\, t$ contains $\un{s}^t(t)$ as an element and the only such entry containing $t$ as an element is $\un{s}(t)$, which appears in $M_s$ if $s\subseteq t$. Thus, Lemma~\ref{L:mapdif}, the combinatorial isomorphism \eqref{E:neve} given by the assignment \eqref{E:nevea} gives the combinatorial isomorphism 
\begin{equation}\label{E:rewb} 
L \ab \big(\prod_{s\prec s^t} \lfloor \un{s}\rfloor_{\succ t}\, N_{s}\big) \big(  \lfloor \un{s^t}\rfloor_{\succ t} \big) \big(  \ov{t}(s^t)\, \hat{t}\, s^t\big)
\end{equation} 
via the assignment 
\begin{equation}\label{E:assi1}
t\to \hat{t}, \;\; \un{s}^t(t) \to \ov{t}(s^t),\;\; \un{s}(t) \to \un{s}(\hat{t}\,),\hbox{ for }s\prec s^t,\, s\subseteq t,  
\end{equation}
where 
\[
N_{s}= \begin{cases}
\un{s}(\,\hat{t}\,) s,&\text{ if }s\subseteq t;\\
s, &\text{ if } s\not\subseteq t.
\end{cases}
\]
Note that $N_s$ is obtained from $M_s$ using Lemma~\ref{L:mapdif} by applying $t\to \hat{t}$.

\setcounter{claim}{0}
The following claim lists the needed commutativity relations for $\ov{t}(\sigma)$ and $\hat{t}$. 
We say that a sequence $\vec{c}$ {\bf extends $\vec{b}$} if 
\[
\vec{c}= \vec{b}'\vec{b}\,\hbox{ for some sequence }\vec{b}'.
\]

\begin{claim}\label{Cl:tasi}  
Fix $s \preceq t$. Let $\vec{c}$ be a sequence extending $\vec{b}$. 

The following relations hold for $\ov{t}(\sigma)$ with $s\preceq \sigma\subseteq t$:  
\begin{enumerate} 
\item[(a)] 
$\un{s}(\tau )\,  \ov{t}(\sigma )\ec  \ov{t}(\sigma )\, \un{s}(\tau )$,  for $\tau$ with $t\prec \tau$ and $s\subseteq \tau$;

\item[(b)] $\un{s}(\,\hat{t}\,)\, \ov{t}(\sigma ) \ec  \ov{t}(\sigma )\, \un{s}(\,\hat{t}\,)$;

\item[(c)] $s\, \ov{t}(\sigma ) \ec  \ov{t}(\sigma )\, s$,\, if $s\not=\sigma$.
\end{enumerate} 

The following relations hold for $\hat{t}$: 
\begin{enumerate}
\item[(d)] $\un{s}(\tau )\,  \hat{t} \ec  \hat{t} \, \un{s}(\tau )$, for $\tau$ with $t\prec \tau$ and $s\subseteq \tau $;

\item[(e)] $s\,  \hat{t} \ec  \hat{t}\, s$,\, if $s\not\subseteq t$.
\end{enumerate} 
\end{claim}

\noindent {\em Proof of Claim~\ref{Cl:tasi}.} 
Recall the definition of combinatorial equivalence of sequences from Section~\ref{Su:combs}.  
Recall also that, for $r_1, r_2\in S\cup T$, we have $r_1\not\in {\rm tc}(r_2)$ as $r_1$ and $r_2$ are faces of $\vec{q}$. 
We will use this property freely in the proof below. 

Assume first that $s\subseteq s^t$. We need to check (a), (b), (c) and (d) (since in (e) the condition $s\not\subseteq t$ contradicts $s\subseteq s^t$). 
This check is based on disjointness of the appropriate faces implying combinatorial equivalence of sequences (as in point (b) of the definition of combinatorial equivalence of sequences in Section~\ref{Su:combs}). 
For (a) and (d) note that since $\sigma \subseteq t$, we have $\sigma \subseteq s^t\subseteq t\prec \tau $, so $\sigma \not= \tau \not= s^t$. 
Using this observation and our case assumption $s\subseteq s^t$, we see that $\un{s}(\tau )$ and $\ov{t}(\sigma )$ are disjoint as are $\un{s}(\tau )$ and $\hat{t}$, and (a) and (d) 
follow. In (b), since $\sigma\not= \hat{t}\not= s^t$ and $s\subseteq s^t$, we see that $\un{s}(\,\hat{t}\,)$ and $\ov{t}(\sigma )$ are disjoint. For (c), 
the disjointness of $s$ and  $\ov{t}(\sigma )$ is immediate by the case assumption $s\subseteq s^t$. 

Assume now $s\not\subseteq s^t$. The check of (a), (b), (c), (d) and (e) 
is based on the union of two appropriate faces not being a face implying combinatorial equivalence of sequences (as in point (b) of the definition of combinatorial equivalence of sequences in Section~\ref{Su:combs}). 
We will use the following property of the sequence $\vec{b}$\,: $\sigma$ is an entry of $\vec{b}$ if and only if $s^t\prec \sigma$. 
This property holds since, by \eqref{E:ordad}, for each $\sigma$ the condition 
$t\prec t_\sigma$ is equivalent to $s^t\prec \sigma$. 
Note now that for the pairs of the relevant faces in (a), (b), (c), (d), (e), their unions, that is, 
\begin{equation}\label{E:nofa}
 \un{s}(\tau)\cup  \ov{t}(\sigma), \; \un{s}(\,\hat{t}\,)\cup \ov{t}(\sigma ),\; s\cup \ov{t}(\sigma ),\; \un{s}(\tau )\cup \hat{t}, \; s\cup \hat{t},
\end{equation} 
all contain $\un{s}\cup \{ s^t\}$. So it is enough to show that $\un{s}\cup \{ s^t\}$ is not a face of $\vec{c}$. Let $\vec{c}= c_0\cdots c_n$. If $\un{s}\cup \{ s^t\}$ is a face of $\vec{c}$, then, by the definition of being a face, there exists $i\leq n$ such that $c_i= s^t$ and $\un{s}\cup s^t = s\cup s^t$ is a face of $c_{i+1}\cdots c_n$. Observe that, since $s^t$ is not listed in $\vec{b}$, we see that $\vec{b}$ is an initial segment of $c_{i+1}\cdots c_n$. Since 
\[
s\cup s^t \subseteq {\rm vr}(\vec{q}\,),
\]
we see, by Lemma~\ref{L:nonfa}(i), that $s\cup s^t$ is a face of $\vec{b}$ and of $\vec{q}$. 
Since $S$ is an additive family of faces of $\vec{q}$, it follows that $s\cup s^t$ is in $S$. Since, by our case assumption, $s\cup s^t$ properly contains $s^t$, so $s^t\prec s\cup s^t$, and therefore, it is an entry of $\vec{b}$. It follows, by Lemma~\ref{L:nonfa}(ii), that $s\cup s^t$ is not a face of $\vec{b}$, a contradiction. Thus, the claim is proved.

\smallskip 

Observe that by the definition \eqref{E:uneqa} of $\lfloor \un{s}\rfloor_{\succ t}$ and Claim~\ref{Cl:tasi}~(a) (with $\sigma=s=s^t$) and (d) (with $s=s^t$), we get the following identities, for each sequence $\vec{c}$ extending $\vec{b}$, 
\begin{equation}\notag
\big( \lfloor \un{s}^t\rfloor_{\succ t}\big) \, \ov{t}(s^t) \ec  \ov{t}(s^t)\,  \big( \lfloor \un{s}^t\rfloor_{\succ t}\big)\;\hbox{ and }\; 
\big( \lfloor \un{s}^t\rfloor_{\succ t}\big) \, \hat{t} \ec  \hat{t}\,  \big( \lfloor \un{s}^t\rfloor_{\succ t}\big).
\end{equation}
An application of these identities to \eqref{E:rewb} gives a combinatorial isomorphism 
\begin{equation}\label{E:rewc} 
L\ab  \big(\prod_{s\prec s^t} \lfloor \un{s}\rfloor_{\succ t}\, N_{s}\big) \, \ov{t}(s^t)\, \hat{t} \, \big(  \lfloor \un{s}^t\rfloor_{\succ t} \, s^t\big)
\end{equation}
given by assignment \eqref{E:assi1}.

We now streamline the information gathered in Claim~\ref{Cl:tasi} to apply it to \eqref{E:rewc}. 

\begin{claim}\label{Cl:stre} Let $\vec{c}$ be a sequence extending $\vec{b}$. 
\begin{enumerate}
\item[(i)] If $s\prec \sigma\subseteq t$, then 
\[
\Big( \big( \lfloor \un{s}\rfloor_{\succ t}\big)\,  N_s\Big) \, \ov{t}(\sigma) \ec
\ov{t}(\sigma)\, \Big( \big( \lfloor \un{s}\rfloor_{\succ t}\big)\, N_s\Big).
\]

\item[(ii)] if $s\not\subseteq t \hbox{ and }s\preceq t$, then 
\[
\lfloor \un{s}\rfloor_{\succ t}\, N_{s}\hat{t} \ec \hat{t}\, \big( \big( \lfloor \un{s}\rfloor_{\succ t}\big)\, s\big).
\]

\item[(iii)] if $s\subseteq t$ and $s$ and $\hat{t}$ are not vertices of $\vec{c}$, then 
the map 
\[
\lfloor \un{s}\rfloor_{\succ t}\, N_{s}\hat{t}  \ac \ov{t}(s)\, \hat{t}\, \big(\big( \lfloor \un{s}\rfloor_{\succ t}\big)\, s\big) 
\]
given by 
\begin{equation}\label{E:assi2}
\un{s}(\hat{t})\to \ov{t}(s)
\end{equation} 
is a combinatorial isomorphism. 
\end{enumerate} 
\end{claim}

\noindent {\em Proof of Claim~\ref{Cl:stre}.} We get (i) from Claim~\ref{Cl:tasi}~(a), (b), and (c).

Using Claim~\ref{Cl:tasi}~(d) and (e), we obtain 
\begin{equation}\notag
\Big( \big( \lfloor \un{s}\rfloor_{\succ t}\big)  s\Big) \, \hat{t} \ec 
\hat{t}\, \Big( \big( \lfloor \un{s}\rfloor_{\succ t}\big)\, s\Big), \; \hbox{ if }s\not\subseteq t \hbox{ and }s\preceq t.
\end{equation} 
which is (ii). 

To see (iii), note that, since 
\[
s\setminus \hat{t} = \un{s}\;\hbox{ and }\; \hat{t}\setminus s = \ov{t}, 
\]
the map 
\begin{equation}\notag
 \un{s}(\,\hat{t}\,)\, s\, \hat{t} \ac \ov{t}(s)\, \hat{t}\, s 
\end{equation} 
given by the assignment \eqref{E:assi2} is a combinatorial isomorphism of type 2 (with $s=s$, $t=\hat{t}$). Thus, by Lemma~\ref{L:mapdif}, since no entry of $\big( \lfloor \un{s}\rfloor_{\succ t}\big)$ has $\un{s}(\hat{t})$ as an element, we see that the map 
\begin{equation}\notag
\Big(\big( \lfloor \un{s}\rfloor_{\succ t}\big)\,  \un{s}(\,\hat{t}\,)\, s\Big)\, \hat{t} 
\ac \big( \lfloor \un{s}\rfloor_{\succ t} \big)\, \ov{t}(s)\, \hat{t}\, s 
\end{equation} 
given again by \eqref{E:assi2} is a combinatorial isomorphism. Then we use Claim~\ref{Cl:tasi}~(a) and (d) to get 
\[
\big( \lfloor \un{s}\rfloor_{\succ t} \big)\, \ov{t}(s)\, \hat{t}\, s 
\ec \ov{t}(s)\, \hat{t}\, \big(  \lfloor \un{s}\rfloor_{\succ t}\, s\big),\;\hbox{ if } s\subseteq t.
\]
Point (iii) and the claim follow. 

\smallskip

First apply Claim~\ref{Cl:stre}(i) (with $\sigma= s^t$) to the right hand side of \eqref{E:rewc} to see that 
\[
\big(\prod_{s\prec s^t} \lfloor \un{s}\rfloor_{\succ t}\, N_{s}\big) \, \ov{t}(s^t)\, \hat{t} \, \big(  \lfloor \un{s}^t\rfloor_{\succ t}  \, s^t\big) \eb
\ov{t}(s^t)\,\big(\prod_{s\prec s^t} \lfloor \un{s}\rfloor_{\succ t}\, N_{s}\big) \,  \hat{t} \, \big( \lfloor \un{s}^t\rfloor_{\succ t} \, s^t\big),
\]
which together with \eqref{E:rewc} gives a combinatorial isomorphism 
\begin{equation}\label{E:rewc2}
L\ab  \ov{t}(s^t)\, \big(\prod_{s\prec s^t} \lfloor \un{s}\rfloor_{\succ t}\, N_{s}\big) \,  \hat{t} \, \big(  \lfloor \un{s}^t\rfloor_{\succ t} \, s^t\big)
\end{equation}
given by assignment \eqref{E:assi1}. 
We claim that, to get, from \eqref{E:rewc2}, the desired combinatorial isomorphism 
\begin{equation}\label{E:almth} 
L\ab \big( \prod^*_{s:s\subseteq t} \overline{t}(s)\big)\, \hat{t}\,\big( \prod_{s:s\preceq t}  \lfloor \un{s}\rfloor_{\succ t} \, s\big)
\end{equation} 
given by assignment \eqref{E:finassi}, it will suffice to produce a combinatorial isomorphism 
\begin{equation}\label{E:prze} 
\begin{split}
\ov{t}(s^t)\, \big(\prod_{s\prec s^t} &\lfloor \un{s}\rfloor_{\succ t}\, N_{s}\big) \,  \hat{t} \, \big(  \lfloor \un{s}^t\rfloor_{\succ t} \, s^t\big)\\
&\ab\, \ov{t}(s^t)\,  \big( \prod^*_{s:s\subseteq t, s\prec s^t} \overline{t}(s)\big)\, \hat{t}\,\big( \prod_{s:s\prec s^t} \lfloor \un{s}\rfloor_{\succ t} \, s\big)\, \big(  \lfloor \un{s}^t\rfloor_{\succ t} \, s^t\big).
\end{split}
\end{equation} 
given by the assignment \begin{equation}\label{E:haha}
\un{s}(\hat{t}\,)\to \ov{t}(s), \;\hbox{ for }s\prec s^t,\, s\subseteq t. 
\end{equation} 
Indeed, note that by \eqref{E:ordad}, $s\preceq t$ is equivalent to $s\preceq s^t$, so 
the composition of \eqref{E:rewc2} and \eqref{E:prze} gives a combinatorial isomorphism as in \eqref{E:almth} that is given by 
the composition of the assignments \eqref{E:assi1} and \eqref{E:haha}, which composition is equal to the assignment \eqref{E:finassi}, as required.

From this point on, we work on constructing a combinatorial isomorphism \eqref{E:prze} given by \eqref{E:haha}. We will produce a combinatorial isomorphism 
\begin{equation}\label{E:prze2} 
\big(\prod_{s\prec s^t} \lfloor \un{s}\rfloor_{\succ t}\, N_{s}\big) \,  \hat{t} 
\ac \big( \prod^*_{s:s\subseteq t, s\prec s^t} \overline{t}(s)\big)\, \hat{t}\,\big( \prod_{s:s\prec s^t} \lfloor \un{s}\rfloor_{\succ t} \, s\big)
\end{equation} 
given by  \eqref{E:haha}, where $\vec{c}$ is a sequence extending $\vec{b}$ and such that $\hat{t}$ and $s$ with $s\prec s^t$ are not vertices of $\vec{c}$. This will be sufficient. Indeed, consider the sequence 
\begin{equation}\label{E:cdefa} 
\vec{c} =  \big(  \lfloor \un{s}^t\rfloor_{\succ t} \, s^t\big)\,\vec{b}. 
\end{equation} 
Note that $\hat{t} = (t\setminus \un{s}^t)\cup \{ s^t\}$ is a face of $s^t\vec{b}$, so it is not a vertex of $s^t\vec{b}$, and it is not an entry of $\lfloor \un{s}^t\rfloor_{\succ t}$. Thus, $\hat{t}$ is not a vertex of $\vec{c}$. Also $s$ with $s\prec s^t$ is a face of $\vec{q}$, so it is not a vertex of $\vec{q}$ and it is not an entry of the sequence 
\[
\lfloor \un{s}^t\rfloor_{\succ t}\, s^t\big( \prod_{\tau: t\prec \tau} P_\tau \tau\big).
\]
Thus, by the definition of $\vec{b}$, $s$ is not a vertex of $\vec{b}$. It follows that the formula \eqref{E:prze2} can be applied to $\vec{c}$ from \eqref{E:cdefa} above 
producing a combinatorial isomorphism 
\begin{equation}\notag 
\big(\prod_{s\prec s^t} \lfloor \un{s}\rfloor_{\succ t}\, N_{s}\big) \,  \hat{t} \, \big(  \lfloor \un{s}^t\rfloor_{\succ t} \, s^t\big)
\ab\, \big( \prod^*_{s:s\subseteq t, s\prec s^t} \overline{t}(s)\big)\, \hat{t}\,\big( \prod_{s:s\prec s^t} \lfloor \un{s}\rfloor_{\succ t} \, s\big)\, \big(  \lfloor \un{s}^t\rfloor_{\succ t} \, s^t\big).
\end{equation} 
given by  \eqref{E:haha}. This immediately yields \eqref{E:prze} by Lemma~\ref{L:mapdif}  after noticing that none of the sets in \eqref{E:haha} is an element of $\ov{t}(s^t)$.

We will iteratively apply Claim~\ref{Cl:stre} to produce the combinatorial isomorphism \eqref{E:prze2}. 
We let $s_0\cdots s_m$  be a non-decreasing enumeration of the set $\{ s\in S\mid s\prec s^t\}$. Then the left hand side of \eqref{E:prze2} becomes 
\begin{equation}\label{E:seqex} 
\big(\lfloor \un{s_0}\,\rfloor_{\succ t}\, N_{s_0}\big) \cdots \big( \lfloor \un{s_m}\,\rfloor_{\succ t}\, N_{s_m}\big)\,  \hat{t}.
\end{equation} 
Using Claim~\ref{Cl:stre} we move $\hat{t}$ to the left in the above expression as follows. To make the assumptions of Claim~\ref{Cl:stre} satisfied, we keep in mind that 
\[
s_0\prec s_1\prec \cdots \prec s_m\prec t, 
\]
where the last inequality comes from $s_m\prec s^t\subseteq t$. Let $i\leq m$. If $s_i\not\subseteq t$, then, by Claim~\ref{Cl:stre}(ii), we have 
 \begin{equation}\label{E:frt}
 \begin{split} 
\big(\lfloor \un{s_0}\,\rfloor_{\succ t}\, N_{s_0}\big) \cdots \big( \lfloor \un{s_{i-1}}\,\rfloor_{\succ t}\, N_{s_{i-1}}\big) \, \big(\lfloor \un{s_i}\,\rfloor_{\succ t}\, N_{s_i}\big) \,  &\hat{t}\\ \ec
\big( \lfloor \un{s_0}\,\rfloor_{\succ t}\, N_{s_0} \big) \cdots \big( \lfloor \un{s_{i-1}}\,\rfloor_{\succ t}\, N_{s_{i-1}}\big) \, &\hat{t}\, \big(\lfloor \un{s_i}\,\rfloor_{\succ t}\, s_i\big)
\end{split}
\end{equation}
If $s_i\subseteq t$, then, by applying first Claim~\ref{Cl:stre}(iii) and then Claim~\ref{Cl:stre}(i) (with $\sigma= s_i$), we get the following combinatorial isomorphism, for $\vec{c}$ extending $\vec{b}$ and such that $\hat{t}$ and $s_i$ are not vertices of $\vec{c}$, 
 \begin{equation}\label{E:scd} 
 \begin{split} 
\big( \lfloor \un{s_0}\,\rfloor_{\succ t}\, N_{s_0}\big) \cdots  \big(\lfloor \un{s_{i-1}}\,\rfloor_{\succ t}\, N_{s_{i-1}} \big) \, \big(\lfloor \un{s_i}\,\rfloor_{\succ t}\, N_{s_i}\big)\,  &\hat{t}\\ 
\ac\big(\lfloor \un{s_0}\,\rfloor_{\succ t}\, N_{s_0} \big) \cdots  \big(\lfloor \un{s_{i-1}}\,\rfloor_{\succ t}\, N_{s_{i-1}}\big) \,  \ov{t}(s_i)\, &\hat{t}\, \big(\lfloor \un{s_i}\,\rfloor_{\succ t}\, s_i\big)\\\ec
\ov{t}(s_i)\, \big( \lfloor \un{s_0}\,\rfloor_{\succ t}\, N_{s_0}\big)  \cdots  \big(\lfloor \un{s_{i-1}}\,\rfloor_{\succ t}\, N_{s_{i-1}} \big) \, &  \hat{t}\, \big(\lfloor \un{s_i}\,\rfloor_{\succ t}\, s_i\big), 
\end{split}
\end{equation}
where the combinatorial isomorphism in \eqref{E:scd} 
is given be the assignment \eqref{E:assi2}, that is, 
\begin{equation}\label{E:ssiia}
\un{s_i}(\hat{t}\,)\to \ov{t}(s_i).
\end{equation} 

By applying recursively \eqref{E:frt} and \eqref{E:scd} for $i= m, m-1, \dots, 0$, choosing \eqref{E:frt} or \eqref{E:scd} depending on whether $s_i\subseteq t$ or $s_i\not\subseteq t$, we obtain a combinatorial isomorphism of the form $\ac$ from the sequence \eqref{E:seqex} to the sequence 
\[
\ov{t}(s_{i_k})\, \ov{t}(s_{i_{k-1}})\cdots \ov{t}(s_{i_0})\, \hat{t}\, \big(\lfloor \un{s_0}\,\rfloor_{\succ t}\, s_0\big)\cdots  \big(\lfloor \un{s_{m-1}}\,\rfloor_{\succ t}\, s_{m-1}\big)
\big(\lfloor \un{s_m}\,\rfloor_{\succ t}\, s_m\big), 
\]
where $s_{i_0}, \dots, s_{i_k}$, for $i_0<\cdots <i_k$, enumerate all $s_i$, $i\leq m$, with $s_i\subseteq t$. The isomorphism is given by assignments \eqref{E:ssiia} for all $i= i_0, \dots, i_k$. Thus, we proved that \eqref{E:prze2} given by \eqref{E:haha} is a combinatorial isomorphism, and the lemma follows. 
\end{proof}

We are now ready to give the final argument.

\setcounter{claim}{0}

\begin{proof}[Proof of Main Lemma] 
We will prove that the map in \eqref{E:spa} is a composition of combinatorial isomorphisms and weld maps, which suffices to show Main Lemma. 

Recall that $S$, $T$, and a sequence $\vec{q}$ are fixed in the statement of Main Lemma. Recall also that 
\[
P_t= \prod_{s: t_s=t} \lfloor \un{s}\rfloor\, s.
\]
Define
\[
Q_t=  \prod^*_{s: s\subseteq t} \ov{t}(s). 
\]
As before, for two sequences $\vec{r_1}$ and $\vec{r_2}$, we will write 
\[
\vec{r_1}\eq \vec{r_2}\;\hbox{ and }\; \vec{r_1}\aq \vec{r_2}
\]
to indicate $\vec{r_1}\,\vec{q}= \vec{r_2}\,\vec{q}$ and a map $\vec{r_1}\,\vec{q}\to \vec{r_2}\,\vec{q}$, respectively.

\begin{claim*}\label{Cl:last}
The map 
\begin{equation}\notag
\prod_{t} P_t\, t \aq \big( \prod_{t} Q_t\, \hat{t}\,\big)\, S
\end{equation} 
given by the assignment 
\[
t\to \hat{t},\;\; \un{s}(t)\to \ov{t}(s),\hbox{ for } s \hbox{ with }s\subseteq t 
\]
is a combinatorial isomorphism.
\end{claim*}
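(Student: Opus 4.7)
The plan is to iterate Lemma~\ref{L:cruc} along a $\preceq$-enumeration $t_1\prec t_2\prec\cdots\prec t_n$ of $T$, peeling off one $t_i$ per step. For $1\le i\le n$ I would introduce the intermediate sequence
\[
U_i \;=\; \Bigl(\prod_{j<i} Q_{t_j}\,\hat{t_j}\Bigr)\,\Bigl(\prod_{s:\, s\preceq t_i}\lfloor\un{s}\rfloor_{t_i}\, s\Bigr)\, t_i\, \vec{b_{t_i}}
\]
and first verify the base identity $\prod_t P_t\,t\cdot\vec{q} = U_1$. This uses $\vec{b_{t_1}} = \prod_{t\succ t_1}P_t\,t\cdot\vec{q}$ together with the observations (both from \eqref{E:ordad}) that $\{s: s\preceq t_1\} = \{s: t_s=t_1\}$ and that $\lfloor\un{s}\rfloor_{t_1} = \lfloor\un{s}\rfloor$ whenever $t_s=t_1$, so the bracket collapses to $P_{t_1}$.

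For the induction step I would construct a combinatorial isomorphism $U_i \to U_{i+1}$ in two moves. First, apply Lemma~\ref{L:cruc} at $t=t_i$ in the enlarged left-context $\prod_{j<i}Q_{t_j}\hat{t_j}$; this is legitimate by Lemma~\ref{L:mapdif} because no entry of the left-context (each being a $\hat{t_j}$ or a $\ov{t_j}(s)$ with $j<i$) contains $t_i$ or any $\un{s}(t_i)$ as an element, so the Lemma~\ref{L:cruc} assignment extends identically past these entries. This yields a combinatorial isomorphism
\[
U_i \;\to\; \Bigl(\prod_{j\le i}Q_{t_j}\hat{t_j}\Bigr)\,\Bigl(\prod_{s:\,s\preceq t_i}\lfloor\un{s}\rfloor_{\succ t_i}\, s\Bigr)\,\vec{b_{t_i}}
\]
given by $t_i\to\hat{t_i}$ and $\un{s}(t_i)\to \ov{t_i}(s)$ for $s\subseteq t_i$. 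Second, expand $\vec{b_{t_i}} = P_{t_{i+1}}\,t_{i+1}\,\vec{b_{t_{i+1}}}$ and invoke the combinatorial identity
\[
\Bigl(\prod_{s\preceq t_i}\lfloor\un{s}\rfloor_{\succ t_i}\,s\Bigr)\, P_{t_{i+1}} \;=\; \prod_{s\preceq t_{i+1}}\lfloor\un{s}\rfloor_{t_{i+1}}\,s,
\]
verified by direct inspection: the $s$-index set splits as $\{s:s\preceq t_{i+1}\}=\{s:s\preceq t_i\}\sqcup\{s:t_s=t_{i+1}\}$ via \eqref{E:ordad}, while the successor relation $t_i\prec\tau\Leftrightarrow t_{i+1}\preceq\tau$ on $T$ collapses both $\lfloor\un{s}\rfloor_{\succ t_i}$ and (via $t_s=t_{i+1}$) $\lfloor\un{s}\rfloor$ to the uniform $\lfloor\un{s}\rfloor_{t_{i+1}}$. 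This rewrites the target as $U_{i+1}$.

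At $i=n$ there is no $\tau\succ t_n$, so $\lfloor\un{s}\rfloor_{\succ t_n}$ is empty and $\vec{b_{t_n}}=\vec{q}$; moreover $\{s:s\preceq t_n\}=S$ since $s\subseteq t_s\preceq t_n$ for every $s\in S$. The final stage of the induction therefore produces precisely $\bigl(\prod_t Q_t\hat t\bigr)\,S\,\vec{q}$, and composition of the step-by-step assignments gives the claimed overall assignment $t\to\hat t$, $\un{s}(t)\to\ov{t}(s)$ for $s\subseteq t$. The main obstacle is the bookkeeping in the rearrangement identity of the second move; this is mechanical but relies essentially on the strengthened order property \eqref{E:ordad} and on $T$ being enumerated in $\preceq$-order so that consecutive $t_i,t_{i+1}$ are predecessor/successor in $T$.
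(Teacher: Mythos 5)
Your proposal is correct and is essentially the paper's own argument: an upward induction along a $\preceq$-enumeration of $T$, peeling off one element per step by applying Lemma~\ref{L:cruc} (extended through the already-processed prefix via Lemma~\ref{L:mapdif}) together with the same rearrangement identity resting on \eqref{E:ordad} and \eqref{E:flfl}. The only difference is bookkeeping: your invariant $U_i$ is the paper's inductive statement \eqref{E:prov} with the rewriting \eqref{E:afbr} already carried out, i.e.\ the same induction shifted by half a step.
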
 

\noindent{\em Proof of Claim.}
For notational convenience, we add a new element $0$ to $S\cup T$ and require that $0$ is $\prec$-smaller than each element 
of $S\cup T$.  We claim that for all $t'\in \{ 0\}\cup T$, we have a combinatorial isomorphism 
\begin{equation}\label{E:prov}
\prod_{t} P_t\, t \aq
\big( \prod_{t: t\preceq t'} Q_t\, \hat{t}\,\big) 
\big( \prod_{s:s\preceq t'} \lfloor \un{s}\rfloor_{\succ t'}\, s\big) \big(\prod_{t: t'\prec t} P_t\, t\big)
\end{equation} 
via the assignment 
\[
\begin{split}
&t\to \hat{t},\; \hbox{ for }t\preceq t',\\
&\un{s}(t)\to \ov{t}(s),\; \hbox{ for }s\subseteq t\preceq t'.
\end{split}
\]

Taking $t'$ to be the largest element of $T$, formula \eqref{E:prov} becomes the formula in the claim. Indeed, for this $t'$, 
the sequence $\lfloor \un{s}\rfloor_{\succ t'}$ is empty as is the sequence $\prod_{t: t'\prec t} P_t\, t$. 
Further, obviously $t\preceq t'$ for all $t\in T$ but also $s\preceq t'$ for each $s\in S$ by (V). Thus, the right hand side of \eqref{E:prov} 
is equal to 
\[
\big( \prod_{t} Q_t\, \hat{t}\,\big) \big( \prod_{s} s\big) = \big( \prod_{t} Q_t\, \hat{t}\,\big)\, S. 
\]

Now, we prove \eqref{E:prov} by induction on $t'\in \{ 0\} \cup T$ starting with $t'=0$ and going upwards with respect to the order 
$\preceq$. For $t'=0$, formula \eqref{E:prov} is a tautology since the sequences in the first two pairs of parentheses on the right hand side are empty. Assume it holds for some $t'\in \{ 0\}\cup T$ that is not the largest element of $T$. 
Let $t''$ be the immediate successor of $t'$ in $\{ 0\}\cup T$. We prove \eqref{E:prov} for $t''$.  We first observe that 
\begin{equation}\label{E:trans} 
\big( \prod_{s:s\preceq t'} \lfloor \un{s}\rfloor_{\succ t'}\, s\big) \big(\prod_{t: t'\prec t} P_t\, t\big) 
= \big( \prod_{s:s\preceq t'} \lfloor \un{s}\rfloor_{\succ t'}\, s\big) \big( P_{t''} t''\big)\big(\prod_{t: t''\prec t} P_t\, t\big).
\end{equation} 
We continue with the first two sequences on the right hand side of \eqref{E:trans}. We observe that \eqref{E:flfl} and the definition of $t_s$ give 
\begin{equation}\label{E:lasl} 
 \lfloor \un{s}\rfloor= \lfloor \un{s}\rfloor_{t''}\hbox{ and } s\preceq t'',\;\hbox{ if }t_s=t''.
\end{equation}
and \eqref{E:ordad} gives 
\begin{equation}\label{E:lask}
t_s=t'', \hbox{ if }t'\prec s\preceq t''.
\end{equation}
We use \eqref{E:lasl} and \eqref{E:lask} to get the second equality in \eqref{E:trans2} below
\begin{equation}\label{E:trans2} 
\begin{split}
\big( \prod_{s:s\preceq t'} \lfloor \un{s}\rfloor_{\succ t'}\, s\big) \big( P_{t''} t''\big)
&= \big( \prod_{s:s\prec t''} \lfloor \un{s}\rfloor_{t''}\, s\big) \Big( \big( \prod_{s: t_s=t''} \lfloor \un{s}\rfloor\, s\big)\, t''\Big)\\
&=  \big( \prod_{s:s\preceq t''} \lfloor \un{s}\rfloor_{t''}\, s\big) \, t''.
\end{split}
\end{equation} 
Putting together \eqref{E:trans} and \eqref{E:trans2} gives 
\begin{equation}\label{E:afbr}
\big( \prod_{s:s\preceq t'} \lfloor \un{s}\rfloor_{\succ t'}\, s\big) \big(\prod_{t: t'\prec t} P_t\, t\big) 
=  \big( \prod_{s:s\preceq t''} \lfloor \un{s}\rfloor_{t''}\, s\big) \, t''\,
\big(\prod_{t: t''\prec t} P_t\, t\big).
\end{equation}

On the other hand, Lemma~\ref{L:cruc} (with $t=t''$) gives a combinatorial isomorphism 
\[
\big( \prod_{s:s\preceq t''} \lfloor \un{s}\rfloor_{ t''}\, s\big)\, t''\, \big(\prod_{t: t''\prec t} P_t\, t\big) \aq Q_{t''} \, \widehat{t''}\,
\big( \prod_{s: s\preceq t''}  \lfloor \un{s}\rfloor_{\succ t''} \, s\big)\big(\prod_{t: t''\prec t} P_t\, t\big), 
\]
which, after taking into account \eqref{E:afbr}, yields a combinatorial isomorphism 
\[
\big( \prod_{s:s\preceq t'} \lfloor \un{s}\rfloor_{\succ t'}\, s\big) \big(\prod_{t: t'\prec t} P_t\, t\big) \aq Q_{t''} \, \widehat{t''}\,
\big( \prod_{s: s\preceq t''} \lfloor \un{s}\rfloor_{\succ t''} \, s\big)\big(\prod_{t: t''\prec t} P_t\, t\big)
\]
via the assignment 
\begin{equation}\label{E:aslasl} 
t''\to \widehat{t''}\;\hbox{ and }\;\un{s}(t'')\to \ov{t''}(s),\hbox{ for $s$ with $s\subseteq t''$}. 
\end{equation} 
The above combinatorial isomorphism gives the combinatorial isomorphism 
\[
\begin{split}
\big( \prod_{t: t\preceq t'} Q_t\, \hat{t}\,\big) &\big( \prod_{s:s\preceq t'} \lfloor \un{s}\rfloor_{\succ t'}\, s\big) \big(\prod_{t: t'\prec t} P_t\, t\big)\\ 
&\aq \big( \prod_{t: t\preceq t''} Q_t\, \hat{t}\,\big)\, \big( \prod_{s: s\preceq t''} \lfloor \un{s}\rfloor_{\succ t''} \, s\big)\big(\prod_{t: t''\prec t} P_t\, t\big)
\end{split} 
\]
implemented by the assignment \eqref{E:aslasl}. Precomposing this map with the combinatorial isomorphism given by \eqref{E:prov} for $t'$, which exists by our inductive assumption, 
we get \eqref{E:prov} for $t''$ via the assignment 
\[
\begin{split}
&t\to \hat{t},\; \hbox{ for }t\preceq t'',\\
&\un{s}(t)\to \ov{t}(s),\; \hbox{ for }s\subseteq t\preceq t'', 
\end{split}
\]
and the claim is proved. 

\smallskip

In light of Claim, our goal of checking that the map in \eqref{E:spa} is a pure weld-division map will be achieved if we show 
that the map 
\begin{equation}\label{E:mapq}
\big( \prod_{t} Q_t\, \hat{t}\,\big)\, S \aq S
\end{equation} 
given by the assignment 
\begin{equation}\label{E:lastassig}
\hat{t}\to p\;\hbox{ and }\;\ov{t}(s) \to s,\; \hbox{ for }s\subseteq t, 
\end{equation} 
is a composition of weld maps. For this conclusion, note that writing $Q_t$ explicitly, we get 
\begin{equation}\label{E:qata}
\big( \prod_{t} Q_t\, \hat{t}\,\big)\, S  \eq \Big(\prod_{t}  \big(\prod^*_{s: s\subseteq t} \ov{t}(s)\big)\,\hat{t}\Big) S.
\end{equation} 

We work with the right hand side of equality \eqref{E:qata} above. Let $t'\in T$.  We show that the map 
\begin{equation}\label{E:plco}
f_{t'}\colon \Big(\prod_{t'\preceq t}  \Big( \big(\prod^*_{s: s\subseteq t} \ov{t}(s)\big)\,\hat{t}\Big)\Big) S\aq
\Big(\prod_{t'\prec t}  \Big(\big(\prod^*_{s: s\subseteq t} \ov{t}(s)\big)\,\hat{t}\Big)\Big) S
\end{equation} 
given by the assignment 
\begin{equation}\label{E:plcoa}
 \hat{t'}\to p,\;\; \ov{t'}(s)\to s \hbox{ for }s\subseteq t'
\end{equation} 
is a composition of weld maps. Represent the domain sequence in \eqref{E:plco} as 
\[
\Big(\big(\prod^*_{s: s\subseteq t'} \ov{t'}(s)\big)\,\widehat{t'}\Big)
\Big(\prod_{t'\prec t}  \Big(\big(\prod^*_{s: s\subseteq t} \ov{t}(s)\big)\,\hat{t}\Big)\Big)S
\]
Now, notice that $s\in \ov{t'}(s)$ and using maps of the form $\pi_{s, \ov{t'}(s)}$, map to $s$ each element of the form $\ov{t'}(s)$ in the sequence 
\[
\prod^*_{s: s\subseteq t'} \ov{t'}(s)
\]
starting with the $\preceq$-largest $s$ with $s\subseteq t'$ and ending with the $\preceq$-smallest $s$ of this sort. This produces a composition of weld maps  
\[
\Big(\big(\prod^*_{s: s\subseteq t'} \ov{t'}(s)\big)\,\widehat{t'}\Big)
\Big(\prod_{t'\prec t}  \Big(\big(\prod^*_{s: s\subseteq t} \ov{t}(s)\big)\,\hat{t}\Big)\Big)S\aq 
\widehat{t'}
\Big(\prod_{t'\prec t}  \Big(\big(\prod^*_{s: s\subseteq t} \ov{t}(s)\big)\,\hat{t}\Big)\Big)S.
\]
Then compose this map with the weld map of the form $\pi_{p, \widehat{t'}}$ (after noticing that $p\in \widehat{t'}$)
\[
 \widehat{t'}\Big(\prod_{t'\prec t}  \Big(\big(\prod^*_{s: s\subseteq t} \ov{t}(s)\big)\,\hat{t}\Big)\Big)S \aq
\Big(\prod_{t'\prec t}  \Big(\big(\prod^*_{s: s\subseteq t} \ov{t}(s)\big)\,\hat{t}\Big)\Big)S 
\]
mapping $\widehat{t'}$ to $p$. This composition is given by the assignment \eqref{E:plcoa} and, therefore, is equal to the map $f_{t'}$ as in \eqref{E:plco}.

Let now $t_0\prec \cdots \prec t_n$ enumerate $T$ in a non-decreasing manner. In light of the argument above, the map 
\[
f_{t_n}\circ \cdots \circ f_{t_0}
\]
is a composition of weld maps and is given by the assignment \eqref{E:lastassig}, so it is equal to the map \eqref{E:mapq}. So, this latter map 
is a composition of weld maps, and Main Lemma is proved. 
\end{proof}

\newpage 

\part{Proofs of the main theorems from amalgamation for $\mathcal D$}\label{P:prfd}

In this part, we work in the setup of Section~\ref{S:divset}. As explained in that section, this assumption causes no loss of generality. 
However, we will be assuming that all divided complexes are finite, in fact, we assume that the set $\rm Ur$ is finite. This assumption can by bypassed with some additional work in Section~\ref{S:proamp}, but it becomes important in the following sections.

\section{The projective amalgamation theorem}\label{S:proamp}

The goal of this section is to prove the projective amalgamation theorem, Theorem~\ref{T:frai2}. 

We recall the definitions involved in the amalgamation theorem. We fix a complex $\mathbf A$. We assume that $\mathbf A$ is finite and grounded. 
Then $\langle {\mathbf A}\rangle$ is the family of all complexes of obtained from $\mathbf A$ by iterative division. Without loss of generality, we will assume that ${\rm Vr}({\mathbf A})= {\rm Ur}$.

For $A$ in $\langle {\mathbf A}\rangle$, $s\in {\rm Fin}^+$ that is not a vertex of $A$, and $p\in s$, recall the weld map 
\begin{equation}\notag
\pi^A_{p,s} \colon  s A \to A;
\end{equation} 
see Appendix~\ref{Su:weldpro}  for the definition and basic properties. 
{\bf Weld-division maps} among complexes in $\langle {\mathbf A}\rangle$ is the smallest class of maps that 
\begin{enumerate}
\item[---] contains all weld maps $\pi^A_{p,s}$ with $p\in s\in {\rm Fin}^+$, $s\not\in {\rm Vr}(A)$, and $A\in \langle{\mathbf A}\rangle$,  

\item[---] contains all grounded isomorphisms; 

\item[---] is closed under division of simplicial maps, and  

\item[---] is closed under composition.
\end{enumerate} 
We use ${\mathcal D}({\mathbf A})$ to denote the category whose objects are complexes in $\langle {\mathbf A}\rangle$ and whose morphisms 
are weld-division maps among them.

Let $A\in \langle {\mathbf A}\rangle$. Let $S$ be an additive family of faces of $A$ and let $\iota\colon S\to {\rm Vr}(A)$ be such that $\iota(s)\in s$, for each $s\in S$. Recall the map 
\[
\pi^A_\iota\colon SA\to A;
\]
see Appendix~\ref{Su:weldpro}  for the definition and basic properties. We say that $\iota$ is based on $S$. 
We say that $S$ is {\bf upward closed in $A$} if for each $s\in S$ and $t\in A$ with $s\subseteq t$, we have $t\in S$. Note that each upward closed family is additive. The class of {\bf neatly composed weld maps} among complexes in $\langle {\mathbf A}\rangle$ 
is the smallest class of simplicial maps among complexes in $\langle {\mathbf A}\rangle$ that 
\begin{enumerate} 
\item[---] contains all maps of the form $\pi_\iota$ with $\iota$ based on an upward closed family 

\item[---] is closed under taking compositions. 
\end{enumerate}
The class of neatly composed weld maps will be denoted by $\mathcal{N}({\mathbf A})$. 

We observe that 
\[
{\mathcal N}({\mathbf A}) \subseteq {\mathcal D}({\mathbf A}).
\]

Now, we have the following amalgamation theorem.

\begin{theorem}\label{T:frai2} 
For $f',\, g'\in {\mathcal D}({\mathbf A})$ with the same codomain, there exist $f \in \mathcal{N}({\mathbf A})$ and $g\in {\mathcal D}({\mathbf A})$ 
such that 
\begin{equation}\notag 
f'\circ f = g'\circ g.
\end{equation} 
\end{theorem}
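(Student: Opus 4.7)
The plan is to derive Theorem~\ref{T:frai2} from the already proved sequence-level amalgamation, Theorem~\ref{T:fraiab}, via the dictionary between divided complexes and finite sequences of sets developed in Part~\ref{P:stfra}. The entire substantive content has been placed in Theorem~\ref{T:fraiab}; what remains is a translation argument.

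First, I fix a sequence $\vec{u}$ with $A = \vec{u}{\mathbf A}$, where $A$ is the common codomain of $f'$ and $g'$. By induction on the inductive construction of $f'$ as a weld-division map on complexes---base cases being weld maps $\pi^{A'}_{p,s}$ (lifted to the sequence-level weld maps $\pi^{\vec{a}}_{p,s}$ of Section~\ref{Su:wene}; cf.\ Lemma~\ref{L:welgr2}) and grounded isomorphisms (lifted to ambient-category isomorphisms; cf.\ Lemma~\ref{L:simsp}), and inductive steps handled by closure under composition and the compatibility of division on the two sides (Lemma~\ref{L:divde} and Proposition~\ref{P:simmad})---I produce a morphism $\tilde{f'}\colon \vec{t}\to \vec{u}$ in $\mathcal{D}$ such that $\vec{t}{\mathbf A}$ is the domain of $f'$ and the restriction of $\tilde{f'}$ to ${\rm Vr}(\vec{t}{\mathbf A})$ equals $f'$. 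The same construction applied to $g'$ produces $\tilde{g'}\colon \vec{s}\to \vec{u}$ in $\mathcal{D}$.

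Next, I apply Theorem~\ref{T:fraiab} to the pair $(\tilde{g'}, \tilde{f'})$, fed in reversed order so that the resulting map from $\mathcal{N}$ composes with $\tilde{f'}$, matching the conclusion required by Theorem~\ref{T:frai2}: there exist $\tilde{g}\in \mathcal{D}$ and $\tilde{f}\in \mathcal{N}$ with a common domain $\vec{v}$ satisfying $\tilde{f'}\circ \tilde{f} = \tilde{g'}\circ \tilde{g}$. Set $D = \vec{v}{\mathbf A}\in \langle {\mathbf A}\rangle$ and use Proposition~\ref{P:simmad} to restrict $\tilde{f}$ and $\tilde{g}$ to ${\rm Vr}(D)$, obtaining grounded simplicial maps $f\colon D\to \vec{t}{\mathbf A}$ and $g\colon D\to \vec{s}{\mathbf A}$. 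A parallel induction---this time projecting from sequences down to complexes---shows that vertex-set restriction sends weld maps to weld maps (Lemma~\ref{L:welgr2}), divisions to divisions (Lemma~\ref{L:divde}), compositions to compositions, and, crucially, neat weld maps $\pi^{\vec{q}}_\iota$ with $\iota$ based on an upward closed family of faces of $\vec{q}$ to maps $\pi^{\vec{q}{\mathbf A}}_\iota$ of the same form (using Proposition~\ref{P:conr} to transfer the upward closure from $\vec{q}$ to $\vec{q}{\mathbf A}$). Hence $f\in \mathcal{N}({\mathbf A})$ and $g\in \mathcal{D}({\mathbf A})$, and the required equation $f'\circ f = g'\circ g$ follows by restricting $\tilde{f'}\circ \tilde{f} = \tilde{g'}\circ \tilde{g}$ to ${\rm Vr}(D)$.

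The main obstacle is notational rather than mathematical: one must verify that the two-level translation between complexes and sequences is faithful with respect to the four operations generating the morphism classes---weld maps, grounded isomorphisms, compositions, divisions---and with respect to the upward-closure condition defining $\mathcal{N}$. All of these correspondences are routine consequences of the results of Part~\ref{P:stfra}, in particular Propositions~\ref{P:conr}, \ref{P:verd}, \ref{P:simmad} and Lemmas~\ref{L:dsig}, \ref{L:simsp}, \ref{L:welgr2}, \ref{L:divde}, and require no new combinatorial input beyond what has already been packaged in Theorem~\ref{T:fraiab}.
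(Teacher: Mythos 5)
Your overall architecture (lift $f'$ and $g'$ to the sequence category, apply Theorem~\ref{T:fraiab}, restrict back to complexes) is the same as the paper's, and your restriction half essentially reproduces the paper's properties \eqref{E:B1}--\eqref{E:B3}. The gap is in the lifting step. You propose to build $\tilde{f'}$ by induction on a presentation of $f'$ as generated by weld maps and grounded isomorphisms under composition and division, but this induction does not go through as described, for two concrete reasons. First, nothing in Part~\ref{P:stfra} gives you that a grounded isomorphism $h\colon\vec{a}\,{\mathbf A}\to\vec{b}\,{\mathbf A}$ extends to an isomorphism $\vec{a}\to\vec{b}$ in the ambient sequence category: the sequences carry faces whose support is not a face of ${\mathbf A}$, and $h$ carries no information about them (Lemma~\ref{L:simsp} characterizes groundedness; it does not produce such an extension, and whether it always exists is close to the rigidity question the paper leaves open). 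Since grounded isomorphisms are among the generators of ${\mathcal D}({\mathbf A})$ as defined in Part~\ref{P:prfd}, you cannot avoid them. Second, there is a coherence problem: a complex in $\langle{\mathbf A}\rangle$ does not determine a sequence presenting it, so different branches of your induction will in general produce lifts over inequivalent sequences having the same evaluation at ${\mathbf A}$ (for instance $s\,t\,{\mathbf A}=t\,s\,{\mathbf A}$ by Corollary~\ref{C:comg} when $s\cup t\notin{\mathbf A}$, while $s\,t$ and $t\,s$ need not be combinatorially equivalent); then the lifted composite $\tilde{h}_2\circ\tilde{h}_1$ is not even defined, and bridging the two presentations requires exactly the kind of sequence-level isomorphism whose existence you have not established. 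A related, smaller omission is the normalization ${\rm sp}(\vec{t}\,),{\rm sp}(\vec{s}\,)\subseteq{\mathbf A}$, which the paper needs before lifting.

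The paper closes precisely this gap with a device you do not use: it appends a nondecreasing enumeration $\vec{{\mathbf A}^c}$ of the non-faces of ${\mathbf A}$, notes $\vec{{\mathbf A}^c}{\mathbf A}={\mathbf A}$, and uses Lemma~\ref{L:facar} to get the uniform description ${\rm vr}(\vec{t}\,\vec{{\mathbf A}^c})={\rm Vr}(\vec{t}\,{\mathbf A})\cup{\mathbf A}^c$ of vertices and faces. This makes the lift $f\mapsto f^{\mathbf A}$ (extend by the identity on ${\mathbf A}^c$) canonical and functorial; properties \eqref{E:A1}--\eqref{E:A2} then show it sends grounded isomorphisms, weld maps, divisions and compositions to the corresponding sequence-level notions, so $(f')^{\mathbf A},(g')^{\mathbf A}\in{\mathcal D}$ with no induction over presentations and no choices to coordinate, and $((f')^{\mathbf A})_{\mathbf A}=f'$ restores the original maps after amalgamating. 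If you replace your inductive lift by this completion (or else prove an extension theorem for grounded isomorphisms together with a coherence argument for presentations, which is substantially harder and not available in the paper), the remainder of your argument coincides with the paper's proof of Theorem~\ref{T:frai2}.
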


The proof amounts to translating Theorem~\ref{T:frai} to the above statement. To implement the translation, we need some additional notions and some preliminary results. 
For a grounded complex $A$ with ${\rm Vr}(A)= {\rm Ur}$, let 
\[
A^c =\{ s\in {\rm Fin}^+\mid s\subseteq {\rm Ur},\, s\not\in A\},
\]
and let 
\[
\vec{A^c}
\]
be a non-decreasing enumeration of $A^c$. The family $A^c$ is an additive family of faces of the empty sequence, so it is not material which non-decreasing enumeration of $A^c$ we choose. The introduction of the sequence $\vec{A^c}$ is needed for  the definition of the extension operation $f\to f^A$ below. Observe that 
\begin{equation}\label{E:acaa} 
\vec{A^c}A= A. 
\end{equation}
For a sequence $\vec{t}= t_1\cdots t_n$, we write 
\[
{\rm sp}(\vec{t}\,)= \{ {\rm sp}(t_1), \dots , {\rm sp}(t_n)\}. 
\]

\begin{lemma}\label{L:facar} 
Let $A$ be a grounded complex. Let $\vec{t}$ be a sequence. 
\begin{enumerate}  
\item[(i)] Then $x$ is a face of $\vec{t}\,A$ if and only if $x$ is a face of $\vec{t}\, {\vec A^c}$ and ${\rm sp}(x)\in A$. In particular,  
\[
{\rm Vr}\big( \vec{t}\,A\big) \subseteq {\rm vr}\big( \vec{t}\,\vec{A^c}\big). 
\]

\item[(ii)] Assume that ${\rm sp}(\vec{t}\,)\subseteq A$. Then $x$ is a face of $\vec{t}\,\vec{A^c}$ if and only if 
\[
x= u\cup \{ v_1, \cdots v_l\},
\]
where $v_1, \cdots, v_l\in A^c$ are such that $ v_1\subseteq \cdots \subseteq v_l$ and $u=\emptyset$ or 
$u$ is a face of $\vec{t}A$ and ${\rm sp}(u) \subseteq v_1$.
In particular, we have 
\[
{\rm vr}\big( \vec{t}\,\vec{A^c}\big) = {\rm Vr}\big( \vec{t}\,A\big)\cup A^c. 
\]
\end{enumerate} 
\end{lemma}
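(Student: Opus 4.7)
For (i), I would combine Proposition~\ref{P:conr} with the already-noted identity $\vec{A^c}A = A$ (equation~\eqref{E:acaa}), which holds because no element of $A^c$ is a face of $A$, so dividing $A$ by any $v \in A^c$ leaves $A$ unchanged. Hence $\vec{t}\vec{A^c}A = \vec{t}(\vec{A^c}A) = \vec{t}A$, and applying Proposition~\ref{P:conr} to the sequence $\vec{t}\vec{A^c}$ and grounded complex $A$ yields: $x$ is a face of $\vec{t}\vec{A^c}A = \vec{t}A$ iff $x$ is a face of $\vec{t}\vec{A^c}$ as a sequence and ${\rm sp}(x) \in A$, which is exactly (i). The ``in particular'' inclusion ${\rm Vr}(\vec{t}A) \subseteq {\rm vr}(\vec{t}\vec{A^c})$ follows by specializing to singleton faces.

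For (ii), I would argue by induction on $|A^c|$. The base case $A^c = \emptyset$ corresponds to $A$ being the maximal grounded complex; then $\vec{A^c}$ is empty, the decomposition $x = u \cup \{v_1,\ldots,v_l\}$ forces $l = 0$, and the claim reduces to part (i). For the inductive step, let $w_0$ be the first element of $\vec{A^c}$; by the non-decreasing property, $w_0$ is minimal in $A^c$ with respect to $\subseteq$, so all proper non-empty subsets of $w_0$ lie in $A$, and $A' := A \cup \{w_0\}$ is a simplicial complex with $(A')^c = A^c \setminus \{w_0\}$. Deleting $w_0$ from $\vec{A^c}$ yields a non-decreasing enumeration of $(A')^c$, and the inductive hypothesis characterizes faces of $\vec{t}\vec{A'^c}$ in terms of $A'$ (using ${\rm sp}(\vec{t}) \subseteq A \subseteq A'$).

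I would then relate faces of $\vec{t}\vec{A^c} = \vec{t}\,w_0\,\vec{A'^c}$ to faces of $\vec{t}\vec{A'^c}$ by unfolding the recursive definition of face in a sequence with a case split on whether $w_0 \in x$. When $w_0 \in x$, the recursion forces $(x \setminus \{w_0\}) \cup w_0$ to be a face of the subsequence after $w_0$, namely $\vec{A'^c}$ together with $\vec{t}$'s entries preceding it; the inductive hypothesis together with the requirement that no face preceding $w_0$ is contained in $x$ yields the decomposition of $x$ with $v_1 = w_0$ and ${\rm sp}(u) \subseteq w_0$. When $w_0 \notin x$, the recursion forces $w_0 \not\subseteq x$, so $x$ is a face of $\vec{t}\vec{A'^c}$; the inductive hypothesis gives a decomposition $x = u \cup V$ with $u$ a face of $\vec{t}A'$, and the constraint $w_0 \not\subseteq x$ together with $A' = A \cup \{w_0\}$ and the structure of ${\rm sp}(u)$ (controlled via Lemma~\ref{L:subf}) upgrades $u$ to a face of $\vec{t}A$. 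The reverse direction $(\Leftarrow)$ is a direct verification via the recursive definition of face, peeling off $v_l, v_{l-1}, \ldots, v_1$ in turn and invoking part (i) at the base.

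The main obstacle will be the bookkeeping in the forward direction of the inductive step of (ii). Specifically, when $w_0 \in x$, verifying that the constraint ${\rm sp}(u) \subseteq v_1 = w_0$ is compatible with the inductive decomposition of $(x \setminus \{w_0\}) \cup w_0$ (and in particular that the inductively obtained $u'$ component coincides with $u$ and satisfies the stronger support bound); and, when $w_0 \notin x$, showing that the face $u$ of $\vec{t}A'$ produced by the inductive hypothesis actually lies in $\vec{t}A$, i.e., that ${\rm sp}(u) \in A$ rather than merely ${\rm sp}(u) \in A' = A \cup \{w_0\}$. Ruling out ${\rm sp}(u) = w_0$ in the latter case uses the constraint $w_0 \not\subseteq x$ from the face recursion together with the fact, provided by Lemma~\ref{L:subf}, that elements of $u$ are either in ${\rm Ur}$ or entries of $\vec{t}$ with support already in $A$.
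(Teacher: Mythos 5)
Your part (i) is correct and in fact takes a slightly cleaner route than the paper: rather than reducing, via Proposition~\ref{P:conr}, to the equivalence between ``face of $\vec{t}$'' and ``face of $\vec{t}\,\vec{A^c}$'' and proving that by induction on the length of $\vec{t}$, you apply Proposition~\ref{P:conr} directly to the sequence $\vec{t}\,\vec{A^c}$ and use $\vec{A^c}A=A$ from \eqref{E:acaa}; since $(\vec{t}\,\vec{A^c})A=\vec{t}(\vec{A^c}A)=\vec{t}A$ by the definition \eqref{E:itsi} of iterated division, this gives (i) with no induction at all.

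Part (ii) is where there is a genuine gap. The paper proves (ii) by induction on the length of $\vec{t}$, peeling off the leftmost entry, which is exactly how the recursive definition of ``face of a sequence'' unwinds. You instead induct on $\#(A^c)$ and peel off the minimal element $w_0$, which sits in the middle of the sequence $\vec{t}\,w_0\,\vec{(A')^c}$: the division by $w_0$ is performed \emph{before} the divisions by $\vec{t}$, so a face of the full sequence typically contains new vertices created by the $\vec{t}$-divisions together with $w_0$, and the witness index $i$ in the definition of face then lies inside $\vec{t}$, not at the position of $w_0$. Consequently your key step --- ``when $w_0\in x$, the recursion forces $(x\setminus\{w_0\})\cup w_0$ to be a face of the subsequence after $w_0$'' --- fails as stated. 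Concretely, take ${\rm Ur}=\{a,b,c\}$, let $A$ consist of all proper nonempty subsets of ${\rm Ur}$, so $A^c=\{w_0\}$ with $w_0=\{a,b,c\}$, and let $\vec{t}$ be the one-term sequence $\{a,b\}$. Then $x=\bigl\{a,\{a,b\},w_0\bigr\}$ is a face of $\vec{t}\,w_0$ (the witness entry is $\{a,b\}$), whereas $(x\setminus\{w_0\})\cup w_0=\bigl\{a,b,c,\{a,b\}\bigr\}$ is not a face of the subsequence after $w_0$ (which is empty here). Likewise, in the case $w_0\notin x$ the implication ``$x$ is a face of $\vec{t}\,\vec{(A')^c}$'' is not a one-step consequence of the definition; and you cannot fix this by moving $w_0$ to the front via combinatorial equivalence, since divisions by $w_0$ and by entries of $\vec{t}$ do not commute (in the example, $w_0$ is a vertex of $\{a,b\}\,w_0\,U$ but not of $w_0\,\{a,b\}\,U$, where $U$ is the full complex on ${\rm Ur}$). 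In both cases, relating faces of $\vec{t}\,w_0\,\vec{(A')^c}$ to faces of $\vec{t}\,\vec{(A')^c}$ requires propagating the information through all of $\vec{t}$ --- which is precisely the induction on the length of $\vec{t}$ that the paper carries out and that your sketch does not supply.

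A secondary problem is your proposed resolution of the ``main obstacle'': ruling out ${\rm sp}(u)=w_0$ from $w_0\not\subseteq x$ alone does not work, since e.g.\ $u=\bigl\{c,\{a,b\}\bigr\}$ has ${\rm sp}(u)=w_0$ without $w_0\subseteq x$; the reason such $u$ cannot occur inside a face of $\vec{t}\,w_0\,\vec{(A')^c}$ is again only visible after unwinding the face recursion through $\vec{t}$ (one finds that $\{a,b\}\cup\{c\}=w_0$ would have to be a face of $w_0\,\vec{(A')^c}$, which Lemma~\ref{L:nonfa} forbids). The simplest repair is to drop the induction on $\#(A^c)$ and argue, as the paper does, by induction on the length of $\vec{t}$, prepending one entry $t$ with ${\rm sp}(t)\in A$ at a time and splitting on whether $t\in x$.
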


\begin{proof} (i) By Proposition~\ref{P:conr}, it suffices to show that $x$ is a face of $\vec{t}$ and ${\rm sp}(x)\in A$ if and only if $x$ is a face of $\vec{t}\, {\vec A^c}$ and ${\rm sp}(x)\in A$. The proof of this equivalence is done by an easy induction of the length of $\vec{t}$ and is left to the reader. 

(ii) Again, this proof is done by induction on the length of $\vec{t}$. In what follows, we use the definition of faces of sequence and Lemma~\ref{L:conr}.

Assume $\vec{t}$ is the empty sequence. Obviously, each face of $\vec{A^c}$ is of the form $u\cup X$, where $u\subseteq {\rm Ur}$ and $X\subseteq A^c$. Lemma~\ref{L:linear2} immediately implies the equivalence in (ii) for the empty sequence $\vec{t}$. 

We now show the inductive steps in the proofs of the two implications of the equivalence in (ii). 

We start with implication $\Rightarrow$. 
Assume the implication holds for $\vec{t}$ with ${\rm sp}(\vec{t}\,)\subseteq A$. We prove it for $t\,\vec{t}$ for any $t$ with ${\rm sp}(t)\in A$. Let $x$ be a face of $t\,\vec{t}\vec{A^c}$. 
We suppose that $t$ is a face of $\vec{t}\vec{A^c}$ since otherwise $t\,\vec{t}\vec{A^c}= \vec{t}\vec{A^c}$ and there is nothing to prove. 

Suppose first that $x$ is a face of $\vec{t}\vec{A^c}$. Then, by our inductive assumption, $x= u\cup \{ v_1, \cdots v_l\}$ with $u$ and $v_1, \dots, v_l$ as in (ii). Since $x$ is a face of $t\,\vec{t}\vec{A^c}$, we see that $t\not \subseteq x$, so $t\not\subseteq u$. It follows that $u=\emptyset$ or $u$ is a face of $t\,\vec{t}A$. Thus, we get the condition in (ii) for $t\,\vec{t}$. 

Suppose now, that $x$ is not a face of $\vec{t}\vec{A^c}$. Then $t\in x$ and $\big( x\setminus \{ t\}\big)\cup t$ is a face of $\vec{t}\vec{A^c}$. By our inductive assumption we get 
\[
\big( x\setminus \{ t\}\big)\cup t =u\cup \{ v_1, \dots, v_l\}
\]
with $u$ and $v_1, \dots, v_l$ satisfying the conditions in (ii). The condition ${\rm sp}(t)\in A$ implies that 
\[
t\cap \{ v_1, \dots, v_l\}=\emptyset,
\]
so, 
\begin{equation}\label{E:ulala} 
t\subseteq u\;\hbox{ and }\; x\setminus \{ t\} = (u\setminus t)\cup \{ v_1, \dots, v_l\}.
\end{equation}
Note that, in particular, $u\not=\emptyset$, so $u$ is a face of $\vec{t}A$. Using the second part of \eqref{E:ulala}, we see that 
\[
x = \big( (u\setminus t)\cup \{ t\}\big) \cup \{ v_1, \dots, v_l\}.
\]
We claim that this representation of $x$ certifies that $x$ has the properties in (ii), that is, we need to check that $(u\setminus t)\cup \{ t\}$ is a face of $t\,\vec{t} A$ and that 
${\rm sp}\big( (u\setminus t)\cup \{ t\}\big) \subseteq v_1$.
By the first part of \eqref{E:ulala}, $(u\setminus t)\cup t =u$ and, by the choice of $u$, $u$ is a face of $\vec{t}A$. Additionally, 
by Proposition~\ref{P:conr}, $t$ is a face of $\vec{t}A$ since $t$ is a face of $\vec{t}\vec{A^c}$, $\vec{t}\vec{A^c}A= \vec{t}A$, by \eqref{E:acaa}, and ${\rm sp}(t)\in A$. 
Thus, $(u\setminus t)\cup \{ t\}$ is a face of $t\,\vec{t} A$. Further, by the first part of \eqref{E:ulala}, the choice of $u$, and the properties of $\rm sp$ spelled out in \eqref{E:sppr}, we have 
\[
{\rm sp}\big( (u\setminus t)\cup \{ t\}\big) = {\rm sp}(u)\subseteq v_1.
\]
The implication $\Rightarrow$ is proved for $t\,\vec{t}$. 

The proof of the inductive step of the implication $\Leftarrow$ is similar. Again we assume it holds for $\vec{t}$ and we prove it for $t\,\vec{t}$ assuming that $t$ is a face of $\vec{t}A$. 
So suppose that 
\[
x= u\cup \{ v_1, \dots, v_l\}.
\]
and $u$ and $v_1, \dots, v_l$ have the properties from (ii). We can assume that $u$ is a face of $t\,\vec{t}A$, as the case $u=\emptyset$ is obvious. We now split the argument into two cases: $u$ is a face of $\vec{t}A$ and $u$ is not a face of $\vec{t}A$. In the first case, we note that $t\not\subseteq u$ and we apply our inductive assumption, from which we get that $x$ is a face of $\vec{t}\vec{A^c}$. Note that $t\not\subseteq x$ as $t\not\subseteq u$ and $t\cap \{ v_1, \dots, v_l\}=\emptyset$ by our assumption that ${\rm sp}(t)\in A$. It follows that 
$x$ is a face of $t\,\vec{t}\vec{A^c}$, as required. In the second case, we have $t\in u$ and $\big(u\setminus \{ t\}\big)\cup t$ is a face of $\vec{t}A$. By $t\in u$, we get 
\[
{\rm sp}\big( (u\setminus\{ t\})\cup t\big) = {\rm sp}(u)\subseteq v_1,
\]
and, by our inductive assumption, we see that 
\begin{equation}\label{E:ususa} 
\big( (u\setminus\{ t\})\cup t\big) \cup \{ v_1, \dots, v_l\}\hbox{ is a face of $\vec{t}\vec{A^c}$}. 
\end{equation}
Now, note that, by Proposition~\ref{P:conr}, $t$ is a face of $\vec{t}\vec{A^c}$ since $t$ is a face of $\vec{t}\vec{A^c} A$ and, by \eqref{E:acaa}, $\vec{t}\vec{A^c} A = \vec{t}A$. Thus, after noticing that $t\not= v_i$, for all $1\leq i\leq l$, as ${\rm sp}(t)\in A$, we see that \eqref{E:ususa} 
implies, by the definition of face, that $x$ is a face of $t\,\vec{t}\vec{A^c}$. 
\end{proof}

Let $A$ be a grounded complex. Lemma~\ref{L:facar} allows us to define the following two operations.

Let $\vec{t}$ and $\vec{s}$ be sequences with ${\rm sp}(\vec{t}),\, {\rm sp}(\vec{s})\subseteq A$. For a grounded simplicial map 
$f\colon \vec{t}A\to \vec{s}A$, by Lemma~\ref{L:facar}(ii), we can define 
\[
f^A\colon \vec{t}\vec{A^c}\to \vec{s}\vec{A^c}
\]
by letting 
\[
\begin{split} 
f^A\res {\rm Vr}\big( \vec{t}\,A\big) &= f\\
f^A\res A^c &= {\rm id}_{A^c}. 
\end{split} 
\]
It is easy to check, using Lemma~\ref{L:facar}(ii), that $f^A$ is a grounded simplicial map. We list the properties of this operation that are relevant in the proof of Theorem~\ref{T:frai2}. We have 
\begin{equation}\label{E:A1}
\begin{split}
&f \hbox{ is a grounded isomorphism }\Rightarrow\; f^A \hbox{ is a grounded isomorphism},\\
&\big(\pi_{p,s}^{\vec{t}A}\big)^A= \pi_{p,s}^{\vec{t}\vec{A^c}}, \hbox{ for }s\in \vec{t}A,\, p\in s,\\
&(sf)^A= sf^A,\hbox{ for }s\in {\rm Fin}^+\hbox{ with }{\rm sp}(s)\in A;
\end{split}
\end{equation} 
additionally, if $g\colon \vec{u}A\to \vec{t}A$ is grounded simplicial and ${\rm sp}(\vec{u})\subseteq A$, then 
\begin{equation}\label{E:A2} 
(f\circ g)^A= f^A\circ g^A.
\end{equation} 
We leave the easy check of these properties based on Lemma~\ref{L:facar}(ii) to the reader.

Now, let $\vec{s}$ and $\vec{t}$ be sequences. For a grounded simplicial map $f\colon \vec{t}\vec{A^c}\to \vec{s}\vec{A^c}$,  by Lemma~\ref{L:facar}(i), we can define 
\[
f_A\colon  \vec{t}A\to \vec{s}A
\]
by letting 
\[
f_A= f\res {\rm Vr}\big( \vec{t}A\big). 
\]
Again it is easy to see using Lemma~\ref{L:facar}(i) that $f_A$ is a grounded simplicial map. 
We have the following properties 
\begin{equation}\label{E:B1}
\begin{split}
&f \hbox{ is a grounded isomorphism }\Rightarrow\; f_A \hbox{ is a grounded isomorphism},\\
&\big(\pi_{p,s}^{\vec{t}\vec{A^c}}\big)_A= \pi_{p,s}^{\vec{t}A}, \hbox{ for }s\in \vec{t}\,\vec{A^c},\, p\in s,\\
&(sf)_A= sf_A, \hbox{ for }s\in {\rm Fin}^+,\\
&\big(\pi_\iota^{\vec{t}\vec{A^c}}\big)_A= \pi_{\iota'}^{\vec{t}A},
\end{split}
\end{equation} 
where in the last line $\iota\colon S\to \bigcup S$, for an additive family $S$ of faces of $\vec{t}\,\vec{A^c}$, is such that $\iota(s)\in s$
and $\iota' = \iota\res \big(S\cap \vec{t}\,A)$; 
furthermore, if $g\colon \vec{u}\vec{A^c}\to \vec{t}\vec{A^c}$ is grounded simplicial, then 
\begin{equation}\label{E:B2}
(f\circ g)_A= f_A\circ g_A.
\end{equation} 
For the second line of \eqref{E:B1}, we point out that if $s\in \vec{t}\,\vec{A^c}$, then, by Proposition~\ref{P:trse}, $s$ is not a vertex of $\vec{t}\,\vec{A^c}$, so it is not a vertex of $\vec{t}\,A$ and $\pi_{p,s}^{\vec{t}A}$ is defined. For the fourth line of \eqref{E:B1}, we note that if $S$ is an additive family of faces of $\vec{t}\,\vec{A^c}$, then $S\cap \vec{t}\,A$ is an additive family of faces of $\vec{t}\,A$; and if $S$ is upward closed, so is $S\cap \vec{t}\,A$. Again, we leave the check of \eqref{E:B1} and \eqref{E:B2} to the reader.

Finally, we note that, by the very definition of the operations $f\to f^A$ and $f\to f_A$, we have that, for a grounded simplicial map $f\colon \vec{t}A\to \vec{s}A$ with ${\rm sp}(\vec{t}),\, {\rm sp}(\vec{s})\subseteq A$,
\begin{equation}\label{E:B3}
(f^A)_A= f.
\end{equation}

\begin{proof}[Proof of Theorem~\ref{T:frai2}] Let $f'\colon \vec{s}\,{\mathbf A}\to \vec{r}\,{\mathbf A}$ and $g'\colon \vec{t}\,{\mathbf A}\to \vec{r}\,{\mathbf A}$ be maps in ${\mathcal D}({\mathbf A})$. We can assume that 
\begin{equation}\label{E:supinc} 
{\rm sp}(\vec{r}\,),\, {\rm sp}(\vec{s}\,),\, {\rm sp}(\vec{t}\,)\subseteq {\mathbf A}.
\end{equation} 
Indeed, by Propostion~\ref{P:conr}, each face of a complex obtained by iterative division of $\mathbf A$ has its support in $\mathbf A$, therefore, if $\vec{u}$ is a sequence and $\vec{u}'$ is obtained from $\vec{u}$ by deleting all the entries of $\vec{u}$ with support not in $\mathbf A$, then $\vec{u}\,{\mathbf A} = \vec{u}'{\mathbf A}$.

Consider $(f')^{\mathbf A}\colon \vec{s}\,\vec{{\mathbf A}^c}\to \vec{r}\,\vec{{\mathbf A}^c}$ and $(g')^{\mathbf A} \colon \vec{t}\,\vec{{\mathbf A}^c}\to \vec{r}\,\vec{{\mathbf A}^c}$ and observe that, by \eqref{E:A1}, \eqref{E:A2}, and \eqref{E:supinc}, these two maps are in $\mathcal D$. By Theorem~\ref{T:frai}, there are $f\in {\mathcal D}$ and $g\in {\mathcal N}$ such that 
\begin{equation}\label{E:fafapr} 
(f')^{\mathbf A} \circ f = (g')^{\mathbf A}\circ g.
\end{equation} 
Note that since $g$ is a composition of welds, its domain, and so also the domain of $f$, is of the form $\vec{u}\,\vec{t}\,\vec{{\mathbf A}^c}$. 
Then, by \eqref{E:B1} and \eqref{E:B2}, we get that $f_{\mathbf A}\colon  \vec{u}\, \vec{t}\,{\mathbf A}\to \vec{s}\,{\mathbf A}$ and $g_A \colon \vec{u}\, \vec{t}\,{\mathbf A}\to \vec{t}\,{\mathbf A}$ 
are in ${\mathcal D}({\mathbf A})$ and ${\mathcal N}({\mathbf A})$, respectively. Furthermore, by \eqref{E:fafapr}, \eqref{E:B3}, and \eqref{E:B2}, we get 
\[
f'\circ f_{\mathbf A} = \big( (f')^{\mathbf A}\big)_{\mathbf A}  \circ f_{\mathbf A} = \big( (g')^{\mathbf A}\big)_{\mathbf A}\circ g_{\mathbf A}= g' \circ g_{\mathbf A},
\]
which shows that $f_{\mathbf A}$ and $g_{\mathbf A}$ witness the amalgamation property for $f'$ and $g'$. 
\end{proof} 

\begin{proposition}\label{P:inob} 
The complex $\mathbf A$ is an initial object in ${\mathcal D}({\mathbf A})$, that is, 
for each complex $A$ in $\langle {\mathbf A}\rangle$, there exists a composition of weld maps, so, in particular, a weld-division map, from $A$ to ${\mathbf A}$.   
\end{proposition}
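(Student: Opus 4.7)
The plan is to argue by induction on the least length $n$ of a sequence $s_0 s_1 \cdots s_{n-1}$ for which $A = s_0 s_1 \cdots s_{n-1} {\mathbf A}$; such a sequence exists by the very definition of $\langle {\mathbf A}\rangle$. The base case $n=0$ gives $A = {\mathbf A}$, and the identity map is a (trivially empty) composition of weld maps.

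For the inductive step, write $A = s\, B$ where $B = s_1 \cdots s_{n-1}\, {\mathbf A}$. By the inductive hypothesis there is a composition of weld maps $h\colon B \to {\mathbf A}$, so it suffices to produce a weld map $A \to B$. If $s$ is not a face of $B$, then $sB = B$, so $A = B$ and nothing more is needed. Otherwise $s \in B$. By Lemma~\ref{L:ok}, the complex $B$ satisfies property \eqref{E:nottr}, so $s \notin {\rm tc}(t)$ for every $t \in B$; in particular $\{s\}$ is not a face of $B$, i.e.\ $s \notin {\rm Vr}(B)$. Choose any $p \in s$; since $s \in B$ we have $s \subseteq {\rm Vr}(B)$, hence $p \in {\rm Vr}(B)$. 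The weld map
\[
\pi^B_{p,s}\colon sB \to B
\]
is therefore defined, and $sB = A$. Composing, $h \circ \pi^B_{p,s}\colon A \to {\mathbf A}$ is a composition of weld maps, as required. By Lemma~\ref{L:welgr}, each such $\pi^B_{p,s}$ is grounded simplicial, so the composition lies in ${\mathcal D}({\mathbf A})$.

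The only point that requires care is making sure that the weld map in the inductive step is genuinely defined, which amounts to checking that $s$ is fresh with respect to the vertices of $B$. This is not obvious from the original definition of stellar subdivision, but in the set-theoretic framework adopted in Section~\ref{Su:divinf} it is automatic: faces of iteratively divided grounded complexes satisfy the ``well-foundedness'' condition \eqref{E:nottr}, which prevents $s$ from ever being a vertex of the complex it subdivides. Once this observation is in place, the proof reduces to the straightforward induction sketched above.
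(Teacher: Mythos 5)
Your proposal is correct and follows essentially the same route as the paper: decompose $A = s_1\cdots s_l\,{\mathbf A}$ into single stellar divisions and undo each with a weld map $\pi^{B_i}_{p_i,s_i}$, composing the results; the paper writes the composition directly while you phrase it as an induction, and your explicit check (via condition \eqref{E:nottr}) that $s$ is not a vertex of $B$ is the same fact the paper uses implicitly when assuming each $s_i$ is a face of $s_{i+1}\cdots s_l{\mathbf A}$.
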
 

\begin{proof} Let 
\[
A = s_1\cdots s_l {\mathbf A}, 
\]
for some sets $s_i\in {\rm Fin}^+$, for $i=1, \dots, l$, of which we can assume that $s_i$ is a face of $s_{i+1}\cdots s_l{\mathbf A}$. Pick $p_i\in s_i$. For $i=0, 1, \dots, l$, set $B_i= s_{i+1} \cdots s_l {\mathbf A}$. 
Then, for $i=1, \dots, l$, 
\[
B_{i-1}= s_i B_i\;\hbox{ and }\;\pi^{B_{i}}_{p_i, s_i}\colon B_{i-1}\to B_{i}.
\]
It follows that 
\[
\pi^{B_{l}}_{p_l, s_l}\circ\cdots \circ \pi^{B_{1}}_{p_1, s_1}\colon B_0\to B_l
\]
is in ${\mathcal D}({\mathbf A})$, which proves the lemma as $B_0=A$ and $B_l= {\mathbf A}$. 
\end{proof}

\section{${\mathcal D}({\mathbf A})$ as a transitive projective Fra{\"i}ss{\'e} class}\label{S:frrel}

The reader may consult Appendix~\ref{Su:prfr} for background information on projective Fra{\"i}ss{\'e} classes and their limits. 

Given $A$ from $\langle {\mathbf A}\rangle$ we define a binary relation $R^A$ on ${\rm Vr}(A)$ by making $x,y \in {\rm Vr}(A)$ related by $R^A$ if $x$ and $y$ belong to a face of $A$, that is, 
\begin{equation}\label{E:interR} 
xR^A y \;\Leftrightarrow\; \{ x,y\}\in A.
\end{equation} 
We note that $R^A$ is reflexive and symmetric. Clearly each grounded simplicial map $f\colon B\to A$, for $A,B\in \langle {\mathbf A}\rangle$, is an epimorphism when considered as a function from the structure $({\rm Vr}(B), R^B)$ to $({\rm Vr}(A), R^A)$. We now consider the class of structures of the form $({\rm Vr}(A), R^A)$ for $A\in \langle {\mathbf A}\rangle$ taken with the class of epimorphisms that are weld-division maps. We denote this category by 
\[
{\mathcal D}_R({\mathbf A}). 
\]

\begin{theorem}\label{T:trprc2} 
${\mathcal D}_R({\mathbf A})$ is a projective Fra{\"i}ss{\'e} class.
\end{theorem}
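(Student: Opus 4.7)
The plan is to verify the defining axioms of a projective Fraïssé class for ${\mathcal D}_R({\mathbf A})$, namely: (a) the class is countable up to isomorphism; (b) each morphism is a surjective strong homomorphism between the relational structures $({\rm Vr}(A), R^A)$; (c) the joint projection property (JPP); and (d) the projective amalgamation property (PAP). The heavy lifting---the PAP---has already been carried out in Theorem~\ref{T:frai2}, so what remains is essentially bookkeeping needed to pass between the complex-theoretic category ${\mathcal D}({\mathbf A})$ and its relational avatar ${\mathcal D}_R({\mathbf A})$, plus a derivation of JPP from PAP using the initial object $\mathbf A$.

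For (a), I would note that, since ${\mathbf A}$ is finite and each stellar subdivision $sA$ adjoins exactly one new vertex (the set $s$ itself, by the set-theoretic convention of Part~\ref{P:stfra}), every $A\in \langle{\mathbf A}\rangle$ is finite, and the collection $\langle{\mathbf A}\rangle$ is countable up to grounded isomorphism, as it is generated from a fixed finite complex by finite iterated divisions. For (b), I would observe that weld maps and grounded isomorphisms are surjective on vertices, and that both composition and division preserve surjectivity (the latter by the explicit description of $sf$ in Section~\ref{Su:stdm} and Lemma~\ref{L:difsa}). Hence every weld-division map $f\colon B\to A$ is a surjective simplicial map; combined with the grounded-simplicial property this immediately gives that $f$ is a strong epimorphism from $({\rm Vr}(B), R^B)$ to $({\rm Vr}(A), R^A)$, since edges of $A$ lift to edges of $B$ by surjectivity on faces, and edges of $B$ are sent to edges or loops of $A$ by the simplicial property.

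For (c), the JPP follows from Proposition~\ref{P:inob} together with Theorem~\ref{T:frai2}: given $A, B\in \langle{\mathbf A}\rangle$, the proposition supplies weld-division maps $A\to {\mathbf A}$ and $B\to {\mathbf A}$, and amalgamating these via Theorem~\ref{T:frai2} yields a common $D\in \langle{\mathbf A}\rangle$ together with weld-division maps $D\to A$ and $D\to B$. For (d), the PAP is a direct reinterpretation of Theorem~\ref{T:frai2} in the relational category: since the passage from ${\mathcal D}({\mathbf A})$ to ${\mathcal D}_R({\mathbf A})$ is a bijection on objects and morphisms (as discussed in the paragraph preceding Theorem~\ref{C:trprc}), a commuting amalgamation square in the former is automatically one in the latter.

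I anticipate no substantial obstacle: all genuinely combinatorial content has been absorbed into Theorem~\ref{T:frai2} and Proposition~\ref{P:inob}. The one point requiring care is the explicit verification that the functor $A\mapsto ({\rm Vr}(A), R^A)$, with morphisms sent to their underlying vertex-maps, is faithful and preserves composition, so that the Fraïssé axioms transfer literally from ${\mathcal D}({\mathbf A})$ to ${\mathcal D}_R({\mathbf A})$ without loss of information.
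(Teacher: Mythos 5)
Your proposal is correct and follows essentially the same route as the paper: the projective amalgamation property is read off directly from Theorem~\ref{T:frai2}, and the joint projection property is obtained by combining it with the initial object supplied by Proposition~\ref{P:inob}. The additional bookkeeping you include (finiteness/countability of $\langle{\mathbf A}\rangle$ and the strong-homomorphism check for weld-division maps) is already handled in the discussion preceding Theorem~\ref{C:trprc} and does not change the argument.
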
 

\begin{proof}
The projective amalgamation property for ${\mathcal D}_R({\mathbf A})$, that is, point (ii) in the definition of projective Fra{\"i}ss{\'e} class,  is immediate from Theorem~\ref{T:frai2}. The joint projective property, that is, point (i) of the definition follows from point (ii) along with Proposition~\ref{P:inob}.
\end{proof}

To set notation, let $(A_n, f_n)$ be a generic sequence for ${\mathcal D}_R({\mathbf A})$, so 
\[
f_n\colon {\rm Vr}(A_{n+1})\to {\rm Vr}(A_n).
\]
Let ${\mathbb A}$
be the inverse limit of this inverse system and, for $n\in {\mathbb N}$, let ${\rm pr}_n\colon {\mathbb A}\to A_n$ be the projection. 
We equip $\mathbb A$ with a binary relation $R^{\mathbb A}$ by letting, for $x,y\in {\mathbb A}$, 
\[
xR^{\mathbb A}y 
\]
precisely when ${\rm pr}_n(x) R^{A_n}{\rm pr}_n(y)$, for each $n$, that is, for each $n$, there is a face $s$ in $A_n$ such that ${\rm pr}_n(x), {\rm pr}_n(y)\in s$. 
Then $\mathbb A$ with $R^{\mathbb A}$ is the projective Fra{\"i}ss{\'e} limit of ${\mathcal D}_R({\mathbf A})$. 

Let $x\in {\mathbb A}$. We point out that since each $f_n$ is grounded simplicial and 
\[
f_n\big( {\rm pr}_{n+1}(x)\big) = {\rm pr}_{n}(x), 
\]
we have 
\begin{equation}\label{E:mont}
{\rm sp}\big( \{ {\rm pr}_n(x)\} \big)\subseteq  {\rm sp}\big( \{ {\rm pr}_{n+1}(x)\} \big), \hbox{ for each }n.
\end{equation}
The function $\rm sp$ is defined on faces of $A_n$, hence the need to consider the faces $\{ {\rm pr}_n(x)\}$ and $\{ {\rm pr}_{n+1}(x)\}$ in formula \eqref{E:mont} rather than the vertices  ${\rm pr}_n(x)$ and ${\rm pr}_{n+1}(x)$. 
The observation \eqref{E:mont} and finiteness of $\mathbf A$ imply that the sequence $\big({\rm sp}( \{ {\rm pr}_n(x)\} )\big)_n$ stabilizes and the set 
\[
\bigcup_n{\rm sp}\big( \{ {\rm pr}_n(x)\} \big)
\]
is a face of $\mathbf A$. It can be thought of as the support in $\mathbf A$ of $x\in {\mathbb A}$.

\begin{theorem}\label{T:trans2} Let $\mathbb A$ equipped with $R^{\mathbb A}$ be the projective Fra{\"i}ss{\'e} limit of ${\mathcal D}_R({\mathbf A})$.
\begin{enumerate}
\item[(i)] $R^{\mathbb A}$ is a transitive relation, so it is a compact equivalence relation on $\mathbb A$. 

\item[(ii)] For all $x,y\in {\mathbb A}$, if $xR^{\mathbb A}y$, then 
\[
\bigcup_n{\rm sp}\big( \{ {\rm pr}_n(x)\}\big) = \bigcup_n {\rm sp}\big(\{ {\rm pr}_n(y)\} \big).
\] 
\end{enumerate}
\end{theorem}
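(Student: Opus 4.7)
\emph{Preliminary observation.} For each $x \in \mathbb{A}$, the sequence $({\rm sp}(\{{\rm pr}_n(x)\}))_n$ is non-decreasing by \eqref{E:mont}, and each of its terms is a face of $\mathbf{A}$ by Lemma~\ref{L:dsig}(i) applied to the singleton face $\{{\rm pr}_n(x)\}$ of $A_n$. Since $\mathbf{A}$ is finite, the sequence stabilizes, so $\sigma_x := \bigcup_n {\rm sp}(\{{\rm pr}_n(x)\})$ is a face of $\mathbf{A}$. The same remark applied to the pair face $\{{\rm pr}_n(x),{\rm pr}_n(y)\}$ (when $xR^{\mathbb A}y$) shows that $\sigma_x\cup\sigma_y$ is a face of $\mathbf{A}$ as well.

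\emph{Plan for (ii).} By symmetry it suffices to show $\sigma_y \subseteq \sigma_x$. The case $x=y$ is trivial; otherwise fix $m_0$ with ${\rm pr}_{m_0}(x)\neq{\rm pr}_{m_0}(y)$, from which it follows that ${\rm pr}_m(x)\neq{\rm pr}_m(y)$ for all $m\geq m_0$. Given $u\in\sigma_y$, I would pick $m\geq m_0$ so that $u\in{\rm sp}(\{{\rm pr}_m(y)\})$, set $s=\{{\rm pr}_m(x),{\rm pr}_m(y)\}$ (a face of $A_m$ by $xR^{\mathbb A}y$), and consider the weld map $g=\pi^{A_m}_{{\rm pr}_m(x),s}\colon sA_m\to A_m$. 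The genericity of the Fra\"iss\'e sequence produces $k\geq m$ and a weld-division map $h\colon A_k\to sA_m$ with $g\circ h=f_{m,k}$. The preimage structure of $g$ forces $h({\rm pr}_k(y))={\rm pr}_m(y)$ and $h({\rm pr}_k(x))\in\{{\rm pr}_m(x),s\}$. Since $\{{\rm pr}_m(x),{\rm pr}_m(y)\}$ is no longer a face of $sA_m$ (having been subdivided) while $\{s,{\rm pr}_m(y)\}$ is a new face, simpliciality of $h$ together with $\{{\rm pr}_k(x),{\rm pr}_k(y)\}\in A_k$ forces $h({\rm pr}_k(x))=s$. Then the grounded simplicial property gives ${\rm sp}(\{{\rm pr}_k(x)\})\supseteq{\rm sp}(s)\ni u$, so $u\in\sigma_x$.

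\emph{Plan for (i).} Assume $xR^{\mathbb A}y$ and $yR^{\mathbb A}z$, and fix $n$. The degenerate cases where two of ${\rm pr}_n(x),{\rm pr}_n(y),{\rm pr}_n(z)$ coincide are immediate from reflexivity and the two given $R^{A_n}$-relations, so I assume the three are distinct. Mirroring (ii), set $s=\{{\rm pr}_n(x),{\rm pr}_n(y)\}$ and apply genericity to the weld map $g=\pi^{A_n}_{{\rm pr}_n(y),s}\colon sA_n\to A_n$ to obtain $k\geq n$ and $h\colon A_k\to sA_n$ with $g\circ h=f_{n,k}$. The unique preimages under $g$ of ${\rm pr}_n(x)$ and ${\rm pr}_n(z)$ pin $h({\rm pr}_k(x))={\rm pr}_n(x)$ and $h({\rm pr}_k(z))={\rm pr}_n(z)$; the simplicial constraint from the face $\{{\rm pr}_k(x),{\rm pr}_k(y)\}$ then forces $h({\rm pr}_k(y))=s$ (as $\{{\rm pr}_n(x),{\rm pr}_n(y)\}$ is subdivided away). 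Finally, $\{{\rm pr}_k(y),{\rm pr}_k(z)\}$ being a face of $A_k$ pushes $\{s,{\rm pr}_n(z)\}$ to be a face of $sA_n$, which by the definition of division occurs if and only if $\{{\rm pr}_n(x),{\rm pr}_n(y),{\rm pr}_n(z)\}$ is a face of $A_n$. Its subset $\{{\rm pr}_n(x),{\rm pr}_n(z)\}$ is therefore also a face of $A_n$, as desired. Transitivity of $R^{\mathbb A}$ follows, and compactness of $R^{\mathbb A}$ as a closed subset of $\mathbb{A}\times\mathbb{A}$ is immediate from its presentation as an inverse limit of finite relations.

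\emph{Main obstacle.} The delicate step in both parts is the meticulous preimage analysis showing that the generic factoring map $h$ is forced onto specific values on ${\rm pr}_k(x)$, ${\rm pr}_k(y)$, ${\rm pr}_k(z)$, together with the explicit description of which sets are faces of the subdivided complex $sA_m$ (respectively $sA_n$). Once this bookkeeping is correctly carried out, the groundedness of $h$ transfers the support of the new vertex $s$ onto ${\rm pr}_k(x)$ (giving (ii)), while the simpliciality of $h$ upgrades the face $\{s,{\rm pr}_n(z)\}$ in the subdivision to the triangle $\{{\rm pr}_n(x),{\rm pr}_n(y),{\rm pr}_n(z)\}$ in $A_n$ (giving (i)).
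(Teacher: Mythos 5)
Your proposal is correct and follows essentially the same route as the paper: weld the edge $s=\{{\rm pr}_n(x),{\rm pr}_n(y)\}$ via a map $\pi_{p,s}$, factor it through the generic sequence, and use the preimage analysis to force the relevant projection onto the new vertex $s$, after which groundedness yields the support equality in (ii) and simpliciality together with the face description of $sA_n$ yields the triangle $\{{\rm pr}_n(x),{\rm pr}_n(y),{\rm pr}_n(z)\}\in A_n$ for (i). The only differences are cosmetic: the paper extracts both conclusions from a single construction (welding at ${\rm pr}_k(y)$) and proves transitivity by contradiction, whereas you run two parallel constructions and argue directly.
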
 

\begin{proof} By its very definition $R^{\mathbb A}$ is reflexive, symmetric, and compact.

Let $x,y,z\in {\mathbb A}$. Assume that $xR^{\mathbb A} y$ and $y R^{\mathbb A} z$. To show transitivity of $R^{\mathbb A}$, we need to see that $x R^{\mathbb A} z$. Set, for $n\in {\mathbb N}$, 
\[
x_n = {\rm pr}_n(x), \, y_n = {\rm pr}_n(y), \, z_n = {\rm pr}_n(z). 
\]
Then $x_n,y_n,z_n\in {\rm Vr}(A_n)$ and, for each $n$ there is a face of $A_n$ containing $x_n$ and $y_n$ and there is a face of $A_n$ containing $y_n$ and $z_n$.

We first make an observation that will make the notation in the argument below easier to handle. If, for infinitely many $n$, we have $x_n=y_n$ or $y_n=z_n$ or $x_n=z_n$, then $x=y$ or $y=z$ or $x=z$ and there is nothing to prove. So, we can assume that, for large enough $n$, we have $x_n\not= y_n$ and $y_n\not= z_n$ and $x_n\not= y_n$. For ease of notation,  we will assume that this happens for all $n\in {\mathbb N}$.

So, we assume that 
\begin{equation}\label{E:xypo} 
\{ x_n, y_n\}\in A_n, \hbox{ for all }n.
\end{equation}
and 
\begin{equation}\label{E:xypo2} 
\{ y_n, z_n\}\in A_n, \hbox{ for all }n,
\end{equation}

We fix $k$. 
Let $s= \{ x_k, y_k\}$. By \eqref{E:xypo}, $s\in A_k$. Consider 
\[
\pi = \pi^{A_k}_{y_k, s}\colon sA_k\to A_k. 
\]
Since $\pi$ is in ${\mathcal D}({\mathbf A})$, by property (iii) of projective Fra{\"i}ss{\`e} limits (see Appendix~\ref{Su:prfr}), 
there are $l\geq k$ and a map $f\colon A_l\to sA_k$ in ${\mathcal D}({\mathbf A})$ such that 
\begin{equation}\label{E:whok} 
\pi\circ f\circ {\rm pr}_l = {\rm pr}_k.
\end{equation} 
Since the only vertex mapped by $\pi$ to $x_k$ is $x_k$, we see from \eqref{E:whok} that 
\begin{equation}\label{E:xlk} 
f(x_l)= x_k,
\end{equation}  
and, similarly, since the only vertex mapped by $\pi$ to $z_k$ is $z_k$, we have 
\begin{equation}\label{E:zlk}
f(z_l)=z_k. 
\end{equation}
The only vertices mapped by $\pi$ to $y_k$ are $s$ and $y_k$, so $f(y_l)= s$ or $f(y_l)= y_k$. By \eqref{E:xlk}, the possibility $f(y_l)=y_k$ would give 
\begin{equation}\label{E:prefo}
f\big( \{ x_l, y_l\} \big) = \{ x_k, y_k\}. 
\end{equation} 
But, by \eqref{E:xypo}, 
$\{ x_l, y_l\}\in A_l$ while, by the definition of $sA_k$, $\{ x_k, y_k\}\not\in sA_k$. So \eqref{E:prefo} would contradict $f$ being a grounded simplicial map, which follows from $f$ being in ${\mathcal D}({\mathbf A})$. So we have 
\begin{equation}\label{E:yls} 
f(y_l)= s. 
\end{equation} 
We observe that, since $f$ is grounded simplicial, by \eqref{E:yls}, so we have 
\[
{\rm sp}(\{ y_l\}) \supseteq {\rm sp}(\{ s\}) = {\rm sp}(\{ x_k, y_k\}), 
\]
and, therefore, for each $k$, there exists $l\geq k$ with 
\begin{equation}\label{E:supli} 
{\rm sp}(\{ y_l\})  \supseteq {\rm sp}(\{ x_k\}). 
\end{equation}

The argument above showed that, for each $k$, there exists $l\geq k$ with \eqref{E:zlk}, \eqref{E:yls}, and \eqref{E:supli} holding. Now assume towards a contradiction that 
\begin{equation}\label{E:xyzne} 
\{ x_k, y_k, z_k\}\not\in A_k, \hbox{ for some }k.
\end{equation} 
For this $k$, fix $l\geq k$ with \eqref{E:zlk}, \eqref{E:yls}, and \eqref{E:supli}. By \eqref{E:xyzne}, $\{ s, z_k\} \not\in sA_k$ for $s= \{ x_k, y_k\}$, and therefore, by \eqref{E:zlk} and \eqref{E:yls}, we get $\{ y_l, z_l\}\not\in A_l$ contradicting \eqref{E:xypo2}.
Therefore, we have 
\begin{equation}\notag
\{ x_n, y_n, z_n\}\in A_n, \hbox{ for all }n.
\end{equation} 
Thus, (i) is proved. Now, observe that \eqref{E:supli} implies ${\rm Sp}(y)\supseteq {\rm Sp}(x)$, so, by symmetry of the relation $R^{\mathbb A}$, we get (ii).  
\end{proof}

\section{Identification of the canonical quotient} 

In light of Theorem~\ref{T:trans2}, for $a \in {\mathbb A}/R^{\mathbb A}$, we write 
\[
{\rm Sp}_{\rm fr}^{\mathbf A}(a) = \bigcup_n {\rm sp}\big( \{ {\rm pr}_n(x)\} \big) \hbox{ for any }x\in {\mathbb A}\hbox{ with } a= x/R^{\mathbb A}. 
\]
Note that ${\rm Sp}_{\rm fr}^{\mathbf A}(a)\in {\mathbf A}$. So for $s\in {\mathbf A}$, the elements $a$ of ${\mathbb A}/R^{\mathbb A}$ with ${\rm Sp}_{\rm fr}^{\mathbf A}(a)=s$ can be considered to be supported by $s$, that is, one can consider the set of all $a\in {\mathbb A}/R^{\mathbb A}$ with ${\rm Sp}_{\rm fr}^{\mathbf A}(a)=s$ to be a face of ${\mathbb A}/R^{\mathbb A}$ corresponding to $s$.

Recall the definition \eqref{E:spgr} of support  ${\rm Sp}_{\rm go}^{\mathbf A}(x)$ of an element $x$ of the geometric realization $\|{\mathbf A}\|$ of $\mathbf A$. Note that ${\rm Sp}_{\rm go}^{\mathbf A}(x)\in {\mathbf A}$. Again, given $s\in {\mathbf A}$, one can consider the set of all $x\in \|{\mathbf A}\|$ with ${\rm Sp}_{\rm go}^{\mathbf A}(x)=s$ to constitute a face of $\|{\mathbf A}\|$ corresponding to $s$. 

The theorem below asserts that there is a homeomorphism between ${\mathbb A}/ R^{\mathbb A}$ and $\|{\mathbf A}\|$ that respects the notions of faces corresponding to $s\in {\mathbf A}$ in these two spaces.

\begin{theorem}\label{T:prli2}
Let $\mathbb A$ equipped with $R^{\mathbb A}$ be the projective Fra{\"i}ss{\'e} limit of ${\mathcal D}_R({\mathbf A})$. Then 
there exists a homeomorphism 
\[
g\colon {\mathbb A}/ R^{\mathbb A}\to \|{\mathbf A}\| 
\]
such that, for each $a\in {\mathbb A}/R^{\mathbb A}$, 
\begin{equation}\label{E:aligsp} 
{\rm Sp}_{\rm fr}^{\mathbf A}(a) = {\rm Sp}_{\rm go}^{\mathbf A}\big(g(a)\big). 
\end{equation} 
\end{theorem}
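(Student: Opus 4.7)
The aim is to construct the homeomorphism $g : \mathbb{A}/R^{\mathbb A} \to \|{\mathbf A}\|$ as an inverse limit of shrinking closed stars. For each $A \in \langle {\mathbf A} \rangle$, I would work with the canonical homeomorphism $h_A : \|A\| \to \|{\mathbf A}\|$ referenced in Section~\ref{Su:acla}: it places each vertex $v = s$ introduced by subdivision at the barycenter of the points already associated with the elements of $s$, so that $h_A(v)$ lies in $\|{\rm sp}(\{v\})\|^{\mathrm{cl}}$ with ${\rm Sp}_{\rm go}^{\mathbf A}(h_A(v)) = {\rm sp}(\{v\})$, and each closed simplex $\|t\|^{\mathrm{cl}}$ of $A$ embeds into $\|{\rm sp}(t)\|^{\mathrm{cl}}$. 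For $x \in \mathbb{A}$ and $n \in \mathbb{N}$, set
\[
K_n(x) \;=\; h_{A_n}\Bigl(\bigcup \{ \|t\|^{\mathrm{cl}} : t \in A_n,\ x_n \in t \}\Bigr) \subseteq \|{\mathbf A}\|.
\]
The plan is to show $\bigcap_n K_n(x) = \{g(x)\}$ is a singleton and to use this to define $g$.

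Two properties of $(K_n(x))_n$ carry the argument. \emph{Monotonicity:} $K_{n+1}(x) \subseteq K_n(x)$ for every $n$. Using the description of weld-division maps from Section~\ref{S:produ}, it suffices to handle a weld map $\pi^A_{p,t} : tA \to A$; every face $t' \in tA$ containing the new vertex $t$ has the form $\{t\} \cup y$ with $t \cup y \in A$, and under $h_{tA}$ the set $\|\{t\} \cup y\|^{\mathrm{cl}}$ embeds into $\|t \cup y\|^{\mathrm{cl}} \subseteq \|{\mathbf A}\|$, which lies in the closed star of $p$ in $A$ because $p \in t \cup y$; an analogous check covers old faces containing old vertices, and the composition and division cases reduce to this one. \emph{Shrinking:} $\mathrm{diam}\,K_n(x) \to 0$ in a fixed compatible metric on $\|{\mathbf A}\|$. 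The $k$-th barycentric subdivision $B_k$ of $\mathbf A$ is a composition of stellar subdivisions and so belongs to $\langle{\mathbf A}\rangle$ with a weld-division map $B_k \to {\mathbf A}$; property~(iii) of projective Fra{\"i}ss{\'e} limits then produces, for every $k$, an index $m$ and a weld-division map $A_m \to B_k$ factoring the projection $A_m \to {\mathbf A}$. Monotonicity then forces $K_m(x)$ inside the closed star of the corresponding vertex of $B_k$, and the classical mesh bound for barycentric subdivision delivers the shrinking.

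It remains to identify the fibres of $g$ with the $R^{\mathbb A}$-classes. If $x R^{\mathbb A} y$, then every $A_n$ contains a face $\tau_n \ni x_n, y_n$, so $h_{A_n}(\|\tau_n\|^{\mathrm{cl}}) \subseteq K_n(x) \cap K_n(y)$ and shrinking forces $g(x) = g(y)$. Conversely, if $g(x) = g(y) = p$, then for every $n$ the point $p$ lies in the open star of both $x_n$ and $y_n$ in $A_n$; the unique open $A_n$-simplex supporting $p$ contains both, so $\{x_n, y_n\} \in A_n$ for all $n$, giving $x R^{\mathbb A} y$. Continuity and surjectivity of $g$ being then standard consequences of compactness and the shrinking property, the induced continuous bijection $\bar g : \mathbb{A}/R^{\mathbb A} \to \|{\mathbf A}\|$ between a compact space and a Hausdorff space is a homeomorphism. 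The support identity~\eqref{E:aligsp} follows because, for large $n$, the set ${\rm sp}(\{x_n\})$ stabilises at ${\rm Sp}_{\rm fr}^{\mathbf A}(a)$, every closed $A_n$-simplex containing $x_n$ has ${\mathbf A}$-support containing this set, and the shrinking property together with $h_{A_n}(x_n) \in \|{\rm Sp}_{\rm fr}^{\mathbf A}(a)\|^{\circ}$ pins down ${\rm Sp}_{\rm go}^{\mathbf A}(g(x))$ as exactly ${\rm Sp}_{\rm fr}^{\mathbf A}(a)$.

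The principal obstacle I foresee is the combined monotonicity and shrinking argument: monotonicity must be pushed cleanly through divisions of weld-division maps, and shrinking requires a genuine use of Theorem~\ref{T:frai2} via property~(iii) of the Fra{\"i}ss{\'e} limit to cofinally reach arbitrarily fine barycentric subdivisions within the generic sequence $(A_n, f_n)$.
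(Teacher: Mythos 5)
Your plan hinges on the claim that the closed-star monotonicity $K_{n+1}(x)\subseteq K_n(x)$ ``reduces to'' the case of a single weld map, the composition and division cases being routine. That reduction is where the proof breaks: the bonding maps $f_n$ of a generic sequence for ${\mathcal D}_R({\mathbf A})$ are arbitrary weld-division maps, and the face-level containment you need (for every face $t'$ of the domain containing $x_{n+1}$, a face $t$ of the codomain containing $x_n$ with $h(\|t'\|^{\mathrm{cl}})\subseteq h(\|t\|^{\mathrm{cl}})$) holds for weld maps but genuinely fails for divisions of weld maps and for grounded isomorphisms --- this is exactly the content of the warning following \eqref{E:a1}--\eqref{E:a4} in the paper. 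Concretely, for the map $s\pi^A_{p,t}$ of Figure~\ref{fig:division-of-a-weld-map} (with $A$ the full triangle on $\{a,b,c\}$, $t=\{a,b,c\}$, $p=b$, $s=\{a,b\}$), the barycentric realization places the vertex $E=\{a,\check t\}$ at $(0.25,\sqrt3/12)$, so the closed star of the vertex $\check t$ in the domain sticks out of the closed star of its image $b$ in $sA$ (the half-triangle $x\ge 0.5$); monotonicity fails already for one division of one weld map. The same defect undercuts your shrinking step: the weld-division map $A_m\to B_k$ produced by property ($\gamma$) of the limit carries no geometric star-containment, so you cannot conclude that $K_m(x)$ sits inside a star of the barycentric subdivision $B_k$. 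The missing idea is the paper's Lemma~\ref{L:specge}: because the amalgamation theorem (Theorem~\ref{T:frai2}) always provides one leg in ${\mathcal N}({\mathbf A})$, i.e.\ a composition of weld maps, one can build a generic sequence whose bonding maps are \emph{single weld maps}, interspersed cofinally with blocks $\pi^{A_n}_\iota$ realizing full barycentric subdivisions; only then do \eqref{E:a1}--\eqref{E:a4} apply at every step and the mesh estimate \eqref{E:epze} kick in. Without this choice of sequence your nested-intersection definition of $g$ does not get off the ground.

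A secondary, fixable gap is the identification of fibres. From $g(x)=g(y)=p$ you infer that $p$ lies in the \emph{open} star of both $x_n$ and $y_n$ and that the carrier of $p$ contains both, hence $\{x_n,y_n\}\in A_n$. Neither implication is valid: $p$ is only guaranteed to lie in the closed stars, and its carrier need not contain $x_n$ or $y_n$ at all (take $x_n\ne y_n$ both joined to a vertex $v$ with $p=r_n(v)$; then $\{x_n,y_n\}$ need not be a face). The correct argument, as in the paper, uses transitivity of $R^{\mathbb A}$ (Theorem~\ref{T:trans2}) together with a compactness/finiteness argument to find a level $n$ at which every face containing $x_n$ is disjoint from every face containing $y_n$, and then separates $g(x)$ from $g(y)$ via \eqref{E:prog2}; your proposal never invokes transitivity, which is indispensable here.
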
 

For the proof of the theorem above, we will need a generic sequence of ${\mathcal D}({\mathbf A})$ with additional properties. The following lemma gives such a sequence.

\begin{lemma}\label{L:specge} 
There exists a generic sequence $(A_n, \pi_n)$ for ${\mathcal D}({\mathbf A})$ such that 
\begin{enumerate}
\item[(i)] for each $n$, there exists $s_n\in A_n$ and $p_n\in s_n$ such that 
\[
\pi_n= \pi^{A_n}_{p_n,s_n}; 
\]

\item[(ii)] there are infinitely many $n$, for which there exist $n'\geq n$ and $\iota\colon A_n\to {\rm Vr}(A_n)$ with $\iota(s)\in s$, for each $s\in A_n$, such that 
\[
\pi_n\circ\cdots \circ \pi_{n'} = \pi_{\iota}^{A_n};
\]

\item[(iii)] $A_0={\mathbf A}$.
\end{enumerate} 
\end{lemma}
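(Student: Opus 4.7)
The plan is to construct the sequence $(A_n, \pi_n)$ by a standard interleaving construction that meets countably many book-keeping tasks fairly, handling each via the amalgamation theorem and decomposing the resulting weld-division map into individual weld maps. Set $A_0 = {\mathbf A}$, which secures (iii). Since $\mathrm{Ur}$ is finite, the category $\mathcal{D}({\mathbf A})$ has, up to grounded isomorphism, only countably many objects and morphisms, so we can fix enumerations that cycle through two kinds of tasks: (G) amalgamation tasks of the form $(k, h \colon B \to A_k)$, where $A_k$ is an object already placed in the sequence, and (S) saturation tasks labeled by the current index $n$. Arrange the bookkeeping so that every amalgamation task eventually arising in the construction is addressed, while saturation tasks occur at infinitely many positions.

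For an amalgamation step handling $(k, h \colon B \to A_k)$, let $n$ denote the current length of the sequence and let $\pi_k^n := \pi_k \circ \cdots \circ \pi_{n-1} \colon A_n \to A_k$ be the current composite projection. Apply Theorem~\ref{T:frai2} with $f' = \pi_k^n$ and $g' = h$, both with codomain $A_k$; this produces $D \in \langle {\mathbf A}\rangle$, a map $f \colon D \to A_n$ in $\mathcal{N}({\mathbf A})$, and a map $g \colon D \to B$ in $\mathcal{D}({\mathbf A})$ with $\pi_k^n \circ f = h \circ g$. Since $f$ is a composition of neat welds $\pi_\iota$, and each neat weld is a composition of weld maps by Lemma~\ref{L:ioco}, we can write $f = \rho_1 \circ \cdots \circ \rho_N$ with each $\rho_j$ a weld map; declaring $\pi_{n+j-1} = \rho_j$ for $j = 1, \ldots, N$ extends the sequence while preserving (i), and the task for $h$ is fulfilled.

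For a saturation step at position $n$, observe that the family $A_n$, viewed as a family of faces of $A_n$ itself, is upward closed and hence additive. Choose any $\iota \colon A_n \to {\rm Vr}(A_n)$ with $\iota(s) \in s$ for every $s \in A_n$, yielding the neat weld $\pi_\iota^{A_n} \colon (A_n)A_n \to A_n$. By Lemma~\ref{L:ioco} this map is a composition of weld maps $\rho_1 \circ \cdots \circ \rho_M$; set $\pi_{n+j-1} = \rho_j$ for $j = 1, \ldots, M$ and $n' = n + M - 1$. Then $\pi_n \circ \cdots \circ \pi_{n'} = \pi_\iota^{A_n}$ holds by construction, realizing (ii) at this $n$. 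Since saturation steps recur infinitely often, (ii) holds for infinitely many $n$. Fairness of the bookkeeping on amalgamation tasks ensures that for every morphism $h \colon B \to A_k$ with $A_k$ in the sequence, some $m \geq k$ provides a factoring $A_m \to B$ of the projection through $h$; this is precisely the genericity criterion identifying the inverse limit of $(A_n, \pi_n)$ with the projective Fra\"iss\'e limit.

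The main obstacle is that the raw amalgamation in $\mathcal{D}({\mathbf A})$ produces only a weld-division map $f \colon D \to A_n$, whereas (i) demands that each individual $\pi_n$ be a weld map. This is resolved precisely by the refined form of amalgamation in Theorem~\ref{T:frai2}, which lets us take $f \in \mathcal{N}({\mathbf A})$, combined with Lemma~\ref{L:ioco} expressing neat welds as compositions of weld maps; the same two tools also deliver the weld decomposition needed to execute the saturation blocks, so the construction proceeds without conflict between (i), (ii), and genericity.
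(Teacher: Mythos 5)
Your construction is correct and is essentially the paper's own proof: both run the standard generic-sequence recursion, invoking Theorem~\ref{T:frai2} so that the maps appended to the sequence are neatly composed welds and hence decompose into weld maps (giving (i)), both insert at infinitely many stages the decomposition of $\pi_\iota^{A_n}$, with $\iota$ defined on all of $A_n$, into weld maps (giving (ii)), and both arrange (iii) via $A_0={\mathbf A}$ and Proposition~\ref{P:inob}. The only cosmetic differences are that in the divided-complex setting the decomposition lemma to cite is Lemma~\ref{L:ioco2} rather than Lemma~\ref{L:ioco}, and that the joint-projection half of genericity deserves an explicit word: it follows from your restricted amalgamation tasks because every $B\in\langle{\mathbf A}\rangle$ admits a map to $A_0={\mathbf A}$ by Proposition~\ref{P:inob}.
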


\begin{proof} We refer in this proof to the construction of a generic sequence as described at the end of Appendix~\ref{Su:prfr}. 

We note that to perform steps (A) and (B) in this construction in the case of ${\mathcal D}({\mathbf A})$, that is, to find maps $f$, $f_n$, $f'$, and $f_{n'}$, we use the amalgamation theorem, Theorem~\ref{T:frai2}. This theorem guarantees that the maps  $f_n$ and $f_{n'}$ are compositions of weld maps, which in turn guarantees (i) of the current lemma. 

When constructing the generic sequence $(A_n, \pi_n)$ recursively on $n$, it is enough to perform steps (A) and (B) only at infinitely many $n$, leaving out infinitely many $n$ to satisfy (ii). For any $n$ of this latter kind, with $A_n$ given, we let 
\[
n'= n+ \#(A_n)-1, 
\]
where $\#(A)$ is the number of faces of $A_n$, and we pick an arbitrary $\iota\colon A_n\to {\rm Vr}(A_n)$ with $\iota(s)\in s$, for each $s\in A_n$. By Lemma~\ref{L:ioco2}, $\pi^{A_n}_\iota$ is a composition of weld maps $\pi_k\colon A_{k+1}\to A_k$, $n\leq k \leq n'$ as in point (ii). We choose the pairs $(A_k, \pi_k)$, $n\leq k\leq n'$, to be in the generic sequence we are constructing. Since the maps $\pi_k$, $n\leq k\leq n'$, are weld maps, the construction as described in the preceding paragraph can be completed with this choice. 

Point (iii) can be arranged by Proposition~\ref{P:inob}.
\end{proof}

\begin{proof}[Proof of Theorem~\ref{T:prli2}] 
We will use the framework for the geometric realization set up in Appendix~\ref{S:Div}. Additionally, we introduce the following piece of notation. If $r$ is a realizing assignment for a complex $A$, for $s\in A$, we write 
\[
{\rm conv}^0\big(r(s)\big) = {\rm conv}\big(r(s)\big)\setminus  \bigcup_{\emptyset\not= s'\subsetneq s} {\rm conv}\big(r(s')\big).
\]

Given a realizing assignment $r$ for $A$ and $s\in A$, we form a realizing assignment $r'$ for $sA$ by letting it be equal to $r$ on 
${\rm Vr}(A)\setminus \{ x\}$ or ${\rm Vr}(A)$, depending on whether or not $s=\{ x\}$ for some $x\in {\rm Vr}(A)$, and setting 
\begin{equation}\label{E:aseq} 
r'(s )= \frac{1}{\#(s)} \sum_{v\in s} x_v, 
\end{equation} 
where $\#(s)$ stands for the number of elements in $s$. It is easy to check that this is a realizing assignment for $sA$, that is, that conditions 
\eqref{E:prog1} and \eqref{E:prog2} hold. Note that 
\begin{equation}\label{E:stas}
\| r\| = \| r'\|. 
\end{equation} 
We make the following easy but crucial observation. Let $p\in s\in A$ and let $\pi=\pi_{p,s}\colon sA \to A$ be a weld map. Then,
for $t\in sA$, there exists $t_0\in A$ with
\begin{align}
&{\rm sp}(t)={\rm sp}(t_0),\label{E:a1}\\ 
&\pi(t)\subseteq t_0,\label{E:a2}\\ 
&{\rm conv}\big( r'(t)\big) \subseteq {\rm conv}\big( r(t_0)\big),\label{E:a3}\\
&{\rm conv}^0\big( r'(t)\big)\subseteq {\rm conv}^0\big(r(t_0)\big).\label{E:a4} 
\end{align} 
Indeed, if $s \not\in t$, then take $t_0=t$; if $s\in t$, take $t_0= \big( t\setminus\{ s\}\big) \cup s$. 
Note that \eqref{E:a1}--\eqref{E:a4} constitute an essential use of $\pi$ being a weld map; a division of a weld map need not have these properties and neither does a grounded isomorphism. 

Let 
\[
{\mathbb A} =\varprojlim_n (A_n, \pi_n)
\]
be the projective Fra{\"i}ss{\'e} limit of ${\mathcal D}_R({\mathbf A})$ produced with the generic sequence from Lemma~\ref{L:specge}. Let ${\rm pr}_n\colon {\mathbb A}\to A_n$ be the canonical projections and, for $m\leq n$, let ${\rm pr}^n_m\colon A_n\to A_m$ be the composition $\pi_m\circ\cdots \circ \pi_{n-1}$. Furthermore, by Proposition~\ref{P:inob}, we can assume $A_0= {\mathbf A}$. 

We start with a realizing assignment for $\mathbf A$, for example, with $r_0\colon {\rm Vr}({\mathbf A}) \to {\mathbb R}^{{\rm Vr}({\mathbf A})}$ given by $r_0(v)=x^v$ as in Appendix~\ref{S:Div}. We identify the geometric realization $\| {\mathbf A}\|$ of $\mathbf A$ with $\| r_0\|$. Now we apply recursively the procedure described in \eqref{E:aseq} to produce realizing assignments $r_n$ for $A_n$. Note that by \eqref{E:stas}, we have 
\[
r_n\colon {\rm Vr}(A_n)\to |{\mathbf A}|_{\mathbb{R}}. 
\]
We observe two properties of the map $r_n$ stated in \eqref{E:epze} and \eqref{E:epzzz} below. 

First, for $n\leq n'$ as in Lemma~\ref{L:specge}(ii), $A_{n'}$ is obtained from $A_n$ by the barycentric division of $A_n$. So, by the standard estimate for barycentric subdivision \cite[Proof of Proposition~2.21]{Ha}, we have 
\[
\max_{t'\in A_{n'}} {\rm diam}\big(r_{n'}(t')\big) \leq \frac{d-1}{d}\, 
\max_{t\in A_n} {\rm diam}\big(r_{n}(t)\big), 
\]
where $\rm diam$ is computed  with respect to the Euclidean metric on $\|{\mathbf A}\|$ and  $d$ is the maximum value of $\#(s)$ with $s\in {\mathbf A}$.
Thus, by Lemma~\ref{L:specge}(ii), we get 
\begin{equation}\label{E:epze} 
\epsilon_n=\max_{t\in A_n} {\rm diam}(r_n(t)) \to 0.  
\end{equation}

The second property of $r_n$ is the following. Fix $n$. Since $\pi_n$ is a weld map, \eqref{E:a2} and \eqref{E:a3} immediately give that, for each $n$, 
\begin{equation}\label{E:epzzz}
\begin{split}
&\hbox{for each $t'\in A_{n+1}$, there exist $t\in A_n$ with}\\
&\pi_n(t')\subseteq t, \hbox{ and }\, {\rm conv}\big( r_{n+1}(t')\big) \subseteq {\rm conv}\big( r_n(t)\big), 
\end{split} 
\end{equation}
which implies by an easy inductive argument (with $t'=\{ v'\}$) that, for all $n'>n$ and $v'\in {\rm Vr}(A_{n'})$, there exists $t\in A_n$ with
\begin{equation}\label{E:b2}
{\rm pr}^{n'}_n(v')\in t\;\hbox{ and }\;r_{n'}(v') \in {\rm conv}\big( r_n(t)\big).
\end{equation}

Set
\[
g_n =r_n\circ {\rm pr}_n\colon  {\mathbb A}\to |{\mathbf A}|_{\mathbb{R}}.
\]
Each map $g_n$ is clearly continuous. 
By \eqref{E:b2}, we see that, for each $x\in {\mathbb A}$, for all $n'\geq n$, 
\begin{equation}\label{E:vvvv}
g_{n'}(x) \in {\rm conv}\big( r_n(t)\big)\hbox{ for some }t\in A_n\hbox{ with }{\rm pr}_n(x)\in t. 
\end{equation} 
Thus, since each $A_n$ is finite, for every $x\in {\mathbb A}$ and $n\leq n'$, we have that
\[
{\rm dist}\big(g_n(x), g_{n'}(x)\big)\leq \epsilon_n, 
\]
where ${\rm dist}$ is the Euclidean metric. So, by \eqref{E:epze}, the sequence of function $(g_n)$ converges uniformly. 
Let $g\colon {\mathbb A} \to |{\mathbf A}|_{\mathbb{R}}$ be the continuous function that is the uniform limit of 
the sequence $(g_n)$. 
Since the image of $g_n$ contains the image of $r_n$, we see that the image of 
$g_n$ is $\epsilon_n$-dense in $|{\mathbf A}|_{\mathbb{R}}$. Now, by \eqref{E:epze}, compactness of ${\mathbb A}$, 
and uniform convergence of $(g_n)$ to $g$, we get that  $g$ is surjective. 
Additionally, note, for a later use, that \eqref{E:vvvv} implies that, for each $n$ 
\begin{equation}\label{E:lateru}
g(x) \in {\rm conv}\big( r_n(t)\big)\hbox{ for some }t\in A_n\hbox{ with }{\rm pr}_n(x)\in t. 
\end{equation}

We check that, for $x,y\in {\mathbb A}$, 
\begin{equation}\label{E:invariants} 
g(x)=g(y)\;\hbox{ if and only if }\; xR^{\mathbb A}y. 
\end{equation}

If $xR^{\mathbb A}y$, then, for each $n$, there is $t\in A_n$ with ${\rm pr}_n(x), {\rm pr}_n(y)\in t$, and so with $g_n(x), g_n(y) \in {\rm conv}\big(r_n(t)\big)$. It follows that ${\rm dist}(g_n(x), g_n(y)) \leq \epsilon_n$, for each $n$; thus, $g(x)=g(y)$ by \eqref{E:epze}. 

Assume now that $\neg(x R^{\mathbb A} y)$. Let 
\[
p_n = {\rm pr}_n(x)\in {\rm Vr}(A_n)\;\hbox{ and }\; q_n= {\rm pr}_n(y)\in {\rm Vr}(A_n). 
\]
Since $R^{\mathbb A}$ is a transitive relation and we have $\neg(x R^{\mathbb A} y)$, finiteness of each $A_n$ guarantees the 
existence of $n$ such that, for all $s, t\in A_n$, 
\begin{equation}\label{E:std}
\hbox{ if }p_n\in s\hbox{ and }q_n\in t, \hbox{ then } s\cap t=\emptyset. 
\end{equation} 
Otherwise, we would find $s_n,t_n\in A_n$ such that, for each $n$, 
\[
p_n\in s_n,\; q_n\in t_n,\; s_n\cap t_n\not=\emptyset,\; \pi_n(s_{n+1})= s_n, \hbox{ and }\, \pi_n(t_{n+1})= t_n.
\] 
This, in turn, would allow us to find $v_v\in s_n\cap t_n$ such that $\pi_n(v_{n+1})=v_n$, for each $n$, that is, $(v_n)\in {\mathbb A}$. But then, we would have 
\[
x=(p_n) R^{\mathbb A} (v_n)R^{\mathbb A} (q_n)=y, 
\]
so $xR^{\mathbb A}y$, contradicting our case assumption. 

We fix $n$ as in \eqref{E:std}. By finiteness of $A_n$ and compactness of ${\rm conv}\big(r_n(s)\big)$ 
and ${\rm conv}\big(r_n(t)\big)$ for $s, t\in A_n$, it follows from \eqref{E:b2} that there are $s, t\in A_n$ with 
\begin{equation}\label{E:nstd}
p_n\in s,\; q_n\in t,\; g(x)\in {\rm conv}\big(r_n(s)\big), \hbox{ and }\, g(y) \in {\rm conv}\big(r_n(t)\big).
\end{equation} 
If $g(x)=g(y)$, then \eqref{E:nstd} would contradict \eqref{E:std} since 
\[
{\rm conv}\big( r_n(s)\big) \cap {\rm conv}\big( r_n(t)\big) = {\rm conv}\big( r_n(s\cap t)\big). 
\] 
Thus, $g(x)\not= g(y)$, as required. 

As a consequence of \eqref{E:invariants} and $g$ being a continuous surjection, we see that the map 
${\mathbb A}/R^{\mathbb A} \to |{\mathbf A}|_{\mathbb{R}}$ induced by $g$ is a homeomorphism. We denote this induced homeomorphism again by $g$.

It remains to check \eqref{E:aligsp}, that is, 
\[
{\rm Sp}_{\rm fr}^{\mathbf A}(a) = {\rm Sp}_{\rm go}^{\mathbf A}\big(g(a)\big).
\] 
Fix $a\in {\mathbb A}/R^{\mathbb A}$ and $z\in {\mathbb A}$ such that $a= z/R^{\mathbb A}$. 

By \eqref{E:a1} and \eqref{E:a3} and finiteness of $A_0= {\mathbf A}$, we see that there exists $t\in {\mathbf A}$ 
such that, for infinitely many $n$, 
\[
{\rm sp}(\{ {\rm pr}_n(z)\}) = {\rm sp}(t)=t
\;\hbox{ and }\;g_n(z)= {r_n}({\rm pr}_n(z))\in {\rm conv}(r_0(t)). 
\]
It now follows from \eqref{E:mont} and compactness of ${\rm conv}(r_0(t))$ that 
\[
 {\rm Sp}_{\rm fr}^{\mathbf A}(a)= t \;\hbox{ and }\; g(a) \in {\rm conv}(r_0(t)). 
\]
So, by definition of ${\rm Sp}_{\rm go}^{\mathbf A} \big(g(a)\big)$, we get 
\begin{equation}\label{E:kunda}
{\rm Sp}_{\rm go}^{\mathbf A} \big(g(a)\big)\subseteq  {\rm Sp}_{\rm fr}^{\mathbf A}(a). 
\end{equation}

We prove that the inclusion in \eqref{E:kunda} is not proper, which will establish \eqref{E:aligsp}. 
We claim that if $s\in A_n$, then 
\begin{equation}\label{E:funda}
{\rm Sp}^{\mathbf A}_{\rm go}\big(x\big)={\rm sp}(s)\; \hbox{ for all $x\in {\rm conv}^0(r_n(s))$}. 
\end{equation} 
Indeed, it is clear that \eqref{E:funda} holds for $n=0$. Assume it holds for $n$ and fix $s\in A_{n+1}$. We apply \eqref{E:a1} and \eqref{E:a4} to $\pi_n$ and $s$ to find $s_0\in A_n$ 
with 
\begin{align}
&{\rm sp}(s)={\rm sp}(s_0),\label{E:c1}\\
& {\rm conv}^0\big(r_{n+1}(s)\big)\subseteq  {\rm conv}^0\big(r_{n}(s_0)\big) .\label{E:c3}
\end{align} 
By our inductive assumption, we have 
\[
{\rm Sp}^{\mathbf A}_{\rm go}\big(x\big)={\rm sp}(s_0)\; \hbox{ for all $x\in {\rm conv}^0(r_n(s_0))$} 
\]
which implies \eqref{E:funda} for $s$ by \eqref{E:c1} and \eqref{E:c3}. So \eqref{E:funda} is proved. 

Set $t= {\rm Sp}_{\rm fr}^{\mathbf A}(a)$. Let $n_0$ be such that ${\rm sp}(\{ {\rm pr_n}(z)\})= t$ for all $n\geq n_0$. 
Now, we argue that we can find $n_1\geq n_0$ such that if $v\in {\rm Vr}(A_{n_1})$ and $vR^{A_{n_1}}{\rm pr}_{n_1}(z)$, then 
\begin{equation}\label{E:nono} 
{\rm sp}(\{v\})\supseteq t.  
\end{equation}
Indeed, if the above assertion failed, by a compactness argument using \eqref{E:mont}, we would find $y\in {\mathbb A}$ with $zR^{\mathbb A} y$ and such that 
$\bigcup_n{\rm sp}\big({\rm pr}_n(y)\big)$ does not contain $t$, in particular, 
\[
\bigcup_n{\rm sp}\big({\rm pr}_n(z)\big)\not= \bigcup_n{\rm sp}\big({\rm pr}_n(y)\big)
\]
contradicting Theorem~\ref{T:trans2}(ii). 
It follows from \eqref{E:nono} and \eqref{E:funda}, applied to $n=n_1$ and $s= \{ v\}$, that for each $v\in {\rm Vr}(A_{n_1})$ with $vR^{A_{n_1}}{\rm pr}_{n_1}(z)$, we have 
\begin{equation}\label{E:kurd}
{\rm Sp}^{\mathbf A}_{\rm go}\big(r_{n_1}(v)\big)\supseteq t.
\end{equation} 
Now, by \eqref{E:lateru}, there exists  $s_0\in A_{n_1}$ with ${\rm pr}_{n_1}(z)\in s_0$ such that 
\begin{equation}\label{E:locgg}
g(a)\in {\rm conv}(r_{n_1}(s_0)). 
\end{equation} 
By \eqref{E:kurd}, we get 
\[
{\rm Sp}^{\mathbf A}_{\rm go}\big(r_{n_1}(v)\big)\supseteq t,\;\hbox{ for each }v\in s_0.
\]
The definition of ${\rm Sp}^{\mathbf A}_{\rm go}$ and the above inclusion immediately imply that 
\[
r_{n_1}(s_0)\cap {\rm conv}(r_0(t'))=\emptyset, \;\hbox{ for each }t'\subsetneq t.
\]
So since, by \eqref{E:a3}, $r_{n_1}(s_0)$ is a subset of ${\rm conv}(r_0(t_0))$ for some $t_0\in {\mathbf A}$, we get
\[
{\rm conv}\big(r_{n_1}(s_0)\big)\cap {\rm conv}(r_0(t'))=\emptyset, \;\hbox{ for each }t'\subsetneq t.
\]
Thus, by \eqref{E:locgg}, we have 
\[
g(a)\not\in {\rm conv}\big(r_0(t')\big), \hbox{ for each }t'\subsetneq t.
\]
So, it follows that ${\rm Sp}^{\mathbf A}_{\rm go}\big(g(a)\big)$
is not a proper subset of $t$, and \eqref{E:aligsp} is proved. 
\end{proof}

\section{Questions}

We ask our questions in the framework of divided complexes from Section~\ref{S:divset}. Recall from Appendix~\ref{A:isome} the statements of the definitions of weld maps, grounded isomorphisms, and division of grounded simplicial maps  in this framework.

The first question is a rigidity problem for grounded isomorphisms asking if all grounded isomorphisms are combinatorial isomorphisms. 
\begin{question}
Let $A$, $B$ be divided complexes. Let $f\colon B\to A$ be a grounded isomorphism. Can $f$ be obtained from grounded isomorphisms described in Theorems~\ref{T:ord3} and \ref{T:ord} using composition and division of simplicial maps? 
\end{question}

The second question concerns a Ramsey statement in the weld-division category ${\mathcal D}({\mathbf A})$ for a finite grounded complex $\mathbf A$. Recall the class $\langle {\mathbf A}\rangle$ of all complexes obtained from $\mathbf A$ by iterating the division operation.

\begin{question} 
Let $b>0$ be a natural number. Let $A, B\in \langle {\mathbf A}\rangle$. Does there exist $C\in \langle {\mathbf A}\rangle$ such that for each coloring with $b$ colors of all weld-division maps from $C$ to $A$, there exists a weld-division map $g\colon C\to B$ such that the set 
\[
\{ f\circ g\mid f\colon B\to A\hbox{ a weld division map}\}
\]
is monochromatic? 
\end{question}

The definition of the class of weld-division maps describes this class from below, that is, the class is defined by closing a base family (weld maps and grounded isomorphisms) under certain operations (composition and division). 

\begin{question}
Is there a description of weld-division maps that characterizes them through combinatorial properties? 
\end{question}

\appendix

\newpage

\part{Appendices}

\section{Simplicial complexes, projective Fra{\"i}ss{\'e} classes, Set Theory} 

Two notions from the literature form the basis of the considerations in this paper: the notion of stellar subdivision of a simplicial complex 
and the notion of 
projective Fra{\"i}ss{\'e} class and its limit. In Appendix~\ref{S:Div}, we recall the standard background relating to simplicial complexes 
and their stellar subdivisions. We also fix notation that will be used throughout the paper; the notation is not entirely standard. 
In Appendix~\ref{Su:prfr}, we recall the projective Fra{\"i}ss{\'e} theory.

\subsection{Simplicial complexes}\label{S:Div}

A {\bf simplicial complex}, or {\bf complex} for short, is a family $A$ of non-empty finite sets closed under taking non-empty subsets, that is, if 
$\emptyset\not= s\subseteq t\in A$, then $s\in A$, and such that 
\begin{equation}\label{E:vrad} 
{\rm Vr}(A)\cap A=\emptyset, 
\end{equation} 
where ${\rm Vr}(A)$ is  the union of all sets in $A$. Sets in $A$ are called {\bf faces} of $A$ and elements of ${\rm Vr}(A)$ are called {\bf vertices} of $A$. Note that 
condition \eqref{E:vrad} is equivalent to 
\begin{equation}\label{E:vrad2} 
s\not\in t,\hbox{ for all }s,t\in A.
\end{equation} 

We describe a setup which will be used to define the geometric realization of a simplicial complex. The setup will be useful in the proofs. 
For a subset $X$ of some ${\mathbb R}^m$, we denote by 
\[
{\rm conv}(X) 
\]
the convex hull of $X$. 
We now fix $m$, and with each $v\in {\rm Vr}(A)$, we associate a point $r(v)\in {\mathbb R}^m$ so that the following 
conditions hold for all $s,t\in A$: 
\begin{align}
&\hbox{the points }r(v),\, v\in s, \hbox{ are in general position,}\label{E:prog1}\\ 
&{\rm conv}\big(\{ r(v)\mid v\in s\}\big)\cap {\rm conv}\big(\{ r(v)\mid v\in t\}\big) = {\rm conv}\big(\{ r(v)\mid v\in s\cap t\}\big).\label{E:prog2} 
\end{align} 
We then consider the union 
\begin{equation}\label{E:unr}
\bigcup_{s\in A} {\rm conv}\big(\{ r(v)\mid v\in s\}\big)\subseteq {\mathbb R}^m.
\end{equation} 
Note that the topological space defined as the above union is fully determined by the assignment 
\[
r\colon {\rm Vr}(A)\to {\mathbb R}^m
\]
fulfilling \eqref{E:prog1} and \eqref{E:prog2}.  We call such an assignment a {\bf realizing assignment for} $A$ and write 
\[
\| r\|
\]
for the topological space \eqref{E:unr}. 
Given two realizing assignments $r$ and $r'$ of the same complex $A$, it is easy to verify that the spaces $\| r\|$ and $\| r'\|$ are homeomorphic by considering the map  
\begin{equation}\label{E:relaf} 
r(v) \to r'(v), \hbox{ for }v\in {\rm Vr}(A), 
\end{equation} 
and extending this map in an affine manner to each set ${\rm conv}\big(\{ r(v)\mid v\in s\}\big)$ with $s\in A$. 
An example of a realizing assignment is the map 
\[
{\rm Vr}(A)\ni v\to x^v\in {\mathbb R}^{{\rm Vr}(A)}, 
\]
where, for $w\in {\rm Vr}(A)$, 
\[
x^v(w) = 
\begin{cases}
1, &\hbox{ if }w=v,\\
0, &\hbox{ if } w\not= v.
\end{cases}
\]
Given a realizing assignment $r$ of $A$, we view ${\rm Vr}(A)$ as embedded into $\| r\|$ by 
\begin{equation}\label{E:reaem} 
{\rm Vr}(A)\ni v\to r(v)\in \| r\|. 
\end{equation} 

A {\bf geometric realization} of a complex $A$ is the topological space $\| r\|$ for any realizing assignment $r$ of $A$ together with the map \eqref{E:reaem}. As explained above, any two geometric realizations $\| r\|$ and $\| r'\|$ of $A$ are related by a homeomorphism induced via the affine extension of the map \eqref{E:relaf}. This isomorphism type of geometric realizations is called {\bf the geometric realization} of $A$ and is denoted by 
\[
\|A\|.
\]

Let $x\in \|A\|$, that is, $x\in \| r\|$ for a realizing assignment $r$ for $A$. Note that, by \eqref{E:prog2}, for each $x\in \| r\|$, there exists a smallest, under the relation of inclusion, face $s$ of $A$ such that $x\in {\rm conv}(r(s))$. It is clear that $s$ does not depend on the realizing assignment $r$. We write, for $x\in \|A\|$, 
\begin{equation}\label{E:spgr} 
{\rm Sp}_{\rm go}^A(x)= s. 
\end{equation}

For more background on simplicial complexes the reader may consult \cite[Chapter 1]{Gl}, \cite{Ha}, and \cite[Sections~2.1 and 2.2]{Koz}.

\subsection{Projective Fra{\"i}ss{\'e} classes and their limits}\label{Su:prfr} 

We present here a framework that extracts in a canonical manner a topological space from a combinatorial/categorical background. The framework comes from \cite{IrSo} with 
some minor additions incorporated. In the present paper it will be applied to the family of all subdivisions of a given simplicial complex $A$ taken together with weld-division maps.
The extraction of the topological space from this combinatorial situation will produce the geometric realization of $A$, thereby giving a combinatorial presentation of 
this geometric/topological object.

For our presentation of the projective Fra{\"i}ss{\'e} framework, we fix a symbol $R$. By an {\bf interpretation of} $R$ on a set $X$ we understand a binary relation 
$R^X$ on $X$, that is, $R^X\subseteq X\times X$. 
We say that $R^X$ is a {\bf reflexive graph} if it is reflexive and symmetric as a binary relation. Assume $X$ and $Y$ are equipped with interpretations $R^X$ 
and $R^Y$ of $R$. 
Then a function $f\colon X\to Y$ is called a {\bf strong homomorphism} 
if
\begin{enumerate} 
\item[---] for all $x_1, x_2\in X$, $x_1R^X x_2$ implies $f(x_1)R^Y f(x_2)$;  

\item[---] for all $y_1, y_2\in Y$, $y_1R^Y y_2$ implies that there exist $x_1, x_2\in X$ with $y_1= f(x_1)$, $y_2=f(x_2)$, and $x_1 R^X x_2$. 
\end{enumerate}
By reflexivity of $R^Y$, a strong homomorphism is a surjective function from $X$ to $Y$.

Now, we assume we have a category $\mathcal C$ each of whose objects is a finite set equipped with an interpretation of $R$ as a reflexive graph 
and all of whose morphisms are strong homomorphisms. We stress that, in general, not all finite sets equipped with such interpretations of $R$ are objects of $\mathcal C$ and not 
all strong homomorphisms are morphisms of $\mathcal C$. In fact, this will be the case for the category considered in this paper.

Given $\mathcal C$, we define a new category ${\mathcal C}^\omega$ consisting of projective, that is, inverse, 
limits of sequences $(A_n, f_n)_n$, where each $A_n$ is an object of $\mathcal C$ and $f_n\colon A_{n+1}\to A_n$ is a morphism of $\mathcal C$. 
Each such projective limit 
\[
X=\varprojlim_n (A_n,f_n) = \{ (x_n)\in \prod_n A_n\mid f_n(x_{n+1})=x_n\hbox{ for all }n\}
\]
is equipped with the compact zero-dimensional topology it inherits from $\prod_n A_n$, where the objects $A_n$ are 
given the discrete topology, and with the natural projections ${\rm pr}^X_n\colon X\to A_n$. Furthermore, $X$ is naturally equipped with an interpretation of $R$ by letting, 
for $x,y\in X$, 
\[
x\,R^X y \;\hbox{ iff }\; {\rm pr}^X_n(x)\,R^{A_n} {\rm pr}^X_n(y)\hbox{ for all } n. 
\]
One easily checks that $R^X$ is a reflexive graph, that $R^X$ is compact when viewed as a set of pairs in $X\times X$, and that ${\rm pr}^X_n\colon X\to A_n$ is a continuous 
strong homomorphisms for each $n$. Given two such inverse limits $X=\varprojlim_n (A_n, f_n)$ and $Y=\varprojlim_n (B_n, g_n)$, 
we declare a continuous function $F\colon Y\to X$ to be a {\bf morphism in ${\mathcal C}^\omega$} if 
for each $m$, there exists $n$ and a morphism $f\colon B_n\to A_m$ in $\mathcal C$ such that 
\[
{\rm pr}_m^X\circ F = f\circ {\rm pr}_n^Y.
\]

So, objects of ${\mathcal C}^\omega$ are, just like objects of $\mathcal C$,  
sets carrying an interpretation of $R$ as a reflexive graph and its morphisms, like morphisms in $\mathcal C$, are also strong homomorphisms with respect to interpretations of $R$. The 
main addition is that objects of ${\mathcal C}^\omega$ carry a non-trivial topology and that interpretations of $R$ and morphisms respect this topology. 
We view $\mathcal C$ as a full subcategory of ${\mathcal C}^\omega$ by identifying each object $A$ of $\mathcal C$ 
with the projective limit $\varprojlim_n(A_n, f_n)$, where $A_n=A$ and $f_n$ is the identity morphism on $A_n$. Note that for an object $A$ of $\mathcal C$ and 
an object $X=\varprojlim_n (A_n, f_n)$ of ${\mathcal C}^\omega$, 
a continuous function $F\colon X\to A$ is a morphism precisely when there exist $n$ and a morphism $f\colon A_n\to A$ in $\mathcal C$ such that $F= f\circ {\rm pr}_n^X$.

We say that $\mathcal C$ is a {\bf projective Fra{\"i}ss{\'e} class} if it fulfills the following two conditions: 
\begin{enumerate}
\item[(i)] for any two objects $A,B$ in $\mathcal C$, there exist an object $C$ in $\mathcal C$ and morphisms $f\colon C\to A$ and $g\colon C\to B$ in $\mathcal C$; 

\item[(ii)] for any two morphisms $f, g$ in $\mathcal C$ with the same codomain, there exist morphisms $f', g'$ in $\mathcal C$ such that $f\circ f'= g\circ g'$. 
\end{enumerate} 
Condition (i) is called the {\bf joint projection property} and condition (ii) is called the {\bf projective amalgamation property}. The following fact was established in \cite{IrSo}. 

\smallskip

\noindent {\em Let $\mathcal C$ be a countable projective Fra{\"i}ss{\'e} category. There exists an object ${\mathbb C}$ in ${\mathcal C}^\omega$ such that 
\begin{enumerate}
\item[($\alpha$)] for each object $A$ of $\mathcal C$, there exists a morphism ${\mathbb C}\to A$;

\item[($\beta$)] for each object $A$ of $\mathcal C$ and morphisms $f, g\colon {\mathbb C}\to A$, there exists an isomorphism $\phi\colon {\mathbb C}\to {\mathbb C}$ 
such that $f=g\circ\phi$; 

\item[($\gamma$)] for all objects $A, B$ of $\mathcal C$ and morphisms $f\colon {\mathbb C}\to A$ and $g\colon B\to A$, there exists a morphism $h\colon {\mathbb C}\to B$ 
such that $f=g\circ h$.
\end{enumerate}
Furthermore, $\mathbb C$ is unique up to an isomorphism in ${\mathcal C}^\omega$ with properties (i) and (ii) and properties (i) and (iii).}
\smallskip

We call the unique $\mathbb C$ in the statement above the {\bf projective Fra{\"i}ss{\'e} limit} of $\mathcal C$. Property (i) of $\mathbb C$ is called {\bf projective universality}, while property (ii) is called {\bf projective homogeneity}.

A projective Fra{\"i}ss{\'e} category $\mathcal C$ will be called {\bf transitive} if the interpretation $R^{\mathbb C}$ of $R$ on the projective Fra{\"i}ss{\'e} limit 
$\mathbb C$ of $\mathcal C$ is transitive. Since $R^{\mathbb C}$ is always reflexive, symmetric, and compact, we see that, for a transitive category $\mathcal C$, $R^{\mathbb C}$ 
is a compact equivalence relation. It leads to the definition of the canonical quotient topological space 
\[
{\mathbb C}/R^{\mathbb C}.
\]
We call this space the {\bf canonical quotient space} of the class $\mathcal C$ or simply the {\bf canonical quotient} of $\mathcal C$. 

We outline a construction of the projective Fra{\"i}ss{\'e} limit for a countable projective Fra{\"i}ss{\'e} class $\mathcal C$. We call an inverse system $(C_n, f_n)$, with 
$C_n$ and $f_n\colon C_{n+1}\to C_n$ in ${\mathcal C}$, {\bf generic} if 
\begin{enumerate} 
\item[(i)] for each $A$ in $\mathcal C$, there exists $n$ and $f\colon C_{n+1}\to A$ in $\mathcal C$; 

\item[(ii)] for each $g\colon B\to A$ in $\mathcal C$, each $n$ and $f\colon A_n\to A$ in $\mathcal C$, there exists $n'\geq n$ and $f'\colon A_{n'+1}\to B$ in $\mathcal C$ such that 
\[
f\circ {\rm pr}^{n'+1}_n = g\circ f'.
\]
\end{enumerate}  
Here and below we set 
\[
{\rm pr}^l_k = f_k\circ \cdots \circ f_{l-1}, \hbox{ for }k<l.
\]
One sees that the inverse limit $\varprojlim (C_n, f_n)$ of a generic sequence yields a structure $({\mathbb C}, R^{\mathbb C})$ that fulfills conditions ($\alpha$), ($\beta$), and ($\gamma$) above, it is, that is a projective Fra{\"i}ss{\'e} limit of $\mathcal C$. Furthermore, one checks that each structure fulfilling conditions ($\alpha$), ($\beta$), and ($\gamma$) can be represented as the inverse limit of a generic sequence.

One checks that if $(C_n, f_n)$ and $(C_n', f_n')$ are generic sequences for $\mathcal C$, then there exist $m_0<n_0<m_1<n_1<\cdots$ and 
\[
g_i'\colon C_{n_i}'\to C_{m_i}\;\hbox{ and }\; g_i\colon C_{m_{i+1}}\to C'_{n_i}
\]
in $\mathcal C$ such that 
\[
g_i'\circ g_i = {\rm pr}^{m_{i+1}}_{m_i}\;\hbox{ and }\; g_i\circ g_{i+1}' = ({\rm pr}')^{n_{i+1}}_{n_i}. 
\]
So any two generic sequences are isomorphic in the sense explained above. Such an isomorphisms guarantees the uniqueness, up to isomorphism, of projective Fra{\"i}ss{\'e} limit $({\mathbb C}, R^{\mathbb C})$. 

We outline a construction of a generic sequence $(C_n, f_n)$ for $\mathcal C$. The sequence is constructed recursively. The construction needs to fulfill two types of demands:
\begin{enumerate} 
\item[(a)] for each $A$ in $\mathcal C$, we need to have $n$ and $f\colon C_{n+1}\to A$ in $\mathcal C$; 

\item[(b)] for each $g\colon B\to A$, each $n$ and $f\colon A_n\to A$, we need to have $n'\geq n$ and $f'\colon A_{n'+1}\to B$ in $\mathcal C$ such that 
\[
f\circ f_n\circ \cdots \circ f_{n'} = g\circ f'.
\]
\end{enumerate}  
There are countably many demands to satisfy as $\mathcal C$ is countable. They can be organized in a way that they are met in the recursive construction of $(C_n, f_n)$ in succession with  
\begin{enumerate} 
\item[(A)] demands of type (a) met by applying the joint projection property of $\mathcal C$, which produces $f$ and $f_n$ to satisfy (a), 

\item[(B)] demands of type (b) met by using the projective amalgamation property of $\mathcal C$, which produces $f'$ and $f_{n'}$ to satisfy (b). 
\end{enumerate}

\subsection{Set theoretic background}\label{A:sett}

We keep our set theoretic considerations on a somewhat informal footing. But all that is done in the paper can be accomplished 
assuming a weak fragment of the standard set theory (ZF) augmented by the existence of urelements. For example, 
the Kripke--Platek theory with urelements such as ${\rm KPU}^+$ in \cite[Chapter 1]{Bar}
is appropriate as the background to our considerations. 
Having urelements is convenient in calculations and is natural for our approach to modeling of complexes and their divisions. They are, however, not necessary; see below.

Urelements are objects that themselves do not have elements but they can be elements of sets. We will assume that there exists a set $\rm Ur$ of all urelements. The reader may consult \cite{Bar} for more information on this. Their introduction in this paper is done exclusively for computational convenience. Working in the usual set theory without urelements one can replace $\rm Ur$ by a set ${\rm Ur}'$ of sets such that $x\not\in {\rm tc}(x')$ for all $x,x'\in {\rm Ur}'$. Such ${\rm Ur}'$ of arbitrary size can be easily produced in the standard system ${\rm ZF}$ of set theory. One then runs all the definitions and arguments in our paper with ${\rm Ur}$ replaced by ${\rm Ur}'$. It does seem, however, that introducing $\rm Ur$ makes for a cleaner presentation. See \cite{Bar} for a more carefully argued case for urelements.

An important property of the membership relation $\in$ that is used in the paper is its {\bf well foundedness}, namely, there does not exist a sequence $x_n$, $n\in {\mathbb N}$, such that $x_{n+1}\in x_n$, for each $n$. In particular, $x\not\in x$, for each set $x$.

For an arbitrary set $x$, let 
\[
\cup x = \{ z\mid z\in y\hbox{ for some } y\in x\}.
\]
So $\cup x$ is the union of $x$, where we treat $x$ as a family of sets, namely, the family of those sets $y$ that are 
elements $x$. Set further 
\[
\cup^0 x = x\; \hbox{ and }\, \cup^{n+1} x = \cup\big( \cup^n x\big). 
\]
For a set $x$, define 
\[
{\rm tc}(x) = \bigcup_{n=0}^\infty \cup^n x. 
\]
So elements of ${\rm tc}(x)$ are elements of $x$, elements of elements of $x$, etc. That is, ${\rm tc}(x)$ collects all sets used to build $x$, and it is called the {\bf transitive closure of} $x$. 
We observe that, for sets $x$ and $y$, 
\begin{equation}\label{E:trpr}
\begin{split}
{\rm tc}(x) &\subseteq {\rm tc}(y),\hbox{ if $x\in y$ or $x\subseteq y$},\\
{\rm tc}(x\cup y) &= {\rm tc}(x)\cup {\rm tc}(y),\\
{\rm tc}\big(\{ x\}\big) &= \{ x\} \cup {\rm tc}(x).
\end{split} 
\end{equation}

For more background on set theory, the reader may consult \cite{Kun}.

\section{Faces of divided complexes}\label{A:faces} 

The following lemma is an immediate consequence of the definition of division. 

\begin{lemma}\label{L:calc} 
Let $A$ be a divided complex, let $s\in {\rm Fin}^+$, and let $x\in {\rm Fin}^+$. 
\begin{enumerate} 
\item[(i)] If $s\in x$ and $s$ is not a vertex of $A$, then 
\[
x\in s\, A\Leftrightarrow \big( s\cup (x\setminus \{ s\})\in A\hbox{ and } s\not\subseteq x\big).
\]

\item[(ii)] If $s\not\in x$, then 
\[
x\in s\, A\Leftrightarrow \big( x\in A\hbox{ and } s\not\subseteq x\big).
\]
\end{enumerate}
\end{lemma}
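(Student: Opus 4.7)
My plan is to prove Lemma~\ref{L:calc} by direct case analysis on the two disjoint clauses in the definition \eqref{E:divinf} of $sA$. Recall that every member of $sA$ has exactly one of the forms
\[
y\cup\{s\}\quad\text{with } s\cup y\in A,\ s\not\subseteq y,\qquad\text{or}\qquad y\quad\text{with } y\in A,\ s\not\subseteq y.
\]
For part (ii), when $s\notin x$ the first clause is automatically excluded (every element of the form $y\cup\{s\}$ contains $s$), so $x\in sA$ is equivalent to $x\in A$ and $s\not\subseteq x$. This gives (ii) directly.

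For part (i), I would first rule out the second clause: if $x\in sA$ came from the second clause, then $x\in A$, and then $s\in x$ would force $s\in{\rm Vr}(A)$, contradicting the hypothesis that $s$ is not a vertex of $A$. Hence $x$ must arise from the first clause, so $x=y\cup\{s\}$ for some $y$ with $s\cup y\in A$ and $s\not\subseteq y$. The key small observation is that $s\notin y$: indeed $y\subseteq{\rm Vr}(A)$ because $s\cup y\in A$, while $s\notin{\rm Vr}(A)$ by hypothesis. Therefore $y=x\setminus\{s\}$, and the first clause rewrites as $s\cup(x\setminus\{s\})\in A$ together with $s\not\subseteq x\setminus\{s\}$. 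Since $s\notin s$ (well-foundedness of $\in$), the latter is equivalent to $s\not\subseteq x$, which gives the forward direction of (i).

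For the converse direction of (i), given $s\in x$, $s\cup(x\setminus\{s\})\in A$ and $s\not\subseteq x$, set $y=x\setminus\{s\}$. Then $s\cup y\in A$ and, again using $s\notin s$, $s\not\subseteq y$; so the first clause of \eqref{E:divinf} places $y\cup\{s\}=x$ in $sA$. There is no real obstacle: the only subtle point is ensuring that in (i) we may identify $y$ with $x\setminus\{s\}$, and this is exactly where the hypothesis that $s\notin{\rm Vr}(A)$ enters, both to exclude the second clause and to guarantee $s\notin y$.
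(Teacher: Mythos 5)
Your proposal is correct and is essentially the argument the paper has in mind: the paper states Lemma~\ref{L:calc} as an immediate consequence of the definition \eqref{E:divinf}, and your case analysis on the two clauses (using $s\notin{\rm Vr}(A)$ to exclude the old-face clause and to identify $y$ with $x\setminus\{s\}$, plus $s\notin s$ from well-foundedness) is precisely the routine verification being left to the reader. No gaps.
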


In the next lemma, given a divided complex $A$ and an additive family $T$ of faces of $A$, we give a description of faces of $TA$. Note that 
\[
{\rm Vr}(TA)\subseteq {\rm Vr}(A)\cup T, 
\]
from which we see that each face of $TA$ is of the form 
\begin{equation}\label{E:xX}
x\cup X, 
\end{equation} 
where $x\subseteq {\rm Vr}(A)$ and $X\subseteq T$. Lemma~\ref{L:linear} below determines precisely which sets of the form \eqref{E:xX} are actually faces of $TA$.

\begin{lemma}\label{L:linear}
Let $A$ be a divided complex and let $T$ be an additive set of faces of $A$. Let $x\subseteq {\rm Vr}(A)$ and let $X\subseteq T$. 
Then 
\[
x\cup X \hbox{ is a face of } TA 
\]
if and only if the following conditions hold 
\begin{enumerate} 
\item[(a)] $t\not\subseteq x$ for each $t\in T$;

\item[(b)] $x\cup \bigcup X$ is a face of $A$;

\item[(c)] $X$ is linearly ordered by $\subseteq$;

\item[(d)] for each $s\in X$ and $t\in T$, if $t\subseteq x\cup s$, then $t\subseteq s$.
\end{enumerate} 
\end{lemma}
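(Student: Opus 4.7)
The plan is to induct on $|T|$, peeling off the $\subseteq$-minimal element of $T$. Fix a non-decreasing enumeration $t_1 \preceq \cdots \preceq t_n$ of $T$; by additivity of $T$ and Lemma~\ref{L:adar}, $T' := T \setminus \{t_1\}$ is an additive family of faces of $A$ and $TA = t_1(T'A)$. The base case $|T| = 0$ forces $X = \emptyset$, making (a), (c), (d) vacuous and reducing (b) to $x \in A$. The key structural observation for the inductive step is that, since $X \subseteq T \subseteq A$ while $x \subseteq {\rm Vr}(A)$ and vertices and faces of a complex are disjoint, we have $x \cap X = \emptyset$ and $t_1 \in x \cup X$ exactly when $t_1 \in X$.

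For the $\Leftarrow$ direction I would invoke Lemma~\ref{L:calc}. If $t_1 \notin X$, all four conditions persist to $(x, X, T')$, so by induction $x \cup X \in T'A$; moreover $t_1 \not\subseteq x \cup X$ because $t_1 \not\subseteq x$ by (a), and Lemma~\ref{L:calc}(ii) concludes. If $t_1 \in X$, then by (c) $t_1$ is the $\subseteq$-smallest element of $X$, so $(x \cup t_1) \cup \bigcup(X \setminus \{t_1\}) = x \cup \bigcup X$, and the conditions translate to $(x \cup t_1, X \setminus \{t_1\}, T')$. The only nontrivial translation is condition (a) for $T'$: if some $t \in T'$ satisfied $t \subseteq x \cup t_1$, the original (d) with $s = t_1$ would give $t \subseteq t_1$, which combined with $t \neq t_1$ contradicts the minimality of $t_1$ in the enumeration of $T$. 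Induction provides $(x \cup t_1) \cup (X \setminus \{t_1\}) \in T'A$, and Lemma~\ref{L:calc}(i) (applied with the divisor $t_1$, which is not a vertex of $T'A$) finishes the case.

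For the $\Rightarrow$ direction I would reverse the argument: Lemma~\ref{L:calc} yields the appropriate face of $T'A$, to which the inductive hypothesis supplies conditions (a)--(d) for $T'$; conditions (b) and (c) transfer unchanged to $T$, while (a) extends to $t_1$ since Lemma~\ref{L:calc} forces $t_1 \not\subseteq x \cup X$ and hence $t_1 \not\subseteq x$. The main obstacle is verifying condition (d) for $t = t_1$. Treating the case $t_1 \notin X$, given $s \in X$ with $t_1 \subseteq x \cup s$, I would set $t^* := t_1 \cup s$: condition (b) from induction gives $x \cup \bigcup X \in A$, and since $s \subseteq \bigcup X$ by linearity of $X$, downward closure yields $x \cup s \in A$; hence $t^* \subseteq x \cup s$ is itself a face of $A$, and additivity of $T$ forces $t^* \in T$. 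If $t^* = t_1$ then $s \subseteq t_1$, contradicting the non-decreasing enumeration since $s \in T \setminus \{t_1\}$; otherwise $t^* \in T'$ with $s \subseteq t^* \subseteq x \cup s$, and (d) for $T'$ applied to $(t^*, s)$ forces $t^* \subseteq s$, hence $t_1 \subseteq t^* = s$. The case $t_1 \in X$ is parallel, now requiring the same additivity argument to establish the comparability of $t_1$ with each $s \in X \setminus \{t_1\}$ needed for condition (c). Thus additivity of $T$ plays its essential role precisely in this verification; every other step is straightforward bookkeeping with Lemma~\ref{L:calc}.
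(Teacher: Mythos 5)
Your proposal is correct, but it takes a genuinely different route from the paper's proof. You run a single induction on the size of $T$, peeling off a $\subseteq$-minimal element $t_1$ and using the factorization $TA=t_1(T'A)$ with $T'=T\setminus\{t_1\}$, and you handle both implications inside that one induction; the crux of your forward direction is the additivity trick: from $t_1\subseteq x\cup s$ you form $t^*=t_1\cup s$, observe it is a non-empty subset of the face $x\cup s$ and hence lies in $T$ by additivity, rule out $t^*=t_1$ by minimality, and then condition (d) for $T'$ forces $t^*=s$, so $t_1\subseteq s$; the same computation (with the trivial inclusion $t_1\cup s\subseteq(x\cup t_1)\cup s$) yields the comparability of $t_1$ with the elements of $X\setminus\{t_1\}$ needed for (c) when $t_1\in X$, as you indicate. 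The paper instead first settles the case $X=\emptyset$, proves $\Leftarrow$ by induction on the size of $X$, peeling off the $\subseteq$-smallest $s'\in X$ and splitting $T$ into $T'=\{t\in T\mid t\not\subseteq s'\}$ and the leftover family, so that $TA=\big(T\setminus(T'\cup\{s'\})\big)\,s'\,T'A$; and it proves $\Rightarrow$ directly rather than inductively, getting (a) from the $X=\emptyset$ case, getting (b) and (d) by applying Lemma~\ref{L:calc} along an enumeration of $T$ in which the given $s\in X$ is placed so that all $t_j\not\subseteq s$ come after it, and getting (c) by a separate contradiction argument in which additivity supplies $s_1\cup s_2\in T$. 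Both arguments ultimately rest on Lemma~\ref{L:calc} and on the disjointness ${\rm Vr}(A)\cap A=\emptyset$; in your write-up you should state once explicitly that this disjointness also gives $t_1\subseteq x\cup X\Leftrightarrow t_1\subseteq x$ (elements of $t_1$ are vertices of $A$ while elements of $X$ are faces), since you use exactly this when discharging the condition $t_1\not\subseteq x\cup X$ required by Lemma~\ref{L:calc} in both cases. On balance, your route is more uniform — one induction and a single, localized use of additivity — at the cost of checking that hypotheses (a)--(d) survive the substitution $x\mapsto x\cup t_1$; the paper's route avoids that substitution but pays with the tailored enumeration in the forward direction and a separate treatment of (c).
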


\begin{proof} We first note that if $X$ is empty, then the current lemma is an immediate consequence of Lemma~\ref{L:calc}(ii). We have to show that $x$ is a face of $TA$ if and only if (a) and (b) hold (as (c) and (d) are fulfilled vacuously). We order $T$ as $t_0\cdots t_n$ in a non-decreasing manner. Fix $i\leq n$, note that $t_i\not\in x$, since $x\subseteq {\rm Vr}(A)$ and $t_i$ is a face of $A$.  Now, Lemma~\ref{L:calc}(ii) implies that $x$ is a face of $t_it_{i+1}\cdots t_n A$ if and only if $x$ is a face of $t_{i+1}\cdots t_nA$ and $t_i\not\subseteq x$.  
The equivalence from the conclusion follows. 

Now, we tackle direction $\Leftarrow$. It is proved by induction on the size of $X$. 
The case $X=\emptyset$ is already taken care of. Assume now that $X$ has $k$ elements with $k\geq 1$.
Let $s'$ be the smallest under inclusion element of $X$, which exists by (c), and let 
\[
T' = \{ s\in T\mid s\not\subseteq s' \}\; \hbox{ and }\; X'= \{ s\in X\mid s\not= s'\}.
\]
Then $T'$ is additive in $A$ and $X'$ is a linearly ordered subset of $T'$ of size $k-1$. 
Consider $T'A$. It is easy to check that conditions (a--d) hold for $T'$, $X'$, and $x\cup s'$. We only point out that 
(b) and (c) are immediate from (b) and (c) for $T, X, x$; to see (a), we apply (d) for $T, X, x$; to see (d) we apply (c) and (d) for $T, X, x$ and use the choice of $s'$. 
So, by the induction hypothesis for $T', X', x\cup s'$, we get 
\begin{equation}\label{E:gar}
x\cup s' \cup X' \hbox{ is a face of } T'A. 
\end{equation} 
By (a) for $T, X, x$ and since $s\not\in s'$ for $s\in T'$, we have 
\begin{equation}\label{E:xsp2} 
s'\not\subseteq x\cup  X'.
\end{equation}
Observe also that $s'$ is not a vertex of $T'A$ as $s'\not\in T'$ and $s'$ is not a vertex of $A$ since it is a face of $A$. Now, from \eqref{E:gar} and \eqref{E:xsp2}, by Lemma~\ref{L:calc}(i), we get 
\[
x \cup X' \cup \{ s'\} \hbox{ is a face of } s'T'A, 
\]
that is, 
\begin{equation}\label{E:ysp}
x\cup  X \hbox{ is a face of } s' T' A.
\end{equation} 

Note that $T\setminus (T'\cup \{ s'\})$ is an additive family of faces of $s' T' A$. Additivity is obvious since $T$ is additive and $s'$ is not a face of $s'T'A$; to see that $T\setminus (T'\cup \{s'\})$ consist of faces of $s'T'A$ apply Lemma~\ref{L:calc}(ii) and observe that no set from $\{ s'\} \cup T'$ is included in a set from $T\setminus (T'\cup \{ s'\})$.  
Furthermore, by (a) for $T, X, x$, we have that, for each $t\in T\setminus (T'\cup \{ s'\})$,  
\[
t\not\subseteq x\cup X.
\]
Thus, it follows from \eqref{E:ysp} and the conclusion of the lemma with $X=\emptyset$ that 
\[
x\cup X \in \big(T\setminus (T'\cup \{ s'\})\big)\, s'\, T' A = TA,
\]
as required. 

Now we show $\Rightarrow$. Assume $x\cup X$ is a face of $TA$. Then $x$ is a face of $TA$ and (a) follows from the case $X=\emptyset$ proved above. 

Since the case $X=\emptyset$ is taken care of in the beginning of the proof, we assume that $X\not=\emptyset$ in what follows. 

Fix $s\in X$. 
We show that if $x\cup \{ s\}\in TA$, then $x\cup s \in A$ and, for each $t\in T$, if $t\subseteq x\cup s$, then $t\subseteq s$. Note, that this will prove (d) and, also, it will prove (b) as long as we show (c) (which we do below). Fix a non-decreasing enumeration $t_0\cdots t_i\cdots t_n$ of $T$ with $t_i=s$ and such that of $t_j\not\subseteq s$, then $j>i$. Since $x\cup \{ s\}$ is a face of $TA$, $s$ is not a vertex of $A$, and $s\not= t_j$ for all $j\not= i$, we have that $x\cup \{ s\}$ is a face of $t_i\cdots t_nA$.  Thus, by Lemma~\ref{L:calc}(i), we get that $x\cup s$ is a face of $t_{i+1}\cdots t_nA$. 
Since $x\cup s \subseteq {\rm Vr}(A)$, so $t_j\not\in x\cup s$ for all $j$, we see, by Lemma~\ref{L:calc}(ii), that $x\cup s\in A$ and $t_j\not\subseteq x\cup s$ for all $j>i$. It follows that if $t\in T$ and $t\subseteq x\cup s$, then $t=t_j$ for $j\leq i$, which, by the choice of our enumeration of $T$, implies that $t\subseteq s$. 

It remains to show (c). Towards a contradiction, assume that $x\cup X\in TA$ and there are $s_1, s_2\in X$ such that $s_1\not\subseteq s_2$ and 
$s_2\not\subseteq s_1$. Then $\{ s_1, s_2\}$ is a face of $TA$. We show that this leads to a contradiction. 
Let $T$ be non-decreasingly enumerated as $t_0\cdots t_n$. Then, for some $i, j\leq n$, we have 
\begin{equation}\label{E:thr}
\begin{split}
&s_1= t_i\hbox{ and  }s_2 = t_j, \hbox{ and }\\
&\big(s_1\cup s_2 = t_k,\hbox{ for some }k>\max (i,j),\hbox{ or } s_1\cup s_2 \hbox{ is not a face of }A\big). 
\end{split}
\end{equation} 
By symmetry, we can assume $i<j$. Then, by Lemma~\ref{L:calc}(i), 
$s_1\cup \{ s_2\}$ is a face of 
$t_{i+1}\cdots t_n A$. This holds precisely when $s_1\cup s_2$ is a face of $t_{j+1}\cdots t_n\, A$, by Lemma~\ref{L:calc}(i) again. But $s_1\cup s_2$ is not a face of this complex by the third conjunct of \eqref{E:thr}, a contradiction.  
\end{proof}

We also register the following lemma with the description of the set of vertices a complex divided by an additive family of faces. 

\begin{lemma}\label{L:vrst} 
Let $A$ be a divided complex, and let $T$ be an additive family of faces of $A$. Then 
\begin{equation}\notag
{\rm Vr}(TA\,) = T\cup \Big( {\rm Vr}(A\,)\setminus \{ y\mid \{ y\}\in T\}\Big).
\end{equation} 
\end{lemma}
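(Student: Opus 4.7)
The plan is to read off the vertices of $TA$ directly from Lemma~\ref{L:linear}, applied to singleton faces. A vertex of $TA$ is by definition an element $v$ such that $\{v\}$ is a face of $TA$. By Lemma~\ref{L:linear}, this means $\{v\} = x \cup X$ for some $x \subseteq {\rm Vr}(A)$ and $X \subseteq T$ satisfying conditions (a)--(d) of that lemma.

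First I would observe that ${\rm Vr}(A) \cap T = \emptyset$. This uses the fact that $T$ consists of faces of $A$ and that, by the divided-complex condition \eqref{E:nottr}, no face of $A$ lies in the transitive closure of another face; in particular no element of $T$ can be a vertex of $A$, since vertices are elements of faces. As a consequence, $x \subseteq {\rm Vr}(A)$ and $X \subseteq T$ are disjoint, so in the equation $\{v\} = x \cup X$ exactly one of $x, X$ is the singleton $\{v\}$ and the other is empty. This splits the analysis cleanly into two cases.

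In the case $X = \{v\}$, $x = \emptyset$ (so $v \in T$), conditions (a)--(d) of Lemma~\ref{L:linear} are all immediate: (a) holds because every $t \in T$ is a non-empty face, so $t \not\subseteq \emptyset$; (b) holds because $v \in T \subseteq A$; (c) is trivial as $X$ has one element; and (d) reduces to the tautology that $t \subseteq v$ implies $t \subseteq v$. Thus every $v \in T$ is a vertex of $TA$. In the case $x = \{v\}$, $X = \emptyset$ (so $v \in {\rm Vr}(A)$), (b) holds by $v \in {\rm Vr}(A)$, while (c) and (d) are vacuous. Condition (a) then becomes: no $t \in T$ is contained in $\{v\}$; since faces are non-empty, this is equivalent to $\{v\} \notin T$, i.e., to $v \notin \{y \mid \{y\} \in T\}$. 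So the vertices contributed by this case are exactly those in ${\rm Vr}(A) \setminus \{y \mid \{y\} \in T\}$.

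Combining the two cases yields both inclusions of the claimed equality. There is no real obstacle here beyond careful bookkeeping: the only subtlety is verifying the disjointness ${\rm Vr}(A) \cap T = \emptyset$ so that the case split is exhaustive, and noting in condition (a) that $\emptyset \notin T$ because faces are required to be non-empty. Everything else is a direct application of Lemma~\ref{L:linear}.
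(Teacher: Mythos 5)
Your proof is correct and follows essentially the same route as the paper: apply Lemma~\ref{L:linear} to singleton sets, observe that every $t'\in T$ and every $y'\in {\rm Vr}(A)$ with $\{y'\}\notin T$ satisfies conditions (a)--(d), and note that condition (a) fails exactly for $y'\in{\rm Vr}(A)$ with $\{y'\}\in T$. Your explicit check that ${\rm Vr}(A)\cap T=\emptyset$ (which already follows from \eqref{E:vrad}) merely makes precise the case split the paper treats as clear.
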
 

\begin{proof} Clearly, we have ${\rm Vr}(TA\,) \subseteq  T\cup {\rm Vr}(A\,)$. We note that for each $t'\in T$ and $y'\in  {\rm Vr}(A\,)\setminus \{ y\mid \{ y\}\in T\}$, 
the sets $\{ t'\}$ and $\{ y'\}$ satisfy the conditions of Lemma~\ref{L:linear}, so they are faces of $TA$, which makes $t'$ and $y'$ into vertices of $TA$. On the other hand, for $y'\in {\rm Vr}(A)$ with $\{ y'\}\in T$, we see that $\{ y'\}$ contains an element of $T$ as a subset, so it does not fulfill condition (a). It follows that $y'\not\in {\rm Vr}(TA)$. 
\end{proof}

We will need a precise description of faces of complexes after two and three divisions. This is done in the two lemmas below.

\begin{lemma}\label{L:calc2} 
Let $A$ be a divided complex, and let $s,t\in {\rm Fin}^+$ be such that $s$ is not a vertex of $tA$, and $t$ is not a vertex of $A$. 
Let $x$ be in ${\rm Fin}^+$, and put 
$x^0 = x\setminus \{ s,t\}$. 
We have 
\begin{equation}\notag
x\in s\, t A \Longleftrightarrow 
\begin{cases}
\big(s\setminus \{ t\}\big) \cup t\cup x^0 \in A,\; (t\setminus s)\not\subseteq x^0,\; s\not\subseteq x, &\hbox{ if } s\in x,\, t\in s\cup x,\\ 
s\cup x^0  \in A,\; (t\setminus s)\not\subseteq x^0,\; s\not\subseteq x, &\hbox{ if } s\in x,\, t\not\in s\cup x,\\
t\cup  x^0  \in A,\; t\not\subseteq x^0,\; s\not\subseteq x, &\hbox{ if } s\not\in x,\, t\in x,\\
x^0 \in A,\; t\not\subseteq x^0,\; s\not\subseteq x, &\hbox{ if } s\not\in x,\, t\not\in x.
\end{cases}
\end{equation} 
\end{lemma}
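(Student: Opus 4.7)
The plan is to derive the lemma by two consecutive applications of Lemma~\ref{L:calc}. Since $stA = s(tA)$, the first application takes $s$ as the dividing set and $tA$ as the ambient complex, reducing the membership $x \in s(tA)$ to a condition on some auxiliary set being a face of $tA$. The second application then reduces that condition to a statement about $A$. The four cases in the conclusion correspond exactly to the two binary splits involved: first on whether $s \in x$ (which determines which clause of Lemma~\ref{L:calc} to invoke), and then on whether $t$ belongs to the intermediate set produced by the first step.

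Cases~3 and 4 (where $s \notin x$) are the easier ones. Lemma~\ref{L:calc}(ii) gives $x \in s(tA) \iff x \in tA$ and $s \not\subseteq x$. For case~4, where additionally $t \notin x$, a second application of Lemma~\ref{L:calc}(ii) produces the stated equivalence directly, using $x^0 = x$. For case~3, where $t \in x$, Lemma~\ref{L:calc}(i) gives $x \in tA \iff t \cup (x \setminus \{t\}) \in A$ and $t \not\subseteq x$; the identity $x \setminus \{t\} = x^0$ is immediate, and the equivalence $t \not\subseteq x \iff t \not\subseteq x^0$ follows from well-foundedness (which rules out $t \in t$), so no element of $t$ equals $t$.

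Cases~1 and 2 (where $s \in x$) require the key set-theoretic computation: setting $y := s \cup (x \setminus \{s\})$, Lemma~\ref{L:calc}(i) gives $x \in s(tA) \iff y \in tA$ and $s \not\subseteq x$. Because $s \notin s$ by well-foundedness, one checks that $y$, as a set of elements, equals $(s \cup x) \setminus \{s\}$, so $t \in y \iff t \in s \cup x$ and $t \neq s$. The split between case~1 ($t \in s \cup x$) and case~2 ($t \notin s \cup x$) then drives the second application. In case~2, Lemma~\ref{L:calc}(ii) applied to $y \in tA$ yields $y \in A$ and $t \not\subseteq y$; the identities $y = s \cup x^0$ and $t \subseteq y \iff (t \setminus s) \subseteq x^0$ (both routine, using well-foundedness to discard $s$ and $t$ as elements of $t \setminus s$) give the stated formula. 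In case~1, Lemma~\ref{L:calc}(i) gives $t \cup (y \setminus \{t\}) \in A$, and the main identity $t \cup (y \setminus \{t\}) = (s \setminus \{t\}) \cup t \cup x^0$ is obtained by distributing the removal of $t$ across $s \cup (x \setminus \{s\})$. I expect the main—indeed only—obstacle to be the degenerate boundary $s = t \in x$ of case~1, where $t \notin y$ despite $t \in s \cup x$: there the condition $(t \setminus s) \not\subseteq x^0$ reduces to $\emptyset \not\subseteq x^0$ and fails, and correspondingly $x \notin stA$ follows from the hypothesis that $s$ is not a vertex of $tA$ (which forces $s = t$ to not be a face of $A$, hence $stA = A$, while $s \in x$ would make $s$ a vertex of $A$, contradicting the hypothesis on $t = s$). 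Thus both sides of the claimed equivalence fail consistently in the degenerate case, and the proof goes through.
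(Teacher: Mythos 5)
Your proposal is correct and is exactly the argument the paper intends: the paper's proof is the one-line remark that two careful applications of Lemma~\ref{L:calc} yield the conclusion, and you carry out precisely those two applications, with the case split on $s\in x$ and then on membership of $t$ in the intermediate set, the set-identities (e.g.\ $t\cup(y\setminus\{t\})=(s\setminus\{t\})\cup t\cup x^0$) verified via $s\notin s$, $t\notin t$, and the degenerate case $s=t\in x$ correctly disposed of using the hypothesis that $s$ is not a vertex of $tA$.
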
 

\begin{proof} 
Two careful applications of Lemma~\ref{L:calc} yield the conclusion. 
\end{proof}

The next lemma is a technical consequence of Lemmas~\ref{L:calc} and \ref{L:calc2}. Given two faces $s, t$ of a complex $A$, it determines the faces of the complex $u\,s\,t\,A$, for an appropriately chosen $u$. The lemma will be crucially used in the proofs of Theorems~\ref{T:ord3} and \ref{T:ord}.

\begin{lemma}\label{L:calc4} 
Let $A$ be a divided complex, let $s,t\in A$, and let $r$ be a set such that 
\[
s\setminus t\subseteq r \subseteq s.
\] 
Set $u= r\cup \{ t\}$. Let $y\in {\rm Fin}^+$, and set $y^0 = y\setminus \{ s,t,u\}$.

\begin{enumerate} 
\item[(A)] Assume $s\not= t$. The following statements hold. 
\begin{enumerate}
\item[(i)]  If $u\in y$, 
then $y\in u\, s\, t A$ if and only if 
\begin{equation}\notag
s\cup t\cup y^0 \in A,\; (s\setminus r)\not\subseteq y^0,\;  r \cup \{ t\}\not\subseteq y,\; 
(t\setminus s)\cup \{ s\} \not\subseteq y. 
\end{equation}

\item[(ii)]  If $u\not\in y$, then $y\in u\, s\, t A$ if and only if 
\begin{equation}\notag
\begin{cases}
s\cup t\cup y^0 \in A,\; (t\setminus s)\not\subseteq y^0,\; r\not\subseteq y^0, &\hbox{ if } s\in y,\, t\in y,\\ 
t\cup  y^0  \in A,\; t\not\subseteq y^0,\; r\not\subseteq y^0, &\hbox{ if } s \not\in y, \, t\in y,\\
s\cup y^0  \in A,\; (t\setminus s)\not\subseteq y^0,\; s\not\subseteq y^0, &\hbox{ if } s \in y,\, t\not\in y,\\
y^0 \in A,\; t\not\subseteq y^0,\; s\not\subseteq y^0, &\hbox{ if } s \not\in y,\, t\not\in y.
\end{cases}
\end{equation} 
\end{enumerate}

\item[(B)] Assume $s=t$. Then $s\,t\,A= tA$ and the following statements hold. 
\begin{enumerate}
\item[(i)]  If $u\in y$, 
then $y\in u\, t A$ if and only if 
\begin{equation}\notag
t\cup y^0 \in A,\; (t\setminus r)\not\subseteq y^0,\;  r \cup \{ t\}\not\subseteq y. 
\end{equation} 

\item[(ii)]  If $u\not\in y$ and $t \in y$, then $y\in u\, t A$ if and only if 
\begin{equation}\notag
\begin{cases} 
 t\cup y^0 \in A,\; r\not\subseteq y^0,& \hbox{ if $t\in y$,}\\
 y^0 \in A,\; t\not\subseteq y^0,& \hbox{ if $t\not\in y$.}
 \end{cases}
\end{equation} 
\end{enumerate}
\end{enumerate} 
\end{lemma}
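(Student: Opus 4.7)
My plan is to proceed by iterated application of Lemmas~\ref{L:calc} and~\ref{L:calc2}, supplemented by two set-theoretic identities that make the extensive case analyses manageable. I will first verify the non-vertex hypotheses needed to invoke those lemmas: $t \notin {\rm Vr}(A)$ follows from $t \in A$ and condition \eqref{E:nottr}; $s \notin {\rm Vr}(tA)$ follows from $s \in A$ and $s \neq t$ in Part (A); and $u = r \cup \{t\} \notin {\rm Vr}(s\,tA)$, because $u \neq t$ (as $t \in u$), $u \neq s$ (since $t \in u$ while $t \notin s$ by \eqref{E:nottr}), and $u$ is not an urelement.

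Part (B) will be almost immediate: when $s = t$, the set $t$ is not a face of $tA$ (by \eqref{E:nottr}), so $s\,t\,A = t\,t\,A = tA$, and the conclusion follows from two applications of Lemma~\ref{L:calc}, one for division by $u$ and one for $t$, followed by direct substitution.

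For Part (A), in each of cases (i) and (ii) I will first apply Lemma~\ref{L:calc} to the outer division by $u$, reducing $y \in u\,s\,tA$ to membership $x \in s\,tA$ together with $u \not\subseteq y$, where $x = (y \setminus \{u\}) \cup r \cup \{t\}$ when $u \in y$ and $x = y$ when $u \notin y$. Next I will apply Lemma~\ref{L:calc2} to $x \in s\,tA$, case-splitting on whether $s, t \in x$. In case (A)(i), $t \in x$ is automatic and $s \in x$ iff $s \in y$, while in case (A)(ii), $x = y$, so the four cases of Lemma~\ref{L:calc2} correspond directly to the four subcases in the lemma. The identification $x^0 = y^0 \cup r$ in case (A)(i) (using $r \cap \{s, t\} = \emptyset$, which follows from $r \subseteq s$ together with $s, t \notin s$) or $x^0 = y^0$ in case (A)(ii) then converts all conditions into statements about $y$ and $y^0$.

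The clean simplifications rest on two identities: $r \cup t = s \cup t$, following from $s \setminus t \subseteq r \subseteq s$, which unifies the principal face condition across subcases to $s \cup t \cup y^0 \in A$; and $t \setminus r = (t \setminus s) \cup (s \setminus r)$, from $r \subseteq s$, which shows that in (A)(i) the condition $(s \setminus r) \not\subseteq y^0$ automatically forces $(t \setminus r) \not\subseteq y^0$. The unified form $(t \setminus s) \cup \{s\} \not\subseteq y$ in (A)(i) is designed precisely so that it is automatic when $s \notin y$ and reduces to $(t \setminus s) \not\subseteq y^0$ when $s \in y$; in (A)(ii), the condition $r \not\subseteq y^0$ arising from $u \not\subseteq y$ (when $t \in y$) implies $s \not\subseteq y^0$ since $r \subseteq s$, making the latter redundant wherever it would otherwise appear. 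The main obstacle will be bookkeeping rather than conceptual: the delicate step is unifying the two subcases $s \in y$ and $s \notin y$ of (A)(i) into a single uniform statement, which becomes transparent only after applying both identities and recognizing the resulting redundancies.
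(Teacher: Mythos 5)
Your proposal is correct and follows essentially the same route as the paper: reduce the outer division by $u$ via Lemma~\ref{L:calc}, then apply Lemma~\ref{L:calc2} to $x\in s\,tA$ with $x^0=r\cup y^0$ (resp.\ $x=y$), and simplify with $r\cup t=s\cup t$, $s\setminus r\subseteq t\setminus r$, and the redundancy of $s\not\subseteq y^0$ given $r\not\subseteq y^0$ — exactly the paper's computation, with your part (B) differing only cosmetically (two uses of Lemma~\ref{L:calc} instead of one use of Lemma~\ref{L:calc2} with $u,t$ in place of $s,t$). One small justification to patch: $u\notin {\rm Vr}(s\,tA)$ cannot be deduced from $u$ not being an urelement, since vertices of a divided complex need not be urelements; instead argue $u\notin{\rm Vr}(A)$ from $t\in u$ together with $t\notin{\rm tc}\big({\rm Vr}(A)\big)$, which follows from $t\in A$ and condition \eqref{E:nottr}.
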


\begin{proof} We freely use Lemma~\ref{L:ok} in this proof, so, for example, from the assumptions $s, t\in A$, we get $s, t\not\in {\rm tc}\big( {\rm Vr}(A)\big)$, $s\not\in t$, and $t\not\in s$. 

Observe that $u$ is not a vertex of $stA$ since $u\not= s$ (as $t\not\in s$), $u\not= t$ (as $t\not\in t$), and $u$ is not a vertex of $A$ (as $t\not\in {\rm tc}\big( {\rm Vr}(A)\big)$). Also $t$ is not a vertex of $A$ (as $t\in A$) and $s$ is not a vertex of $tA$ (as $t\not\in s$ and $s$ is not a vertex of $A$). 
Thus, Lemmas~\ref{L:calc} and \ref{L:calc2} are applicable.

We prove (A). 
Assume first that $u \in y$.
Applying Lemma~\ref{L:calc}(i), we conclude that $y\in u\, s\, t A$ precisely when 
\begin{equation}\label{E:uull}
u\cup \big( y\setminus \{ u \} \big) \in s\, t A\;\hbox{ and }\;u\not\subseteq y.
\end{equation}  
To 
unravel the condition $u\cup \big( y\setminus \{ u\} \big) \in s\, t A$, we apply Lemma~\ref{L:calc2} with 
\[
x= u\cup \big( y\setminus \{ u\} \big). 
\]
We note that  $t \in u$, so 
\[
t \in x, 
\]
and, therefore, 
only the first and third cases on the right hand side of the equivalence in Lemma~\ref{L:calc2} are relevant. We note that 
$s \not\in u$, since $s\not= t$,  and, therefore, 
\[
s \in x\;\Leftrightarrow\; s \in y. 
\]
Now, we apply Lemma~\ref{L:calc2} with $x=u\cup \big( y\setminus \{ u \} \big)$. Keeping in mind that 
\[
x^0= r\cup y^0,\; t\not\in s,\; r\subseteq s,
\]  
we see that $x\in s\, t A$ holds precisely when 
\begin{equation}\notag
\begin{cases}
s\cup t\cup y^0 \in A,\; (t\setminus s)\not\subseteq y^0,\; (s\setminus r)\not\subseteq y^0, &\hbox{ if } s \in y,\\ 
r\cup t\cup y^0  \in A,\;  (t\setminus r)\not\subseteq  y^0,\; (s\setminus r) \not\subseteq y^0, &\hbox{ if }s\not\in y.\\
\end{cases}
\end{equation} 
Observe that by using our assumptions on $r$, we have $r\cup t = s\cup t$ and $s\setminus r\subseteq t\setminus r$, so the above formulas can be restated as 
\begin{equation}\notag
\begin{cases}
s\cup t\cup y^0 \in A,\; (t\setminus s)\not\subseteq y^0,\; (s\setminus r)\not\subseteq y^0, &\hbox{ if } s \in y,\\ 
s\cup t\cup y^0  \in A,\;  (s\setminus r) \not\subseteq y^0, &\hbox{ if } s\not\in y.\\
\end{cases}
\end{equation} 
Thus, using the second part of \eqref{E:uull}, we reach the conclusion that, under the assumption $u\in y$, the condition $y\in u\, s\, t A$ is equivalent to 
\begin{equation}\label{E:eqlar}
s\cup t\cup y^0 \in A,\, r \cup \{ t \}  \not\subseteq y,\, (s\setminus r)\not\subseteq y, \hbox{ and, when }s \in y,\, (t\setminus s)\not\subseteq y.
\end{equation}
The last part of the right hand side of \eqref{E:eqlar} can be rephrased as $(t\setminus s)\cup \{ s\}\not\subseteq y$, which transforms \eqref{E:eqlar} into (i).

Now, assume $u \not\in y$. We apply Lemma~\ref{L:calc}(ii) to see that $y\in u\, s\, t A$ precisely when 
$y \in s\, t A$ and $u\not\subseteq y$; this last condition being equivalent to the disjunction
$r\not\subseteq y$ or $t\not\in y$. Thus, under this condition, using Lemma~\ref{L:calc2} with $x=y$ and keeping in mind that $u,s\not\in r$ and $u,t\not\in s$, 
we get the statements in (ii).

We show (B). We apply Lemma~\ref{L:calc2} (with $s=u$ and $t$ and $x=y$). Since $t\in u$, we see that the conditions from Lemma~\ref{L:calc2} equivalent to $y\in u\,t\,A$ translate to 
\begin{equation}\notag
\begin{cases}
r \cup t\cup y^0 \in A,\; (t\setminus r)\not\subseteq y^0,\; u\not\subseteq y, &\hbox{ if } u\in y,\, t\in u\cup y,\\ 
t\cup  y^0  \in A,\; t\not\subseteq y^0,\; u\not\subseteq y, &\hbox{ if } u\not\in y,\, t\in y,\\
y^0 \in A,\; t\not\subseteq y^0,\; u\not\subseteq y, &\hbox{ if } u\not\in y,\, t\not\in y.
\end{cases}
\end{equation} 
with the second condition in Lemma~\ref{L:calc2} omitted since $t\not\in u\cup y$ fails. Now after noticing that $u= r\cup \{t \}$ and $r\subseteq t$, we restate the assertions above as 
\begin{equation}\notag
\begin{cases}
t\cup y^0 \in A,\; (t\setminus r)\not\subseteq y^0,\; r\cup \{ t\} \not\subseteq y, &\hbox{ if } u\in y,\\ 
t\cup  y^0  \in A,\; t\not\subseteq y^0,\; r\cup \{ t\} \not\subseteq y, &\hbox{ if } u\not\in y,\, t\in y,\\
y^0 \in A,\; t\not\subseteq y^0,\; r\cup \{ t\}\not\subseteq y, &\hbox{ if } u\not\in y,\, t\not\in y.
\end{cases}
\end{equation} 
Seeing that $u\not\in r$, we get the final restatement 
\begin{equation}\notag
\begin{cases}
t\cup y^0 \in A,\; (t\setminus r)\not\subseteq y^0,\; r\cup \{ t\} \not\subseteq y, &\hbox{ if } u\in y,\\ 
t\cup  y^0  \in A,\; t\not\subseteq y^0,\; r \not\subseteq y^0, &\hbox{ if } u\not\in y,\, t\in y,\\
y^0 \in A,\; t\not\subseteq y^0, &\hbox{ if } u\not\in y,\, t\not\in y.
\end{cases}
\end{equation} 
Clearly, this is the content of (B). 
\end{proof}

\section{Proofs of grounded simpliciality of maps}\label{A:isome}

In this section, we prove that various maps are grounded simplicial. In light of Lemma~\ref{L:simsp}, our task is as follows. We are given two divided complexes $A$ and $B$ and a map 
$f\colon {\rm Vr}(B)\to {\rm Vr}(A)$. We need to show that $f$ fulfills the following two conditions:
\begin{enumerate}
\item[(S1)] if $t\in B$, then $f(t)\in A$ and ${\rm sp}\big( f(t)\big)\subseteq {\rm sp}(t)$;

\item[(S2)] if $s\in A$, then there exists $t\in B$ with $f(t)=s$ and ${\rm sp}(t)= {\rm sp}(s)$.
\end{enumerate}

\subsection{Weld maps}\label{Su:weldpro} 

We restate the definition of weld maps given in Section~\ref{S:divmap} in a manner appropriate to the setup of Section~\ref{S:divset}.
Let $A$ be a divided complex and let $s$ be in ${\rm Fin}^+$.
We have 
\[
{\rm Vr}(sA) =
\begin{cases}
{\rm Vr}(A), &\hbox{ if }s\not\in A;\\
\big({\rm Vr}(A)\setminus s\big)\cup \{ s\}, &\hbox{ if }s\in A\hbox{ and } s\hbox{ is a one-element set};\\
{\rm Vr}(A) \cup \{ s\}, &\hbox{ if }s\in A\hbox{ and }s \hbox{ has at least two elements}.
\end{cases} 
\]
If $s\not\in {\rm Vr}(A)$, define 
\[
\pi^A_{p,s} (y) = 
\begin{cases}
y, &\hbox{ if } y\in {\rm Vr}(A);\\
p, &\hbox{ if }y=s.
\end{cases} 
\]

\begin{lemma}\label{L:welgr2} 
Let $A$ be a divided complex, let $s\in {\rm Fin}^+$ not be a vertex of $A$, and let $p\in s$. Then the weld map $\pi^A_{p.s}\colon sA\to A$ is grounded simplicial. 
\end{lemma}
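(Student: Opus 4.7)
The plan is to apply Lemma~\ref{L:simsp}: it suffices to check the two conditions
\begin{itemize}
\item[(S1)] for every face $t$ of $sA$, the set $\pi^A_{p,s}(t)$ is a face of $A$ with ${\rm sp}\big(\pi^A_{p,s}(t)\big)\subseteq {\rm sp}(t)$;
\item[(S2)] for every face $r$ of $A$, there is a face $t$ of $sA$ with $\pi^A_{p,s}(t)=r$ and ${\rm sp}(t)={\rm sp}(r)$.
\end{itemize}
I would first dispose of the trivial case $s\notin A$, where $sA=A$ and $\pi^A_{p,s}$ is the identity on vertices, so both conditions hold. From now on I assume $s\in A$, so that the faces of $sA$ are, by the defining formula \eqref{E:divinf}, of exactly two kinds: old faces $y\in A$ with $s\not\subseteq y$, and new faces $y\cup\{s\}$ with $s\cup y\in A$ and $s\not\subseteq y$.

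For (S1), an old face $y$ is fixed pointwise by $\pi^A_{p,s}$ (using the assumption $s\notin {\rm Vr}(A)$, which ensures $s\notin y$), so $\pi^A_{p,s}(y)=y\in A$ and the support is unchanged. A new face $t=y\cup\{s\}$ has pointwise image $\pi^A_{p,s}(t)=y\cup\{p\}$, which is a non-empty subset of $y\cup s\in A$ and hence a face of $A$. The support inclusion ${\rm sp}(y\cup\{p\})\subseteq{\rm sp}(y\cup\{s\})$ follows from \eqref{E:sppr} together with $p\in s$, since ${\rm sp}(\{p\})={\rm sp}(p)\subseteq {\rm sp}(s)={\rm sp}(\{s\})$.

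The substantive step is (S2). Fix $r\in A$. If $s\not\subseteq r$, then $r$ itself is an old face of $sA$, $\pi^A_{p,s}(r)=r$, and the supports agree. The delicate case is $s\subseteq r$, where no old face of $sA$ lifts $r$. My proposal is the candidate
\[
t=\big(r\setminus\{p\}\big)\cup\{s\}.
\]
Setting $y=r\setminus\{p\}$, one checks that $y\cup s=r\in A$ (because $p\in s\subseteq r$) and $s\not\subseteq y$ (because $p\in s$ but $p\notin y$), so $t$ is a new face of $sA$. The image computation gives $\pi^A_{p,s}(t)=(r\setminus\{p\})\cup\{p\}=r$, and the support identity
\[
{\rm sp}(t)={\rm sp}\big(r\setminus\{p\}\big)\cup {\rm sp}(s)={\rm sp}(r)
\]
follows from \eqref{E:sppr}, using the two inclusions ${\rm sp}(s)\subseteq {\rm sp}(r)$ (as $s\subseteq r$) and ${\rm sp}(p)\subseteq {\rm sp}(s)$ (as $p\in s$).

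The only real obstacle is finding the right lift in (S2) when $s\subseteq r$; the naive guess $(r\setminus s)\cup\{s\}$ fails to map onto $r$ as soon as $|s|\geq 2$, since the vertices of $s\setminus\{p\}$ are permanently lost. Removing only $\{p\}$ rather than all of $s$ from $r$ before attaching the new vertex $s$ is what repairs the computation, and with this adjustment both conditions of Lemma~\ref{L:simsp} are immediate.
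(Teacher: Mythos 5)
Your proposal is correct and follows essentially the same route as the paper: both verify conditions (S1) and (S2) of Lemma~\ref{L:simsp}, treat old and new faces separately, and in the only nontrivial case $s\subseteq r$ use exactly the same lift $\big(r\setminus\{p\}\big)\cup\{s\}$, whose image and support are computed just as in the paper's argument.
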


\begin{proof}
Set $\pi^A_{p,s}=\pi$. We check that $\pi$ fulfills (S1). If $t$ is an old face of $sA$, then $t\in A$ and $\pi(t)=t$; obviously, in this case, ${\rm sp}(t)={\rm sp}(\pi(t))$. If 
$\{ s\}\cup t$ is a new face of $sA$, then $s\cup t\in A$, so $\{ p\}\cup t\in A$ as $p\in s$, and $\pi(\{ s\}\cup t)= \{ p\}\cup t$. Clearly, since $p\in s$, we have 
\[
{\rm sp}\big(\pi(\{s\}\cup t)\big)\subseteq {\rm sp}(\{ s\}\cup t).
\]

To see that $\pi$ satisfies (S2), let $t$ be a face of $A$. If $s\not\subseteq t$, then $t$ is a face of $sA$, $\pi(t)=t$, and ${\rm sp}(t)={\rm sp}\big(\pi(t)\big)$. If $s\subseteq t$, then 
\[
\{ s\}\cup (t\setminus \{ p\}) \in sA 
\]
since $s\cup (t\setminus \{ p\}) = t\in A$. We also have
\[
\pi\big( \{ s\}\cup (t\setminus \{ p\}) \big) = \{ p \}\cup \big( t\cup \{ p\}\big)= t,
\]
and, by $s\cup (t\setminus \{ p\}) = t$,
\[
{\rm sp}\big( \{ s\}\cup (t\setminus \{ p\}) \big) = {\rm sp}(t), 
\]
and the lemma is proved. 
\end{proof}

We now restate the definition of maps of the form $\pi_\iota^A$. 
Let $A$ be a divided complex, and let $S$ be an additive family of faces of $A$. Observe that 
\[
{\rm Vr}(SA) = \big( {\rm Vr}(A) \setminus \{ x\mid \{ x\}\in S\} )\cup S.
\]
Let $\iota\colon S\to \bigcup S$ be such that $\iota(s)\in s$ for each $s\in S$. Let 
\[
\pi^A_\iota\colon SA\to A
\] 
be the map defined by 
\[
\pi^A_\iota(y) = 
\begin{cases}
y, &\hbox{ if } y\in {\rm Vr}(A) \setminus \{ x\mid \{ x\}\in S\};\\
\iota(y), &\hbox{ if }y\in S.
\end{cases} 
\]

\begin{lemma}\label{L:ioco2}
Let $A$ be a divided complex, let $S$ be a an additive family of faces of $A$, and let $\iota\colon S\to \bigcup S$ be such that $\iota(s)\in s$. 
Then $\pi^A_\iota$ is a composition of weld maps. 
\end{lemma}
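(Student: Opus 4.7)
The plan is to fix a non-decreasing enumeration $s_0, s_1, \ldots, s_n$ of $S$ and write $\pi^A_\iota$ explicitly as a composition of $n+1$ weld maps, one for each $s_k$, peeling off the divisions in order. Set $A_{n+1} = A$ and, for $0 \leq k \leq n$,
\[
A_k = s_k s_{k+1} \cdots s_n A,
\]
so $A_0 = SA$ and $A_k = s_k A_{k+1}$. The candidate factorization is
\[
\pi^A_\iota = \pi_n \circ \pi_{n-1} \circ \cdots \circ \pi_0, \quad \text{where } \pi_k = \pi^{A_{k+1}}_{\iota(s_k), s_k} : A_k \to A_{k+1}.
\]

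First I would verify that each $\pi_k$ is a well-defined weld map, i.e., that $s_k \notin {\rm Vr}(A_{k+1})$ and $\iota(s_k) \in {\rm Vr}(A_{k+1})$. For the former, since the enumeration is non-decreasing, $s_j \not\subseteq s_k$ for all $j > k$, and a straightforward induction using \eqref{E:divinf} shows $s_k$ remains a face of $A_{k+1}$; then condition \eqref{E:nottr} of Lemma~\ref{L:ok} rules out $\{s_k\}$ being a face of $A_{k+1}$, so $s_k$ is not a vertex. For the latter, $\iota(s_k) \in s_k$, so if $\{\iota(s_k)\} = s_j$ for some $j$, then $s_j \subseteq s_k$ forces $j \leq k$; hence $\{\iota(s_k)\} \notin \{s_{k+1}, \ldots, s_n\}$, and Lemma~\ref{L:vrst} (applied iteratively) gives $\iota(s_k) \in {\rm Vr}(A_{k+1})$.

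Second, I would verify the factorization by evaluating both sides on each vertex of ${\rm Vr}(SA) = S \cup \bigl({\rm Vr}(A) \setminus \{v : \{v\} \in S\}\bigr)$ from Lemma~\ref{L:vrst}. If $v \in {\rm Vr}(A) \setminus \{v : \{v\} \in S\}$, then $v$ is a vertex of every $A_{k+1}$, and $v \neq s_k$ for any $k$ (otherwise $v$ and $\{v\}$ would both be faces of $A$, violating \eqref{E:nottr}); thus every $\pi_k$ fixes $v$, matching $\pi^A_\iota(v) = v$. If instead $v = s_j \in S$, then $\pi_k$ fixes $s_j$ for $k < j$ (since the $s_k$ are pairwise distinct), $\pi_j$ maps $s_j$ to $\iota(s_j)$, and for $k > j$ the map $\pi_k$ fixes $\iota(s_j)$ because $\iota(s_j) \in {\rm Vr}(A)$ while $s_k$ is never in ${\rm Vr}(A)$ (same face/vertex argument via \eqref{E:nottr}). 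So the composition sends $s_j$ to $\iota(s_j) = \pi^A_\iota(s_j)$.

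The only genuine obstacle is the bookkeeping in step one---ensuring that the intermediate $s_k$'s and $\iota(s_k)$'s do not collide with one another as one moves up the tower of divisions. Once this is settled using non-decreasingness of the enumeration together with the well-foundedness condition \eqref{E:nottr}, step two is essentially a formal computation on vertices.
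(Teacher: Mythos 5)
Your proposal is correct and follows essentially the same route as the paper: the paper's proof also fixes a non-decreasing enumeration $s_0\cdots s_n$ of $S$ and writes $\pi^A_\iota = \pi_n\circ\cdots\circ\pi_0$ with $\pi_k = \pi^{s_{k+1}\cdots s_n A}_{\iota(s_k),s_k}\colon s_k s_{k+1}\cdots s_n A\to s_{k+1}\cdots s_n A$. The well-definedness and vertex-by-vertex checks you spell out are exactly the details the paper leaves implicit.
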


\begin{proof} 
Let $\vec{S}= s_0\cdots s_n$ be a non-decreasing enumeration of $S$. For $i\leq n$,
\[
\pi_i=\pi^{s_{i}\cdots s_n A}_{\iota(s_i), s_i}\colon s_is_{i+1}\cdots s_nA \to s_{i+1}\cdots s_nA
\]
given by $s_i\to \iota(s_i)$ is a weld map and 
\[
\pi_\iota= \pi_n\circ\cdots \circ \pi_0.
\]
So $\pi^A_\iota$ is a composition of weld maps. 
\end{proof}

\subsection{Division of grounded simplicial maps} 

Recall the definition of division of grounded simplicial maps from Section~\ref{Su:stdm}. We restate it here with the notation appropriate to divided complexes. Let $A$ and $B$ be divided complexes. Given a grounded simplicial map $f\colon B\to A$ and $s\in A$, define a map 
\[
sf\colon {\rm Vr}\big( \big(f^{-1}(s)\big) B\big) \to {\rm Vr}(sA)
\]
as follows. By Lemma~\ref{L:vrst}, we have 
\[
{\rm Vr}\big( \big(f^{-1}(s)\big) B\big) = f^{-1}(s) \cup \Big( {\rm Vr}(B)\setminus \big\{ x\in {\rm Vr}(B)\mid \{ x\}\in f^{-1}(s)\big\}\Big)
\]
and 
\[
{\rm Vr}(sA) = 
\begin{cases} 
\{ s\}\cup {\rm Vr}(A), &\hbox{ if }\#s >1;\\
\{ s\}\cup \big({\rm Vr}(A)\setminus s\big), &\hbox{ if }\#s =1.
\end{cases}
\]
Define 
\[
(sf)(v)= 
\begin{cases}
s, & \hbox{ if } v\in f^{-1}(s);\\
f(v), & \hbox{ if } v\in {\rm Vr}(B)\setminus \{ x\in {\rm Vr}(B)\mid \{ x\}\in f^{-1}(s).
\end{cases}
\]

\begin{lemma}\label{L:divpr} 
Assume $A, B$ are divided complexes. Let $f\colon B\to A$ be a grounded simplicial map, and let $s$ be a face of $A$. 
Then the map 
\[
sf\colon \big( f^{-1}(s)\big) B\to sA
\]
is grounded simplicial. 
\end{lemma}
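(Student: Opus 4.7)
The plan is to verify the two conditions of Lemma~\ref{L:simsp} for the map $sf\colon (f^{-1}(s))B \to sA$. By Lemma~\ref{L:adco}, $f^{-1}(s)$ is an additive family of faces of $B$, so Lemma~\ref{L:linear} applies: every face of $(f^{-1}(s))B$ has the form $x \cup X$ with $x \subseteq {\rm Vr}(B)$ and $X \subseteq f^{-1}(s)$ satisfying conditions (a)--(d) of that lemma. From the definition of $sf$, the image is $f(x)$ when $X = \emptyset$ and $f(x) \cup \{s\}$ otherwise.

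For (S1), I take $t = x \cup X$ a face of $(f^{-1}(s))B$ and consider the two cases. When $X = \emptyset$, condition (a) forces $s \not\subseteq f(x)$: otherwise for each $p \in s$ I could pick $v_p \in x$ with $f(v_p) = p$, and $u = \{v_p : p \in s\} \subseteq x$ would be a face of $B$ with $f(u) = s$, violating~(a). Thus $f(t) \in sA$ as an old face, and ${\rm sp}(f(t)) \subseteq {\rm sp}(t)$ follows from $f$ being grounded. When $X \neq \emptyset$, condition (b) gives $x \cup \bigcup X \in B$, and applying $f$ yields $f(x) \cup s \in A$; combined with the same argument for $s \not\subseteq f(x)$ via condition (a), this gives $f(x) \cup \{s\} \in sA$. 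For supports, I use $f$ grounded on $x$ and the observation that for any $u \in X$, ${\rm sp}(s) = {\rm sp}(f(u)) \subseteq {\rm sp}(u)$, so ${\rm sp}((sf)(t)) = {\rm sp}(f(x)) \cup {\rm sp}(s) \subseteq {\rm sp}(t)$.

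For (S2), given $r \in sA$, I again split according to whether $r$ is old or new. If $r \in A$ with $s \not\subseteq r$, I pick $t \in B$ with $f(t) = r$ and ${\rm sp}(t) = {\rm sp}(r)$ using $f$ grounded; the condition $s \not\subseteq r$ rules out $f^{-1}(s)$-subsets of $t$ (via the argument above), so $t$ is a face of $(f^{-1}(s))B$ and $(sf)(t) = f(t) = r$. For a new face $r = y \cup \{s\}$ with $y \cup s \in A$ and $s \not\subseteq y$, I pick $t' \in B$ with $f(t') = y \cup s$ and ${\rm sp}(t') = {\rm sp}(y \cup s)$; then, crucially using a \emph{single} choice $z_q \in t'$ with $f(z_q) = q$ for each $q \in y \cup s$, I set $u = \{z_q : q \in s\}$ and $x = \{z_q : q \in y\}$, and take $t = x \cup \{u\}$. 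Checking Lemma~\ref{L:linear} for $X = \{u\}$: (a) holds since $s \not\subseteq y = f(x)$ rules out $f^{-1}(s)$-subsets of $x$; (b) is immediate from $x \cup u \subseteq t'$; (c) is trivial; and (d) holds because any $t'' \in f^{-1}(s)$ contained in $x \cup u$ consists of vertices $z_q$ with $f(z_q) \in s$, forcing $z_q \in u$, so $t'' \subseteq u$. Finally $(sf)(t) = f(x) \cup \{s\} = y \cup \{s\} = r$, and the support equality is obtained by sandwiching ${\rm sp}(t) = {\rm sp}(x \cup u) \subseteq {\rm sp}(t') = {\rm sp}(y \cup s)$ with the reverse inclusion coming from groundedness of $f$ on $x \cup u$.

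The main obstacle I anticipate is the construction in (S2) for a new face: naive attempts that partition vertices of $t'$ according to whether their image lies in $s$ or in $y$ fail when $y \cap s \neq \emptyset$, because condition (d) of Lemma~\ref{L:linear} then breaks. The coordinated choice of a common vertex $z_q$ for each $q \in y \cup s$, which lets the same $z_q$ serve both $x$ and $u$ when $q \in y \cap s$, is exactly what makes (d) work. Everything else reduces to bookkeeping with Lemmas~\ref{L:linear}, \ref{L:adco}, \ref{L:simsp}, and the support identities \eqref{E:sppr}.
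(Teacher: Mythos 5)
Your proof is correct and follows essentially the same route as the paper's: both verify conditions (S1) and (S2) of Lemma~\ref{L:simsp} using the description of the faces of $\big(f^{-1}(s)\big)B$ given by Lemma~\ref{L:linear} with $T=f^{-1}(s)$, and your treatment of (S1) and of old faces in (S2) coincides with the paper's. The only divergence is the witness for a new face $y\cup\{s\}$ in (S2): the paper takes a preimage $t$ of $y\cup s$, a maximal $t_1\subseteq t$ with $f(t_1)=s$ and any $t_2\subseteq t$ with $f(t_2)=y$, getting condition (d) from maximality, whereas you coordinate one preimage vertex per element of $y\cup s$ --- a cosmetic variation of the same idea.
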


\begin{proof} We freely use properties \eqref{E:sppr} of the operation $\rm sp$ in the calculations below.

We start with proving that $sf$ fulfills (S1). By Lemma~\ref{L:linear} with $T=f^{-1}(s)$, a face of $f^{-1}(s)B$ can have one of two forms: 
\begin{enumerate} 
\item[---] it is $r$ with $r\in B$ and $t\not\subseteq r$, for each $t\in f^{-1}(s)$, or 

\item[---] it is $r\cup X$, where $X$ is a non-empty linearly ordered by inclusion subset of $f^{-1}(s)$, with $r\cup \bigcup X\in B$ and $t\not\subseteq r$, for each $t\in f^{-1}(s)$. 
\end{enumerate} 

In the first case, $(sf)(r)= f(r)\in A$. So, to check that $sf(r)\in sA$, it suffices to see that $s\not\subseteq f(r)$. But otherwise, there exists $t\subseteq r$ with $f(t)=s$, that is, $t\in f^{-1}(s)$, contradicting the assumptions on $r$. Note also that since $f$ fulfills (S1), 
\[
{\rm sp}\big((sf)(r)\big)= {\rm sp }\big(f(r)\big)\subseteq {\rm sp}(r).
\]

In the second case, 
\[
(sf)(r\cup X) = f(r)\cup \{ s\}. 
\]
To see that the right side is a face of $sA$, by Lemma~\ref{L:calc}(i), we need to show that $f(r)\cup s \in A$ and $s\not\subseteq f(r)$. But, 
\[
f(r)\cup s = f(r\cup \bigcup X),
\]
which is a face of $A$ since $r\cup \bigcup X$ is a face of $A$ and $f$ has (S1). The checking that the condition $s\not\subseteq f(r)$ holds is the same as in the first case. 
Furthermore, since $f$ fulfills (S1), we get 
\[
{\rm sp}\big((sf)(r\cup X)\big) = {\rm sp}\big(f(r)\cup \{ s\}\big) \subseteq {\rm sp}\big( r\cup \{ s\}\big) \subseteq {\rm sp}(r\cup X)
\]
with the last inclusion following from $f(t)=s$ for $t\in X$ and $f$ having (S1).

We now check that $sf$ satisfies (S2). From the definition of division, a face of $sA$ can have one of the two forms: 
\begin{enumerate}
\item[---] $r$ with $r\in A$ and $s\not\subseteq r$;

\item[---] $r\cup \{ s\}$ with $r\cup s\in A$ and $s\not\subseteq r$.
\end{enumerate} 

In the first case, since $f$ has (S2), there exists $t\in B$ with $f(t)=r$ and ${\rm sp}(t)={\rm sp}(r)$. Note that no element of $f^{-1}(s)$ is included in $t$ since otherwise $s\subseteq r$. Thus, $t\in f^{-1}(s)B$ and $(sf)(t)= r$. 

In the second case, since $f$ fulfills (S2), there exists $t\in B$ such that $f(t)= r\cup s$ and 
\begin{equation}\label{E:sptrs} 
{\rm sp}(t)= {\rm sp}(r\cup s). 
\end{equation} 
Let $t_1\subseteq t$ be maximal with respect to inclusion with $f(t_1)=s$ and let $t_2\subseteq t$ be such that $f(t_2)=r$. We consider $t_2\cup \{ t_1\}$. Using \eqref{E:sptrs}, properties of the operation $\rm sp$, and the fact that $f$ satisfies (S1), we get 
\[
{\rm sp}(t)\supseteq {\rm sp}\big( t_2\cup \{ t_1\}\big) = {\rm sp}(t_2)\cup {\rm sp}(t_1) \supseteq {\rm sp}(r)\cup {\rm sp}(s)= {\rm sp}(r\cup s)= {\rm sp}(t), 
\]
from which, by using again the properties of $\rm sp$, we obtain 
\begin{equation}\label{E:essenc}
{\rm sp}\big( t_2\cup \{ t_1\}\big)={\rm sp}(r)\cup {\rm sp}(s) = {\rm sp}\big(r\cup \{ s\}\big).
\end{equation} 
Now, we use Lemma~\ref{L:linear} to see that 
\begin{equation}\label{E:essena} 
t_2\cup \{ t_1\} \in f^{-1}(s)\,B. 
\end{equation} 
Indeed, $u\not\subseteq t_2$, for each $u\in f^{-1}(s)$, since otherwise 
\[
s= f(u)\subseteq f(t_2)=r,
\]
which contradicts our assumptions. Also $t_1\cup t_2\subseteq t$, so $t_1\cup t_2$ is a face of $B$. Obviously, $\{ t_2\}$ is linearly ordered by inclusion, so it remains to see that, for each $u\in f^{-1}(s)$, if $u\subseteq t_1\cup t_2$, then $u\subseteq t_1$, but this follows from maximality of $t_1$. So, \eqref{E:essena} holds. We have 
\[
(sf)\big( t_2\cup \{ t_1\}\big) = f(t_2) \cup \{ s\} = r\cup \{ s\}.
\]
The above equalities and \eqref{E:essenc} finish the argument  for $sf$ having (S2). 
\end{proof}

\subsection{Combinatorial isomorphisms} 

Let $A, \, B$ be divided complexes. 
We observe that $f\colon {\rm Vr}(B)\to {\rm Vr}(A)$ is a grounded isomorphism, that is, it is an invertible grounded simplicial map whose inverse is grounded simplicial, precisely when the following two conditions hold 
\begin{enumerate} 
\item[---] $f$ is bijective and, for $t\subseteq {\rm Vr}(B)$, $t\in B$ if and only if $f(t)\in A$; 

\item[---] for $t\in B$, ${\rm sp}\big( f(t)\big)= {\rm sp}(t)$.
\end{enumerate} 
We will be checking these conditions in Theorems~\ref{T:ord3} and \ref{T:ord} below.

It is clear that in most situations the complexes $s\, t A$ and 
$t\, s A$, where $s,t$ are faces of $A$, are not isomorphic. It turns out, however,  
that one may reverse the order of $s$ and $t$ if one agrees to compensate the order change with 
applications of appropriate additional divisions. This is the content of Theorems~\ref{T:ord3} and \ref{T:ord}.

\begin{theorem}\label{T:ord3}
Let $A$ be a divided complex. Let $t\in {\rm Fin}^+$ and let $r, s\subseteq t$ be such that $r\cup s\not=\emptyset$ and $r\cap s=\emptyset$. 
Assume that $t\not\in {\rm tc}\big( {\rm Vr}(A)\big)$. Set 
\[
B_1= \big( r\cup \{ t \} \big) \, (r\cup s)\, t A\;\hbox{ and }\;B_2 = \big( s\cup \{ t \} \big) \, (r\cup s)\, t A.
\]
\begin{enumerate} 
\item[(i)] We have 
\begin{enumerate}
\item[---] $t$ is a vertex of $B_1$ $\Longleftrightarrow$ $s\cup \{ t\}$ is a vertex of $B_2$ 
$\Longleftrightarrow$ $t$ is a face of $A$ and $r\not=\emptyset$ and 

\item[---] $r\cup \{ t\}$ is a vertex of $B_1$ $\Longleftrightarrow$ $t$ is a vertex of $B_2$ $\Longleftrightarrow$ $t$ is 
a face of $A$ and $s\not=\emptyset$.
\end{enumerate} 

\item[(ii)] The complexes $B_1$ and $B_2$ are ground isomorphic via the assignment 
\[
t \to  s\cup \{ t \} , \;  r\cup \{ t \} \to  t. 
\]
\end{enumerate}
\end{theorem}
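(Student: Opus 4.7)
The plan is to reduce the theorem to the three-level subdivision analysis of Lemma~\ref{L:calc4}, after disposing of the degenerate case where $t$ is not a face of $A$.

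If $t \notin A$, then $tA = A$, and the hypothesis $t \notin {\rm tc}({\rm Vr}(A))$ forces $t$ to be distinct from every vertex of $(r \cup s)A$, whose vertex set is contained in ${\rm Vr}(A) \cup \{r \cup s\}$. Consequently neither $r \cup \{t\}$ nor $s \cup \{t\}$ is a face of $(r \cup s)A$, so both outer divisions are trivial and $B_1 = B_2 = (r \cup s)A$. The identity map then witnesses (ii), and in (i) each of the four ``vertex'' conditions fails because each of $t$, $r \cup \{t\}$, $s \cup \{t\}$ has $t$ in its transitive closure, matching the right-hand side ``$t$ is a face of $A$'' being false.

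Assume now $t \in A$. Then $\emptyset \neq r \cup s \subseteq t$, so $r \cup s$ is also a face of $A$. I apply Lemma~\ref{L:calc4} twice: once with substitution $(s,t,r,u) \leftarrow (r\cup s,\, t,\, r,\, r\cup\{t\})$ to describe the faces of $B_1$, and once with $(s,t,r,u) \leftarrow (r\cup s,\, t,\, s,\, s\cup\{t\})$ to describe the faces of $B_2$; the side condition $(r \cup s) \setminus t \subseteq r$ (respectively $\subseteq s$) is automatic since $r \cup s \subseteq t$. Part (A) of the lemma applies when $r \cup s \neq t$ and part (B) when $r \cup s = t$. Specialising the resulting face descriptions to singletons proves (i): for instance, $\{t\}$ is a face of $B_1$ iff $r \cup \{t\} \not\subseteq \{t\}$, which forces $r \neq \emptyset$; the other three equivalences are identical after swapping $r \leftrightarrow s$.

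For (ii), extend the stated assignment to $\varphi \colon {\rm Vr}(B_1) \to {\rm Vr}(B_2)$ by making it the identity on all other vertices. Using (i) and Lemma~\ref{L:vrst}, one checks that $\varphi$ is a well-defined bijection of vertex sets. The support condition reduces to verifying ${\rm sp}(t) = {\rm sp}(s \cup \{t\}) = {\rm sp}(r \cup \{t\})$, which is immediate from~\eqref{E:sppr} together with $r, s \subseteq t$. The crux is then to check that $\varphi$ sends faces of $B_1$ bijectively onto faces of $B_2$: given a candidate face $y$ of $B_1$, one computes $\varphi(y)$ by tracking the combined presence/absence of $t$ and $r \cup \{t\}$ in $y$, and matches the defining conditions from Lemma~\ref{L:calc4} on the $B_1$ side with those on the $B_2$ side. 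The key observation is that once $t$ is replaced by $s \cup \{t\}$ and $r \cup \{t\}$ by $t$, the clauses of Lemma~\ref{L:calc4}(A)(i), (A)(ii) transform into exactly the clauses obtained by the substitution $r \leftrightarrow s$, and likewise for~(B).

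The main obstacle is the case-by-case bookkeeping in this last step: Lemma~\ref{L:calc4}(A)(ii) has four sub-cases indexed by the membership of $r \cup s$ and $t$ in $y$, and part (B) has three more, so a total of roughly ten sub-cases must be matched on each side and shown to correspond correctly under $\varphi$. Each individual match is a routine inspection, but verifying all of them without error is where the bulk of the work lies.
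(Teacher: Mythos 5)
Your proposal follows essentially the same route as the paper: it disposes of the degenerate case $t\notin A$ (where both outer divisions are trivial and $B_1=B_2$), and for $t\in A$ it applies Lemma~\ref{L:calc4} twice (case (A) when $r\cup s\neq t$, case (B) when $r\cup s=t$) to get the face descriptions of $B_1$ and $B_2$ and matches the resulting clauses under the stated assignment, which is exactly how the paper proves (ii). The only cosmetic differences are that the paper establishes (i) in the main case by a short direct computation with the definition of division rather than by specializing the Lemma~\ref{L:calc4} descriptions to singleton faces, and that in the degenerate case the correct reason $t$, $r\cup\{t\}$, $s\cup\{t\}$ are not vertices is $t\notin{\rm tc}\big({\rm Vr}(A)\big)$ together with the triviality of the divisions (not that $t$ lies in its own transitive closure, which well-foundedness rules out).
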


In the second theorem, one changes the orders of $s$ and $t$ at the expense of an additional division.

\begin{theorem}\label{T:ord}
Let $A$ be a divided complex, and let $s, \,t\in {\rm Fin}^+$ be such that $s\not\in t$, $t\not\in s$, and $s, t\not\in {\rm tc}\big( {\rm Vr}(A)\big)$. Set 
\begin{equation}\notag
B_1= \big( (s\setminus t)\cup \{ t \} \big) \, s\, t A \;\hbox{ and }\; B_2= \big( (t\setminus s)\cup \{ s \} \big) \, t\, s A. 
\end{equation} 

\begin{enumerate} 
\item[(i)] $(s\setminus t)\cup \{ t \}$ is a vertex of $B_1$ $\Longleftrightarrow$ 
$(t\setminus s)\cup \{ t \}$ is a vertex of $B_2$ $\Longleftrightarrow$   $s\cup t$ is a face of $A$ and $s\cap t\not=\emptyset$.

\item[(ii)] The complexes $B_1$ and $B_2$ are ground isomorphic via the assignment 
\[
(s\setminus t)\cup \{ t \} \to   (t\setminus s)\cup \{ s \}. 
\]
\end{enumerate} 
\end{theorem}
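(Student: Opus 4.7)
My plan is to verify both parts by a direct case analysis using Lemma~\ref{L:calc4}(A), taking $r=s\setminus t$ (so $u=(s\setminus t)\cup\{t\}$) for $B_1$ and, symmetrically, $r'=t\setminus s$ (so $u'=(t\setminus s)\cup\{s\}$) for $B_2$. The case $s=t$ is trivial (both sides collapse), so I would assume $s\neq t$ throughout. Note that $s\setminus t\subseteq r\subseteq s$ in the lemma is satisfied with equality $r=s\setminus t$, so $s\setminus r=s\cap t$; this is the crucial combinatorial symmetry that will make the characterizations for $B_1$ and $B_2$ match under the swap $s\leftrightarrow t$, $r\leftrightarrow r'$.

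For (i), apply Lemma~\ref{L:calc4}(A)(i) with $y=\{u\}$. Using $u\neq s$ and $u\neq t$ (both forced by well-foundedness together with the hypotheses $t\notin s$ and $s,t\notin\mathrm{tc}(\mathrm{Vr}(A))$), we get $y^0=\emptyset$. The first two conditions of the lemma then reduce precisely to $s\cup t\in A$ and $s\cap t\neq\emptyset$; the conditions $r\cup\{t\}\not\subseteq\{u\}$ and $(t\setminus s)\cup\{s\}\not\subseteq\{u\}$ hold automatically, again via well-foundedness (neither $u$ nor $u'$ can be a singleton equal to itself, since $t\in u$ and $s\in u'$). The symmetric computation for $B_2$ yields the same condition, establishing (i).

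For (ii), define $f\colon \mathrm{Vr}(B_1)\to\mathrm{Vr}(B_2)$ by $f(u)=u'$ and $f(v)=v$ otherwise. First I would verify this is a well-defined bijection: applying Lemma~\ref{L:calc4}(A)(ii) to singletons $\{v\}$ for $v\in\mathrm{Vr}(A)\cup\{s,t\}$, one sees the membership condition for $B_1$ and $B_2$ coincide (they reduce to conditions on $v$, $s$, $t$ symmetric in $s,t$); separately, well-foundedness plus $s,t\notin\mathrm{tc}(\mathrm{Vr}(A))$ gives $u\notin\mathrm{Vr}(B_2)$ and $u'\notin\mathrm{Vr}(B_1)$, so the only discrepancy between the two vertex sets is the swap of $u$ and $u'$. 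Next, for a candidate face $y$ I split on whether $u\in y$. If $u\notin y$, then $f(y)=y$, and the four subcases of Lemma~\ref{L:calc4}(A)(ii) for $B_1$ match verbatim with those for $B_2$ after swapping the roles of $s$ and $t$ (and correspondingly $r=s\setminus t$ with $r'=t\setminus s$). If $u\in y$, then $f(y)=(y\setminus\{u\})\cup\{u'\}$; using that $u\neq s,t,u'$ (well-foundedness) one gets $f(y)^0=y^0$, so the first two conditions of (A)(i) match on the nose, and the remaining two conditions become $u\not\subseteq y,\;u'\not\subseteq y$ for $B_1$ versus $u'\not\subseteq f(y),\;u\not\subseteq f(y)$ for $B_2$. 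The equivalence $u\subseteq y\Leftrightarrow u\subseteq f(y)$ (and similarly for $u'$) is checked element-by-element: since $u\notin s$, $u\notin t$, $u\notin u'$ and symmetrically for $u'$ (each ruled out by well-foundedness from the hypotheses), the swap of $u$ and $u'$ in going from $y$ to $f(y)$ does not affect membership of any element of $u$ or $u'$. Finally, ${\rm sp}(u)={\rm sp}(s\cup t)={\rm sp}(u')$ and $f$ is the identity elsewhere, so ${\rm sp}(f(y))={\rm sp}(y)$, and $f$ is a grounded isomorphism.

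The main obstacle will be the bookkeeping in the last step: carefully ruling out all the pathological membership relations among $s,t,u,u'$ using well-foundedness of $\in$ together with the two hypotheses $s\notin t$, $t\notin s$, and $s,t\notin\mathrm{tc}(\mathrm{Vr}(A))$. Once these auxiliary non-memberships are in hand, the symmetry of the formulas in Lemma~\ref{L:calc4}(A) does the real work.
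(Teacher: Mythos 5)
Your plan follows essentially the same route as the paper in the main case: Lemma~\ref{L:calc4}(A) applied with $r=s\setminus t$ (and symmetrically with $r'=t\setminus s$), the map swapping $u=(s\setminus t)\cup\{t\}$ with $u'=(t\setminus s)\cup\{s\}$ and fixing all other vertices, and the observation that the resulting membership conditions are invariant under interchanging $s$ and $t$; your element-by-element check that the swap does not disturb the inclusion conditions $u\not\subseteq y$, $u'\not\subseteq y$, and the equality ${\rm sp}(u)={\rm sp}(s\cup t)={\rm sp}(u')$, are exactly the paper's steps, and your handling of $s=t$ matches as well.

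The one genuine gap is that Lemma~\ref{L:calc4} is stated under the hypothesis $s,t\in A$, whereas Theorem~\ref{T:ord} only assumes $s,t\in{\rm Fin}^+$ with $s\notin t$, $t\notin s$, and $s,t\notin{\rm tc}\big({\rm Vr}(A)\big)$; in particular $s$ or $t$ may fail to be a face of $A$, and then your citations of Lemma~\ref{L:calc4} in both (i) and (ii) are not licensed as written. The paper avoids this: for (i) it uses Lemma~\ref{L:calc2}, whose hypotheses are exactly the non-vertex conditions you do have, and for (ii) it first disposes of the degenerate situation ($s\cup t$ not a face of $A$, or $s\cap t=\emptyset$) using (i), so that when Lemma~\ref{L:calc4} is finally invoked one already has $s\cup t\in A$ and hence $s,t\in A$. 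The repair is routine — either treat the case where $s$ or $t$ is not a face of $A$ separately (then $u$ and $u'$ are not faces of the relevant complexes, the divisions by them are vacuous, and one checks directly that $B_1=B_2$ with the assignment empty), or observe that the proof of Lemma~\ref{L:calc4} uses $s,t\in A$ only to derive the conditions you are assuming outright — but as written your plan silently assumes $s$ and $t$ are faces. Apart from this, your uniform treatment even covers the subcase where $s,t\in A$ but $s\cup t\notin A$ or $s\cap t=\emptyset$, which the paper dispatches with a brief assertion, so the missing piece above is the only thing to add.
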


Notice that, by Lemma~\ref{L:ok}, the assumptions $t\not\in {\rm tc}\big({\rm Vr}(A)\big)$ and $s,t\not\in {\rm tc}\big({\rm Vr}(A)\big)$ in Theorems~\ref{T:ord3} and \ref{T:ord} above are implied by the simpler to state condition $t\in A$. But the theorems will be applied with the weaker assumptions.

\begin{corollary}\label{C:comg} 
Let $A$ be a divided complex, and let $s, t\in {\rm Fin}^+$ be such that $s\not\in t$ and $t\not\in s$. 
\begin{enumerate}
\item[(i)]  If $s \cup t$ is not a face of $A$, then $s\, t A = t\, s A$. 
\item[(ii)] If $s\cap t=\emptyset$, then $s\, t A = t\, s A$.
\end{enumerate} 
\end{corollary}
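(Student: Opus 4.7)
The plan is to deduce both cases from Theorem~\ref{T:ord}. Write $s'=(s\setminus t)\cup\{t\}$ and $t'=(t\setminus s)\cup\{s\}$, and let $B_1 = s'\,s\,t\,A$ and $B_2 = t'\,t\,s\,A$ as in that theorem. I aim to show that, under either hypothesis (i) or (ii), the corrective divisions by $s'$ and $t'$ are vacuous, so that $B_1 = s\,t\,A$ and $B_2 = t\,s\,A$, and that the ground isomorphism between $B_1$ and $B_2$ furnished by Theorem~\ref{T:ord}(ii) reduces to the identity on vertices, forcing $s\,t\,A = t\,s\,A$.

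First I would dispose of the case $s\notin A$ directly: then $sA=A$, so $t\,s\,A=t\,A$, and $s$ cannot be a face of $t\,A$---an old face $y\in A$ cannot equal $s$, and a new face $y\cup\{t\}$ contains $t$, so $s=y\cup\{t\}$ would force $t\in s$, against the hypothesis---whence $s\,t\,A=t\,A=t\,s\,A$. The case $t\notin A$ is symmetric, so I may assume $s,t\in A$. Under this assumption the set-theoretic hypothesis $s,t\notin{\rm tc}({\rm Vr}(A))$ of Theorem~\ref{T:ord} follows from Lemma~\ref{L:ok}: condition \eqref{E:nottr} says $s\notin{\rm tc}(x)$ for any $x\in A$, and since each vertex of $A$ lies in some face $x\in A$ we have ${\rm tc}({\rm Vr}(A))\subseteq\bigcup_{x\in A}{\rm tc}(x)$. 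By Theorem~\ref{T:ord}(i), $s'$ is a vertex of $B_1$ if and only if $s\cup t\in A$ and $s\cap t\not=\emptyset$; under hypothesis (i) the first conjunct fails, under (ii) the second fails, so $s'\notin{\rm Vr}(B_1)$. Since ${\rm Vr}(s\,t\,A)\subseteq\{s,t\}\cup{\rm Vr}(A)$ and one checks $s'\not= s$, $s'\not= t$, and $s'\notin{\rm Vr}(A)$ (the last using $t\in s'$ together with $t\notin{\rm tc}({\rm Vr}(A))$), the vertex $s'$ is therefore not a face of $s\,t\,A$, and so $B_1 = s\,t\,A$. Symmetrically $B_2 = t\,s\,A$.

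Finally, Theorem~\ref{T:ord}(ii) supplies a ground isomorphism $B_1\to B_2$ given by the assignment $s'\to t'$ and the identity elsewhere. Because $s'\notin{\rm Vr}(B_1)$, the non-identity clause of the assignment operates on an empty set of vertices, so the isomorphism is literally the identity map ${\rm Vr}(B_1)\to{\rm Vr}(B_2)$; a ground isomorphism that is the identity on vertices must send each face to itself, so $B_1$ and $B_2$ coincide as families of sets, giving $s\,t\,A = t\,s\,A$. The main obstacle I expect is the transition from ``$s'$ is not a vertex of $B_1$'' (as supplied by Theorem~\ref{T:ord}(i)) to ``$s'$ is not a face of $s\,t\,A$,'' which requires ruling out the possibility that $s'$ is already a vertex of $s\,t\,A$ inherited from below; this is where careful bookkeeping of \eqref{E:nottr} through iterated divisions is essential.
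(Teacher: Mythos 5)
Your proof is correct and takes essentially the same route as the paper: dispose of the case where $s$ or $t$ is not a face of $A$ directly, invoke Theorem~\ref{T:ord}(i) to see that the corrective divisions by $(s\setminus t)\cup\{t\}$ and $(t\setminus s)\cup\{s\}$ are vacuous under either hypothesis, and then read Theorem~\ref{T:ord}(ii) as saying that the identity assignment on vertices is a grounded isomorphism between $s\,t\,A$ and $t\,s\,A$, forcing equality. The extra verification that $(s\setminus t)\cup\{t\}$ is not an inherited vertex of $s\,t\,A$ is harmless but not needed for the direction you actually use, since a face of $s\,t\,A$ always becomes a vertex after dividing by it, so ``not a vertex of $B_1$'' already yields ``not a face of $s\,t\,A$.''
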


\begin{proof}
If $s$ is not a face of $A$, then, by the definition of dividing, since $t\not\in s$, $s$ is not a face of $tA$. It follows that $tsA=tA$ and $stA=tA$ and the conclusions of (i) and (ii) follow. The same argument gives the conclusions if $t$ is not a face of $A$. So we assume that $s$ and $t$ are faces of $A$. 

(i) Under the assumption of (i), by Theorem~\ref{T:ord}(i), we have that $(s\setminus t)\cup \{ t \}$ and 
$(t\setminus s)\cup \{ s \}$ are not faces of $stA$ and $tsA$, respectively. So, we get 
\begin{equation}\notag
\Big( (s\setminus t)\cup \{ t \} \Big) \, s\, t A = s\,t A \;\hbox{ and }\; \Big( (t\setminus s)\cup \{ s \} \Big) \, t\, s A = t\, sA.
\end{equation} 
Now, the conclusion of Theorem~\ref{T:ord}(ii) gives that the identity assignment on the vertices is a grounded isomorphism between $stA$ and $tsA$. Thus, we get the equality $st A = tsA$. 

(ii) By Theorem~\ref{T:ord}(i), $(s\setminus t)\cup \{ t\}$ and $(t\setminus s)\cup \{ s\}$ are not faces of $stA$ and $tsA$, respectively. Thus, we get 
\begin{equation}\notag
\Big( (s\setminus t)\cup \{ t \} \Big) \, s\, t A = s\,t A \;\hbox{ and }\; \Big( (t\setminus s)\cup \{ s \} \Big) \, t\, s A = t\, sA.
\end{equation}
Now, we finish the proof of (ii) by applying Theorem~\ref{T:ord}(ii). 
\end{proof}

Now we move to the proof of Theorem~\ref{T:ord3}.

\begin{proof}[Proof of Theorem~\ref{T:ord3}]
We note that since $r, s\subseteq t$, we have 
\[
{\rm sp}(t) ={\rm sp}\big(  s\cup \{ t \}\big) = {\rm sp} \big(r\cup \{ t \}\big). 
\]
Thus, it remains to check that the assignment from (ii) gives a bijective function between ${\rm Vr}(B_1)$ and ${\rm Vr}(B_2)$ such that the function and its inverse map faces to faces. 

We start with considering the case $t\not\in A$. Then, if $r\cup s=t$, we have 
\[
B_1=(r\cup \{ t\}) A\;\hbox{ and }B_2= (s\cup \{ t\}) A. 
\]
Since $t$ is not a vertex of $A$, we see that $r\cup \{ t\}$ and $s\cup \{ t\}$ are not faces of $A$, so actually 
\[
B_1=A=B_2. 
\]
Thus, by our assumption $t\not\in {\rm tc}\big( {\rm Vr}(A)\big)$, we see that $t,\, r\cup \{ t\},\, s\cup \{ t\}$ are not vertices of $B_1$ and $B_2$, so points (i) and (ii) of the lemma follow. 
If $r\cup s\subsetneq t$, then 
\[
B_1=(r\cup \{ t\}) (r\cup s) A\;\hbox{ and }B_2= (s\cup \{ t\}) (r\cup s) A. 
\]
Since $t$ is not a vertex of $A$ and since $t\not= r\cup s$, we see that $r\cup \{ t\}$ and $s\cup \{ t\}$ are not faces of $(r\cup s)A$; thus, 
\[
B_1=(r\cup s) A\;\hbox{ and }B_2= (r\cup s) A.
\]
Since $t,\, r\cup \{ t\},\, s\cup \{ t\}$ are not vertices of $A$, they are not vertices of $B_1$ and $B_2$ since otherwise $r\cup s$ would be equal to one of them, which 
is ruled out by our case assumption that $t\not= r\cup s$ and by $t\not\in r\cup s$ (as $r\cup s\subseteq t$). Now, points (i) and (ii) of the lemma follow immediately.

So, assume that $t\in A$. Note first that then $t$ is a vertex of $tA$ and it remains a vertex of $B_1$ precisely when $r\cup s\not= \{ t\}$ and $r\cup \{ t\}\not=\{ t\}$. This happens precisely when $r\not=\emptyset$ since $t\not\in r\cup s$ (as $r\cup s\subseteq t$). An analogous argument gives that $t$ is a vertex of $B_2$ precisely $s\not=\emptyset$. 
Now observe that $r\cup \{ t\}$ is a face of $tA$ since $r\cup t=t$ is a face of $A$. It follows that $r\cup \{ t\}$ is a face of $(r\cup s) tA$ precisely when $r\cup s \not\subseteq r\cup \{ t\}$, which is equivalent to saying $s\not=\emptyset$ (as $t\not\in r\cup s$). Thus, $r\cup \{ t\}$ is a vertex of $B_1$ precisely when $s\not=\emptyset$. A similar argument shows that $s\cup \{ t\}$ is a vertex of $B_2$ precisely when $r\not=\emptyset$. Point (i) follows. 

Now we argue for point (ii) under the assumption $t\in A$. Set 
\[
u= r\cup \{ t\},\; v= s\cup \{ t\}, \hbox{ and }w= r\cup s.
\]
We split our argument into two cases: $w\not=t$ and $w= t$. 

Assume first $w\not= t$. Define 
\[
f\colon {\rm Vr}(A)\cup \{ w, t, u\}\to {\rm Vr}(A)\cup \{ w, t, v\}
\]
by letting 
\[
f\res \big( {\rm Vr}(A)\cup \{ w\} \big) = {\rm id},\; f(t)=v,\; f(u)=t. 
\]
We need to see that, for $y\subseteq {\rm Vr}(A)\cup \{ w, t, u\}$,  
\begin{equation}\label{E:want2}
y\in u\, w\, tA\Leftrightarrow f(y)\in v\, w\, tA.
\end{equation}

We consider the complex $u\, w\, tA$. Let $y\subseteq {\rm Vr}(A)\cup \{ w,\, t,\,  u\}$, and 
set $y^0 = y\cap {\rm Vr}(A)$. By Lemma~\ref{L:calc4}(A) (with $r, s=w, t$) using the assumed relationships $w\not= t$ (which implies that $w$ is not a vertex of $tA$) and $w \subseteq t$, we obtain the following points with (i) of Lemma~\ref{L:calc4}(A) split into ($u$-i) and ($u$-ii) and (ii) of Lemma~\ref{L:calc4}(A) split into ($u$-iii) and ($u$-iv).

\begin{enumerate}
\item[($u$-i)]  If $u\in y$ and $t\in y$, then $y\in u\, w\, tA$ if and only if 
\begin{equation}\notag
t\cup y^0 \in A,\;  s\not\subseteq y^0,\; r \not\subseteq y^0,\;
(t\setminus w)\cup \{ w\} \not\subseteq y. 
\end{equation} 

\item[($u$-ii)]  If $u\in y$ and $t\not\in y$, then $y\in u\, w\, tA$ if and only if 
\begin{equation}\notag
t\cup y^0 \in A,\; s\not\subseteq y^0,\; 
(t\setminus w)\cup \{ w\} \not\subseteq y. 
\end{equation} 

\item[($u$-iii)]  If $u\not\in y$ and $t\in y$, then $y\in u\, w\, tA$ if and only if 
\begin{equation}\notag
t\cup y^0 \in A,\; r\not\subseteq y^0,\; (t\setminus w)\cup \{ w\} \not\subseteq y. 
\end{equation} 

\item[($u$-iv)]  If $u\not\in y$ and $t\not\in y$, then $y\in u\, w\, tA$ if and only if 
\begin{equation}\notag
\begin{cases}
w\cup y^0  \in A,\; (t\setminus w)\not\subseteq y^0,\; w\not\subseteq y^0, &\hbox{ if } w\in y,\\
y^0 \in A,\;  w\not\subseteq y^0, &\hbox{ if } w\not\in y.
\end{cases}
\end{equation} 
\end{enumerate}

The same analysis for the complex $v\, w\, tA$, shows that, for a subset $z$ of 
${\rm Vr}(A)\cup \{ w,\, t,\, v\}$, if we let $z^0 = z\cap {\rm Vr}(A)$, we have the following statements. 

\begin{enumerate}
\item[($v$-i)]  If $v\in z$ and $t\in z$, then $z\in v\, w\, tA$ if and only if 
\begin{equation}\notag
t\cup z^0 \in A,\; s\not\subseteq z^0,\; r \not\subseteq z^0,\; 
(t\setminus w)\cup \{ w\} \not\subseteq z. 
\end{equation} 

\item[($v$-ii)]  If $v\in z$ and $t\not\in z$, then $z\in v\, w\, tA$ if and only if 
\begin{equation}\notag
t\cup z^0 \in A,\; r\not\subseteq z^0,\; 
(t\setminus w)\cup \{ w\} \not\subseteq z. 
\end{equation} 

\item[($v$-iii)]  If $v\not\in z$ and $t\in z$, then $z\in v\, w\, tA$ if and only if 
\begin{equation}\notag
t\cup z^0 \in A,\; s\not\subseteq z^0,\; (t\setminus w)\cup \{ w\} \not\subseteq z. 
\end{equation} 

\item[($v$-iv)]  If $v\not\in z$ and $t\not\in z$, then $z\in v\, w\, tA$ if and only if 
\begin{equation}\notag
\begin{cases}
w\cup z^0  \in A,\; (t\setminus w)\not\subseteq z^0,\; w\not\subseteq z^0, &\hbox{ if } w\in z,\\
z^0 \in A,\;  w\not\subseteq z^0, &\hbox{ if } w\not\in z.
\end{cases}
\end{equation} 
\end{enumerate}

Substituting $z=f(y)$ in ($v$-i)--($v$-iv) and using the definition of $f$, we get the following statements.

\begin{enumerate}
\item[($v$-v)]  If $t\in y$ and $u\in y$, then $f(y)\in v\, w\, tA$ if and only if 
\begin{equation}\notag
t\cup y^0 \in A,\;  s\not\subseteq y^0,\; r \not\subseteq y^0,\;
(t\setminus w)\cup \{ w\} \not\subseteq y. 
\end{equation} 

\item[($v$-vi)]  If $t\in y$ and $u\not\in y$, then $f(y)\in v\, w\, tA$ if and only if 
\begin{equation}\notag
t\cup y^0 \in A,\; r\not\subseteq y^0,\; 
(t\setminus w)\cup \{ w\} \not\subseteq y. 
\end{equation} 

\item[($v$-vii)]   If $t\not\in y$ and $u\in y$, then $f(y)\in v\, w\, tA$ if and only if 
\begin{equation}\notag
t\cup y^0 \in A,\; s\not\subseteq y^0,\; (t\setminus w)\cup \{ w\} \not\subseteq y. 
\end{equation} 

\item[($v$-viii)]   If $t\not\in y$ and $u\not\in y$, then $f(y)\in v\, w\, tA$ if and only if 
\begin{equation}\notag
\begin{cases}
w\cup y^0  \in A,\; (t\setminus w)\not\subseteq y^0,\; w\not\subseteq y^0, &\hbox{ if } w\in y,\\
y^0 \in A,\;  w\not\subseteq y^0, &\hbox{ if } w\not\in y.
\end{cases}
\end{equation} 
\end{enumerate}

We note that the conditions in the pairs (($u$-i) and ($v$-v)),  (($u$-ii) and ($v$-vii)), (($u$-iii) and ($v$-vi)),  (($u$-iv) and ($v$-viii))
are identical, which proves \eqref{E:want2}. 

Assume now $w=t$, and recall that $t$ is a face of $A$. We keep in mind that $t, u, v\not\in {\rm Vr}(A)$ (since $t\not\in {\rm tc}\big( {\rm Vr}(A)\big)$. 
We have 
\[
B_1 = u\,t\,A\;\hbox{ and }\; B_2=v\,t\,A. 
\]
We need to show that for the function 
\[
f\colon {\rm Vr}(A)\cup \{ t, u\}\to {\rm Vr}(A)\cup \{ t, v\}
\]
defined by letting 
\[
f\res {\rm Vr}(A) = {\rm id},\; f(t)=v,\; f(u)=t, 
\]
we have that, for $y\subseteq {\rm Vr}(A)\cup \{ t, u\}$,  
\begin{equation}\label{E:want3} 
y\in u\, tA\Leftrightarrow f(y)\in v\, tA.
\end{equation} 
We use Lemma~\ref{L:calc4}(B) (substituting $w$ for $s=t$). For $y\subseteq {\rm Vr}(A)\cup \{ u, t\}$ and $z\subseteq {\rm Vr}(A)\cup \{ v, t\}$ after setting $y^0 = y\setminus \{ u, t\}$ and $z^0= z\setminus \{ v,t\}$, and noting that $t\setminus r =s$, 
we get 
\begin{equation}\notag
y\in u\, t A \Longleftrightarrow 
\begin{cases}
t\cup  y^0  \in A,\; s\not\subseteq y^0,\, r\not\subseteq y^0&\hbox{ if } u\in y,\, t\in y,\\
t\cup  y^0  \in A,\; s\not\subseteq y^0, &\hbox{ if } u\in y,\, t\not\in y,\\
t\cup  y^0  \in A,\; r\not\subseteq y^0, &\hbox{ if } u\not\in y,\, t\in y,\\
y^0 \in A,\; t\not\subseteq y^0, &\hbox{ if } u\not\in y,\, t\not\in y. 
\end{cases}
\end{equation} 
and 
\begin{equation}\label{E:spare} 
z\in v\, t A \Longleftrightarrow 
\begin{cases}
t\cup  z^0  \in A,\; s\not\subseteq z^0,\, r\not\subseteq z^0&\hbox{ if } v\in y,\, t\in y,\\
t\cup  z^0  \in A,\; r\not\subseteq z^0, &\hbox{ if } v\in y,\, t\not\in y,\\
t\cup  z^0  \in A,\; s\not\subseteq z^0, &\hbox{ if } v\not\in z,\, t\in z,\\
z^0 \in A,\; t\not\subseteq z^0, &\hbox{ if } v\not\in z,\, t\not\in z. 
\end{cases}
\end{equation} 
We finish the proof as in the case $w\not=t$ by substituting $f(y)$ for $z$ in \eqref{E:spare} and using the definition of $f$. 
\end{proof}

\begin{proof}[Proof of Theorem~\ref{T:ord}] 
We note 
\[
{\rm sp}\big((s\setminus t)\cup \{ t \}\big)  = {\rm sp}\big( (t\setminus s)\cup \{ s \}\big). 
\]
It follows that we only need to see that the assignment in (ii) gives a bijective function between ${\rm Vr}(B_1)$ and ${\rm Vr}(B_2)$ such that the function and its inverse map faces to faces. 

First we consider the case $s=t$. Then $B_1=\{ t\}\, t\,A=B_2$ 
and the assignment in (ii) is $\{ t\}\to \{ t\}$, which determines the identity map. So, (ii) follows. When $s=t$, point (i) reads 
\[
\{ t\}\hbox{ is a vertex of }\{ t\}\,t\,A\; \Leftrightarrow\; t\hbox{ is a face of }A. 
\]
The implication $\Leftarrow$ is clear from the definition of division. As for the implication $\Rightarrow$ note that if $t$ is not a face of $A$, then $\{ t\}\, t\,A=\{ t\}\,A$ and 
$\{ t\}$ is a vertex of $\{ t\}\,A$ only if $\{ t\}$ is a face of $A$ or $\{ t\}$ is a vertex of $A$. Both these possibilities are excluded by the assumption $t\not\in {\rm tc}\big( {\rm Vr}(A)\big)$. Thus, (ii) follows.

From this point on, we assume that $s\not= t$. 

We start with proving (i). Let $u= (s\setminus t)\cup \{ t\}$. Since $u\not = s$ (as $t\not\in s$),  $u\not= t$ (as $t\not\in s$), and $u$ is not a vertex of $A$ (as $t\not\in {\rm tc}\big( {\rm Vr}(A)\big)$), we have that $u$ is a vertex of $B_1$ precisely when $u$ is a face of $s\,t\,A$. We apply Lemma~\ref{L:calc2} with $x=u$. Note that $s$ is not a vertex of $tA$ since $s\not= t$ and $s$ is assumed not to be a vertex of $A$, so the lemma can be applied. Note further that $t\in u$ and $s\not\in u$ since $s\not= t$ and $s\not\in s$. It follows that $u\in s\,t\,A$ if and only if $(s\setminus t)\cup t\in A$, $t\not\subseteq s\setminus t$, and $s\not\subseteq s\setminus t$, which is equivalent to $s\cup t\in A$ and $s\cap t\not=\emptyset$. By an analogous argument we see that $(t\setminus s)\cup \{ s\}$ is a vertex of $B_2$ if and only if $s\cup t\in A$ and $s\cap t\not=\emptyset$, and (i) is proved.

Now we show (ii). Note that by point (i), if $s\cup t$ is not a face of $A$ or $s\cap t=\emptyset$, then $B_1=stA=tsA=B_2$ and the assignment gives the identity map, so point (ii) holds. 
We therefore assume that $s\cup t\in A$ and $s\cap t\not=\emptyset$. The proof is based on an application of Lemma~\ref{L:calc4} with $r=s\setminus t$. 
We consider the two complexes 
\begin{equation}\notag
u\, s\,  t A\;\hbox{ and }\;v \, t\, s A, 
\end{equation} 
where 
\[
u=(s\setminus t)\cup \{ t \}\;\hbox{ and }\; v=(t\setminus s)\cup \{ s \}. 
\]
Note that 
\begin{equation}\label{E:ustn} 
u\not= s, t\hbox{ and }v\not= s,t
\end{equation}
since $s\not\in t$ and $t\not\in s$. Also $u,v\not \in {\rm Vr}(A)$ since $s, t\not\in {\rm tc}\big( {\rm Vr}(A)\big)$. So, we can define a function $f\colon {\rm Vr}(A)\cup \{ s, t, u\} \to {\rm Vr}(A)\cup \{ s, t, v\}$ by letting $f\res \big( {\rm Vr}(A)\cup \{ s, t\}\big)$ be the identity and $f(u)=v$. 
The function $f$ is a bijection and, for $y\subseteq {\rm Vr}(A)\cup \{ s, t, u\}$, we have 
\[
{\rm sp}\big(f(y)\big)= {\rm sp}(y). 
\]
So, we only need to show that 
\begin{equation}\label{E:want} 
y \in u \, s\, tA \Leftrightarrow f(y) \in v\, t\, sA. 
\end{equation} 

Observe first that directly from the definition, we have, for $y\subseteq {\rm Vr}(A)\cup \{ s, t, u\}$, 
\begin{equation}\label{E:lin}
\begin{split}
f(y) \cap \big(  {\rm Vr}(A)\cup \{ s,  t \}\big) & = y \cap  \big( {\rm Vr}(A)\cup \{ s, t \} \big)\\
v \in f(y) &\Leftrightarrow u \in y. 
\end{split}
\end{equation}
Consider the complex $u \, s\, tA$. Set $y^0= y\setminus \{ s, t, u\}$. 
We apply Lemma~\ref{L:calc4} with $r=s\setminus t$. By Lemma~\ref{L:calc4}(Ai), 
we get the following statement.
\begin{enumerate}
\item[(a)] If $u\in y$, then 
$y\in u\, s\, tA$ if and only if 
\begin{equation}\label{E:case1} 
s\cup t\cup y^0 \in A,\; (s\setminus t)\cup \{ t\}  \not\subseteq y,\; (s\cap t)\not\subseteq y^0,\; 
(t\setminus s)\cup \{ s\} \not\subseteq y. 
\end{equation} 
\end{enumerate} 
Whereas by Lemma~\ref{L:calc4}(Aii), we get the following statement.

\begin{enumerate}
\item[(b)] If $u\not\in y$, then 
$y\in u\, s\, tA$ if and only if 
\begin{equation}\label{E:case2} 
\begin{cases}
s\cup t\cup y^0 \in A,\; (t\setminus s)\not\subseteq y^0,\; (s\setminus t)\not\subseteq y^0, &\hbox{ if } s\in y,\, t\in y,\\ 
s\cup y^0  \in A,\; (t\setminus s)\not\subseteq y^0,\; s\not\subseteq y^0, &\hbox{ if } s\in y,\, t\not\in y,\\
t\cup  y^0  \in A,\; t\not\subseteq y^0,\; (s\setminus t)\not\subseteq y^0, &\hbox{ if } s\not\in y,\, t\in y,\\
y^0 \in A,\; t\not\subseteq y^0,\; s\not\subseteq y^0, &\hbox{ if } s\not\in y,\, t\not\in y.  
\end{cases}
\end{equation} 
\end{enumerate} 

We work under the assumption of (a). By the second line of \eqref{E:lin}, we have $v\in f(y)$. By $v\in f(y)$, 
the same application of Lemma~\ref{L:calc4}(Ai) as the one we used to get \eqref{E:case1} shows that 
the condition $f(y)\in v \, t\, sA$ is equivalent to \eqref{E:case1} with $s$ and $t$ switched and with $y$ replaced by $f(y)$. Note that 
\eqref{E:case1} is invariant under switching of $s$ and $t$. Thus, by the first line of \eqref{E:lin}, we get that $f(y)\in v \, t\, sA$ is equivalent to \eqref{E:case1}. Thus, \eqref{E:want} follows. 

We work now under the assumption of (b). By the second line of \eqref{E:lin}, we have $v\not\in f(y)$. Again, by $v\not\in f(y)$,  Lemma~\ref{L:calc4}(Aii) implies that 
the condition $f(y)\in v \, t\, sA$ is equivalent to \eqref{E:case2} with $s$ and $t$ switched and with $y$ replaced by $f(y)$. Since 
\eqref{E:case2} is invariant under switching of $s$ and $t$, by the first line of \eqref{E:lin}, we have that 
$f(y)\in v \, t\, sA$ is equivalent to \eqref{E:case2}. Again, \eqref{E:want} follows. 
\end{proof}

\bigskip
\bigskip
\bigskip

\newpage

\end{document}